\newtheorem{Thm}[equation]{Theorem}
\newtheorem{Lem}[equation]{Lemma}
\newtheorem{Cor}[equation]{Corollary}
\theoremstyle{remark}
\newtheorem{Rem}[equation]{Remark}
\theoremstyle{definition}
\numberwithin{equation}{section}
\newcommand{\be}{\begin{equation}}
\newcommand{\beu}{\begin{equation*}}
\renewcommand{\bar}[1]{\overline{#1}}
\begin{document}

\title{An Algorithm to classify rational 3-tangles}
\author{Bo-hyun Kwon}
\footnote{The subject classification code: 57M27}

\begin{abstract}
 A $3$-$tangle$ $T$  is the disjoint union of $3$ properly embedded arcs in the unit 3-ball; it is called rational if there is a homeomorphism of pairs from
$(B^3,T)$ to $(D^2\times I,\{x_1,x_2,x_3\}\times I)$. Two rational 3-tangles $T$ and $T'$ are isotopic if there is an orientation-preserving self-homeomorphism $h: (B^3, T)\rightarrow (B^3,T')$ that is the identity map on the boundary.
In this paper, we give an algorithm to check whether or not two rational 3-tangles are isotopic  by using a modified version of Dehn's method for classifying simple closed curves on surfaces.
\end{abstract}

\maketitle
\section{Introduction}

Tangles were introduced by J. Conway.  In 1970, he proved that every rational 2-tangle defines a rational number and two rational 2-tangles are isotopic if and only if they have the same rational number.
However, there is no similar invariant known which classifies rational $3$-tangles. In this paper, I describe an algorithm to check whether  or not two rational $3$-tangles are isotopic.\\

A $\emph{n-tangle}$ is the disjoint union of $n$ properly embedded arcs in the unit 3-ball; the embedding must send the endpoints of the arcs to $2n$ marked (fixed) points on the ball's boundary. Without loss of generality, consider the marked points on the 3-ball boundary to lie on a great circle. The tangle can be arranged to be in general position with respect to the projection onto the flat disk in the $xy$-plane bounded by the great circle. The projection then gives us a $\emph{tangle diagram}$, where we make note of over and undercrossings as with knot diagrams.
  A $rational$ $n$-$tangle$ is a $n$-tangle $\alpha_1\cup\alpha_2\cup\cdot\cdot\cdot \cup\alpha_n$ in a 3-ball $B^3$
 such that there exists a homeomorphism of pairs
 $\bar{H}: (B^3,\alpha_1\cup\alpha_2\cup\cdot\cdot\cdot \cup\alpha_n)\longrightarrow
(D^2\times I,\{p_1,p_2,\cdot\cdot\cdot ,p_n\}\times I)$, where $I=[0,1]$.\\

We note that there exists a homeomorphism $K:
(D^2\times I,\{p_1,p_2, \cdot\cdot\cdot,p_n\}\times I)\rightarrow (B^3,\epsilon_1\cup\epsilon_2 \cup\cdot\cdot\cdot\cup\epsilon_n)$, where $\epsilon_1\cup\epsilon_2\cup\cdot\cdot\cdot\cup\epsilon_n$ is the $\infty$ tangle as in Figure~\ref{A1}.\\

Therefore, alternatively, a $n$-tangle $\alpha_1\cup\alpha_2\cup\cdot\cdot\cdot\cup\alpha_n$ is $\emph{rational}$ if there exists a homeomorphism of pairs:
$\widehat{H}=(\bar{H})^{-1}K^{-1}: (B^3,\epsilon_1\cup\epsilon_2\cup\cdot\cdot\cdot \cup\epsilon_n)\rightarrow (B^3,\alpha_1\cup\alpha_2\cup\cdot\cdot\cdot \cup\alpha_n)$.\\

Two rational $n$-tangles, $T,T'$, in $B^3$ are $\emph{isotopic}$, denoted by $T\approx T'$, if there is an orientation-preserving self-homeomorphism
$h: (B^3, T)\rightarrow (B^3,T')$ that is the identity map on the boundary.\\

\begin{figure}[htb]
\includegraphics[scale=.2]{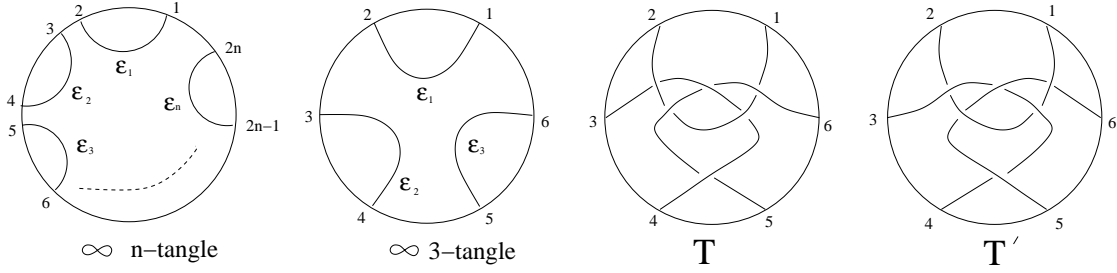}
\caption{Examples of rational 3-tangles}
\label{A1}
\end{figure}

Let $\Sigma_{0,6}$ be the six punctured sphere and let $\epsilon=\epsilon_1\cup\epsilon_2\cup\epsilon_3$ be the $\infty$ 3-tangle as in Figure~\ref{A2}.
Then, for two orientation preserving homeomorphisms $f$ and $g$ from $\Sigma_{0,n}$ to $\Sigma_{0,n}$,
we say they are $\textit{isotopic}$, denoted by $f\sim g$,  if there is a continuous map $H: \Sigma_{0,n}\times I\rightarrow
\Sigma_{0,n}$ so that $H(x,1)=f(x)$ and $H(x,0)=g(x)$ and $h_t(x)=H(x,t)$ is a homeomorphism for all $t$.
Also, we say that a subset $C_1$ of $\Sigma_{0,n}$ is $\textit{isotopic}$ to $C_2$, denoted by $C_1\sim C_2$, if there is a homeomorphism $h$ of $\Sigma_{0,6}$ with $h(C_1)=C_2$ such that
$h\sim id$.\\

To demonstrate the effectiveness of this algorithm, we will show that the rational $3$-tangles in Figure~\ref{A1} are not isotopic to each other.
Note that for every string of $T$, if we consider the other two strings then they are isotopic to a trivial rational 2-tangle in $B^3$. However, we will show that $T$ is not isotopic to the $\infty$ tangle. So, $T$ is similar to the Borromean rings. We will also show that $T$ is not isotopic to the tangle $T'$ which is obtained from $T$ by reversing  all the crossings in $T$. \\

The algorithm is based on the following facts, which are proved in Section 2.\\

Up to isotopy, orientation preserving homeomorphisms $f$ and $g$ from $\Sigma_{0,6}$ to $\Sigma_{0,6}$ which fix the puncture 1 can be obtained
by four half Dehn twists $\sigma_1,\sigma_2,\sigma_3$ and $\sigma_4$  which are the generators of the braid group $\mathbb{B}_5$. (Refer to~\cite{2}.)\\

Then we can get  extensions $F,G:B^3\rightarrow B^3$ of $f$ and $g$ which fix the set of six points,  $\epsilon\cap \partial B^3$, setwise
giving  two rational 3-tangles $T_{F}:=F({\epsilon})$ and $T_{G}:=G({\epsilon})$.\\

 We will show that a tangle can be ``presented" by an element of $\mathbb{B}_5$ and our algorithm will decide whether or not two elements of $\mathbb{B}_5$ present equivalent tangles. We will discuss this in Section 2.\\

 We say that a disk $D$ is $\textit{essential}$ in $B^3-T$ for a rational 3-tangle $T$ if
$D$ is a properly embedded disk in $B^3-T$ but it is not boundary parallel in $B^3-T$. \\

\begin{figure}[htb]
\begin{center}
\includegraphics[scale=.4]{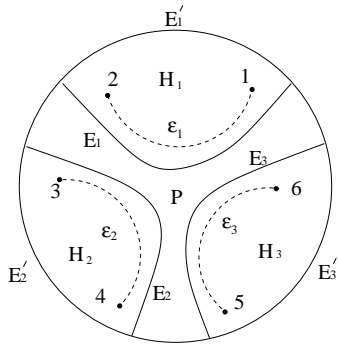}
\end{center}
\caption{A trivial rational 3-tangle, $\epsilon$}
\label{A2}
\end{figure}

$\bf{Theorem~2.3}$
\emph{For two rational 3-tangles $T_F$ and $T_G$, $T_F\approx T_G$ if and only if $G^{-1}F(\partial E)$ bounds  essential disks in $B^3-\epsilon$, where $E=E_1\cup E_2\cup E_3$ is a fixed union of ``standard essential disks" in $B^3-\epsilon$. (Refer to Figure~\ref{A2}.) }\\

In fact, if $G^{-1}F(\partial E_i)$ and $G^{-1}F(\partial E_j)$ bound essential disks in $B^3-\epsilon$ then $G^{-1}F(\partial E_k)$ also bounds an essential disk in $B^3-\epsilon$ for $\{i,j,k\}=\{1,2,3\}$.
Therefore,  if two of $G^{-1}F(\partial E)$ bound essential disks then $T_F\approx T_G$.\\

We note that there is another way to check whether $G^{-1}F(\partial E_i)$ bounds an essential disk in $B^3-\epsilon$ or not by using a fundamental group argument.
There is an induced map $i_*:\pi_1(\Sigma_{0,6})\rightarrow \pi_1(B^3-\epsilon)$ from the inclusion map $i:\Sigma_{0,6}\rightarrow B^3-\epsilon$. Then we know that if $i_*([G^{-1}F(\partial E_i)])=1$ then $G^{-1}F(\partial E_i)$ bounds an essential disk in $B^3-\epsilon$ by Dehn's Lemma.
This method is conceptually simple but it appears to be awkward to implement due to having to deal with arbitrarily long words in a free group.
 However, if one uses the algorithm given below to check whether or not $G^{-1}F(\partial E_i)$ bounds an essential disk in $B^3-\epsilon$  then we will be dealing with integer vectors of fixed dimension.
I will give an example to compare the two algorithms later. \\

Let $\mathcal{C}$ be the set of isotopy classes of closed essential simple closed curves in $\Sigma_{0,6}$. A simple closed curve $\gamma$ is \emph{essential} in  $\Sigma_{0,6}$ if $\gamma$ does not bound a disk in $\Sigma_{0,6}$ and $\gamma$ does not enclose a single puncture of $\Sigma_{0,6}$.\\

The algorithm is as follows:\\

Step 1: We represent $\Sigma_{0,6}$ as the union of two hexagons $H$ and $H^c$ so that the  vertices are the punctures of $\Sigma_{0,6}$. Then we use a variation of  normal curve theory to parameterize $\mathcal{C}$.
Each curve has a ``hexagon diagram". A set of ``weights" $w_{ij}$ and $w^{kl}$ for the hexagon diagram parameterizes the set of isotopy classes $[G^{-1}F(\partial E_s)]$.
Using certain formulas the weights can be obtained easily from the words in $\sigma_1,\cdot\cdot\cdot ,\sigma_4$ which describe $F$ and $G$, but the weights are hard to use directly to decide whether or not $G^{-1}F(\partial E_s)$ bounds an essential disk.\\

Step 2: We find a simple closed curve $\gamma'$, possibly not isotopic to $\gamma$, which bounds an essential disk in $B^3-\epsilon$ if and only if the component $\gamma$ of  $G^{-1}F(\partial E_i)$ does. We take a decomposition of  $\Sigma_{0,6}$ into three $2$-punctured disks $E_i'$ and one pair of pants $I$, where each $2$-punctured disk contains one component of $H\cap H^c$. We specify the isotopy class $[\gamma']$ by using a modified version of Dehn's method. (See [5].) We define the Dehn parameters $p_i,q_i$ and $t_i$ ($1\leq i\leq 3$) of $[\gamma']$ in $E_i'$ and the weights $x_{jk}$ ($1\leq j,k\leq 3$) of $[\gamma']$ in $I$. The $x_{jk}$ are determined by $p_i,q_i$ and $t_i$. We note that the Dehn parameters $p_i,q_i$ and $t_i$ ($1\leq i\leq 3$) of $[\gamma']$ are obtained from the weights $w_{ij}$ and $w^{kl}$ for the hexagon diagram, where $i,j,k,l\in\{1,2,3,4,5,6\}$.\\

Step 3: We modify $\gamma'$ into $\gamma_0$, possibly not isotopic to $\gamma'$ or $\gamma$, which is in ``standard position" and  bounds an essential disk in $B^3-\epsilon$ if and only if $\gamma'$ does. Then we get ``standard weights'' $m_i\geq 0$ ($1\leq i\leq 11$) of $\gamma_0$ from the Dehn parameters. Standard position is slightly reminiscent of train track theory, but involves fewer diagrams.\\

Step 4: We define three homeomorphisms $\delta_1, \delta_2$ and $\delta_3$ so that $\gamma_0$ bounds an essential disk in $B^3-\epsilon$ if and only if both $\delta_1\delta_2^{-1}(\gamma_0)$ and $\delta_3(\gamma_0)$ bound essential disks in $B^3-\epsilon$. Then, we repeatedly  apply Theorem~\ref{T93} below  to check whether $\gamma_0$ bounds an essential disk in $B^3-\epsilon$, where $I'$ is certain regular heighborhood of $I$.\\

$\bold{Theorem~\ref{T93}}$
Suppose that $\gamma_0$ bounds an essential disk in $B^3-\epsilon$ and $\gamma_0$  is in standard position in $I'$ and $m_3>0$.
Then applying one of the homeomorphisms  $(\delta_1\delta_2^{-1})^{\pm 1}$ and $\delta_3^{\pm 1}$ reduces the sum of the $p_i$ for the image of $\gamma_0$.\\

Suppose that a simple closed curve $\gamma_0$ is in standard position and has $m_3>0$. If we can reduce the sum of the standard weights of $\gamma_0$ by using one of the four homeomorphisms then we take the new simple closed curve $\gamma_1$ which is obtained by applying  one of the four homeomorphisms. If not, then $\gamma_0$ does not bound an essential disk. If  $\gamma_1$  still has $m_3>0$, then we will go on. Suppose $m_3=0$.  Then $\gamma_1$   is isotopic to one of the $\partial E_k$ if $m_i=0$ for all $i$. It does not bound an essential disk in $B^3-\epsilon$ if $m_i\neq 0$ for some $i$. Since the sum of the standard weights is finite, the algorithm will end in a finite number of steps.\\

Recall that $\gamma_0$ bounds an essential disk in $B^3-\epsilon$ if and only if $G^{-1}F(\partial E_i)$ does.
So, the given procedures form an algorithm to classify rational 3-tangles.\\

The author would like to thank his advisor Robert Myers for his consistent encouragement and sharing his enlightening ideas on the foundations of this topic.

\section{Presentations of rational 3-tangles}

We recall that a rational 3-tangle $T$ can be arranged to be in general position with respect to the projection onto the flat disk $Q$ in the $xy$-plane bounded by the great circle $C$.
Then we will have a tangle diagram $TD$ of the rational 3-tangle $T$. Let $p$ be the number of crossings of the tangle $T$ in the diagram. \\
\begin{figure}[htb]
\begin{center}
\includegraphics[scale=.3]{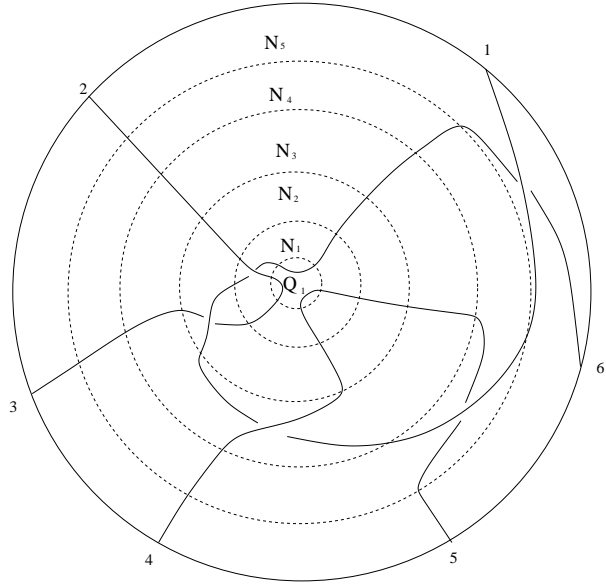}
\end{center}
\caption{A standard diagram of a rational 3-tangle expressed by  $w=\tau_5^{-1}\tau_4^{-1}\tau_3\tau_2\tau_1$.}
\label{B1}
\end{figure}

Now, we say that a tangle diagram $TD$ is $\emph{standard}$ if for the nested disks $Q_1\subset Q_2\subset\cdot\cdot\cdot\subset Q_{p+1}$, $Q_1$ contains the $\infty$ tangle and each annulus $N_j=Q_{j+1}-Q_{j}^\circ$ contains exactly one crossing of the crossings of $T$ as in Figure~\ref{B1}.\\

Then we define  a rational 3-tangle $T$ to be in $\emph{standard position}$ if the projection of $T$  onto the flat disk in the $xy$-plane bounded by $C$ is a standard diagram.\\

Let $\sigma_i$ be the half Dehn twist supported on the twice punctured disk $K_i$ as in Figure~\ref{B2}.\\

\begin{figure}[htb]
\includegraphics[scale=.4]{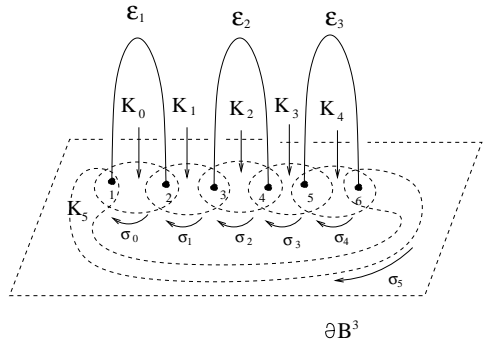}
\caption{Generators of the mapping class group of $\Sigma_{0,6}$}
\label{B2}
\end{figure}

Then we have an extension $\tau_i$ of $\sigma_i$ to $B^3$ as follows.\\

\begin{figure}[htb]
\begin{center}
\includegraphics[scale=.4]{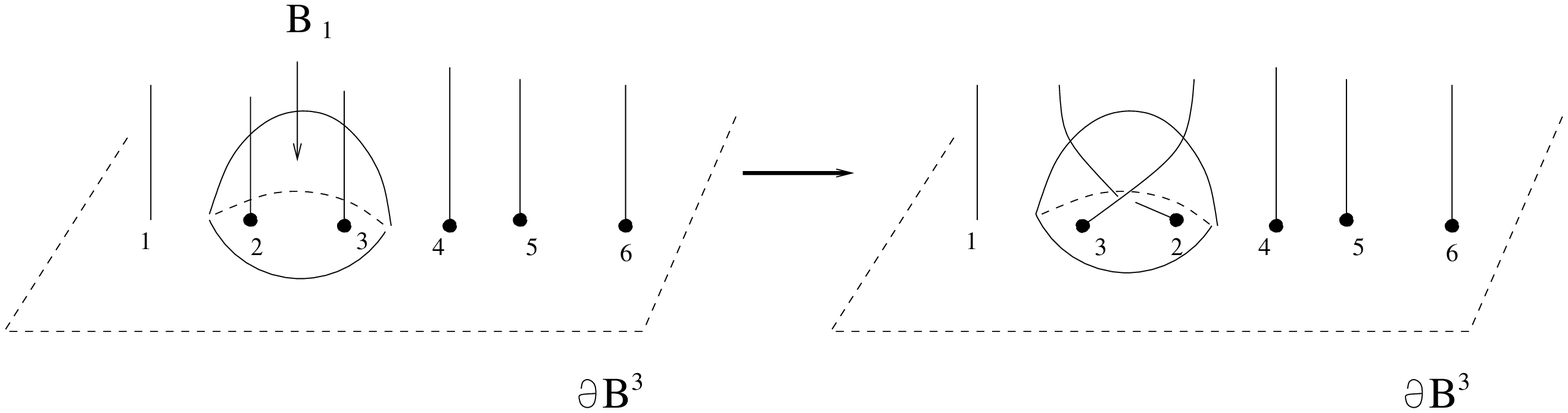}
\end{center}
\caption{The extension $\tau_i$ of a half Dehn twist $\sigma_i$ to $B^3$}
\label{B3}
\end{figure}
Take a  ball $B_i$ in $B^3$ so that $B_i\cap \partial B^3=\overline{K_i}$ and $B_i\cap \epsilon$ is two trivial  subarcs of  $\epsilon$ as in Figure~\ref{B3}.\\

Then, we define $\tau_i$ so that $\tau_i|_{B^3-B_i}=id$ and $\tau_i|_{B_i}$ is an extension of $\sigma_i$ to $B_i$ which twists the two trivial simple subarcs in $B_i$ to have a positive crossing  as in Figure~\ref{B3}. 

\begin{Lem}
\label{T21}
Suppose that $F$  is an orientation preserving homeomorphism from $B^3$ to $B^3$ so that $F(\{1,2,3,4,5,6\})=\{1,2,3,4,5,6\}$. Then, there exists an orientation preserving homeomorphism $F_1:B^3\rightarrow B^3$ so that  $F_1(1)=1$ and $F_1(\epsilon)=F(\epsilon)$.
\end{Lem}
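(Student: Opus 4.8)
The plan is to reduce this to a statement about the mapping class group of $\Sigma_{0,6}$ acting on the punctures, and then realize the needed correction inside a collar of $\partial B^3$ so that the tangle $F(\epsilon)$ is not disturbed. First I would observe that $F$ restricts to an orientation-preserving homeomorphism of $\partial B^3 \cong S^2$ that permutes the six marked points $\{1,\dots,6\}$; call the induced permutation $\pi$. The goal is to modify $F$, without changing its effect on $\epsilon$, so that the new permutation fixes the puncture $1$.

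The key step is to produce an orientation-preserving homeomorphism $\Phi : B^3 \to B^3$ with the following two properties: (i) $\Phi$ is the identity outside a collar neighborhood $C \cong \partial B^3 \times [0,1]$ of $\partial B^3$ chosen small enough that $C \cap \epsilon$ consists of six trivial unknotted subarcs (one per endpoint of $\epsilon$), and (ii) on $\partial B^3$, $\Phi$ induces a homeomorphism of $S^2$ carrying the configuration of six points by the permutation $\pi^{-1}$ — in particular sending $\pi(1)$ back to $1$. Such a $\Phi$ exists because any permutation of finitely many points of $S^2$ is realized by some orientation-preserving homeomorphism of $S^2$ (the group of such homeomorphisms acts as the full symmetric group on any finite marked set, since $S^2$ is connected and we may drag points around), and any homeomorphism of $S^2 = \partial B^3$ extends over the collar $C$ to the identity on the inner boundary $\partial B^3 \times \{1\}$; inside $C$ this extension merely braids the six trivial subarcs, but we do not need $\Phi$ to fix $\epsilon$ pointwise or even setwise — we only need it on the complement of $C$, where it is the identity. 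Then set $F_1 := \Phi \circ F$. Since $\Phi$ is the identity on $B^3 - C$ and $F(\epsilon)$ agrees with $\epsilon$ only near the boundary — wait: more carefully, we instead first isotope so that $F$ itself is the identity on the inner part and the braiding happens in $C$; the cleanest route is to note $F_1(\epsilon)$ and $F(\epsilon)$ can be arranged to coincide by absorbing $\Phi$'s effect on $C\cap\epsilon$ via an ambient isotopy supported in $C$, and to record this we just need $\Phi(F(\epsilon)) = F(\epsilon)$ up to isotopy, i.e. we may take $\Phi$ to additionally preserve $F(\epsilon)$ setwise by choosing the correcting homeomorphism of $C$ to permute the six trivial strands of $F(\epsilon)\cap C$ compatibly. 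Since permuting trivial parallel strands in a ball is realized by a homeomorphism fixing their union setwise, we get $F_1(\epsilon) = \Phi(F(\epsilon)) = F(\epsilon)$ and $F_1(1) = \Phi(\pi(1)) = 1$, with $F_1$ orientation-preserving as a composition of orientation-preserving maps.

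The main obstacle is the bookkeeping in property (i)–(ii): one must choose the collar $C$ and the correcting homeomorphism so that it simultaneously fixes the puncture $1$ on $\partial B^3$ and preserves $F(\epsilon)$ (not $\epsilon$) as a set, so that $F_1(\epsilon)$ is literally unchanged. This is where the hypothesis that $\epsilon$ is the trivial $\infty$-tangle is used: near $\partial B^3$ the arcs of any tangle are unknotted and unlinked, so $F(\epsilon)\cap C$ is six trivial strands, and the symmetric group on these six strands is realized by homeomorphisms of $(C, F(\epsilon)\cap C)$ that are the identity on $\partial B^3 \times \{1\}$. Composing such a strand-permutation (chosen to induce $\pi^{-1}$ on the outer boundary) with $F$ finishes the argument; the remaining details — that homeomorphisms of $(C, \text{six trivial strands})$ realize the full symmetric group on the strands while restricting to a prescribed permutation-homeomorphism on $\partial B^3$, and that orientation is preserved — are routine and I would not grind through them.
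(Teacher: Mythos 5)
Your correction step cannot work as stated. If $\Phi$ is the identity outside the collar $C$, then it is in particular the identity on the inner boundary sphere of $C$, and each of the six strands of $F(\epsilon)\cap C$ has exactly one endpoint on that inner sphere, where it joins the rest of its arc of $F(\epsilon)$, which $\Phi$ fixes pointwise. If in addition $\Phi(F(\epsilon))=F(\epsilon)$, then $\Phi$ permutes the strands of $F(\epsilon)\cap C$; but each strand must be sent to a strand containing the same (fixed) inner endpoint, hence to itself, and therefore $\Phi$ fixes every marked point of $\partial B^3$. So a collar-supported homeomorphism that preserves $F(\epsilon)$ setwise necessarily induces the trivial permutation on $\{1,\dots,6\}$ and can never carry $\pi(1)$ back to $1$ when $\pi(1)\neq 1$. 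The version of $\Phi$ that does realize $\pi^{-1}$ on the boundary (extending an isotopy of $S^2$ across the collar) braids the strands, so $\Phi(F(\epsilon))\neq F(\epsilon)$; and the braided tangle is in general not even isotopic rel $\partial B^3$ to $F(\epsilon)$, so the remark about ``absorbing $\Phi$'s effect by an ambient isotopy supported in $C$'' cannot repair this --- the lemma asserts literal equality $F_1(\epsilon)=F(\epsilon)$. Your appeal to ``permuting trivial parallel strands in a ball by a homeomorphism fixing their union setwise'' is only valid when the strands' other endpoints are free to move; here they are pinned by the requirement $\Phi=\mathrm{id}$ off $C$. A secondary issue: any homeomorphism preserving $F(\epsilon)$ setwise can only induce permutations compatible with the endpoint pairing of $F(\epsilon)$, and $\pi^{-1}$ need not be compatible with it; fortunately the lemma only requires that $1$ be fixed, so insisting on all of $\pi^{-1}$ overshoots.

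The missing idea is to correct on the source side rather than inside a collar on the target side: precompose with a symmetry of the standard pair $(B^3,\epsilon)$. The paper produces, for each $i$, a homeomorphism $R_i$ with $R_i(\epsilon)=\epsilon$ and $R_i(1)=i$ (rotations by multiples of $120^\circ$ for $i=1,3,5$, and these composed with the extended half twists $\tau_0,\tau_2,\tau_4$, each supported in a ball containing both endpoints of a single arc of $\epsilon$, for $i=2,4,6$), and then sets $F_1=F\circ R_p$ where $F(p)=1$; this gives $F_1(1)=1$ and $F_1(\epsilon)=F(R_p(\epsilon))=F(\epsilon)$ simultaneously. Note the contrast with your construction: the half twist can swap two endpoints while preserving $\epsilon$ setwise precisely because the two points it moves are the endpoints of one and the same strand, whereas your collar correction would have to move an endpoint of one strand onto an endpoint of a different strand while fixing the strands' interiors, which the connectivity argument above rules out.
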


\begin{proof}

First, we claim that there is a homeomorphism $R_i:(B^3,\epsilon)\rightarrow (B^3,\epsilon)$ so that $R_i(1)=i$ for $i\in\{1,2,3,4,5,6\}$.\\

We note that there is a homeomorphism $\rho_j:(B^3,\epsilon)\rightarrow(B^3,\epsilon)$  so that $\rho_j(1)=j$ for $j=1,3$ or $5$  by using $0^\circ$, $120^\circ$ or $240^\circ$ counterclockwise rotation in the plane. Then we remark that it preserves $\epsilon$ setwise. So, $R_i=\rho_i$ for $i=1,3,5$.
\\

We note that $\tau_k$ switches the two endpoints of $\epsilon_{1+{k\over 2}}$ for $k=0,2,4$.\\

  We let  $R_i:(B^3,\epsilon)\rightarrow (B^3,\epsilon)$ be $R_i=\tau_{i-2}\circ\rho_{i-1}$ for $i=2,4,6$. We check that $R_i(1)=i$ for $i=2,4,6$.\\

Suppose that $F(p)=1$.
Then we let $F_1=F\circ R_p$. So, we have $F_1(1)=F( R_p(1))=F(p)=1$.  Also, we know that $F_1(\epsilon)=F(R_p(\epsilon))=F(\epsilon)$ since $R_p(\epsilon)=\epsilon$.

\end{proof}

Now, by Lemma~\ref{T21}, we may assume $F(1)=1$.
 \emph{This will be assumed throughtout the rest of the paper.}\\

Let $f=F|_{\Sigma_{0,6}}$. Then it is an orientation preserving homeomorphism from $\Sigma_{0,6}$ to $\Sigma_{0,6}.$ 
The mapping class group of $\Sigma_{0,6}$ is  $\mathcal{MCG}(\Sigma_{0,6})=Homeo^+(\Sigma_{0,6})/\sim$.\\

Let $\mathcal{MCG}_{1}(\Sigma_{0,6})=\{[h]\in \mathcal{MCG}(\Sigma_{0,6})~|~h(1)=1\}$, where $[h]$ is the isotopy class of $h$. 
Then, $[f]\in\mathcal{MCG}_{1}(\Sigma_{0,6})$.\\

Recall the two rational 3-tangles $T$ and $T'$ in Figure\ref{A1} which can be arranged as standard diagrams.\\

In Figure~\ref{A1}, we see that $T\approx\tau_5\circ\tau_1\circ\tau_0^{-1}\circ\tau_3\circ\tau_1\circ\tau_5(\epsilon)$ and $T'\approx\tau_5^{-1}\circ\tau_1^{-1}\circ\tau_0\circ\tau_3^{-1}\circ\tau_1^{-1}\circ\tau_5^{-1}(\epsilon)$, where $\tau_i$ is an extension of $\sigma_i$ to $B^3$.\\

We say that a rational 3-tangle $F(\epsilon)$ is $\emph{presented}$ by an element of $\mathbb{B}_5$ if $F(\epsilon)$ is isotopic to $G(\epsilon)$ so that $G$ is the composition of a sequence  of extensions $\tau_i^{\pm 1}$  for $i\in\{1,2,3,4\}$. We note that the four generators of $\mathbb{B}_5$ are associated with the four isotopy classes $[\tau_i]$ ($1\leq i\leq 4)$. The later of this section, we will show that every rational 3-tangle can be presented by an element of $\mathbb{B}_5$. For example, $T\approx\tau_5\circ\tau_1\circ\tau_0^{-1}\circ\tau_3\circ\tau_1\circ\tau_5(\epsilon)\approx \tau_1\circ\tau_3\circ\tau_2^{-1}\circ\tau_1\circ\tau_2^{-1}\tau_1\circ\tau_2\circ\tau_3(\epsilon)$. \\

The following Lemma~\ref{T22} and Theorem~\ref{T23} appear as  Lemma 4.4.1 and Theorem 4.5 of~\cite{3}.

\begin{Lem}[Alexander~\cite{2}]\label{T22}
If  $g:D^n\rightarrow D^n$ is a homeomorphism from the unit $n$-ball to itself which fixes
the $(n-1)$-sphere $S^{n-1}=\partial D^n$ pointwise, then $g$ is isotopic to the identity under an isotopy
 which fixes $S^{n-1}$ pointwise. If $g(0)=0$, then the isotopy may be chosen to fix $0$.
\end{Lem}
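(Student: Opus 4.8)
The plan is to construct the isotopy explicitly by the "coning" (or "shrinking") construction. Given $g:D^n\to D^n$ fixing $S^{n-1}$ pointwise, define $G:D^n\times I\to D^n$ by
\[
G(x,t)=\begin{cases} t\,g(x/t) & \text{if } 0\le \|x\|\le t,\\ x & \text{if } t\le \|x\|\le 1,\end{cases}
\]
for $t\in(0,1]$, and $G(x,0)=x$. Writing $h_t(x)=G(x,t)$, I would first check that each $h_t$ is well defined and continuous: on the sphere $\|x\|=t$ the two formulas agree because $g$ fixes $S^{n-1}$, so $t\,g(x/t)=t\,(x/t)=x$. Then $h_1=g$ and $h_0=\mathrm{id}$, and for $\|x\|=1$ we have $h_t(x)=x$ for all $t$, so the isotopy fixes $S^{n-1}$ pointwise. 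For the last sentence of the statement, if $g(0)=0$ then $h_t(0)=t\,g(0)=0$ for $t>0$ and $h_0(0)=0$, so the isotopy fixes the origin.

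Next I would verify that each $h_t$ is a homeomorphism. For fixed $t\in(0,1]$ the map is a homeomorphism of the ball $\{\|x\|\le t\}$ onto itself (it is $x\mapsto t\,g(x/t)$, a conjugate of $g$ by the linear scaling $x\mapsto x/t$, hence a homeomorphism since $g$ is), and it is the identity on the complementary annulus $\{t\le\|x\|\le 1\}$; the two pieces agree on the common boundary sphere, so by the pasting lemma $h_t$ is a homeomorphism of $D^n$. The inverse is given by the analogous formula built from $g^{-1}$, which is continuous in $(x,t)$ by the same argument, establishing that $G$ is an isotopy in the required sense.

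The only genuinely delicate point — and the step I would expect to take the most care with — is continuity of $G$ at $t=0$ (and jointly in $(x,t)$ near $t=0$). On the region $\|x\|\ge t$ we have $G(x,t)=x$, which is obviously continuous, so the issue is only when $\|x\|\le t$ and $t\to 0$, forcing $x\to 0$ as well. There one estimates $\|G(x,t)-G(x,0)\| = \|t\,g(x/t)-x\| \le t\,\|g(x/t)\| + \|x\| \le t\cdot 1 + t = 2t$, using that $g$ maps into $D^n$ so $\|g(x/t)\|\le 1$, and that $\|x\|\le t$. Hence $\|G(x,t)-x\|\le 2t\to 0$ uniformly, which gives joint continuity at every point with $t=0$. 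Combining this with continuity on the open set $t>0$ (clear from the explicit formulas and the pasting lemma applied fiberwise) shows $G$ is continuous on all of $D^n\times I$, completing the proof.
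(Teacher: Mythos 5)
Your proof is correct: the coning construction $h_t(x)=t\,g(x/t)$ on $\{\|x\|\le t\}$ and $h_t=\mathrm{id}$ outside, with the uniform estimate $\|h_t(x)-x\|\le 2t$ giving joint continuity at $t=0$, is exactly the classical Alexander trick, and you handle the well-definedness, the homeomorphism property of each $h_t$, the boundary-fixing, and the fixed origin correctly. The paper does not prove this lemma itself (it cites it from Birman and Epstein et al.), and the proof in those sources is the same coning argument you give, so your approach coincides with the standard one.
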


\begin{Thm}[Birman~\cite{2}]\label{T23}
If $n\geq 2$, then $\mathcal{MCG}(\Sigma_{0,n})$ admits a presentation with generators $\sigma_0,\cdot\cdot\cdot,\sigma_4$.
\end{Thm}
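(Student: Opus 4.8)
The statement is classical; I indicate the line of argument one would follow, which is the one carried out in~\cite{3}. The plan is to exhibit $\mathcal{MCG}(\Sigma_{0,n})$ as an explicit quotient of a braid group and then read off a presentation. Deleting an open disk neighbourhood of the puncture $1$ from $\Sigma_{0,n}$ produces a disk $D_{n-1}$ with $n-1$ punctures, and by Artin's theorem $\mathcal{MCG}(D_{n-1},\partial D_{n-1})\cong\mathbb{B}_{n-1}$, with the familiar presentation on the half-twist generators $\sigma_1,\dots,\sigma_{n-2}$ subject only to the braid relations $\sigma_i\sigma_{i+1}\sigma_i=\sigma_{i+1}\sigma_i\sigma_{i+1}$ and $\sigma_i\sigma_j=\sigma_j\sigma_i$ for $|i-j|\ge 2$. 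Capping $\partial D_{n-1}$ with a once-punctured disk, the new puncture playing the role of the puncture $1$, gives a homomorphism $\mathbb{B}_{n-1}\to\mathcal{MCG}_1(\Sigma_{0,n})$; it is surjective, since by Lemma~\ref{T22} every class fixing the puncture $1$ has a representative that is the identity on a disk about that puncture, and its kernel is the infinite cyclic group generated by the boundary twist $(\sigma_1\cdots\sigma_{n-2})^{\,n-1}$. This already presents $\mathcal{MCG}_1(\Sigma_{0,n})$; for $n=6$ it recovers the generators $\sigma_1,\dots,\sigma_4$ of $\mathcal{MCG}_1(\Sigma_{0,6})$ appearing in the algorithm.

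To pass to the full mapping class group one adjoins a single further half-twist $\sigma_0$ exchanging the puncture $1$ with an adjacent puncture, so as to be able to move that puncture (cf.\ the maps $R_i$ of Lemma~\ref{T21}). The images of $\sigma_0,\sigma_1,\dots,\sigma_{n-2}$ in the symmetric group $S_n$ acting on the punctures form a chain of $n-1$ transpositions, hence generate $S_n$; moreover $\langle\sigma_0,\dots,\sigma_{n-2}\rangle$ contains every half-twist exchanging two cyclically adjacent punctures (the one not already listed being a braid conjugate of $\sigma_0$), hence every Dehn twist about a curve enclosing two punctures, and these generate $\mathrm{PMod}(\Sigma_{0,n})$; as $\langle\sigma_0,\dots,\sigma_{n-2}\rangle$ also surjects onto $\mathcal{MCG}(\Sigma_{0,n})/\mathrm{PMod}(\Sigma_{0,n})\cong S_n$, it is all of $\mathcal{MCG}(\Sigma_{0,n})$ --- for $n=6$ the generators $\sigma_0,\dots,\sigma_4$ claimed. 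For the relations, the cleanest route is the identification $\mathcal{MCG}(\Sigma_{0,n})\cong B_n(S^2)/Z$, where $B_n(S^2)$ is the spherical braid group, carrying the Fadell--Van Buskirk presentation made of the braid relations together with the ``once around the sphere'' relation $\sigma_0\cdots\sigma_{n-3}\,\sigma_{n-2}^{\,2}\,\sigma_{n-3}\cdots\sigma_0=1$, and $Z$ is its centre, generated by the full twist $(\sigma_0\cdots\sigma_{n-2})^{\,n}$. A Tietze transformation then gives the desired presentation of $\mathcal{MCG}(\Sigma_{0,n})$ on $\sigma_0,\dots,\sigma_{n-2}$, the relators being the braid relations, the sphere relation, and triviality of the full twist.

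The real work lies not in this bookkeeping but in the two exactness statements on which it rests: that the capping homomorphism has kernel exactly $\langle(\sigma_1\cdots\sigma_{n-2})^{n-1}\rangle$, and that $\mathcal{MCG}(\Sigma_{0,n})$ is exactly $B_n(S^2)$ modulo its centre (equivalently, that the Birman point-pushing exact sequence for forgetting a puncture closes up without introducing further relations). Establishing these requires the Birman exact sequence, the computation of $Z(B_n(S^2))$ --- which ultimately comes from the fundamental group of the orientation-preserving diffeomorphism group of $S^2$, namely $\pi_1(SO(3))=\mathbb{Z}/2$ --- and the explicit finite presentations of $\mathcal{MCG}(\Sigma_{0,2})$ and $\mathcal{MCG}(\Sigma_{0,3})$ to begin an induction on $n$. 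This is where Birman's original argument concentrates its effort, and for the present purposes it suffices to quote it.
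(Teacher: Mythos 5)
The paper offers no proof of this statement at all: it is imported wholesale, with the sentence immediately preceding it noting that it appears as Lemma 4.4.1 and Theorem 4.5 of~\cite{3} (attributed to Birman~\cite{2}), so there is no in-paper argument for your sketch to diverge from. Your outline is a correct roadmap of the classical proof --- Artin's presentation of $\mathbb{B}_{n-1}$, capping the boundary with a once-punctured disk so the kernel is the boundary twist $(\sigma_1\cdots\sigma_{n-2})^{n-1}$, adjoining $\sigma_0$ to move the distinguished puncture, and the identification of $\mathcal{MCG}(\Sigma_{0,n})$ with the spherical braid group $B_n(S^2)$ modulo its centre via the Fadell--Van Buskirk presentation --- and, exactly as the paper does, you defer the genuinely hard exactness statements to the same sources, which is the appropriate treatment of a quoted classical result (your only loose phrase, that the subgroup contains \emph{every} twist about a curve enclosing two punctures, is harmless since the standard twists $A_{ij}$ already generate $\mathrm{PMod}(\Sigma_{0,n})$).
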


\begin{Cor}\label{T24}
Suppose that $F$ is a homeomorphism of $B^3$ so that $F(\Sigma_{0,6})=\Sigma_{0,6}$. Then there exists a homeomorphism $G$ of $B^3$ so that  $G(\epsilon)\approx F(\epsilon)$ and $G$ is the composition of a sequence of extensions $\tau_i^{\pm 1}$ of $\sigma_i^{\pm 1}$ for $i\in\{0,1,2,3,4\}.$
\end{Cor}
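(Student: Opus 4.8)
The plan is to deduce Corollary~\ref{T24} by combining Lemma~\ref{T21}, Theorem~\ref{T23}, and Lemma~\ref{T22}, using the fact that extensions of half Dehn twists to $B^3$ realize the action of $\mathcal{MCG}(\Sigma_{0,6})$ on $\epsilon$. First I would restrict $F$ to the boundary sphere: since $F(\Sigma_{0,6})=\Sigma_{0,6}$, the map $f:=F|_{\Sigma_{0,6}}$ is an orientation-preserving self-homeomorphism of $\Sigma_{0,6}$, so $[f]\in\mathcal{MCG}(\Sigma_{0,6})$. By Theorem~\ref{T23}, $[f]$ can be written as a finite word $[f]=[\sigma_{i_1}^{\epsilon_1}\circ\cdots\circ\sigma_{i_m}^{\epsilon_m}]$ in the generators $\sigma_0,\dots,\sigma_4$ with $\epsilon_j\in\{\pm1\}$. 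Let $G_0:=\tau_{i_1}^{\epsilon_1}\circ\cdots\circ\tau_{i_m}^{\epsilon_m}$ be the corresponding composition of the extensions $\tau_i^{\pm1}$ defined earlier (each $\tau_i$ extends $\sigma_i$, so $\tau_i^{-1}$ extends $\sigma_i^{-1}$). Then $G_0$ is a homeomorphism of $B^3$ restricting on the boundary to a representative $g_0$ of $[f]$, so $g_0$ is isotopic to $f$ through homeomorphisms of $\Sigma_{0,6}$.

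The next step is to upgrade the boundary isotopy $g_0\sim f$ to an ambient statement about tangles. Consider $h:=G_0^{-1}\circ F$, a homeomorphism of $B^3$ whose restriction to $\partial B^3$ is $g_0^{-1}\circ f$, which is isotopic to the identity on $\Sigma_{0,6}$ (hence isotopic to the identity on $\partial B^3=S^2$, the six punctures being merely marked points and not removed). I would use the isotopy extension theorem / the collar of $\partial B^3$ to modify $h$ within a neighborhood of the boundary so as to arrange that (after composing $G_0$ by a further homeomorphism supported near $\partial B^3$, or after an isotopy of $B^3$) the modified map agrees with the identity on $\partial B^3$. Concretely: the boundary isotopy from $g_0^{-1}f$ to $\mathrm{id}_{S^2}$ extends to an ambient isotopy of $B^3$ supported in a collar, and composing with its time-one map yields a homeomorphism $h'$ of $B^3$ with $h'|_{\partial B^3}=\mathrm{id}$ and $h'(\epsilon\cap\partial B^3)=\epsilon\cap\partial B^3$; one should be slightly careful that $h'$ sends $\epsilon$ to $G_0^{-1}F(\epsilon)$ up to isotopy rel boundary, i.e. that it does not disturb the tangle $\epsilon$ in an uncontrolled way — this is the one place requiring a little care, but since the collar can be taken disjoint from all the crossings in standard position, the trivial subarcs of $\epsilon$ in the collar are carried along rigidly.

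Now Lemma~\ref{T22} (Alexander's trick) applies to $h'$: since $h'$ fixes $S^2=\partial B^3$ pointwise, $h'$ is isotopic to $\mathrm{id}_{B^3}$ through homeomorphisms fixing $\partial B^3$ pointwise. Using this isotopy we get $h'(\epsilon)\approx\epsilon$ in the sense of the tangle equivalence $\approx$ (identity on the boundary). Tracing back, $G_0^{-1}F(\epsilon)\approx\epsilon$, equivalently $F(\epsilon)\approx G_0(\epsilon)$. Setting $G:=G_0$, which by construction is a composition of a sequence of extensions $\tau_i^{\pm1}$ of $\sigma_i^{\pm1}$ for $i\in\{0,1,2,3,4\}$, we obtain $G(\epsilon)\approx F(\epsilon)$, as required. (Lemma~\ref{T21} is what licenses us throughout to keep puncture $1$ fixed, reducing $\mathcal{MCG}(\Sigma_{0,6})$ to the subgroup $\mathcal{MCG}_1(\Sigma_{0,6})$ generated by $\sigma_1,\dots,\sigma_4$ when needed, though the statement of the corollary as written allows all five generators.)

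The main obstacle I anticipate is not the group-theoretic input — Theorem~\ref{T23} does all of that — but the passage from "boundary restrictions are isotopic" to "the tangles are isotopic via a map that is the identity on $\partial B^3$." This requires being careful that the ambient isotopy used to kill the boundary discrepancy can be chosen in a collar disjoint from the essential structure of the tangle, and that Alexander's trick is being applied to a genuine homeomorphism of the pair $(B^3,\ast)$ fixing $\partial B^3$ pointwise rather than merely setwise. Once that is set up cleanly, the rest is bookkeeping: each $\sigma_i^{\pm1}$ in the word for $[f]$ gets replaced by its chosen extension $\tau_i^{\pm1}$, and the composition $G$ is read off directly.
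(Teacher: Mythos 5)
Your proposal is correct and takes essentially the same route as the paper: apply Theorem~\ref{T23} to $F|_{\Sigma_{0,6}}$, replace each $\sigma_i^{\pm 1}$ by its extension $\tau_i^{\pm 1}$, and use a collar correction plus Lemma~\ref{T22} to see that extensions of isotopic boundary homeomorphisms yield $\approx$-equivalent tangles. The collar step you spell out (and correctly flag as the only delicate point) is exactly what the paper itself packages as Lemma~\ref{T25}, its stated proof of the corollary being just the citation of Theorem~\ref{T23} and Lemma~\ref{T22}.
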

\begin{proof}
By Theorem~\ref{T23}, $F|_{\Sigma_{0,6}}$ is isotopic to $g$ in $\Sigma_{0,6}$ which is the composition of a sequence of $\sigma_i^{\pm 1}$ for $i\in\{0,1,2,3,4\}$. Then, By Lemma~\ref{T22}, the extension $G$ of $g$ which is the composition of the sequence of $\tau_i^{\pm 1}$ is isotopic to $F$.
\end{proof}
\begin{Lem}\label{T25}
If two homeomorphisms $f$ and $g$ of $\Sigma_{0,6}$ are isotopic, then for any two extensions $\mathcal{F}$ and $\mathcal{G}$ of $f$ and $g$ to $B^3$, $\mathcal{F}(\epsilon)\approx\mathcal{G}(\epsilon)$.
\end{Lem}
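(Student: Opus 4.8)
The plan is to show that the choice of extension does not matter, using Lemma~\ref{T22} (Alexander's trick) to reduce everything to a single extension, and then to reduce the case of isotopic boundary maps to the case of equal boundary maps. The statement has two independent sources of ambiguity: the extension of a \emph{fixed} homeomorphism of $\Sigma_{0,6}$ is not unique, and $f$ and $g$ are only assumed isotopic, not equal. I would handle these one at a time.

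First I would fix a homeomorphism $f$ of $\Sigma_{0,6}$ and compare two extensions $\mathcal{F}_1$ and $\mathcal{F}_2$ of it to $B^3$. The composition $\mathcal{F}_2^{-1}\mathcal{F}_1$ is a self-homeomorphism of $B^3$ that restricts to the identity on $\partial B^3=\Sigma_{0,6}$ (it fixes all six marked points and agrees with $f^{-1}f=\mathrm{id}$ on the boundary sphere). By Lemma~\ref{T22}, $\mathcal{F}_2^{-1}\mathcal{F}_1$ is isotopic to $\mathrm{id}_{B^3}$ through an isotopy fixing $\partial B^3$ pointwise. Call this isotopy $h_t$, so $h_0=\mathrm{id}$ and $h_1=\mathcal{F}_2^{-1}\mathcal{F}_1$. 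Then $\mathcal{F}_2\circ h_t$ is an ambient isotopy of $B^3$, fixing $\partial B^3$ pointwise, carrying $\mathcal{F}_2(\epsilon)$ at $t=0$ to $\mathcal{F}_2\mathcal{F}_2^{-1}\mathcal{F}_1(\epsilon)=\mathcal{F}_1(\epsilon)$ at $t=1$; restricting this isotopy to the tangle gives exactly an orientation-preserving self-homeomorphism of $B^3$ carrying $\mathcal{F}_1(\epsilon)$ to $\mathcal{F}_2(\epsilon)$ and fixing the boundary, i.e. $\mathcal{F}_1(\epsilon)\approx\mathcal{F}_2(\epsilon)$.

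Next I would reduce the general case to the one just proved. Since $f\sim g$, pick an isotopy $g_t$ with $g_0=g$, $g_1=f$; each $g_t$ is a homeomorphism of $\Sigma_{0,6}$, and by the isotopy extension theorem (or by directly building a collar extension of $g_t$ over a boundary collar of $\partial B^3$ and the identity elsewhere) one gets an isotopy $\mathcal{H}_t$ of $B^3$ with $\mathcal{H}_t|_{\Sigma_{0,6}}=g_t$. Then $\mathcal{H}_0$ is an extension of $g$ and $\mathcal{H}_1$ is an extension of $f$, and $\mathcal{H}_t$ is an ambient isotopy showing $\mathcal{H}_0(\epsilon)\approx \mathcal{H}_1(\epsilon)$ — more precisely $\mathcal{H}_t$ need not fix $\partial B^3$ pointwise, but $\mathcal{H}_t\mathcal{H}_1^{-1}$ can be post-composed so that the relevant comparison is between $\mathcal{H}_1(\epsilon)$ and $\mathcal{H}_0(\epsilon)$ via a boundary-fixing map; the cleanest route is: $\mathcal{H}_0$ and $\mathcal{G}$ are two extensions of $g$, so $\mathcal{G}(\epsilon)\approx\mathcal{H}_0(\epsilon)$ by the first part, similarly $\mathcal{F}(\epsilon)\approx\mathcal{H}_1(\epsilon)$, and $\mathcal{H}_0(\epsilon)\approx\mathcal{H}_1(\epsilon)$ by the ambient isotopy $\mathcal{H}_t$. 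Chaining these three equivalences of rational 3-tangles gives $\mathcal{F}(\epsilon)\approx\mathcal{G}(\epsilon)$.

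The main obstacle I anticipate is bookkeeping around which maps fix $\partial B^3$ pointwise versus only setwise, since the definition of $\approx$ demands the identity on the boundary. The isotopy $g_t$ moves the boundary, so $\mathcal{H}_t$ moves $\partial B^3$; one must be careful that the final composite self-homeomorphism of $B^3$ really is the identity on $\partial B^3$ and not merely isotopic to it there. This is resolved by using the ``two extensions of the same boundary map'' lemma (the Alexander-trick step above) as the workhorse and only invoking the ambient isotopy $\mathcal{H}_t$ to connect $\mathcal{H}_0$ to $\mathcal{H}_1$ as \emph{extensions}, never asking $\mathcal{H}_t$ itself to be boundary-fixing. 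Everything else is routine: transitivity of $\approx$ (an easy check, composing the self-homeomorphisms) and orientation-preservation (preserved under composition and under the isotopies, which start at orientation-preserving maps).
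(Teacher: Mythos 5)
Your first step is fine and is essentially how the paper uses Lemma~\ref{T22}: if $\mathcal{F}_1$ and $\mathcal{F}_2$ both extend the \emph{same} boundary homeomorphism, then $\mathcal{F}_1\mathcal{F}_2^{-1}$ is the identity on $\partial B^3$ and already realizes $\mathcal{F}_1(\epsilon)\approx\mathcal{F}_2(\epsilon)$ (the Alexander isotopy is more than you need there). The gap is in the second step, at the assertion that $\mathcal{H}_0(\epsilon)\approx\mathcal{H}_1(\epsilon)$ ``by the ambient isotopy $\mathcal{H}_t$.'' The relation $\approx$ requires a homeomorphism that is the identity on $\partial B^3$, but the comparison map the isotopy hands you, $\mathcal{H}_1\mathcal{H}_0^{-1}$, restricts to $fg^{-1}$ on the boundary, which is only isotopic to the identity, not equal to it. You flag this yourself, but the proposed repair is circular: the ``two extensions of the same boundary map'' lemma does not apply to $\mathcal{H}_0$ and $\mathcal{H}_1$, since they extend \emph{different} boundary maps, and post-composing with a collar-supported correction of the boundary map drags the tangle near $\partial B^3$ (its endpoints lie on the boundary), so it does not obviously return the same tangle rel boundary. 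In other words, $\mathcal{H}_0(\epsilon)\approx\mathcal{H}_1(\epsilon)$ is exactly the content of Lemma~\ref{T25} for one particular pair of extensions, and your outline leaves it unproved.

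The missing idea, which is how the paper closes precisely this gap, is to build the two extensions so that the images of $\epsilon$ coincide on the nose. Take a collar $S^2\times[0,1]$ of $\partial B^3$ chosen so that $\epsilon$ meets it in the vertical arcs $\{1,\dots,6\}\times[0,1]$. Extend $f$ over the collar by the trace of the isotopy, $(x,t)\mapsto(h_t(x),t)$ with $h_1=f$, $h_0=g$, and extend $g$ by the product $(x,t)\mapsto(g(x),t)$; these agree on $S^2\times\{1\}$, so one common extension over the inner ball can be used for both. Because each $h_t$ sends marked points to marked points and this assignment is constant in $t$ (it equals that of $f$ and of $g$), both collar maps carry each vertical arc $\{p_i\}\times[0,1]$ to the same vertical arc; hence the resulting extensions $F$ of $f$ and $G$ of $g$ satisfy $F(\epsilon)=G(\epsilon)$ literally. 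Your step 1 then compares $\mathcal{F}$ with $F$ and $\mathcal{G}$ with $G$ and finishes the proof. So your skeleton matches the paper's, but the essential ingredient --- the stationarity of the six marked points throughout the isotopy, which forces the two collar extensions to agree on $\epsilon$ --- is absent, and without it the boundary-fixing requirement in the definition of $\approx$ is never met.
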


\begin{proof}
First, take a collar $N(\partial B^3)=S^2\times[0,1]$ in $B^3$ so that $S^2\times\{0\}=\partial B^3$, $S^2\times\{1\}$ is a properly embedded sphere in $B^3$ and $\epsilon\cap N(\partial B^3)=\{1,2,3,4,5,6\}\times[0,1]$.\\

We note that there exists a homeomorphism $\phi:\Sigma_{0,6}\times[0,1]\rightarrow\Sigma_{0,6}\times[0,1]$ so that $\phi(x,0)=(f(x),0)$ and $\phi(x,1)=(g(x),1)$ since $f\sim g$. Then we define $\bar{\phi}:S^2\times[0,1]\rightarrow S^2\times[0,1]$ by filling in the six punctures of $\Sigma_{0,6}$ for each time $t$.\\

Let $\bar{f}$ and $\bar{g}$ be the extensions of $f$ and $g$ to $S^2$ by filling in the six punctures of $\Sigma_{0,6}$.\\

Also, we know that there exists a homeomorphism $\psi:\Sigma_{0,6}\times[0,1]\rightarrow\Sigma_{0,6}\times [0,1]$ so that $\psi(x,t)=(g(x),t)$ for all $t$.
Then we define $\bar{\psi}:S^2\times[0,1]\rightarrow S^2\times[0,1]$ by filling in the six punctures of $\Sigma_{0,6}$ for each time $t$.\\

Now, we define a homeomorphism $F:B^3\rightarrow B^3$ so that $F|_{S^2\times[0,1]}=\bar{\phi}$ and $F|_{B^3-(S^2\times[0,1])}$ is a homeomorphism of ${B^3-(S^2\times[0,1])}$ which  extends $\bar{\phi}|_{S^2\times \{1\}}$.\\

Also, we define a homeomorphism $G:B^3\rightarrow B^3$ so that $G|_{S^2\times[0,1]}=\bar{\psi}$ and $G|_{B^3-(S^2\times[0,1])}=F|_{B^3-(S^2\times[0,1])}$. 
We remark that $\bar{\phi}(x,1)=(\bar{g}(x),1)=\bar{\psi}(x,1).$\\

We see that $F(\epsilon)=G(\epsilon)$.\\

We remark that for any extension $\mathcal{F}$ of $f$ to $B^3$, $\mathcal{F}(\epsilon)\approx F(\epsilon)$ and any extension $\mathcal{G}$ of $g$ to $B^3$, $\mathcal{G}(\epsilon)\approx G(\epsilon)$ by Lemma~\ref{T22} since $F$ is the extension of $f$ to $B^3$ and $G$ is the extension of $g$ to $B^3$.\\

This implies that $\mathcal{F}(\epsilon)\approx\mathcal{G}(\epsilon)$ since $F(\epsilon)=G(\epsilon)$.

\end{proof}

\begin{Lem}\label{T26}
For a rational 3-tangle $F(\epsilon)$, there exists a rational 3-tangle $G(\epsilon)$ so that $G(\epsilon)\approx F(\epsilon)$ and $G(\epsilon)$  is in standard position.

\end{Lem}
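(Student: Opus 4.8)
The plan is to combine the braid-word presentation of Corollary~\ref{T24} with an induction on word length, building up the nested standard diagram one crossing at a time. First I would note that the hypothesis of Corollary~\ref{T24} is automatic here: a homeomorphism exhibiting $F(\epsilon)$ as a rational tangle carries the six marked points to themselves, so $F(\Sigma_{0,6})=\Sigma_{0,6}$. Thus Corollary~\ref{T24} gives a homeomorphism $W_0$ of $B^3$ with $W_0(\epsilon)\approx F(\epsilon)$ that is a composition $W_0=\tau_{i_k}^{e_k}\circ\cdots\circ\tau_{i_1}^{e_1}$ of extensions $\tau_i^{\pm1}$, with $i\in\{0,\dots,4\}$ and $e_j\in\{1,-1\}$. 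Since $\approx$ is an equivalence relation, it suffices to prove: \emph{for every composition $W$ of extensions $\tau_i^{\pm1}$ (with $i\in\{0,\dots,4\}$), the tangle $W(\epsilon)$ is isotopic to a tangle in standard position.}

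I would prove this by induction on the number $k$ of factors. If $k=0$ then $W(\epsilon)=\epsilon$, which is in standard position with $p=0$ and $Q_1=Q$. For the inductive step, write $W=\tau_{i_k}^{e_k}\circ W'$ with $W'$ a composition of $k-1$ factors, and let $T'$ be a standard-position tangle with $T'\approx W'(\epsilon)$, realized by nested disks $Q_1\subset\cdots\subset Q_k=Q$; after an isotopy of $T'$ rel $\partial B^3$ we may assume every crossing of $T'$, and the copy $T'\cap Q_1$ of the $\infty$ tangle, lies well away from $\partial B^3$. Since $T'$ meets a small collar of $\partial B^3$ in six trivial arcs, I may take the support ball $B_{i_k}$ of $\tau_{i_k}$ small enough that $B_{i_k}\cap T'$ is two trivial subarcs, and by Lemma~\ref{T25} this choice of $B_{i_k}$ affects $W(\epsilon)$ only up to $\approx$. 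Then $\tau_{i_k}^{e_k}(T')$ coincides with $T'$ off $B_{i_k}$ and differs from it by a single half-twist of those two arcs confined to a small region abutting $\partial B^3$, hence exterior to every crossing of $T'$. Replacing the outermost disk $Q_k=Q$ by a sub-disk $\widetilde Q_k$ containing $Q_{k-1}$ and every crossing of $T'$ but disjoint from the newly inserted crossing, and setting $\widetilde Q_{k+1}=Q$, one checks that the chain $Q_1\subset\cdots\subset Q_{k-1}\subset\widetilde Q_k\subset\widetilde Q_{k+1}=Q$ displays the projection of $\tau_{i_k}^{e_k}(T')$ as a standard diagram with $k$ crossings.

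It then remains to verify $\tau_{i_k}^{e_k}(T')\approx W(\epsilon)$. Since $T'\approx W'(\epsilon)$, there is an orientation-preserving homeomorphism $h\colon(B^3,W'(\epsilon))\to(B^3,T')$ that is the identity on $\partial B^3$. The conjugate $\tau_{i_k}^{e_k}\circ h\circ(\tau_{i_k}^{e_k})^{-1}$ is again orientation preserving, and since $h$ is the identity on $\partial B^3$ so is the conjugate; it carries $\tau_{i_k}^{e_k}(W'(\epsilon))=W(\epsilon)$ to $\tau_{i_k}^{e_k}(T')$, so $W(\epsilon)\approx\tau_{i_k}^{e_k}(T')$. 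This closes the induction, and applying the claim to $W_0$ yields a standard-position tangle isotopic to $W_0(\epsilon)\approx F(\epsilon)$.

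I expect the main difficulty to be the geometric bookkeeping inside the inductive step: one must confirm that a tangle in standard position looks like six unknotted radial arcs in a neighborhood of $\partial B^3$, so that $\tau_{i_k}$ inserts exactly one new crossing near the boundary and disturbs neither the earlier crossings nor the copy of the $\infty$ tangle, normalize $B_{i_k}$ (and pre-isotope $T'$) so that this picture applies, and then re-exhibit the nested disk family with the new crossing occupying the outermost annulus. By contrast the isotopy step is routine, resting only on the fact that conjugating a homeomorphism that is the identity on $\partial B^3$ again produces one that is the identity on $\partial B^3$.
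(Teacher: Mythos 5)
Your proposal is correct and takes essentially the same route as the paper: obtain a word in the $\tau_i^{\pm1}$ via Theorem~\ref{T23}/Corollary~\ref{T24}, realize each letter as a single crossing created near $\partial B^3$ and placed in its own annulus of a nested disk family, and use Lemma~\ref{T22}/Lemma~\ref{T25} to see that the particular extension chosen only changes the tangle up to $\approx$. The only difference is organizational — you induct on word length and put each new crossing in the outermost annulus, while the paper builds the diagram in one sweep by reading the word right to left and isotoping each freshly created crossing inward — which is the same geometric content.
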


\begin{proof}

By Theorem~\ref{T23},  there exists a homeomorphism $g$ of $\Sigma_{0,6}$ which is isotopic to $F|_{\Sigma_{0,6}}$ and $g$ is a composition of a sequence of $\sigma_i^{\pm 1}$ for $0\leq i\leq 4$. \\

Now, we construct an extension of $g$ to $B^3$ as follows:\\

Suppose that $g=\sigma_{j_1}^{\alpha_{1}}\sigma_{j_2}^{\alpha_{2}}\cdot\cdot\cdot\sigma_{j_m}^{\alpha_{m}}$ for some $\sigma_{j_k}\in\{\sigma_0,\sigma_1,\cdot\cdot\cdot,\sigma_5\}$ and integers $\alpha_{k}$. Let $p=|\alpha_{1}|+|\alpha_{2}|+\cdot\cdot\cdot+|\alpha_{m}|$.
 Now, consider the projection of $B^3$ onto the flat disk $Q$ in the $xy$-plane bounded by $C$ and having the $\infty$ tangle diagram in $Q_1\subset Q$.
 Then take nested disks $Q_2,\cdot\cdot\cdot Q_{p+1}$ so that $Q_1\subset Q_2\subset\cdot\cdot\cdot\subset Q_{p+1}$. Let $N_l=Q_{l+1}-Q_l$. \\
 
We know that the extension $\tau_{j_k}^{\pm 1}$ of $\sigma_{j_k}^{\pm 1}$ generates the crossing which may be in $N_p$. 
We note that the extension $\tau_i$ of a half Dehn tiwst $\sigma_{i}$ in Figure~\ref{B3} makes a positive crossing as in the last diagram of Figure~\ref{B3}. Then, we isotope the crossing into $N_1$. After this,  we generate the next crossing  by the extension of the next element either $\sigma_{j_k}^{\pm 1}$ or $\sigma_{j_{k-1}}^{\pm 1}.$ Then we  isotope the crossing into $N_2$ while we fix $Q_2$. By reading off the sequence of the composition from the right to the left and doing this procedure repeatedly, we can construct an extension $G$ of $g$ so that $G$ is in standard position.\\

Finally, by using Lemma~\ref{T22}, we complete the proof of this lemma.
\end{proof}

We say that a crossing in a standard diagram is $\emph{expressed}$ by an extension $\tau_i^{\pm 1}$ of $\sigma_i^{\pm 1}$ if the crossing is obtained by applying  $\tau_i^{\pm 1}$ as above.\\

\begin{figure}[htb]
\begin{center}
\includegraphics[scale=.32]{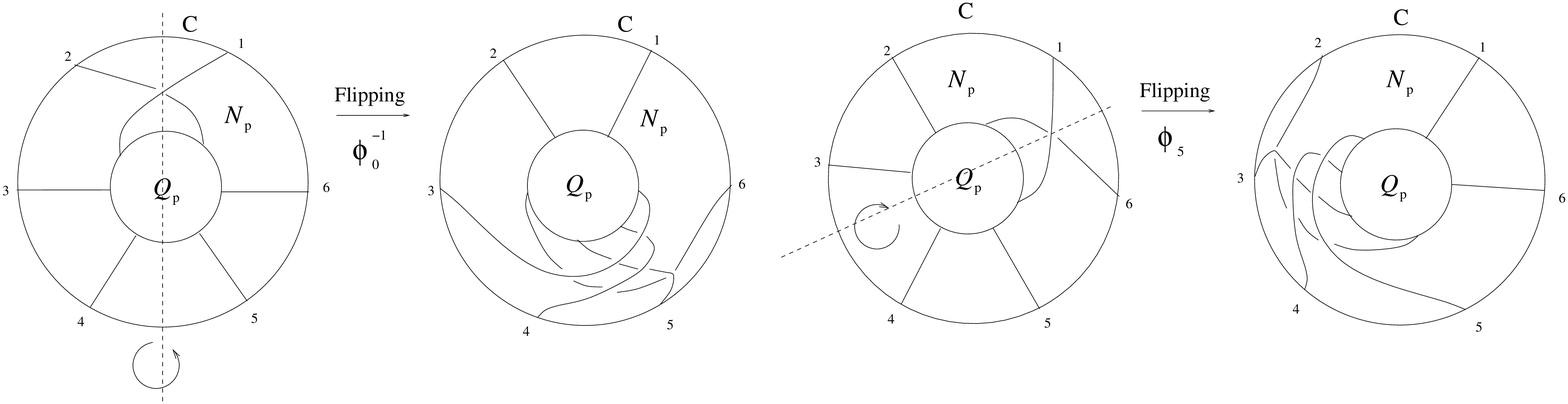}
\end{center}
\caption{Flippings}
\label{B4}
\end{figure}

Now, we will prove that every rational 3-tangle can be presented by an element of $\mathbb{B}_5$ with generators $\sigma_1,\sigma_2,\sigma_3$ and $\sigma_4$. So, our algorithm will decide whether or not two elements of $\mathbb{B}_5$ present equivalent tangles.

\begin{Lem}\label{T27}
Suppose that $G(\epsilon)$ is in standard position and the crossing in $N_p$ is expressed by $\tau_0^{\pm 1}$  as in the first diagram, or $\tau_5^{\pm 1}$ as in the third diagram  in Figure~\ref{B4}.
Then  $(\tau_2\tau_3\tau_2\tau_4\tau_3\tau_2)^{\pm 1}$ or $(\tau_1\tau_2\tau_1\tau_3\tau_2\tau_1)^{\pm 1}$  can replace  $\tau_0^{\pm 1}$ or $\tau_5^{\pm 1}$, respectively, so that the diagram of the new expression is still standard as in the second or fourth diagram in Figure ~\ref{B4}.\\

Especially, the number of crossings in $Q_p$ is fixed, where $Q_p$ is the disk inside of $C$ as in Figure~\ref{B4}.

\end{Lem}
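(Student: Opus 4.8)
The plan is to argue geometrically in the 3-ball, using the explicit pictures in Figure~\ref{B4}. The key point is that $\tau_0$ (resp. $\tau_5$) is the extension of the half Dehn twist $\sigma_0$ (resp. $\sigma_5$) supported on the twice-punctured disk $K_0$ containing punctures $1,2$ (resp. $K_5$ containing punctures $5,6$), while $\sigma_2\sigma_3\sigma_2\sigma_4\sigma_3\sigma_2$ and $\sigma_1\sigma_2\sigma_1\sigma_3\sigma_2\sigma_1$ are words known in the braid group $\mathbb{B}_6$ on six strands to represent the same mapping class as $\sigma_0$ and $\sigma_5$ respectively — these are the standard ``full-chain'' relations expressing the first (resp. last) band generator in terms of the others. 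First I would verify this braid identity on the level of $\mathcal{MCG}(\Sigma_{0,6})$: the word $\sigma_2\sigma_3\sigma_2\sigma_4\sigma_3\sigma_2$ drags the pair of punctures $\{1,2\}$ around behind punctures $3,4,5,6$ and back, which is isotopic on $\Sigma_{0,6}$ to the half-twist $\sigma_0$ interchanging $1$ and $2$; an honest way to see this is to track the image of the arcs connecting consecutive punctures and apply the Birman presentation cited in Theorem~\ref{T23}.

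Next, having the equality $[\tau_0]=[\tau_2\tau_3\tau_2\tau_4\tau_3\tau_2]$ in $\mathcal{MCG}_1(\Sigma_{0,6})$ (and likewise for $\tau_5$), I would invoke Lemma~\ref{T25}: since the restrictions to $\Sigma_{0,6}$ are isotopic, replacing the extension $\tau_0^{\pm1}$ by the extension $(\tau_2\tau_3\tau_2\tau_4\tau_3\tau_2)^{\pm1}$ in the composition defining $G$ produces a new homeomorphism $G'$ with $G'(\epsilon)\approx G(\epsilon)$. So the tangle is unchanged up to isotopy; what remains is to check that the \emph{diagram} can be arranged to stay standard, i.e.\ that the six new crossings introduced by $\tau_2\tau_3\tau_2\tau_4\tau_3\tau_2$ can be pushed into successive annuli $N_p,\dots,N_{p+5}$ exactly as in the standardization procedure of Lemma~\ref{T26}. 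This is the content of the pictures: the second and fourth diagrams of Figure~\ref{B4} exhibit the local replacement, and one checks that each $\tau_i$ ($i\in\{1,2,3,4\}$) in the word, applied in order from the right, creates one positive (or, for the inverse, one negative) crossing in the outermost annulus, which is then isotoped inward while fixing the relevant nested disk, leaving a standard diagram.

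Finally, for the ``especially'' clause: the single crossing produced by $\tau_0^{\pm1}$ (resp. $\tau_5^{\pm1}$) lived in the annulus $N_p=Q_{p+1}-Q_p^\circ$, hence \emph{outside} $Q_p$; all of the replacing crossings from $(\tau_2\tau_3\tau_2\tau_4\tau_3\tau_2)^{\pm1}$ are likewise created in annuli outside $Q_p$ and then isotoped, with the isotopies fixing $Q_p$. Therefore the part of the diagram inside $Q_p$ is literally untouched, so its number of crossings is unchanged; this is visible directly by comparing the first and second (resp. third and fourth) diagrams in Figure~\ref{B4}.

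I expect the main obstacle to be the second step — verifying that the substitution can be carried out while preserving standardness of the diagram, rather than merely preserving the isotopy class of the tangle. The isotopy-class statement is immediate from Lemma~\ref{T25} once the braid identity is in hand, but keeping the diagram standard requires care that the six new crossings nest correctly in $N_p,\dots,N_{p+5}$ and that the inward isotopies do not disturb the crossings already present inside $Q_p$; this is really a matter of reading Figure~\ref{B4} carefully and checking that the local pictures glue, but it is where the genuine content lies.
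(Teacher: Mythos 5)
Your proposed route has a fatal flaw at its first step: the claimed identity $[\sigma_0]=[\sigma_2\sigma_3\sigma_2\sigma_4\sigma_3\sigma_2]$ (and likewise $[\sigma_5]=[\sigma_1\sigma_2\sigma_1\sigma_3\sigma_2\sigma_1]$) is false in $\mathcal{MCG}(\Sigma_{0,6})$. With the paper's conventions $\sigma_0$ interchanges the punctures $1$ and $2$ (it switches the endpoints of $\epsilon_1$), whereas any word in $\sigma_2,\sigma_3,\sigma_4$ induces a permutation supported on $\{3,4,5,6\}$ and so fixes $1$ and $2$; indeed $\sigma_2\sigma_3\sigma_2\sigma_4\sigma_3\sigma_2$ induces $(36)(45)$, not $(12)$. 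The same permutation check kills the $\sigma_5$ case. Consequently the appeal to Lemma~\ref{T25} to conclude $G'(\epsilon)\approx G(\epsilon)$ after a literal word substitution collapses, and the rest of the argument has nothing to stand on. A symptom of the same problem is your ``especially'' step: the inner part of the diagram is \emph{not} ``literally untouched.'' If it were, the worked example in Section 2 (where, after the move, $\tau_0^{-1}\tau_4\tau_5^{-1}\tau_1$ becomes $\tau_4^{-1}\tau_0\tau_5^{-1}\tau_3$) and Remark~\ref{T28} (the indices of the remaining letters change by $j\mapsto -j$ or $j\mapsto 4-j$ mod $6$) would be impossible.

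The paper's proof is genuinely different and is where the content lies: one performs an ambient isotopy of $(B^3,G(\epsilon))$ that rotates the inner disk $Q_p$ by $180^\circ$ about the dotted axis in Figure~\ref{B4} (``flipping'' $Q_p$). This flip cancels the single crossing in $N_p$ expressed by $\tau_0^{\pm1}$ (resp.\ $\tau_5^{\pm1}$) while dragging the remaining strands so as to create exactly the six crossings expressed by $(\tau_2\tau_3\tau_2\tau_4\tau_3\tau_2)^{\pm1}$ (resp.\ $(\tau_1\tau_2\tau_1\tau_3\tau_2\tau_1)^{\pm1}$), arranged in nested annuli so the diagram stays standard; simultaneously the crossings inside $Q_p$ are carried along by the rigid rotation, which re-expresses them via the index involution of Remark~\ref{T28} but clearly preserves their number, which is exactly the ``especially'' clause. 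So the correct statement is not an equality of mapping classes permitting a subword substitution, but an isotopy of the tangle that also conjugates (re-labels) the inner part of the word; any repair of your proposal would have to incorporate this flip (or an equivalent relation involving the relabeling), not a braid relation among the $\sigma_i$ alone.
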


\begin{proof}

Consider the dotted line which passes through the center of $C$ for each case as in Figure~\ref{B4}.\\

Flip the disk $Q_p$ about the dotted line to eliminate the crossing associated to $\tau_0^{\pm 1}$ or $\tau_5^{\pm 1}$.
Then, this procedure shows the lemma.

\end{proof}

\begin{Rem}\label{T28}
In Lemma~\ref{T27}, let $\phi_0^{-1}$ be the isotopy move to flip the disk $Q_p$ to eliminate the crossing associated to $\tau_0$ as in Figure~\ref{B4}.
Then let $\phi_0$ be the isotopy move to flip the disk $Q_p$ counter clockwise to eliminate the crossing associated to $\tau_0^{-1}$.
Similarly, let $\phi_5$ be the isotopy move to flip the disk $Q_p$ to eliminate the crossing associated to $\tau_5^{-1}$ as in Figure~\ref{B4}.
Also, let $\phi_5^{-1}$ be the isotopy move to flip the disk $Q_p$ clockwise to eliminate the crossing associated to $\tau_5$.\\

Suppose that $G=\tau_{j_1}^{\alpha_{1}}\tau_{j_2}^{\alpha_{2}}\cdot\cdot\cdot\tau_{j_m}^{\alpha_{m}}$  for some $\tau_{j_k}\in\{\sigma_0,\sigma_1,\cdot\cdot\cdot,\tau_5\}$ and integers $\alpha_{k}$ which expresses the crossings in $Q_{p+1}$.\\

Then, the crossings in $Q_p$ are expressed by $\tau_{j_1}^{\alpha_{1}}\tau_{j_2}^{\alpha_{2}}\cdot\cdot\cdot\tau_{j_m}^{\alpha_{m}\pm 1}$.\\

Then we note that the crossings of $\phi_0^{\pm 1}(Q_p)$ are expressed by $\tau_{\bar{j_1}}^{\alpha_{1}}\tau_{\bar{j_2}}^{\alpha_{2}}\cdot\cdot\cdot\tau_{\bar{j_m}}^{\alpha_{m}\pm 1} $,  where $\bar{j_i}\equiv -{j_i}$ (mod 6). \\

Similarly, we note that the crossings of $\phi_5^{\pm 1}(Q_p)$ are expressed by $\tau_{\bar{j_1}}^{\alpha_{1}}\tau_{\bar{j_2}}^{\alpha_{2}}\cdot\cdot\cdot\tau_{\bar{j_m}}^{\alpha_{m}\pm 1} $, where $\bar{j_i}\equiv 4-{j_i}$ (mod 6).

\end{Rem}

\begin{Thm}\label{T29}
A rational 3-tangle can be presented by an element of  $\mathbb{B}_5$.
\end{Thm}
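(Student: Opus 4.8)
The plan is to reduce the general case to the already-established machinery. By Corollary~\ref{T24}, any rational 3-tangle $F(\epsilon)$ is isotopic to $G(\epsilon)$ where $G$ is a composition of extensions $\tau_i^{\pm 1}$ with $i \in \{0,1,2,3,4\}$; and by Lemma~\ref{T26} we may further assume $G(\epsilon)$ is in standard position, so $G = \tau_{j_1}^{\alpha_1}\cdots\tau_{j_m}^{\alpha_m}$ with each $\tau_{j_k}$ an extension of a generator $\sigma_{j_k}$, $j_k \in \{0,1,\dots,4\}$, and the standard diagram has $p = \sum |\alpha_k|$ crossings arranged one per annulus $N_1 \subset \cdots \subset N_p$. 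The goal is to rewrite $G(\epsilon)$, up to isotopy, so that only $\tau_1, \tau_2, \tau_3, \tau_4$ (the generators of $\mathbb{B}_5$) appear, i.e.\ to eliminate every occurrence of $\tau_0^{\pm 1}$.

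The key idea is the flip move of Lemma~\ref{T27} together with the bookkeeping of Remark~\ref{T28}. First I would set up an induction on the number of $\tau_0^{\pm 1}$ letters appearing in the word for $G$. If that number is zero we are done, since then $G$ is already a composition of $\tau_1,\dots,\tau_4$ and $\tau_5$ — wait, we must also handle $\tau_5$, but by symmetry ($\tau_5^{\pm 1}$ being eliminated by $(\tau_1\tau_2\tau_1\tau_3\tau_2\tau_1)^{\pm 1}$ via Lemma~\ref{T27}) the same argument applies. So more carefully: induct on the number of $\tau_0^{\pm 1}$ and $\tau_5^{\pm 1}$ letters. For the inductive step, locate the occurrence of $\tau_0^{\pm1}$ (or $\tau_5^{\pm 1}$) that is outermost, i.e.\ expresses the crossing in the outermost annulus $N_p$ among all such occurrences; after an isotopy putting that crossing in $N_p$ with the standard diagram intact, Lemma~\ref{T27} replaces $\tau_0^{\pm 1}$ by $(\tau_2\tau_3\tau_2\tau_4\tau_3\tau_2)^{\pm 1}$ (respectively $\tau_5^{\pm1}$ by $(\tau_1\tau_2\tau_1\tau_3\tau_2\tau_1)^{\pm1}$), which involves none of $\tau_0,\tau_5$, and keeps the number of crossings in $Q_p$ fixed so that the diagram remains standard. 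Then apply the flip move $\phi_0^{\pm1}$ or $\phi_5^{\pm1}$ of Remark~\ref{T28}: this is an isotopy of the tangle, and it relabels the remaining inner letters $\tau_{j_k} \mapsto \tau_{\bar{j_k}}$ by $\bar{j_k} \equiv -j_k$ or $\bar{j_k} \equiv 4 - j_k \pmod 6$. Crucially these relabelings permute $\{0,5\}$ among $\{0,1,4,5\}$ in a controlled way, and one checks the total count of $\tau_0^{\pm1}$-and-$\tau_5^{\pm1}$ letters in $Q_p$ strictly decreases (the outermost such letter has been removed and replaced by a word with no such letters, and the flip only permutes the remaining ones without creating new ones beyond what is accounted for). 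Iterating, after finitely many steps the word contains no $\tau_0^{\pm1}$ or $\tau_5^{\pm1}$, so $G(\epsilon)$ — hence $F(\epsilon)$ — is presented by an element of $\mathbb{B}_5$ with generators $\sigma_1,\sigma_2,\sigma_3,\sigma_4$.

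The main obstacle I anticipate is making the induction's progress measure genuinely monotone: the flip move $\phi_0^{\pm1}$ sends $j \mapsto -j \bmod 6$, which maps $1 \leftrightarrow 5$ and $2 \leftrightarrow 4$ and fixes $0,3$, so it can turn a $\tau_1$ into a $\tau_5$ and thereby \emph{create} a new "bad" letter deeper in the word; similarly $\phi_5$ sends $j \mapsto 4-j$, mapping $0 \leftrightarrow 4$, $1 \leftrightarrow 3$, fixing $2$, and $5 \mapsto -1 \equiv 5$. So a naive count of bad letters need not decrease. The fix is to be careful about \emph{which} bad letter one eliminates and to choose the induction variable correctly: one should eliminate the \emph{innermost} (smallest-index annulus) occurrence of $\tau_0^{\pm1}$ or $\tau_5^{\pm1}$ first — since the flip move only affects letters expressing crossings in the flipped disk $Q_p$, processing from the inside out means each flip acts only on letters that are themselves about to be processed, and one argues by induction on the annulus index at which the first bad crossing occurs, or equivalently peels off crossings one annulus at a time from $N_1$ outward, at each stage converting any $\tau_0^{\pm1}$ or $\tau_5^{\pm1}$ in $N_1$ into a good sub-word via Lemma~\ref{T27} and Remark~\ref{T28} applied to the innermost disk, then absorbing it. Getting this ordering and the precise effect of the flips on the annulus structure right — and verifying that the replacement word of six letters genuinely fits into the standard diagram without increasing the crossing count in the relevant sub-disk — is the technical heart of the argument; everything else is routine application of the lemmas already proved.
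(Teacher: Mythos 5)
There is a genuine gap, and it sits exactly where you located the difficulty: the processing order and the termination measure. Your second-paragraph claim that the total count of $\tau_0^{\pm1}$-and-$\tau_5^{\pm1}$ letters strictly decreases is false (as you yourself then observe, $\phi_0$ relabels $\tau_1\mapsto\tau_5$ and $\phi_5$ relabels $\tau_4\mapsto\tau_0$; in the paper's own worked example the very first flip turns a $\tau_4$ into a $\tau_0$). But your proposed fix --- eliminate the \emph{innermost} bad letter first, ``processing from the inside out,'' on the grounds that the flip only acts on letters about to be processed --- is backwards. The flip of Lemma~\ref{T27} eliminates the crossing in the annulus just \emph{outside} the flipped disk $Q_p$ and relabels (Remark~\ref{T28}) precisely the crossings \emph{inside} $Q_p$, while the replacement word $(\tau_2\tau_3\tau_2\tau_4\tau_3\tau_2)^{\pm1}$ or $(\tau_1\tau_2\tau_1\tau_3\tau_2\tau_1)^{\pm1}$ is realized outside the flipped disk. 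So in an inside-out scheme every flip rescrambles the letters you have already ``absorbed,'' and it can recreate bad letters there: under $j\mapsto 4-j$ a previously absorbed $\tau_4$ becomes $\tau_0$, under $j\mapsto -j$ an absorbed $\tau_1$ becomes $\tau_5$, and an already-inserted replacement word such as $\tau_2\tau_3\tau_2\tau_4\tau_3\tau_2$ is carried to $\tau_2\tau_1\tau_2\tau_0\tau_1\tau_2$. Hence your induction variable (the level of the first bad crossing, or the number of bad letters) can decrease as well as increase, and no termination argument is actually given. A smaller but real problem is the phrase ``after an isotopy putting that crossing in $N_p$ with the standard diagram intact'': crossings cannot be permuted to the outermost annulus while keeping the diagram standard, so that reduction is not available either.

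The correct resolution --- and the paper's --- is the opposite ordering together with a different monotone quantity. Process the crossings from the \emph{outermost} one inward: at each stage examine the outermost crossing not yet dealt with, say in $N_j$; if it is expressed by $\tau_1^{\pm1},\dots,\tau_4^{\pm1}$, move inward; if it is expressed by $\tau_0^{\pm1}$ or $\tau_5^{\pm1}$, apply Lemma~\ref{T27} to the subdiagram inside $Q_{j+1}$ and flip $Q_j$. The six-letter replacement word lies outside the flipped disk, so it --- and every letter already processed --- is never relabeled again, because all subsequent flips are supported strictly inside $Q_j$; the letters that do get relabeled are exactly the unprocessed inner ones, which will each be examined later anyway. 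The right decreasing quantity is therefore not the number of bad letters but the number of crossings still to be processed (those inside the current disk), which drops by one at every stage; after at most $p$ stages the word involves only $\tau_1^{\pm1},\dots,\tau_4^{\pm1}$. This is what the paper's proof means by ``the number of crossings in $Q$ of the original diagram is more than the number of crossings in $Q_p$,'' and it is exactly the mechanism visible in the example following Theorem~\ref{T29}, where the first flip creates a new $\tau_0$ inside but the procedure still terminates because that crossing is simply handled when its turn comes.
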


\begin{proof}
First, we assume that a rational 3-tangle is in standard position. So, the projection onto the plat disk $Q$ in the $xy$-plane is a standard diagram. Let $p$ be the number of crossings in $Q$.\\

We remark that the two flippings in Figure~\ref{B4} will not change the tangle type in $Q_1$. i.e., $Q_1$ still contains the $\infty$ tangle after flippings.\\

Also, we know that  $\tau_0^{\pm 1}$ is replaced by  $(\tau_2\tau_3\tau_2\tau_4\tau_3\tau_2)^{\pm 1}$ and $\tau_5^{\pm 1}$ is replaced by $(\tau_1\tau_2\tau_1\tau_3\tau_2\tau_1)^{\pm 1}$
after flipping.\\

We note that the expression in terms of the crossings in $Q_{p}$ will be changed after flipping as in Remark 2.8, but the number of crossings in $Q_p$ is fixed.\\

If the  crossing  in $N_{p}$ is not expressed by either $\tau_0^{\pm 1}$ or $\tau_5^{\pm 1}$, then we consider the next  crossing  in $N_{p-1}$.\\

If the  crossing in $N_p$ is  expressed by either $\tau_0^{\pm 1}$ or $\tau_5^{\pm 1}$, then we flip $Q_p$ to eliminate the crossing associated to either $\tau_0^{\pm 1}$ or $\tau_5^{\pm 1}$.\\

Then, we also know that  the number of crossings in $Q$ of the original diagram is more than the number of crossings in $Q_p$.\\

We remark that  $(\tau_1\tau_2\tau_1\tau_3\tau_2\tau_1)^{\pm 1}$ and $(\tau_2\tau_3\tau_2\tau_4\tau_3\tau_2)^{\pm 1}$ do not contain  $\tau_0^{\pm 1}$ or $\tau_5^{\pm 1}$ factors.\\

By repeating this procedure, we can have another expression of $G(\epsilon)$ which involves only $\sigma_1^{\pm 1}$,...,$\sigma_4^{\pm 1}$.

\end{proof}

Now, I  give an example about Theorem~\ref{T29}.\\

\begin{figure}[htb]
\begin{center}
\includegraphics[scale=.32]{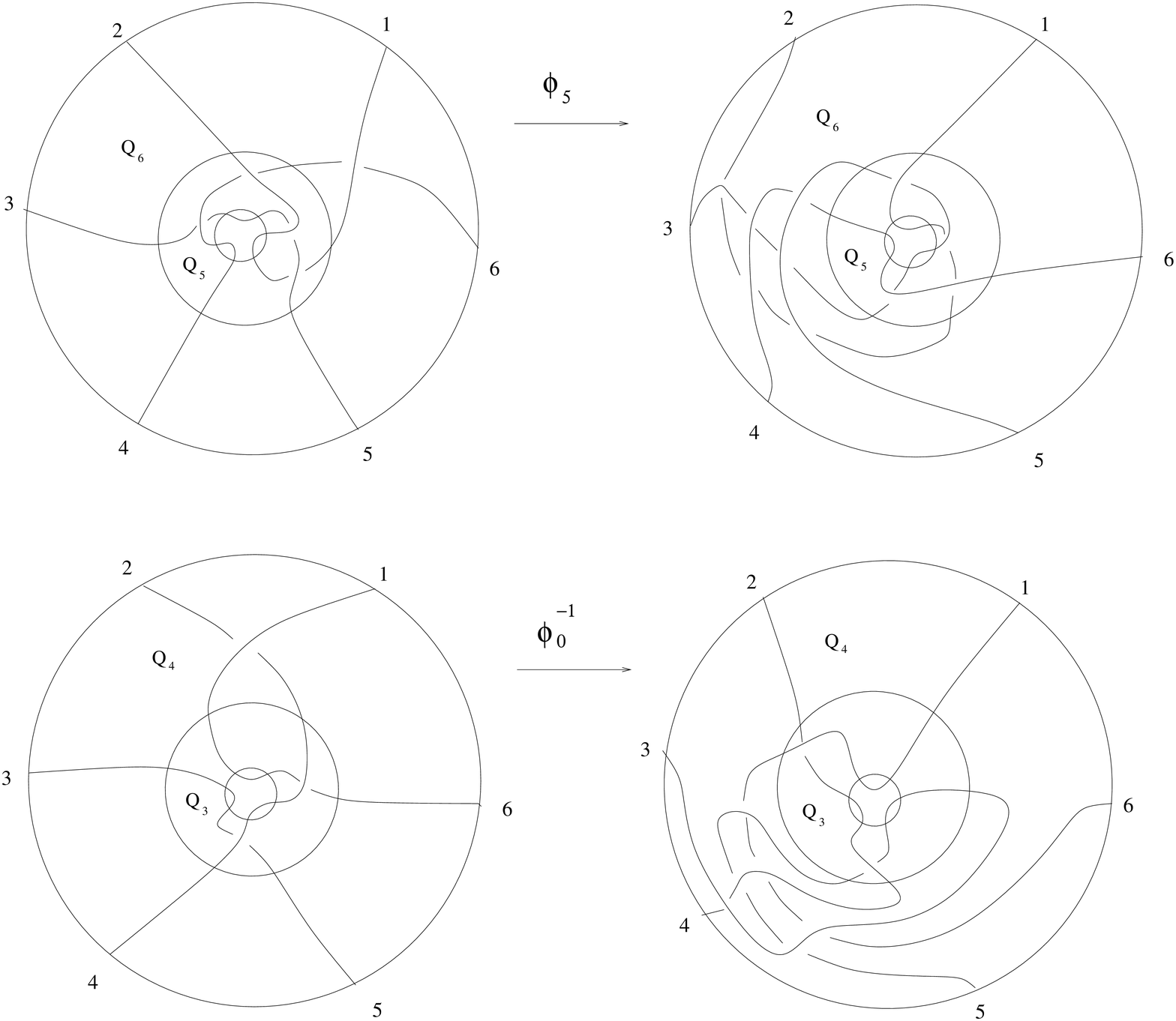}
\end{center}
\caption{A procedure to find a presentation which involves only $\sigma_1^{\pm 1},\cdot\cdot\cdot,\sigma_4^{\pm 1}$}
\label{B5}
\end{figure}

Consider the rational 3-tangle expressed by $w=\tau_5^{-1}\tau_0^{-1}\tau_4\tau_5^{-1}\tau_1$ as in the first diagram of Figure~\ref{B5}.\\

Flip the disk $Q_5$ to have a new expression $w_1=(\tau_1\tau_2\tau_1\tau_3\tau_2\tau_1)^{-1}\tau_{\bar{0}}^{-1}\tau_{\bar{4}}\tau_{\bar{5}}^{-1}\tau_{\bar{1}}$ as in Figure~\ref{B5}.\\

Then, we note that $w_1=(\tau_1\tau_2\tau_1\tau_3\tau_2\tau_1)^{-1}\tau_{\bar{0}}^{-1}\tau_{\bar{4}}\tau_{\bar{5}}^{-1}\tau_{\bar{1}}=
(\tau_1\tau_2\tau_1\tau_3\tau_2\tau_1)^{-1}\tau_{4}^{-1}\tau_{0}\tau_{5}^{-1}\tau_{3}$.\\

 So $w_1=(\tau_1\tau_2\tau_1\tau_3\tau_2\tau_1)^{-1}\tau_4^{-1}w_1'$, where $w_1'=\tau_{0}\tau_{5}^{-1}\tau_{3}$.\\
 
 We note that $Q_4$ contains the crossings which are expressed by $\tau_{0}\tau_{5}^{-1}\tau_{3}$ and $Q_3$ contains the crossings which are expressed by 
 $\tau_{5}^{-1}\tau_{3}$.\\

Now, flip the disk $Q_3$ to have a new expression $w_2'=(\tau_2\tau_3\tau_2\tau_4\tau_3\tau_2)\tau_{\bar{5}}^{-1}\tau_{\bar{3}}$ of $w_1'$. \\

We note that $w_2'=(\tau_2\tau_3\tau_2\tau_4\tau_3\tau_2)\tau_{\bar{5}}^{-1}\tau_{\bar{3}}=(\tau_2\tau_3\tau_2\tau_4\tau_3\tau_2)\tau_{1}^{-1}\tau_{3}.$ \\

Therefore, we have a new expression $w_3=(\tau_1\tau_2\tau_1\tau_3\tau_2\tau_1)^{-1}\tau_4^{-1}(\tau_2\tau_3\tau_2\tau_4\tau_3\tau_2)\tau_{1}^{-1}\tau_{3}$ of $w$ which involves only $\sigma_1^{\pm 1},\sigma_2^{\pm 1},\sigma_3^{\pm 1}$ and $\sigma_4^{\pm 1}$.

\section{ Equivalence of rational 3-tangles}

In this section, we will prove Theorem~\ref{T32} which tells us alternative method to decide whether or not two rational 3-tangles are isotopic.\\
 \begin{figure}[htb]
 \begin{center}
\includegraphics[scale=.4]{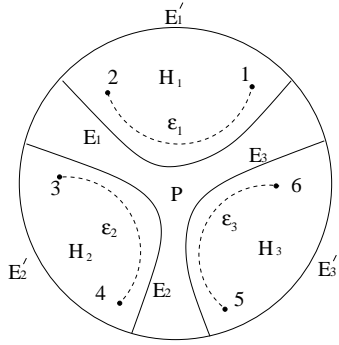}
\end{center}
\caption{ Three essential disks in the $\infty$  tangle}
\label{C1}
\end{figure}

Let $E_1$,$E_2$ and $E_3$ be the three disjoint essential disks as in Figure~\ref{C1}.
Then $E_1$,$E_2$ and $E_3$ separate $B^3$ into four components. Let $H_i$  be the component which contains $\epsilon_i$ and $P=cl(B^3-(H_1\cup H_2\cup H_3))$.\\

 Let $E_i'$ be the disk in $\partial B^3$ so that $\partial E_i'=\partial E_i$ and $E_i\cup E_i'$ bounds the ball $H_i$ in $B^3$.
Let  $E=E_1\cup E_2\cup E_3$,  $E'=E_1'\cup E_2'\cup E_3'$ and $\partial E=\partial E'=\partial E_1\cup \partial E_2\cup \partial E_3$.\\

We say that a properly embedded simple arc $C$ in $B^3$ is $\textit{unknotted}$ if there is an isotopy $\phi_t: B^3\rightarrow B^3$  that is identity on $\partial B^3$ so that $\phi_1(S)=C$, where $S$ is the straight line arc with the endpoints $\partial C$.\\
Then we can prove Lemma~\ref{T22} below.
\begin{Lem}\label{T31}
If $\alpha$ and $\alpha'$ are properly embedded unknotted simple arcs in $B^3$ with $\partial \alpha=\partial \alpha'\subseteq S^2$,
then $\alpha\approx\alpha'$.
\end{Lem}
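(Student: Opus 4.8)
The plan is to reduce the statement to a relative version of Alexander's lemma (Lemma~\ref{T22}). Since $\alpha$ is unknotted, there is an ambient isotopy $\phi_t$ of $B^3$, fixing $\partial B^3$ pointwise, with $\phi_1(S)=\alpha$, where $S$ is the straight arc joining the two boundary points $\partial\alpha=\partial\alpha'$. Likewise there is an ambient isotopy $\psi_t$, fixing $\partial B^3$, with $\psi_1(S)=\alpha'$. Composing, the homeomorphism $h=\psi_1\circ\phi_1^{-1}$ of $B^3$ is the identity on $S^2=\partial B^3$ and carries $\alpha$ to $\alpha'$; moreover $h$ is isotopic to the identity rel $\partial B^3$ (being a composition of time-one maps of isotopies starting at the identity and fixing the boundary). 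This already shows $\alpha\approx\alpha'$ in the sense needed, but one should phrase it as an isotopy of pairs $(B^3,\alpha)\to(B^3,\alpha')$ fixing $\partial B^3$, which is exactly what $\{\psi_t\circ\phi_t^{-1}\}$, or more carefully a concatenation of $\phi_t^{-1}$ followed by $\psi_t$, provides.

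In more detail, first I would run $\phi_t^{-1}$ for $t$ from $1$ down to $0$: this is an ambient isotopy fixing $\partial B^3$ that starts at the identity and at the end carries $\alpha=\phi_1(S)$ back to $S$. Then I would run $\psi_t$ for $t$ from $0$ to $1$: another ambient isotopy fixing $\partial B^3$, starting at the identity and carrying $S$ to $\alpha'=\psi_1(S)$. Concatenating these two isotopies (and reparametrizing the time interval) yields a single ambient isotopy $\Phi_t$ of $B^3$, fixing $\partial B^3$ pointwise for all $t$, with $\Phi_0=\mathrm{id}$ and $\Phi_1(\alpha)=\alpha'$. The self-homeomorphism $\Phi_1:(B^3,\alpha)\to(B^3,\alpha')$ is orientation preserving (being isotopic to the identity) and is the identity on the boundary, so $\alpha\approx\alpha'$.

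The only point requiring a little care is orientation preservation and the fact that the concatenation of two isotopies-rel-boundary is again an isotopy-rel-boundary: this is routine, since each stage fixes $\partial B^3$ pointwise at every time, so the concatenation does too, and the time-one map, being the end of a path from the identity in $\mathrm{Homeo}^+(B^3,\partial B^3)$, is orientation preserving. I do not expect any genuine obstacle here; the content of the lemma is entirely carried by the \emph{definition} of ``unknotted,'' which hands us the two isotopies for free. If one instead wanted to prove it without invoking the isotopy extension implicit in that definition, the hard part would be promoting the isotopies of the arcs to ambient isotopies, but as stated the definition of unknotted already bakes in the ambient isotopy $\phi_t$, so this step is not needed.
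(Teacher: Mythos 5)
Your proof is correct and takes essentially the same route as the paper: both arcs are compared to the straight arc $S$ using the ambient isotopies rel $\partial B^3$ supplied by the definition of unknottedness, and the isotopy carrying $\alpha$ back to $S$ is concatenated with the one carrying $S$ to $\alpha'$. Your write-up additionally makes explicit that the resulting time-one map $\psi_1\circ\phi_1^{-1}$ is an orientation-preserving self-homeomorphism of $B^3$ that is the identity on $\partial B^3$, which is precisely the definition of $\approx$ used in the paper.
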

\begin{proof}
Since $\alpha$ and $\alpha'$ are properly embedded unknotted simple arcs in $B^3$ with $\partial \alpha=\partial \alpha'$, $\alpha\sim \alpha_g\sim\alpha'$ for the straight line arc  $\alpha_g$ in $B^3$ from $a$ to $b$. (where $\partial \alpha=\{a,b\}.)$
We have a path $f_{\alpha}:I\rightarrow \alpha$ so that $f_{\alpha}(0)=a$ and $f_{\alpha}(1)=b$. Similarly, we also have paths $f_{\alpha'}$ and $f_{\alpha_g}$.
Let $H$ and $J$ be the isotopies from $B^3\times I$ to  $B^3$ so that $H(x,0)=f_{\alpha}(x)$ and $H(x,1)=f_{\alpha_g}(x)$, and $J(x,0)=f_{\alpha'}(x)$ and $J(x,1)=f_{\alpha_g}(x)$. Now, we define the isotopy $K:B^3\times I\rightarrow B^3$ so that $K(x,t)=H(x,2t)$ for $0\leq t\leq {1\over 2}$ and 
$K(x,t)=J(x,2-2t)$ for ${1\over 2}\leq t\leq 1$. Then $K$ is an isotopy from $\alpha$ to $\alpha'$ in $B^3$.

\end{proof}
Now consider orientation preserving homeomorphisms $f$ and $g$  from $\Sigma_{0,6}$ to $\Sigma_{0,6}$.
Then we have $F$ and $G$ which are  extensions to $B^3$  of $f$ and $g$ respectively.

\begin{Thm}\label{T32}
For two rational 3-tangles $T_F$ and $T_G$, $T_F\approx T_G$ if and only if $G^{-1}F(\partial E)$ bounds  essential disks in $B^3-\epsilon.$
\end{Thm}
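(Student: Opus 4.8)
The plan is to prove both directions by translating the tangle equivalence $T_F \approx T_G$ into a statement about the homeomorphism $G^{-1}F$ and the standard essential disks $E = E_1 \cup E_2 \cup E_3$. The key observation is that $T_F = F(\epsilon)$ and $T_G = G(\epsilon)$ are isotopic (via a homeomorphism fixing $\partial B^3$) precisely when there is an orientation-preserving homeomorphism $h \colon (B^3, T_G) \to (B^3, T_F)$ that is the identity on the boundary; composing with $F$ and $G^{-1}$ this is equivalent to saying that $G^{-1} h F \colon (B^3, \epsilon) \to (B^3, \epsilon)$ is a homeomorphism, and the question becomes whether such a map exists subject to the boundary constraint.

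For the ``if'' direction, suppose $G^{-1}F(\partial E_i)$ bounds an essential disk $D_i$ in $B^3 - \epsilon$ for each $i$ (by the remark following the statement it suffices that two of them do, but I would carry all three). The three curves $G^{-1}F(\partial E)$ are disjoint simple closed curves on $\Sigma_{0,6}$, so the bounding disks $D_1, D_2, D_3$ can be taken disjoint. Cutting $B^3 - \epsilon$ along $D_1 \cup D_2 \cup D_3$ yields a ball decomposition matching the decomposition of $B^3 - \epsilon$ along $E$ into the $H_i$ (each a ball meeting $\epsilon$ in a single unknotted arc) and $P$. The first step is to check that $G^{-1}F$ carries the decomposition along $E$ to the decomposition along the $D_i$ and hence can be isotoped (rel $\partial B^3$ on the boundary sphere, using that $f, g$ fix puncture $1$ and Lemma~\ref{T21}) to a homeomorphism preserving $E$ setwise; then on each piece I invoke Lemma~\ref{T31} together with Lemma~\ref{T22} (Alexander's trick) to see that the restriction is isotopic rel boundary to a standard map. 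Assembling these pieces produces the required $h$, giving $T_F \approx T_G$.

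For the ``only if'' direction, suppose $T_F \approx T_G$, i.e.\ there is $h \colon (B^3, T_G) \to (B^3, T_F)$ fixing $\partial B^3$ pointwise. Then $\Phi := G^{-1} h F$ is a homeomorphism of $(B^3, \epsilon)$ whose restriction to $\Sigma_{0,6}$ is $g^{-1} \circ (\mathrm{id}) \circ f$ up to isotopy, i.e.\ it realizes $G^{-1}F$ on the boundary. The disks $E_i$ are essential in $B^3 - \epsilon$, and $\Phi$ being a homeomorphism of the pair sends essential disks to essential disks; hence $\Phi(\partial E_i) = G^{-1} h F(\partial E_i)$ bounds the essential disk $\Phi(E_i)$ in $B^3 - \epsilon$. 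Finally, since $h$ is the identity on $\partial B^3$, the curve $\Phi(\partial E_i)$ is isotopic in $\Sigma_{0,6}$ to $G^{-1}F(\partial E_i)$, and being isotopic to a curve that bounds an essential disk is enough (push the disk along the isotopy), so $G^{-1}F(\partial E_i)$ bounds an essential disk in $B^3 - \epsilon$.

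The main obstacle I expect is the ``if'' direction: specifically, promoting the hypothesis ``$G^{-1}F(\partial E_i)$ bounds an essential disk'' to ``$G^{-1}F$ is isotopic, through homeomorphisms of $(B^3,\epsilon)$ restricting correctly on $\partial B^3$, to one that actually carries $E$ to $E$.'' This requires care: one must use that the complement $B^3 - \epsilon$ has an essentially unique such system of splitting disks up to isotopy (an irreducibility/$\partial$-incompressibility argument for the handlebody-like complement of the trivial tangle), and then control the behavior on $\partial B^3$ so that the final map can be made the identity there. Once the decomposition is matched, the piece-by-piece argument via Lemmas~\ref{T31} and~\ref{T22} is routine.
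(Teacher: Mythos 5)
Your ``only if'' direction is essentially the paper's argument (compose with $F$ and $G^{-1}$, note the composition preserves $\epsilon$ and agrees with $G^{-1}F$ on $\partial B^3$ because $h$ is the identity there, and push the disks $E_i$ through), so that half is fine, up to a harmless slip in the direction of $h$ (with $h\colon (B^3,T_G)\to(B^3,T_F)$ you want $F^{-1}hG$, or use $h^{-1}$). The genuine gap is in the ``if'' direction, at exactly the step you flag: you propose to isotope $G^{-1}F$ rel $\partial B^3$ to a homeomorphism preserving $E$ setwise, justified by an ``essentially unique system of splitting disks up to isotopy'' in $B^3-\epsilon$. This cannot be carried out as stated. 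An isotopy rel $\partial B^3$ does not change the boundary restriction $g^{-1}f$, and the hypothesis does not force $g^{-1}f(\partial E)$ to be isotopic to $\partial E$ in $\Sigma_{0,6}$: a curve enclosing the two endpoints of one strand and compressing in the complement lies in infinitely many distinct isotopy classes (for instance the images of $\partial E_1$ under twists along other essential disks), so in general no representative of $G^{-1}F$ rel $\partial B^3$ can carry $E$ to $E$. The supporting uniqueness claim is also unfounded: $B^3$ minus an open neighborhood of $\epsilon$ is a genus two handlebody, which contains infinitely many pairwise non-isotopic essential disks and disk systems; what irreducibility gives you is only that a disk with a \emph{fixed} essential boundary curve is unique up to isotopy rel boundary, which is not the statement your reduction needs.

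The paper avoids this entirely by never comparing the given disks with the standard ones. It applies $G$ to restate the hypothesis as: $F(\partial E_i)$ bounds an essential disk $D_i$ in $B^3-G(\epsilon)$. On the sphere this curve also bounds the two-punctured disk $K_i=F(E_i')$, so $F(E_i)\cup K_i$ bounds a ball $M_i$ containing $F(\epsilon_i)$, while $D_i\cup K_i$ bounds a ball $N_i$ containing the corresponding strand of $G(\epsilon)$. One then defines $h_i\colon (M_i,F(\epsilon_i))\to (N_i,G(\epsilon_i))$ with $h_i|_{K_i}=\mathrm{id}$ (using Lemma~\ref{T22} and the unknottedness of the arcs as in Lemma~\ref{T31}), extends over the complementary regions by a homeomorphism that is the identity on the rest of $\partial B^3$ and agrees with the $h_i$ on $F(E_i)$, and assembles $H$ with $H|_{\partial B^3}=\mathrm{id}$ and $H(F(\epsilon))=G(\epsilon)$. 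Your cut-along-the-disks/Alexander architecture can be salvaged in the same spirit by sending each $E_i$ onto the given disk $D_i$ (rather than trying to move the $D_i$ onto $E$) and then conjugating; also note that taking the three disks disjoint, which you assert in passing, requires the standard innermost-circle exchange in the irreducible complement.
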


\begin{proof}

($\Rightarrow)$ Suppose that there exists a homeomorphism $H$ from $(B^3,F(\epsilon))$ to $(B^3,G(\epsilon))$ so that $H|_{\partial B^3}=id|_{\partial B^3}$. Then we know that $HF(\partial E)=F(\partial E)$ since $H|_{\partial B^3}=id|_{\partial B^3}$.  Also, $G^{-1}HF(\epsilon)=\epsilon$ since $H(F(\epsilon))=G(\epsilon)$. Also, $G^{-1}HF(\partial E)=G^{-1}F(\partial E)$.
We claim that $G^{-1}HF(E)$ are essential disks in $B^3-\epsilon$. Since $E$ are essential disks in $B^3-\epsilon$, $F(E)$ are  essential disks in $B^3-F(\epsilon).$
Then $H(F(E))$ are  properly embedded disks in $B^3$ which are disjoint with $H(F(\epsilon))=G(\epsilon).$ Therefore, $H(F(E))$ are  essential disks in $B^3-G(\epsilon)$.
Finally, we know that $G^{-1}(H(F(E)))$ are  properly embedded disks in $B^3$ which are disjoint with $G^{-1}(G(\epsilon))=\epsilon.$
So, $G^{-1}HF(E)$ are  disks in $B^3-\epsilon$ and essential since each simple closed curve of $G^{-1}HF(\partial E)$ encloses two punctures in $\Sigma_{0,4}.$
 This implies that $G^{-1}F(\partial E)$ bound essential disks in $B^3-\epsilon.$ \\

($\Leftarrow$) Since $G^{-1}F(\partial E)$ bounds essential disks in $B^3-\epsilon,$  $F(\partial E)$ bounds essential disks in $B^3-G(\epsilon)$. Let $D_i$ be the properly embedded disk in $B^3-G(\epsilon)$ so that $\partial D_i=F(\partial E_i)$.
 We also know that $F(\partial E_i)$ bounds a disk $F(E_i')(=K_i)$ in $\partial B^3$ which contains two punctures.
 Then,  $F(E_i)\cup K_i$ bounds a ball $M_i$ in $B^3$ and $M_i$ contains $F(\epsilon_i)$.
Similarly, $D_i\cup K_i$ bounds a ball $N_i$ in $B^3$ so that $N_i$ contains $G(\epsilon_i)$.
Now, we can define a homeomorphism  $h_i(1\leq i\leq3)$ from $M_i$ to $N_i$ so that $h_i|_{F(E_i')}=id_{F(E_i')}$ and $h_i(F(\epsilon_i))=G(\epsilon_i)$  by using  Lemma~\ref{T22} and the Alexander trick. 
  Also, we can define $h_4$ from $B^3-(M_1\cup M_2\cup M_3)^\circ$ to $B^3-(N_1\cup N_2\cup N_3)^{\circ}$
   so that $h_4|_{\partial B^3-(K_1\cup K_2\cup K_3)}=id$ and $h_4|_{F(E_i)}=h_i|_{F(E_i)}$.
   Then we have a homeomorphism  $H$ from $B^3$ to $B^3$ so that $H|_{\partial B^3}=id$  and $H(F(\epsilon))=G(\epsilon)$.

 \end{proof}
 
 In fact, if two of $G^{-1}F(\partial E)$ bound essential disks in $B^3-\epsilon$ then $T_{F}\approx T_G$ by Lemma~\ref{T33} below.
 So, two disjoint non-parallel  simple closed curves which bound essential disks in $B^3-\epsilon$ determine the $\infty$ tangle.
 \begin{Lem}\label{T33}
 Suppose that two essential simple closed curves $\alpha,\beta~(\not\sim \alpha$) bound disjoint disks in $B^3-\epsilon$. If $\gamma$ is  an essential simple closed curve  which encloses two punctures, is disjoint with $\alpha$ and $\beta$ and is non-parallel to $\alpha$ and $\beta$, then $\gamma$ bounds an essential disk in $B^3-\epsilon$.
 \end{Lem}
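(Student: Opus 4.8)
The plan is to identify, by two‑ and three‑dimensional separation arguments, exactly which punctures each of $\alpha$, $\beta$, $\gamma$ encloses, and then to build the required essential disk for $\gamma$ by hand. Let $D_\alpha$, $D_\beta$ be disjoint disks in $B^3-\epsilon$ with $\partial D_\alpha=\alpha$, $\partial D_\beta=\beta$. First I would show $\alpha$ encloses the two endpoints $\partial\epsilon_i$ of a single component of $\epsilon$: since $D_\alpha$ is a properly embedded disk in $B^3$ it separates $B^3$ into two balls, and each $\epsilon_k$, being connected and disjoint from $D_\alpha$, lies entirely in one of them, so the partition of the six punctures of $\Sigma_{0,6}$ induced by $\alpha$ is a union of the pairs $\partial\epsilon_1,\partial\epsilon_2,\partial\epsilon_3$; as $\alpha$ is essential (it bounds no disk and encloses no single puncture) this split is $(2,4)$, i.e. $\alpha$ encloses one $\partial\epsilon_i$, and similarly $\beta$ encloses one $\partial\epsilon_j$. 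If $i=j$, then $\alpha$ and $\beta$ would be disjoint simple closed curves on $\Sigma_{0,6}$ each separating $\partial\epsilon_i$ from the other four punctures; an easy innermost/outermost argument shows two such curves cobound an unpunctured annulus, so $\alpha\sim\beta$, contradicting $\beta\not\sim\alpha$. Hence $i\ne j$, and we may assume $\alpha$ encloses $\partial\epsilon_1$ and $\beta$ encloses $\partial\epsilon_2$.

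Next I would pin down $\gamma$. Cutting $\Sigma_{0,6}$ along $\alpha\cup\beta$ yields two twice‑punctured disks, around $\partial\epsilon_1$ and $\partial\epsilon_2$, together with one remaining four‑holed sphere whose holes are $\alpha$, $\beta$ and the two punctures $\partial\epsilon_3$. Since $\gamma$ is disjoint from $\alpha\cup\beta$ it lies in one of these pieces; it cannot lie in either twice‑punctured disk without being parallel to $\alpha$ or $\beta$ or enclosing a single puncture, so it lies in the four‑holed sphere, and counting enclosed punctures and using that $\gamma$ encloses exactly two of them and is non‑parallel to $\alpha$ and $\beta$, the only possibility is that $\gamma$ encloses exactly $\partial\epsilon_3$.

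It remains to produce the disk. Let $M_1$, $M_2$ be the balls cut off from $B^3$ by $D_\alpha$, $D_\beta$ and containing $\epsilon_1$, $\epsilon_2$, and set $X=\mathrm{cl}(B^3-(M_1\cup M_2))$; this is a ball containing $\epsilon_3$ with $\partial X=D_\alpha\cup R\cup D_\beta$, where $R\subset\partial B^3$ is an annulus whose interior contains $\partial\epsilon_3$. The curve $\gamma$ lies on $R\subset\partial X$ and bounds there a disk $P$ containing $\partial\epsilon_3$; the complementary disk $P'=\mathrm{cl}(\partial X-P)$ contains $D_\alpha$ and $D_\beta$ but not $\partial\epsilon_3$. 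Pushing $P'$ slightly into $X$ gives a properly embedded disk $\Delta$ in $B^3$ with $\partial\Delta=\gamma$; since $\epsilon_1\cup\epsilon_2\subset M_1\cup M_2$ is disjoint from $X$ and $\epsilon_3$ meets $\partial X$ only in $\partial\epsilon_3\subset P$, a sufficiently small push makes $\Delta$ disjoint from $\epsilon$, so $\Delta\subset B^3-\epsilon$. Finally $\Delta$ is essential: $\gamma$ separates the closed genus‑three surface $\partial(B^3-\epsilon)$ into a one‑holed torus (the part carrying the tube around $\epsilon_3$) and a one‑holed genus‑two surface, so $\gamma$ is not null‑homotopic on $\partial(B^3-\epsilon)$ and $\Delta$ cannot be boundary‑parallel in $B^3-\epsilon$.

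I expect the main obstacle to be the combinatorial bookkeeping in the first two steps: proving that $\alpha$ and $\beta$ each enclose a single $\partial\epsilon_i$, that $i\ne j$, and that disjointness together with non‑parallelism then forces $\gamma$ to enclose $\partial\epsilon_3$. Once that is known the essential disk is essentially forced. One should also take mild care that $D_\alpha,D_\beta$ can be arranged so that $M_1,M_2$ really are disjoint balls (so $X$ is a ball) and that the ``small push'' of $P'$ off $\epsilon_3$ is legitimate, but these are routine general‑position arguments.
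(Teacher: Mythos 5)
Your proof is correct and takes essentially the same route as the paper: cut $B^3$ along the two given disks, observe that $\gamma$ lies in the boundary of the resulting ball containing $\epsilon_3$ with the two punctures on one side, and push the complementary boundary region slightly into the interior to obtain the disk. Your write-up simply supplies the bookkeeping the paper leaves implicit (that $\alpha,\beta$ enclose distinct pairs $\partial\epsilon_i$, that $\gamma$ must enclose $\partial\epsilon_3$, and that the pushed disk is essential).
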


 \begin{proof}
 Let $D_1$ and $D_2$ be the two disks in $B^3-\epsilon$ so that $\partial D_1=\alpha$ and $\partial D_2=\beta$.
Cut $B^3-\epsilon$ along the two disks. Then we have three balls $P_i$ which contains $\epsilon_i$.
Suppose that $\gamma\subset P_1$ without loss of generality. Then $\gamma$ divides $\partial P_1$ into two regions $Q$ and $R$. Assume that $Q$ contains the two punctures in $P_1$. To have a disk $D_3$ in $P_1$, push $R^\circ$ from $\partial P_1$ to the interior of $P_1$ a little bit.
 
 \end{proof}

 \section{Step 1: Hexagon prameterization of $\mathcal{C}$}

Recall $\mathcal{C}$ which is the set of isotopy classes of  essential simple closed curves in $\Sigma_{0,6}$.
In this section, we will describe how to parameterize $\mathcal{C}$ by using the hexagon diagram.  
To do this, we  define the hexagon as follows.\\

\begin{figure}[htb]
\begin{center}
\includegraphics[scale=.42]{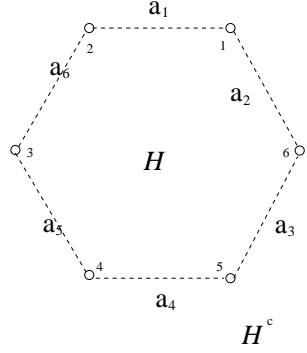}
\end{center}
\caption{Hexagon in $\Sigma_{0,6}$ }
\label{D1}
\end{figure}
Let $\partial\epsilon_1=\{1,2\}$, $\partial\epsilon_2=\{3,4\}$ and $\partial\epsilon_3=\{5,6\}$ as in Figure~\ref{D1}. 
By connecting the punctures in $\Sigma_{0,6}$ as in Figure~\ref{D1}, we can make  the hexagon $H$. Let $a_i$ be the dotted open intervals as in Figure~\ref{D1}. 
Then let $\bar{a_i}$ be the closed interval which is obtained from $a_i$ by adding the two punctures. For example, $\bar{a_1}=a_1\cup\{1,2\}$ and $\bar{a_3}=a_3\cup\{5,6\}$.\\

A family $C$ of smooth simple closed curves disjointly embedded in $\Sigma_{0,6}$ so that no component of $C$ is either null-homotopic or homotopic into a puncture is called a $\textit{multiple curve}$ in $\Sigma_{0,6}$; moreover, we require that two distinct components of $C$ cannot be isotopic to each other.
Define a $\textit{multicurve}$ in  $\Sigma_{0,6}$ to be the isotopy class of a multiple curve in $\Sigma_{0,6}$. Let $\Gamma$ be a graph so that the vertices of $\Gamma$ are  the punctures $1,2,3,4,5,6$ and the edges of $\Gamma$ are $\bar{a_i}$. Then, we define the $\emph{pseudo-graph}$ ${\Gamma}^{\circ}=\Gamma-\{1,2,3,4,5,6\}$. Then a multiple curve $\gamma$ is in $\textit{general position}$ with respect to $\Gamma^\circ$ in $\Sigma_{0,6}$  if $\gamma$ meets $\Gamma^\circ$ transversely.
Also, a multiple curve $\gamma$ is in $\textit{minimal general position}$ with respect to  $\Gamma^\circ$ in $\Sigma_{0,6}$ if $\gamma$ is in general position with respect to $\Gamma^\circ$ and $\gamma$ has a minimal number of intersections with $\Gamma^\circ$ up to isotopy.\\

Now, consider orientation preserving homeomorphisms $f=\sigma_1^{a_1}\sigma_2^{b_1}\sigma_3^{c_1}\sigma_4^{d_1}\cdot\cdot\cdot\sigma_1^{a_k}\sigma_2^{b_k}\sigma_3^{c_k}\sigma_4^{d_k}$ and $g=\sigma_1^{a_1'}\sigma_2^{b_1'}\sigma_3^{c_1'}\sigma_4^{d_1'}
\cdot\cdot\cdot\sigma_1^{a_m'}\sigma_2^{b_m'}\sigma_3^{c_m'}\sigma_4^{d_m'}$ from $\Sigma_{0,6}$ to $\Sigma_{0,6}$  
 for some
 $a_i,b_i,c_i,d_i,a_j',b_j',c_j',d_j'\in \mathbb{Z}$. 
Then by Theorem~\ref{T23}, $g^{-1}f(\partial E)=\sigma_4^{-d_m'}\sigma_3^{-c_m'}\sigma_2^{-b_m'}\sigma_1^{-a_m'}\cdot\cdot\cdot\sigma_4^{-d_1'}\sigma_3^{-c_1'}\sigma_2^{-b_1'}\sigma_1^{-a_1'}\sigma_1^{a_1}\sigma_2^{b_1}\sigma_3^{c_1}\sigma_4^{d_1}\cdot\cdot\cdot\sigma_1^{a_k}\sigma_2^{b_k}\sigma_3^{c_k}\sigma_4^{d_k}(\partial E)$ bounds essential disks in $B^3-\epsilon$ if and only if $T_F\approx T_G$, where $F$ and $G$ are extensions of $f$ and $g$ to $B^3$.\\

Let $\gamma=g^{-1}f(\partial E_p)$, where $p\in\{1,2,3\}$.
Then, we want to know  how each half Dehn twist $\sigma_j$  changes  $\gamma$ in $\Sigma_{0,6}$.
Assume that $\gamma$ is in minimal general position with respect to $\Gamma^\circ$. 
 Let $w_{ij}$ be the number of arcs of $\gamma$ which are from $a_i$ to $a_j$ in the hexagon.
Also, we  define $w^{kl}$ to be the number of arcs which are from $a_k$ to $a_l$ in the complement of the hexagon $H^c$. These are called $\textit{weights}$.
We notice that $w_{ij}=w_{ji}$ and $w^{kl}=w^{lk}$. Also, we know if $w_{ij}\neq 0$ for $i,j$ such that $i=j\pm 1$ (mod~6) then $w^{ij}=0$ and if $w^{kl}\neq 0$ for $k,l$ such that  
 $k=l\pm 1$ (mod~6) then $w_{kl}=0$.  If not, then we have a simple closed curve which is parallel to a puncture.
We notice that $w_{ii}=w^{ii}=0$ for all $i$ since $\gamma$ is in minimal general position with respect to $\Gamma^\circ$.\\

First, we will show that the weights $w_{ij}$ and $w^{ij}$  for the isotopy class $[\gamma]$  are well defined.\\

\begin{figure}[htb]
\begin{center}
\includegraphics[scale=.33]{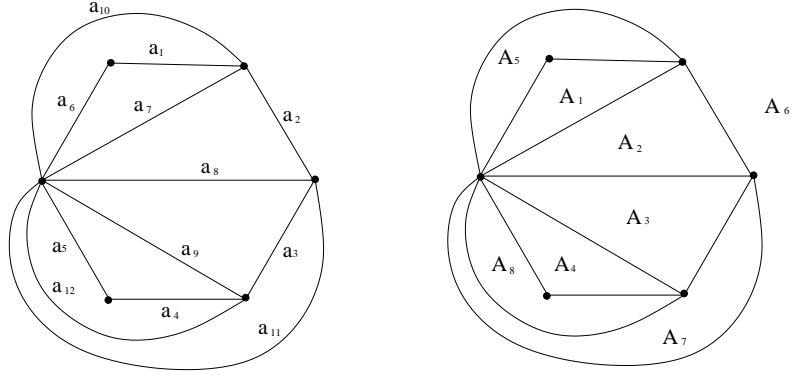}
\end{center}
\caption{A triangulation of the Hexagon diagram}
\label{D2}
\end{figure}

Let $a_7,a_8,...,a_{12}$ be the open arcs which connect two punctures as in Figure~\ref{D2}.
Let $\Gamma_+=\cup_{i=1}^{12}\bar{a_i}$ and $\Gamma^\circ_+=\Gamma_+-\{1,2,3,4,5,6\}$.
Then let $\Gamma_*$ be a subgraph of $\Gamma_+$. Then we define $\Gamma^\circ_*=\Gamma_*-\{1,2,3,4,5,6\}.$\\

 For two simple subarcs $\lambda$ and $\nu$ of a union $K$ of finitely many simple arcs  in a surface $\Sigma$, $(\Delta,\lambda,\nu)$ is a $bigon$ in the surface $\Sigma$ if $ \lambda\cup \nu$ bounds a disk $\Delta$ in $\Sigma$ and $\partial \lambda=\partial \nu=\lambda\cap \nu$
and $(\mathrm{int}~\Delta)\cap K =\emptyset$. Then we say that two unions $A$ and $B$ of simple arcs in a surface $\Sigma$ have a $\emph{bigon}$ if there exist two simple subarcs $\lambda$ and $\nu$ in $A$ and $B$ respectively so that $(\Delta,\lambda,\nu)$ is a bigon.\\

Let $|A\cap B|$ be the number of  intersections between $A$ and $B$.\\

\begin{Lem} \label{T41}
Suppose that $\delta$ is a simple closed curve in  $\Sigma_{0,6}$ so that $\delta$  is in general position with respect to $\Gamma^\circ_*$, but $\delta\cap \Gamma^\circ_*$ is not minimal.
Then $\delta$ and $\Gamma^\circ_*$ have a bigon in $\Sigma_{0,6}$.
\end{Lem}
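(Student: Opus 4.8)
The plan is to prove this as a form of the classical \emph{bigon criterion}. First observe that, since every vertex of $\Gamma_*$ is one of the six punctures of $\Sigma_{0,6}$, deleting the punctures turns $\Gamma_*$ into a disjoint union of pairwise disjoint, properly embedded open arcs; hence $\Gamma^\circ_*$ is a properly embedded $1$--manifold, which streamlines the bookkeeping. I would first dispose of the case in which $\delta$ is not essential: if $\delta$ bounds a disk or encloses exactly one puncture, then $\delta$ bounds an embedded, possibly once--punctured, disk $\Delta'$, and every component of $\Gamma^\circ_*\cap\Delta'$ is either a ray running to that puncture or an arc with both endpoints on $\delta$. Non--minimality of $|\delta\cap\Gamma^\circ_*|$ forces at least one arc of the second kind; an innermost such arc cuts off from $\Delta'$ a subdisk whose interior misses $\delta\cup\Gamma^\circ_*$ and whose boundary is a subarc of $\delta$ together with a subarc of $\Gamma^\circ_*$, i.e.\ an embedded bigon. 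So from now on $\delta$ is essential.

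Next I would pass to hyperbolic geometry. Equip $\Sigma_{0,6}$ with a complete, finite--area hyperbolic metric with a cusp at each puncture; after an ambient isotopy (harmless, since it changes neither the hypothesis nor the conclusion) take each arc of $\Gamma^\circ_*$ to be the complete geodesic joining its two cusps, so the arcs stay pairwise disjoint. Let $\delta_g$ be the simple closed geodesic freely homotopic to $\delta$; because distinct complete geodesics cross at most once, $|\delta_g\cap\Gamma^\circ_*|$ is the minimal intersection number in the isotopy class of $\delta$, while by hypothesis $|\delta\cap\Gamma^\circ_*|>|\delta_g\cap\Gamma^\circ_*|$. Lift to $\mathbb{H}^2$: the preimage $\widetilde\Gamma$ of $\Gamma^\circ_*$ is a disjoint family of complete geodesics, the preimage $\widetilde\delta$ of $\delta$ is a disjoint family of properly embedded lines, and the lift of $\delta_g$ consists of geodesics sharing ideal endpoints with the lines of $\widetilde\delta$. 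Since two distinct geodesics meet at most once, the parity of the number of intersections of a line of $\widetilde\delta$ with a line of $\widetilde\Gamma$ is governed only by whether their ideal endpoints link on $\partial\mathbb{H}^2$, which is a free--homotopy invariant; comparing $\delta$ with $\delta_g$, if every line of $\widetilde\delta$ met every line of $\widetilde\Gamma$ at most once we would get $|\delta\cap\Gamma^\circ_*|=|\delta_g\cap\Gamma^\circ_*|$, so some line $\ell\subset\widetilde\delta$ must meet some line $m\subset\widetilde\Gamma$ in at least two points.

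Finally I would extract the bigon. Two intersection points of $\ell$ and $m$ that are consecutive along $\ell$ cobound, via a subarc of $\ell$ and a subarc of $m$, an embedded disk in $\mathbb{H}^2$ with just those two corners; among all disks arising this way --- over all pairs consisting of a line of $\widetilde\delta$ and a line of $\widetilde\Gamma$, and all consecutive pairs of their intersection points --- choose an innermost one $\Delta_0$. Because the lines of $\widetilde\delta$ are pairwise disjoint and the lines of $\widetilde\Gamma$ are pairwise disjoint, a line of $\widetilde\delta$ entering $\mathrm{int}\,\Delta_0$ would have to cross the $m$--side of $\partial\Delta_0$ at least twice, and a line of $\widetilde\Gamma$ entering $\mathrm{int}\,\Delta_0$ would have to cross the $\ell$--side at least twice; in either case a strictly smaller disk of the same type appears, contradicting innermostness. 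Hence $\mathrm{int}\,\Delta_0$ misses $\widetilde\delta\cup\widetilde\Gamma=\pi^{-1}(\delta\cup\Gamma^\circ_*)$, and a short further argument --- using that no two points of $\ell\cap m$ lie in a common $\langle\delta\rangle$--orbit, so that the $\ell$--side of $\Delta_0$ cannot wrap all the way around $\delta$ --- shows the covering projection is injective on $\Delta_0$; its image is then an embedded bigon between $\delta$ and $\Gamma^\circ_*$. The step I expect to be the main obstacle is exactly this last one: making the ``innermost'' choice correctly among \emph{all} lifts and checking that the chosen disk descends to an \emph{embedded} bigon downstairs, together with the parity/linking comparison with $\delta_g$; everything else is a routine innermost--disk argument.
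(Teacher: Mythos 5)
Your proof is essentially correct, but it follows a genuinely different route from the paper's. The paper argues softly: it takes $\delta'\sim\delta$ meeting $\Gamma^\circ_*$ minimally, views the isotopy between them as a map $H\colon S^1\times[0,1]\to\Sigma_{0,6}$ transverse to $\Gamma^\circ_*$, and uses the drop in intersection number to find a component of $H^{-1}(\Gamma^\circ_*)$ that is an arc parallel into $S^1\times\{0\}$; projecting the disk it cuts off gives a null-homotopic loop formed by a subarc of $\delta$ and a segment of $\Gamma^\circ_*$, hence a bigon. You instead run the classical geodesic/universal-cover proof of the bigon criterion: dispose of inessential $\delta$ by an innermost arc in the (possibly once-punctured) disk it bounds, make $\Gamma^\circ_*$ geodesic, detect a doubly intersecting pair of lifts via linking of ideal endpoints, and project an innermost disk from $\mathbb{H}^2$. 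Your route costs hyperbolic machinery, the separate inessential case (the paper's isotopy argument treats all $\delta$ uniformly), and the orbit bookkeeping you flag; what it buys is an embedded bigon whose interior misses $\delta\cup\Gamma^\circ_*$, which is literally what the paper's definition of bigon demands and which the paper's own proof leaves implicit (its disk $D'$ is a priori neither embedded nor innermost). The step you flag does close: a nontrivial deck transformation identifying two points of $\Delta_0$ would have to stabilize both $\ell$ and $m$; the stabilizer of $m$ is trivial, and a $\langle\delta\rangle$-translate of $m$ is disjoint from $m$, so it would cross the $\ell$-side into $\mathrm{int}\,\Delta_0$, which is excluded by innermostness — hence the $\ell$-side cannot wrap and the projection is injective. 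One small point to record: your ambient isotopy making $\Gamma^\circ_*$ geodesic tacitly uses that no two arcs of $\Gamma_*$ are properly isotopic (otherwise their geodesic representatives coincide), which does hold for the arc systems $\Gamma_*\subseteq\Gamma_+$ used in the paper.
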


\begin{proof}
Let $\delta'$ be a simple closed curve in $\Sigma_{0,6}$ so that $\delta'\sim\delta$ and $\delta'$ is in minimal general position with respect to $\Gamma^{\circ}_*$.
Then by the transversality theorem we can choose an isotopy $H:S^1\times[0,1]\rightarrow \Sigma_{0,6}$ so that $H(S^1\times\{0\})=\delta$, $H(S^1\times\{1\})=\delta'$ and $H^{-1}(\Gamma^{\circ}_*)$ is a collection of 1-manifolds in $S^1\times[0,1]$.    Let  $m=|H^{-1}(\Gamma^{\circ}_+)\cap (S^1\times\{0\})|$ and $n=|H^{-1}(\Gamma^{\circ}_*)\cap (S^1\times\{1\})|$. Then we notice that $m>n$ since $\delta'\cap \Gamma^{\circ}_*$ is minimal in $\Sigma_{0,6}$, but $\delta\cap \Gamma^{\circ}_*$ is not minimal. Therefore, there exists a properly embedded arc $\alpha$ in $S^1\times[0,1]$ so that $\alpha$ is parallel to an arc $\beta$ of $S^1\times \{0\}$ and $H(\alpha)\subset \Gamma^{\circ}_*$. So, $\alpha\cup\beta$ bounds a disk $D$ in $S^1\times [0,1]$.
Let $c_1$ and $c_2$ be the common endpoints of $\alpha$ and $\beta$. Let $d_1=H(c_1)$ and $d_2=H(c_2)$.
Now, consider $H|_{D}$. Let  $[d_1,d_2]$ be the segment between $d_1$ and $d_2$ in $ \Gamma^{\circ}_*$. Now, we choose a homeomorphism $K:\alpha\rightarrow [d_1,d_2]$ with $K(c_1)=d_1,~K(c_2)=d_2$. Then we remark that $ K \simeq H|_{\alpha}$ $\mathrm{rel}$ $\{c_1,c_2\}$. We define $\bar{K}:\alpha\cup\beta\rightarrow \Sigma_{0,6}$ so that $\bar{K}(x)=K(x)$ for $x\in\alpha$ and $\bar{K}(y)=H(y)$ for $y\in \beta$.
So, $H|_{\alpha\cup\beta}\simeq \bar{K}|_{\alpha\cup\beta}$ rel $\{c_1\}$. Let $[d_1,d_2]=\alpha'$ and $H(\beta)=\beta'$. Let $p_1$ be a path from $H(c_1)$ to $H(c_2)$ along $\alpha'$ and let $p_2$ be a path from $H(c_2)$ to $H(c_1)$ along $\beta'$. Then $p_1\cdot p_2$ is a loop with base point $H(c_1)$. Then we notice that $p_1\cdot p_2$ is null-homotopic in $\Sigma_{0,6}$. Therefore, $\alpha'\cup \beta'$ bounds a disk $D'$ in $\Sigma_{0,6}$.
This implies that $\delta$ and $\Gamma^{\circ}_*$ have a bigon in $\Sigma_{0,6}$. 

\end{proof}

\begin{Cor}\label{T42}
If $\gamma$ and $\Gamma^\circ_+$ have no bigons then $\gamma$ is in minimal general position with respect to $\Gamma^{\circ}_+$. Moreover, $\gamma\cap a_i$ also has a minimal intersection for all $i\in\{1,2,...,12\}$.
\end{Cor}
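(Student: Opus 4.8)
The plan is to deduce both assertions from Lemma~\ref{T41} by a direct contrapositive/contradiction argument, treating $\gamma$ exactly as $\delta$ was treated there. First I would observe that the hypothesis "$\gamma$ and $\Gamma^\circ_+$ have no bigons" is literally the negation of the conclusion of Lemma~\ref{T41} applied with $\Gamma^\circ_* = \Gamma^\circ_+$ (note $\Gamma_+$ is itself a subgraph of $\Gamma_+$, so it is a legitimate choice of $\Gamma_*$). Since Lemma~\ref{T41} says that whenever $\gamma$ is in general position with respect to $\Gamma^\circ_+$ but \emph{not} in minimal general position, a bigon must exist, the absence of a bigon forces $\gamma$ to be in minimal general position with respect to $\Gamma^\circ_+$. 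This disposes of the first sentence.

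For the second assertion I would argue one arc $a_i$ at a time. Fix $i \in \{1,\dots,12\}$ and suppose, for contradiction, that $|\gamma \cap a_i|$ is not minimal; that is, there is a simple closed curve $\gamma'' \sim \gamma$ with $|\gamma'' \cap a_i| < |\gamma \cap a_i|$ and with $\gamma''$ in general position with respect to $a_i$ (and, after a further small isotopy, with respect to all of $\Gamma^\circ_+$). Here I want to apply Lemma~\ref{T41} with the single-edge subgraph $\Gamma^\circ_* = \bar a_i - \{1,2,3,4,5,6\} = a_i$ (possibly minus its endpoint punctures), which is a subgraph of $\Gamma_+$ as required by the setup. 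The curve $\gamma$ is in general position with respect to $a_i$ by the first part, but by assumption not in minimal general position with respect to $a_i$; hence Lemma~\ref{T41} produces a bigon $(\Delta,\lambda,\nu)$ with $\lambda \subset \gamma$, $\nu \subset a_i \subset \Gamma^\circ_+$, and $\operatorname{int}\Delta$ disjoint from $a_i$. The one technical point to check is that $\operatorname{int}\Delta$ is in fact disjoint from \emph{all} of $\Gamma^\circ_+$, not merely from $a_i$: if some other arc $a_j$ cut through $\Delta$, one passes to an innermost subdisk of $\Delta$ cut off by $\gamma \cup \Gamma^\circ_+$, which is again a bigon between $\gamma$ and some $a_j$ (the sides of $\Delta$ other than $\lambda$ lie on $a_i$, and $\gamma$ is simple, so an innermost disk has one side on $\gamma$ and one on some $a_j$). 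Either way we obtain a bigon between $\gamma$ and $\Gamma^\circ_+$, contradicting the hypothesis. Therefore $|\gamma \cap a_i|$ is minimal.

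I expect the main obstacle to be precisely this last bookkeeping step — namely, upgrading "bigon with the single arc $a_i$" to "bigon with $\Gamma^\circ_+$" via an innermost-disk argument, and being careful that the innermost subdisk genuinely has one boundary arc on $\gamma$ and one on a graph edge (rather than two arcs on graph edges, which cannot bound a bigon with empty interior since distinct $\bar a_j$ meet only at punctures). Everything else is a bland invocation of Lemma~\ref{T41} with the appropriate choice of subgraph $\Gamma^\circ_*$, so the corollary is really just the statement that the bigon criterion of Lemma~\ref{T41} can be localized to each edge of the triangulated hexagon diagram.
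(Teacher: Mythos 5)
Your proposal is correct and takes essentially the same route as the paper: the first assertion is the contrapositive of Lemma~\ref{T41}, and the second follows by applying Lemma~\ref{T41} to the single-edge subgraph $\bar{a_i}$ and deriving a bigon that contradicts the hypothesis that $\gamma$ and $\Gamma^\circ_+$ have none. The only difference is that you justify the upgrade from a bigon with $a_i$ alone to a bigon with all of $\Gamma^\circ_+$ by an explicit innermost-disk argument, a point the paper dispatches with the brief remark that $\nu$ is homotopic to $\lambda$; this is a sharpening of the same argument rather than a different method.
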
 

\begin{proof} From Lemma~\ref{T41}, we know that if $\gamma$ and $\Gamma^\circ_+$ have no bigons then $\gamma$ is in minimal general position with respect to $\Gamma^{\circ}_+$. Now, suppose that $\gamma\cap a_i$ does not have a minimal intersection. Then $\gamma$ and $a_i$ have a bigon in $\Sigma_{0,6}$ by Lemma~\ref{T41}. So, we have closed intervals $\lambda\subset a_i$ and $\nu\subset \gamma$ so that $\lambda\cup \nu$ bounds a disk $\Delta$ in $\Sigma_{0,6}$. This implies that $\gamma$ and $\Gamma^\circ_+$ have a bigion since $\nu$ is homotopic to $\lambda$.
This contradicts the fact that $\gamma\cap \Gamma^\circ_+$ has a minimal intersection. Therefore,  $\gamma\cap a_i$ is minimal for all $i\in\{1,2,...,12\}.$
\end{proof}

 Using Corollary~\ref{T42}, we will show the weights of isotopy classes are well defined.\\

Recall that $w_{ij}$ is the number of arcs of $\gamma$ which are from $a_i$ to $a_j$ in the hexagon $H$ and $w^{kl}$ is the number of arcs of $\gamma$ which are from $a_k$ to $a_l$ in the complement of the hexagon $H^c$.
\begin{Lem}\label{T43}
The weights $w_{ij}$ and $w^{ij}$ of  $[\gamma]$  for $i,j\in\{1,2,...,6\}$ are  well defined.
\end{Lem}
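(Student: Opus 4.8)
The plan is to express the weights of $[\gamma]$ as explicit functions of the geometric intersection numbers $n_i=|\gamma\cap a_i|$ ($1\le i\le 12$), which are the honest isotopy invariants of $[\gamma]$ supplied by Corollary~\ref{T42}, and to carry this out with the triangulated hexagon diagram $\Gamma_+$ by normal-curve bookkeeping.

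First I would reduce to the triangulated picture. Let $\gamma$ be any representative of $[\gamma]$ in minimal general position with respect to $\Gamma^\circ$. Inside the hexagon $H$ the pieces of $\gamma\cap H$ form a properly embedded family of arcs with endpoints on $a_1,\dots,a_6$; since the only marked points of the disk $H$ are its six vertices, no arc of $\gamma\cap H$ can have both endpoints on one $a_i$ (the piece of $H$ cut off on the short side of such an arc would be a puncture-free bigon between $\gamma$ and $a_i$, contradicting minimality, by the argument used in the proof of Corollary~\ref{T42}). Hence $\gamma\cap H$ is a non-crossing family of chords, and such a family in a polygon is determined up to isotopy rel $\partial H$ by the number of chords joining each pair of sides, i.e. by the weights $w_{ij}$; likewise $\gamma\cap H^c$ is determined by the $w^{kl}$. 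Replacing $\gamma\cap H$ and $\gamma\cap H^c$ by their ``straight'' representatives is an isotopy supported in the interiors of $H$ and $H^c$ which fixes $\partial H$ pointwise, hence leaves every $w_{ij}$ and $w^{kl}$ unchanged; it produces $\gamma'\sim\gamma$ with the same weights as $\gamma$, meeting each diagonal minimally and with no trivial arcs, so $\gamma'$ has no bigons with $\Gamma^\circ_+$. By Corollary~\ref{T42}, $\gamma'$ is then in minimal general position with respect to $\Gamma^\circ_+$, so $|\gamma'\cap a_i|=n_i$ is the minimal intersection number for every $i\in\{1,\dots,12\}$ and therefore depends only on $[\gamma]$.

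Next I would read the weights off from the $n_i$. The diagonals cut $H$ into four triangles and $H^c$ into four triangles. In each such triangle $T$ with sides $a_p,a_q,a_r$ the same bigon argument shows every arc of $\gamma'\cap T$ joins two distinct sides; writing $x^T_{pq}$ for the number of arcs joining $a_p$ and $a_q$, and using that each point of $\gamma'\cap a_p$ is an end of exactly one arc of $\gamma'$ in $T$, the three equations $x^T_{pq}+x^T_{pr}=n_p$ (and cyclically) give $x^T_{pq}=\tfrac12(n_p+n_q-n_r)$, a function of $(n_1,\dots,n_{12})$. Since $\gamma'$ is now normal with respect to the triangulation, along each diagonal the arc-ends coming from the two adjacent triangles occur in a combinatorially prescribed order and must be joined in that order because the arcs are disjoint; hence the whole multiarc $\gamma'\cap H$, and likewise $\gamma'\cap H^c$, is reconstructed up to isotopy rel $\partial H$ from the $x^T_{pq}$, so each $w_{ij}$ and each $w^{kl}$ is a definite function $W$ of $(n_1,\dots,n_{12})$. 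Combining the two steps: if $\gamma_0\sim\gamma_1$ are both in minimal general position with respect to $\Gamma^\circ$, their straight forms $\gamma_0'\sim\gamma_1'$ share the same numbers $n_i$, hence the same weights $W(n_1,\dots,n_{12})$, which are the weights of $\gamma_0$ and of $\gamma_1$. This proves the lemma.

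I expect the reconstruction in the third step to be the main obstacle: one must check carefully that a curve in minimal general position with respect to $\Gamma^\circ_+$ is determined, inside $H$ and inside $H^c$, by its triangle normal coordinates $x^T_{pq}$ — that there is no residual ``twisting'' freedom along the diagonals — which amounts to tracking how the parallel families of arcs sit along each diagonal and verifying the gluing is forced. A secondary point needing care is the first step: that an essential multiarc in a polygon with marked boundary points is determined rel $\partial$ by its pairwise endpoint counts, so that the passage from $\gamma$ to $\gamma'$ genuinely preserves all the weights.
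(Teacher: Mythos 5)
Your proposal is correct and follows the same skeleton as the paper's proof: triangulate the hexagon by the arcs $a_7,\dots,a_{12}$, invoke Corollary~\ref{T42} to see that the twelve intersection numbers with the $a_i$ depend only on $[\gamma]$, and compute the per-triangle coordinates by the linear relations $\frac{1}{2}(n_p+n_q-n_r)$ exactly as the paper does for the regions $A_1,\dots,A_4$ (and, by symmetry, for $H^c$). Where you diverge is the final reconstruction step: the paper assembles the $w_{ij}$ from the triangle weights through explicit $\min/\max$ formulas (first $A_1\cup A_2$ and $A_3\cup A_4$, then the whole hexagon), whereas you replace this computation by the abstract normal-curve fact that a disjoint essential arc system in a disk is determined, up to isotopy preserving the sides, by its normal coordinates, so the gluing along the diagonals is forced. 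That substitution is legitimate and shorter, though the paper's formulas have the side benefit of being directly computable, which is what the algorithm later needs. You also add a straightening step that passes from minimal position with respect to $\Gamma^{\circ}$ (as in the definition of the weights) to a representative having no bigons with $\Gamma^{\circ}_+$ while keeping all $w_{ij},w^{kl}$ fixed; this is a point the paper treats only implicitly, and your justification is fine provided you observe that any bigon in the sense of the paper's definition is innermost, hence lies in a single triangle of the triangulation, where straightened chords cannot bound bigons with straight sides. With that small point made explicit, the argument is complete.
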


\begin{proof}
Suppose that $\gamma$ is in minimal general position with respect to $\Gamma^\circ_+$. 
Let $m_i$ be the number of intersections between $\gamma$ and $a_i$ for $i\in\{1,2,...,12\}$.
Let $A_i$ be the regions as in Figure~\ref{D2}.\\

For the three sides $a_1,a_6$ and $a_7$ of a region $A_1$, let $s_{16},s_{17}, s_{67}$ be the numbers of arcs from $a_{i}$ to $a_{j}$ in $A_1$, where $i,j\in\{1,6,7\}$.
Then we know that $s_{16}+s_{17}=m_1$, $s_{16}+s_{67}=m_6$ and $s_{67}+s_{17}=m_7$. By solving these equations for $s_{ij}$, we have
$\displaystyle{s_{16}={m_1+m_6-m_7\over 2}}$, $\displaystyle{s_{17}={m_1+m_7-m_6\over 2}}$ and $\displaystyle{s_{67}={m_6+m_7-m_1\over 2}}$. So, the weights in $A_1$ are determined by $m_1, ~m_6$ and $m_7$.
Similarly, the weights $t_{27},t_{28},t_{78}$ in $A_2$ are  determined by $m_2$, $m_7$ and $m_8$.\\

Since $\gamma$ is in mimimal general position with respect to $\Gamma^{\circ}_+$, $\gamma$ and 
$\Gamma^{\circ}_+$ have no bigon. By Corollary~\ref{T42}, we know that $m_k$ is unique for $k=1,2,...,12$. This implies that the weights $s_{ij}$ and $t_{kl}$ in $A_1$ and $A_2$ respectively are also unique since $m_k$ is unique.\\

 Now, consider $A_1\cup A_2$. Then for the four sides $a_{1},a_{2},a_{6}$ and $a_{8}$ of the rectange $A_1\cup A_2$, let $y_{pq}$ be the number of arcs from $a_p$ to $a_q$ in $A_1\cup A_2$, where $p,q\in\{1,2,6,8\}$. Then the weights $y_{12},y_{16},y_{18},y_{26},y_{28}$ and $y_{68}$ are determined by $\{s_{16},s_{17},s_{67},t_{27},t_{28},t_{78}\}$ as follows. $y_{12}=\min(s_{17},t_{27})$, $y_{16}=s_{16}$, $y_{18}=s_{17}-y_{12}=s_{17}-\min(s_{17},t_{27})$, $y_{26}=t_{27}-y_{12}=t_{27}-\min(s_{17},t_{27})$, $y_{28}=t_{28}$ and $y_{68}=t_{78}-y_{18}=t_{78}-s_{17}+y_{12}=t_{78}-s_{17}+\min(s_{17},t_{27})$.\\

For four sides $a_3,a_4,a_5$ and $a_8$ of $A_3\cup A_4$, let $z_{uv}$ be the number of arcs from $a_u$ to $a_v$ in $A_3\cup A_4$, where $u,v\in\{3,4,5,8\}$. 
Then we note that $z_{uv}$ are determined by the six weights in $A_3$ and $A_4$.\\

Now, consider $H=A_1\cup A_2\cup A_3\cup A_4$. Then we claim that the weights $w_{ij}$ in $H$ are determined by $y_{pq}$ and $z_{uv}$ as follows.\\

$w_{12}=y_{12}$, $w_{13}=\max(0,y_{18}+z_{38}-\max(z_{38},y_{18}+y_{28}))$, $w_{15}=\max(0,y_{18}+z_{58}-\max(z_{58},y_{18}+y_{68}))$, $w_{14}=y_{18}-w_{13}-w_{15}$, $w_{16}=y_{16}$; $w_{23}=\min(y_{28},z_{38})$, $w_{24}=\max(0,y_{48}+z_{28}-\max(z_{28},y_{48}+y_{38}))$, $w_{25}=y_{28}-w_{23}-w_{24}$, $w_{26}=y_{26}$;  $w_{34}=z_{34}$,
$w_{35}=z_{35}$, $w_{36}=z_{38}-w_{13}-w_{23}$; $w_{45}=z_{45}$, $w_{46}=\max(0,y_{48}+z_{68}-\max(z_{68},y_{48}+y_{58}))$; $w_{56}=\min(y_{68},z_{58})$.\\

In order to get the formula for $w_{13}$, we need to consider the two cases that $z_{38}\geq y_{18}+y_{28}$ and $z_{38}<y_{18}+y_{28}$.\\

If $z_{38}\geq y_{18}+y_{28}$, then we see that $w_{13}=y_{18}$.\\

If $z_{38}<g_{18}+y_{28}$, then we see that $w_{13}=\max(y_{18}-((y_{18}+y_{28})-z_{38}),0)=\max(z_{38}-y_{28},0)$.\\

By combining the two cases, we get $w_{13}=\max(0,y_{18}+z_{38}-\max(z_{38},y_{18}+y_{28}))$.\\

Similarly, we can get the formulas for $w_{15}, w_{24}$ and $w_{46}$.\\

Therefore, $w_{ij}$ of $[\gamma]$ for $i,j\in\{1,2,3,4,5,6\}$ are unique if $\gamma$ is in minimal general position.\\

By using symmetry, we also know $w^{kl}$ are determined by the weights in $A_{i}$ for $i=5,6,7,8$.\\

Therefore, $w_{jk}$ and $w^{jk}$ for $j,k\in\{1,2,...,6\}$  are determined by $m_i$ for $i=1,2,...,12$ and this proves the theorem.
\end{proof}

From Theorem~\ref{T42}, we define that  $w_{ij}$ and $w^{ij}$ are $\textit{the weights}$ for the $\textit{isotopy class}$ $[\gamma]$ in the \emph{hexagon} parameterization if $w_{ij}$ and $w^{ij}$ are the weights for a simple closed curve $\delta$ which is isotopic to $\gamma$ and has no bigons with the hexagon.
Now, we want to calculate the weight changes by a half Dehn twist. \\

\begin{figure}[htb]
\begin{center}
\includegraphics[scale=.4]{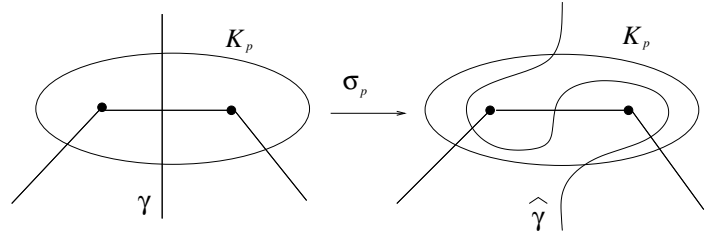}
\end{center}
\caption{The half Dehn twist supported on $K_P$}
\label{D3}
\end{figure}

First, we are getting a new curve $\hat{\gamma}$ as in Figure~\ref{D3} which is a representative of $[\sigma_p(\gamma)]$ that may have a bigon, and so the new weight $w'_{ii}$ may be non-zero. Then $\hat{\gamma}$ will be isotoped to remove all bigons and get the new weights $v_{ij}$ for $[\sigma_p(\gamma)]$.\\

\begin{figure}[htb]
\begin{center}
\includegraphics[scale=.40]{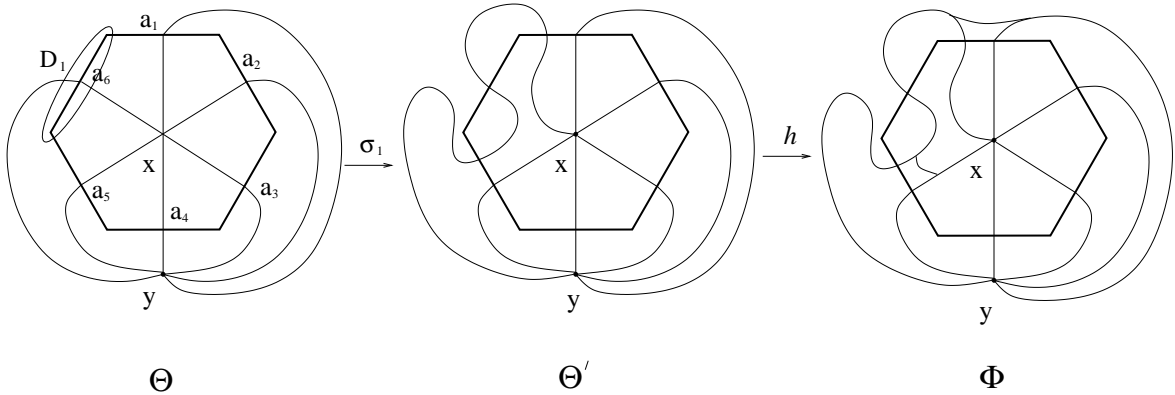}
\end{center}
\caption{The weight changes by the half Dehn twist $\sigma_1$}
\label{D4}
\end{figure}
Let  $w'_{ij}$ and $w'^{ij}$ be the weights for $\sigma_1(\gamma)$ as in the middle diagram of Figure~\ref{D4}. 

\begin{Thm}\label{T44}
Let $w_{ij}$ and $w^{ij}$ be the weights for $[\gamma]$. Then the following formulas give the weights $w'_{ij}$ and $w'^{ij}$ for $\sigma_1(\gamma)$.\\

$w_{12}'=w_{12}+w_{26}$,

$w_{13}'=w_{13}+w_{36}$,

  $w_{14}'=w_{14}+w_{46}$,
  
    $w_{15}'=w_{15}+w_{56}$, 
    
      $w_{16}'=0$;
      
      $w_{23}'=w_{23}$,
      
        $w_{24}'=w_{24}$,
        
          $w_{25}'=w_{25}$,
          
            $w_{26}'=0$;
            
    $w_{34}'=w_{34}$,
    
      $w_{35}'=w_{35}$,
      
         $w_{36}'=0$; 
         
       $w_{45}'=w_{45}$,
       
        $w_{46}'=0$;
        
    $w_{56}'=w_{16}+w_{26}+w_{36}+w_{46}+w_{56}$;
    
    $w'_{11}=w_{16}$;\\

$w'^{52}=w^{52}+w^{26}$, 

$w'^{53}=w^{53}+w^{36}$,

 $w'^{54}=w^{54}+w^{46}$,
 
   $w'^{51}=w^{51}+w^{16}$,
   
     $w'^{56}=0$; 
     
$w'^{23}=w^{23}$, 

$w'^{24}=w^{24}$,

 $w'^{21}=w^{21}$, 
 
  $w'^{26}=0$; 
  
$w'^{34}=w^{34}$,

 $w'^{31}=w^{31}$,
 
   $w'^{36}=0$;
   
 $w'^{41}=w^{41}$,
 
   $w'^{46}=0$;
   
 $w'^{16}=w^{56}+w^{26}+w^{36}+w^{46}+w^{16}$;
 
 $w'^{55}=w^{56}$.

\end{Thm}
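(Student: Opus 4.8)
The plan is to prove these formulas by a direct geometric bookkeeping argument, in the spirit of the classical computation of a Dehn twist in Dehn--Thurston type coordinates. I would begin by fixing a representative of $[\gamma]$ in minimal general position with respect to $\Gamma^\circ$, so that the weights $w_{ij}$ and $w^{ij}$ are realized with no bigons (Corollary~\ref{T42}), and moreover chosen so that $\gamma\cap K_1$ consists of the minimal number of arcs, each crossing $\overline{K_1}$ in the standard way shown in Figure~\ref{D3} (here $K_1$ is the twice-punctured disk supporting $\sigma_1$, written $K_P$ in that figure). The decisive structural fact is that $\sigma_1$ is the identity outside $K_1$: every arc of $\gamma$ disjoint from $K_1$ is fixed pointwise, and this single observation already accounts for every entry of the statement of the form $w'_{ij}=w_{ij}$ and ${w'}^{ij}=w^{ij}$, since an arc running between two hexagon edges that do not meet $\overline{K_1}$ keeps its endpoints and its side ($H$ or $H^c$), hence its contribution to the weights.

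The heart of the argument is then a finite case analysis over the arc types that \emph{do} meet $K_1$ --- the arcs with an endpoint on the hexagon edge adjacent to the punctures enclosed by $K_1$, namely the edge $a_6$ in the labelling of Figure~\ref{D4}. For each such type I would draw its image under $\sigma_1$, exactly as in the passage from the left to the middle diagram of Figure~\ref{D4}. Since $\sigma_1(H)$ differs from the fixed hexagon $H$ inside $K_1$, the image of a single arc of $\gamma\cap H$ (or $\gamma\cap H^c$) may, after cutting along the fixed hexagon, break into several arcs of $\sigma_1(\gamma)$, some lying in $H$ and some in $H^c$; I would isotope away any bigons these pieces form with $\Gamma^\circ$ outside $K_1$ (legitimized by Lemma~\ref{T41}) and record the edge-pair and the side of each resulting piece. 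Summing these contributions over the arc types yields the formulas: the ``shifting'' identities $w'_{1j}=w_{1j}+w_{j6}$ for $j=2,3,4,5$ reflect that the $a_6$-endpoint of an $a_j$--$a_6$ arc in $H$ is carried onto $a_1$ by the twist; the ``collecting'' identity $w'_{56}=w_{16}+w_{26}+w_{36}+w_{46}+w_{56}$ counts the additional $a_5$--$a_6$ pieces produced in $H$ as each such arc is wrapped once around $K_1$; and $w'_{11}=w_{16}$ counts the self-arcs about a puncture left behind by the old $a_1$--$a_6$ arcs. The companion computation inside $H^c$ produces the primed complement weights; alternatively one invokes the orientation-preserving symmetry of the configuration $(\Sigma_{0,6},\epsilon,\Gamma)$ interchanging $H$ and $H^c$ and carrying $\sigma_1$ to a conjugate of itself, which converts the $H$-formulas into the $H^c$-formulas after the appropriate relabelling of indices.

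The step I expect to be the main obstacle is the bigon bookkeeping once several arc families have been dragged onto a common edge (the edge $a_5$, for $w'_{56}$, and its counterpart on the $H^c$ side): one must check that the pictured representative $\hat\gamma$ of $[\sigma_1(\gamma)]$ acquires \emph{only} the self-arc bigons recorded by $w'_{11}$ and ${w'}^{55}$ --- not a bigon between two distinct arcs --- so that the listed numbers are exactly the weights of the middle diagram of Figure~\ref{D4}, with nothing cancelling unexpectedly. A useful consistency check is that the intersection numbers with the edges $a_i$ must transform compatibly with $\sigma_1(\gamma)\cap a_i=\sigma_1(\gamma\cap\sigma_1^{-1}(a_i))$; matching the edgewise totals $\sum_j w'_{ij}$ and $\sum_k {w'}^{ik}$ against this determines the handful of entries --- such as $w'_{14}=w_{14}+w_{46}$ --- that are most naturally obtained as ``whatever is left'' once the other cases along the same edge are settled.
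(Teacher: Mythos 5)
Your proposal is correct and takes essentially the same route as the paper: the paper's own proof is exactly the diagrammatic bookkeeping of Figure~\ref{D4} (arcs missing the twist disk are untouched, arcs meeting it are redrawn and their hexagon weights read off from the middle diagram, with the removal of the bigons recorded by $w'_{11}$ and $w'^{55}$ deferred to the computation of the $v_{ij}$ in Theorem~\ref{T45}). Your write-up merely makes explicit the case analysis, the $H$/$H^c$ symmetry, and the edgewise consistency checks that the paper leaves to the figure.
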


\begin{proof}
To see  weight changes by a half Dehn twist $\sigma_1$, consider Figure~\ref{D4}.

From the two points $x,y$, we have 12 arcs which connect one of the two points and the middle of $a_i$. 
Then this diagram shows all possibilities of the weights. For example, there are arcs from $a_1$ to $x$ and from $x$ to $a_2$. These two arcs show the possibilities for $w_{12}$. Let $\Theta$ be the graph with the two vertices and the six edges. Now, take a proper two punctured disk $D_1$ which contains $a_6$ so that every component of $D_1\cap \gamma$ is essential in $D_1$.
We note that $D_1$ contains punctures $2$ and $3$.  Now, apply a half Dehn twist $\sigma_1$ supported on $D_1$ counter clockwise to the first diagram to get the second diagram. Let $\Theta'$ be the graph which is obtained from $\Theta$ by $\sigma_1$.
 Let $w_{ij}'$ and $w'^{kl}$ be the weights  for $\sigma_1(\gamma)$. We point out that $\sigma_1(\gamma)$ is not isotoped to have minimal intersection with the hexagon when
 $w_{ij}'$ and $w'^{kl}$ are computed. That will happen when $v_{ij}$ and $v^{kl}$ are computed. The formulas above give the weights $w_{ij}'$ and $w'^{kl}$.

\end{proof}

We remark that  if we use the transposition $(1,5)$ then we can get the formulas for $w'^{ij}$ from the formulas for $w'_{ij}$. i.e., we switch the indices $1$ and $5$. For example,  we get $w'^{16}=w^{56}+w^{26}+w^{36}+w^{46}+w^{16}$ from $w_{56}'=w_{16}+w_{26}+w_{36}+w_{46}+w_{56}$.\\

We notice that if we have a subarc of $\sigma_1(\gamma)$ for $w'_{ii}$ or $w'^{ii}$ then we can isotope the subarc across $a_i$ so that eventually $w'_{ii}=w'^{ii}=0$. Let $\Phi$ be the graph which is obtained from $\Theta$ by the isotopy to have $w'_{ii}=w'^{ii}=0$. Then we have the following theorem.

\begin{Thm}\label{T45}
Let $w'_{ij}$ and $w'^{ij}$ be the weights for $\sigma_1(\gamma)$. Then the following formulas give the weights $v_{ij}$ and $v^{ij}$ for $[\sigma_1(\gamma)]$ which has $v_{ii}=v^{ii}=0$ for all $i\in\{1,2,3,4,5,6\}$.\\

$v_{12}=w'_{12}$, $v_{13}=w'_{13}$, $v_{14}=w'_{14}$,

 $v_{15}=\max(w'_{15}-w'^{55},0)$, 

 $v_{16}=\min(w'^{55},w'_{15})$;
   
    $v_{23}=w'_{23}$, $v_{24}=w'_{24}$,
    
     $v_{25}=\min(w'_{25},\max(w'_{15}+w'_{25}+w'_{35}+w'_{45}-w'^{55},0))$, 
    
    $v_{26}=\min(w'_{25},\max(w'^{55}-w'_{15}-w'_{45}-w'_{35},0))$;
  
   $v_{34}=w'_{34}$,
   
    $v_{35}=\min(w'_{35},\max(w'_{15}+w'_{35}+w'_{45}-w'^{55},0))$,
       
    $v_{36}=\min(w'_{35},\max(w'^{55}-w'_{15}-w'_{45},0))$; 
    
    $v_{45}=\min(w'_{45},\max(w'_{15}+w'_{45}-w'^{55},0))$,
    
     $v_{46}=\min(w'_{45},\max(w'^{55}-w'_{15},0)$;
     
$v_{56}=w'_{56}-(w'^{55})$.\\

$v^{25}=w'^{25}$, $v^{35}=w'^{35}$, $v^{45}=w'^{45}$,

 $v^{15}=\max(w'^{15}-w'_{11},0)$,

 $v^{56}=\min(w'_{11},w'^{15})$; 

$v^{23}=w'^{23}$, 

$v^{24}=w'^{24}$,

 $v^{12}=\min(w'^{12},\max(w'^{12}+w'^{13}+w'^{14}+w'^{15}-w'_{11},0))$, 

$v^{26}=\min(w'^{12},\max(w'_{11}-w'^{13}-w'^{14}-w'^{15},0))$; 

$v^{34}=w'^{34}$,

 $v^{13}=\min(w'^{13},\max(w'^{13}+w'^{14}+w'^{15}-w'_{11},0))$,

 $v^{36}=\min(w'^{13},\max(w'_{11}-w'^{14}-w'^{15},0))$;

 $v^{14}=\min(w'^{14},\max(w'^{14}+w'^{15}-w'_{11},0))$, 
 
 $v^{46}=\min(w'^{14},\max(w'_{11}-w'^{15},0))$;
 
$v^{16}=w'^{16}-(w'_{11})$.
\end{Thm}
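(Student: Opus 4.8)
The plan is to prove the formulas by directly analyzing the isotopy of $\sigma_1(\gamma)$ that removes all bigons with the graph $\Gamma^\circ_+$ — i.e.\ the isotopy carrying $\Theta'$ to $\Phi$ — and reading off how it changes the arc counts. \emph{Step 1: isolate the two relevant bigon families.} By Theorem~\ref{T44} the only self-weights of $\sigma_1(\gamma)$ that can be nonzero are $w'_{11}$ (arcs running from $a_1$ back to $a_1$ inside $H$) and $w'^{55}$ (arcs running from $a_5$ back to $a_5$ inside $H^c$); every $w'_{ij}$, $w'^{ij}$ with $i\neq j$ already counts an honest family of arcs. Hence the reduction to minimal general position consists of two independent combings — pushing the $w'_{11}$ nested return arcs across the edge $a_1$, and pushing the $w'^{55}$ nested return arcs across $a_5$ — and these cannot interfere, being supported near disjoint edges on opposite sides of the hexagon. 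Moreover, by the symmetry already recorded after Theorem~\ref{T44} (the transposition $(1\,5)$ together with the exchange $H\leftrightarrow H^c$), the second combing is the image of the first, so it suffices to treat one of them.

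\emph{Step 2: the local computation at $a_5$.} Pushing the $w'^{55}$ return arcs across $a_5$ rearranges only the $H$-arcs of $\sigma_1(\gamma)$ that meet $a_5$ — the families $w'_{15},w'_{25},w'_{35},w'_{45},w'_{56}$ — and leaves every other $H$-weight and all $H^c$-weights fixed, which explains the unchanged entries in the $v_{ij}$ list. The return arcs are mutually parallel and nested, and each elementary push of one across a subarc of $a_5$ deletes it and splices together the two $H$-arcs flanking its feet along $a_5$. Reading off from $\Theta'$ (Figure~\ref{D4}) the linear order $a_5$-$a_6$, $a_1$-$a_5$, $a_4$-$a_5$, $a_3$-$a_5$, $a_2$-$a_5$ in which these arc types occur along $a_5$ and which end the nested bigons wrap around, each splice joins an $a_i$-$a_5$ arc ($i\in\{1,2,3,4\}$) to an $a_5$-$a_6$ arc and produces an $a_i$-$a_6$ arc, and an arc type is consumed only while arcs of that type remain. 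Processing this greedily according to which of the partial sums $w'_{15}$, $w'_{15}+w'_{45}$, $w'_{15}+w'_{45}+w'_{35}$, $w'_{15}+w'_{45}+w'_{35}+w'_{25}$ first exceeds $w'^{55}$ yields, in each case, the counts of surviving arcs and of newly created $a_i$-$a_6$ arcs, and these reassemble into the stated closed forms $v_{16}=\min(w'^{55},w'_{15})$, $v_{15}=\max(w'_{15}-w'^{55},0)$, $v_{56}=w'_{56}-w'^{55}$, and so on. One checks along the way that $v_{ii}=v^{ii}=0$ and that no bigon with another edge arises, so that $\Phi$ represents $[\sigma_1(\gamma)]$ in minimal general position. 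Applying the symmetry then transports each $v_{ij}$-formula to the corresponding $v^{ij}$-formula (for instance $v_{15}=\max(w'_{15}-w'^{55},0)$ becomes $v^{15}=\max(w'^{15}-w'_{11},0)$, and $v_{56}=w'_{56}-w'^{55}$ becomes $v^{16}=w'^{16}-w'_{11}$).

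The main obstacle is entirely in Step 2: pinning down the exact linear order of the five arc types along $a_5$, deciding which side the nested $w'^{55}$ bigons lie on, and then resolving the cascade of splicings correctly in each case so that the separate cases collapse into the single listed formulas. A secondary point requiring care is non-negativity — that $w'_{56}\geq w'^{55}$ and the analogous inequalities hold, so the greedy process never tries to consume a missing arc nor creates a fresh bigon; this follows from the linear relations among the $w'_{ij}$, $w'^{ij}$ that hold because $\gamma$ is a genuine multicurve in minimal position, combined with the formulas of Theorem~\ref{T44}.
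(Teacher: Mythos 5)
Your proposal is correct and follows essentially the same route as the paper: the paper likewise reduces the problem to two independent pushes (of the $w'_{11}$ arcs across $a_1$ and the $w'^{55}$ arcs across $a_5$), carries out exactly your greedy/partial-sum case analysis (its nine cases comparing $w'_{11}$ with $w'^{15}$, $w'^{14}+w'^{15}$, etc., and the Claims (a)--(h)), and obtains the other half of the formulas by the $1\leftrightarrow 5$, $H\leftrightarrow H^c$ symmetry. The only difference is which side you compute explicitly (you comb at $a_5$ and transfer to $a_1$; the paper combs at $a_1$ and transfers to $a_5$), which is immaterial.
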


\begin{proof}

\begin{figure}[htb]
\begin{center}
\includegraphics[scale=.4]{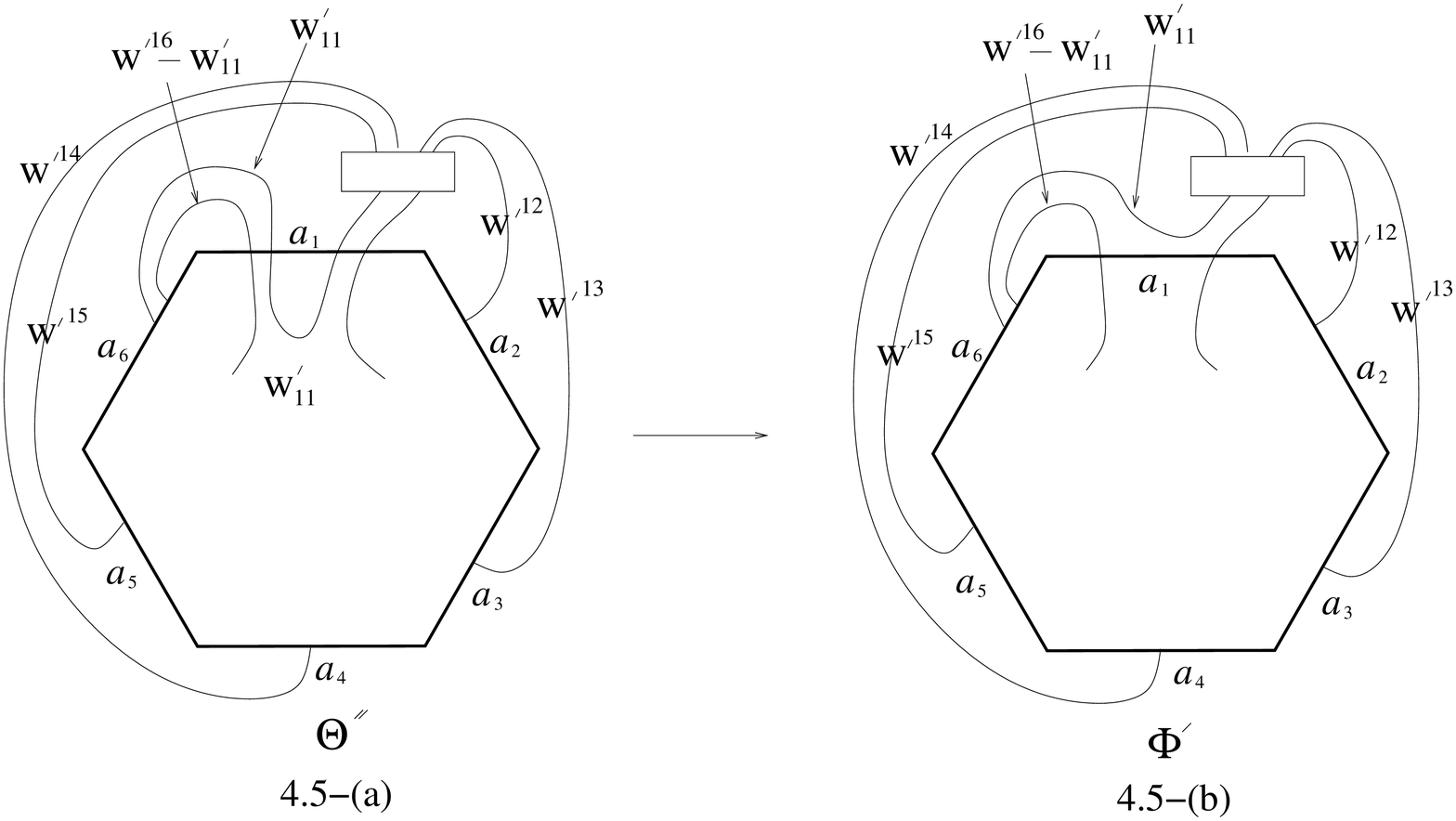}
\end{center}
\caption{The way to have $v_{ii}=0$}
\label{D5}
\end{figure}
Let $\Theta''$ be the union of arcs in the hexagon diagram as in Figure~\ref{D5}-(a) which shows the details of $\Theta'$. Similarly, let $\Phi'$ be the union of arcs in the hexagon diagram as in Figure~\ref{D5}-(b) which shows the details of $\Phi$.\\

 I want to remark that the arcs in $\Theta''$ and $\Phi'$ carry  $\sigma_1(\gamma)$.\\

From the diagram 13-(a), we obtain the diagram 13-(b) by pushing the arcs for $w'_{11}$ across $a_1$ so that we change $w'_{11}$ to $0$. I note that each subarc of $\sigma_1(\gamma)$ from $a_i$ to $a_j$ carries a weight. For example, in 13-(a), assume that the two arcs from $a_1$ to $a_6$ carries the weights $w'^{16}-w'_{11}$ and $w'_{11}$ respectively so that the sum of weights is $w'^{16}$.  
Let $h_1$ be the isotopy move to push the arcs that carring the weight $w'_{11}$  across $a_1$. Also, we know that $h_1\circ\sigma_1$ is isotopic to $\sigma_1$.
Let $h_1(\sigma_1(\gamma))=\gamma'$ and $w''_{ij}$ and $w''^{kl}$ be the weights for $\gamma'$. Then we have weight changes by $\sigma_1$ from $\gamma$ to
$\gamma'$ which has $w''_{11}=0$.\\

We note that $w''_{ij}=w'_{ij}$ for $i\neq j\in\{1,2,3,4,5,6\}$ and $w''_{ii}=0$ for all $i\in\{1,2,3,4,5,6\}$.\\

\begin{figure}[htb]
\begin{center}
\includegraphics[scale=.25]{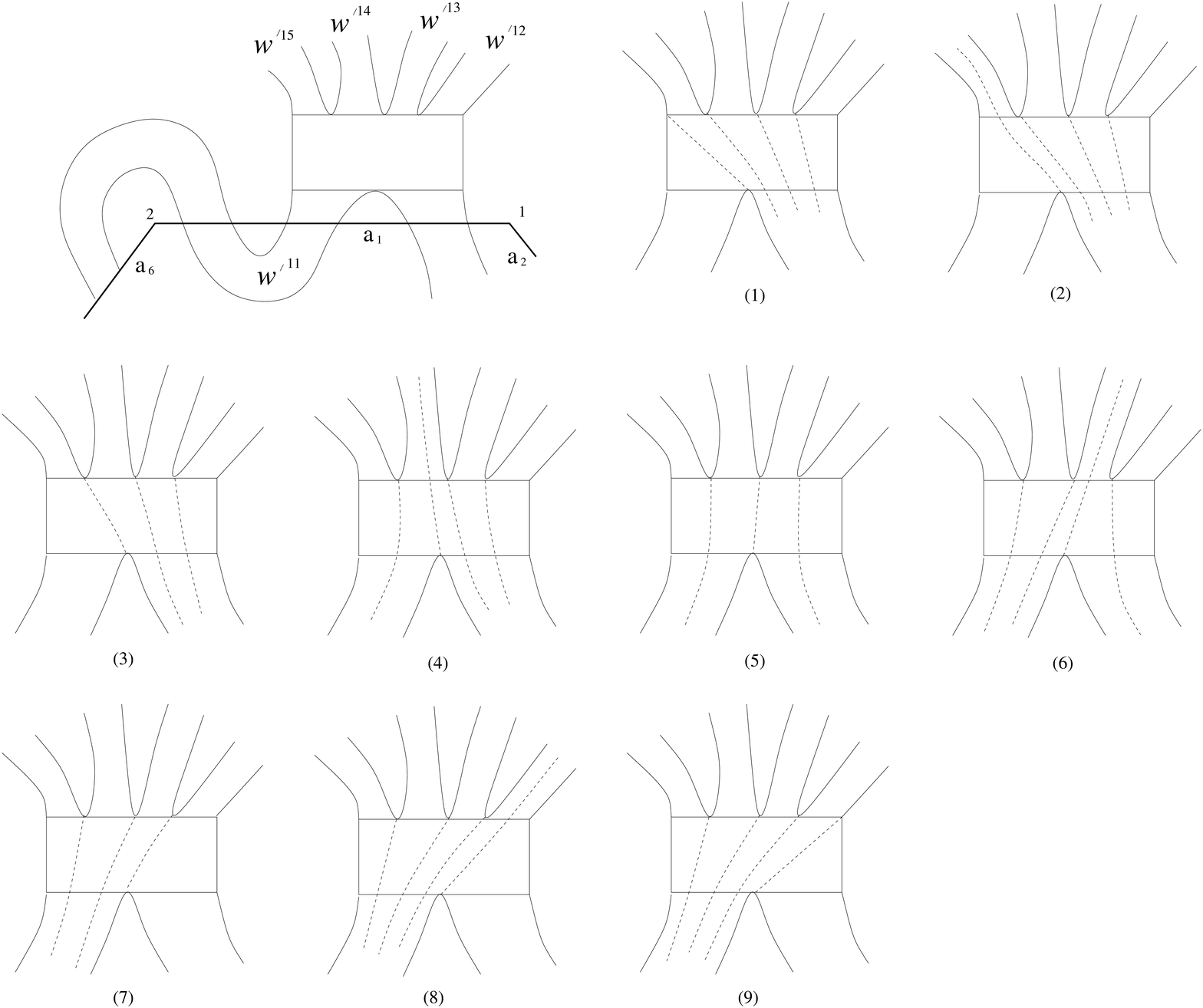}
\end{center}
\caption{Subcases for the weight changes from $w'^{11}$}
\label{D6}
\end{figure}

Now, consider the nine cases to find the formulas for $w''^{kl}$ as in Figure~\ref{D6}.\\

I want to emphasize that  the bands in the diagrams of Figure~\ref{D6} now carries the weight of $\gamma'$.\\

(1) $w_{11}'=0$ (2) $0<w_{11}'<w'^{15}$ (3) $w_{11}'=w'^{15}$ (4) $w'^{15}<w_{11}'<w'^{14}+w'^{15}$\\

 (5) $w_{11}'=w'^{14}+w'^{15}$
(6) $w'^{14}+w'^{15}<w_{11}'<w'^{13}+w'^{14}+w'^{15}$\\

 (7) $w_{11}'=w'^{13}+w'^{14}+w'^{15}$
(8) $w'^{13}+w'^{14}+w'^{15}<w_{11}'<w'^{12}+w'^{13}+w'^{15}+w'^{15}$\\

 (9) $w_{11}'=w'^{12}+w'^{13}+w'^{14}+w'^{15}$\\

Then, we have the following formulas for $w''^{kl}$.\\

$w''^{25}=w'^{25}$, $w''^{35}=w'^{35}$, $w''^{45}=w'^{45}$, \\

$w''^{23}=w'^{23}$, $w''^{24}=w'^{24}$,\\

$w''^{34}=w'^{34}$,\\

$w''^{16}=w'^{16}-(w'_{11})$.\\

Claim (a): $w''^{56}=\min(w'_{11},w'^{15})$.

\begin{proof}
We remind that $w'^{56}=0$.\\

 (i) If $w'_{11}<w'^{15}$ then by (1) and (2) we have $w''^{56}=w'_{11}$. \\

(ii) If $w'_{11}\geq w'^{15}$ then by $(3)-(9)$ we have $w''^{56}=w'^{15}$.
\end{proof}

 Claim (b): $w''^{15}=\max(w'^{15}-w'_{11},0)$.\\
 
 \begin{proof}
 (i) If  $w_{11}'<w'^{15}$ then by (1) and (2) we have $w''^{15}=w'^{15}-w_{11}'$.\\
   
   (ii) If $w_{11}'\geq w'^{15}$ then by $(3)-(9)$ we have $w''^{15}=0$.
   \end{proof}

 Claim (c): $w''^{26}=\min(w'^{12},\max(w'_{11}-w'^{13}-w'^{14}-w'^{15},0))$.

\begin{proof}
Recall that $w'^{26}=0$.\\

(i) If $w'_{11}\leq w'^{13}+w'^{14}+w'^{15}$  then by $(1)-(7)$ we have $w''^{26}=0$. Since $w'_{11}- w'^{13}-w'^{14}-w'^{15}\leq 0$,
 $\max(w'_{11}-w'^{15}-w'^{14}-w'^{13},0))=0$. So, $w''^{26}=\min(w'^{12},0)=0$.\\

(ii) If $w'^{13}+w'^{14}+w'^{15}<w'_{11}<w'^{12}+w'^{13}+w'^{14}+w'^{15}$ then by (8) we have $w''^{26}=w'_{11}-w'^{13}-w'^{14}-w'^{15}$. Since $w'_{11}-w'^{13}-w'^{14}-w'^{15}>0$, $\max(w'_{11}-w'^{13}-w'^{14}-w'^{15},0)=w'_{11}-w'^{13}-w'^{14}-w'^{15}$. Since $w'_{11}-w'^{13}-w'^{14}-w'^{15}<w'^{12}$, we have $w''^{26}=\min(w'^{12},w'_{11}-w'^{13}-w'^{14}-w'^{15})=w'_{11}-w'^{13}-w'^{14}-w'^{15}$. \\

 (iii) If $w'_{11}= w'^{12}+w'^{13}+w'^{14}+w'^{15}$ then by (9) we have $w''^{26}=w'^{12}$. Since $w_{11}'-w'^{13}-w'^{14}-w'^{15}=w'^{12}$, $\max(w'_{11}-w'^{13}-w'^{14}-w'^{15},0)= w'_{11}-w'^{13}-w'^{14}-w'^{15}=w'^{12}$. So, $\min(w'^{12},w'^{12})=w'^{12}$.
 \end{proof}

Claim (d): $w''^{12}=\min(w'^{12}, \max(w'^{12}+w'^{13}+w'^{14}+w'^{15}-w_{11}',0))$.

\begin{proof}
(i) If $w'^{13}+w'^{14}+w'^{15}> w_{11}'$ then by $(1)-(6)$ we have $w''^{12}=w'^{12}$. Since $w'^{13}+w'^{14}+w'^{15}-w_{11}'>0$, we have $\max(w'^{12}+w'^{13}+w'^{14}+w'^{15}-w_{11}',0)=w'^{12}+w'^{13}+w'^{14}+w'^{15}-w_{11}'$. So, $w''^{12}=\min(w'^{12},w'^{12}+w'^{13}+w'^{14}+w'^{15}-w_{11}')=w'^{12}$ since $w'^{13}+w'^{14}+w'^{15}> w_{11}'$. \\

(ii) If $w'^{13}+w'^{14}+w'^{15}\leq w_{11}'<w'^{12}+w'^{13}+w'^{14}+w'^{15}$ then by $(7)$ and $(8)$ we have $w''^{12}=w'^{12}+w'^{13}+w'^{14}+w'^{15}-w_{11}'$. Since $w'^{12}+w'^{13}+w'^{14}+w'^{15}-w_{11}'>0$, $\max(w'^{12}+w'^{13}+w'^{14}+w'^{15}-w_{11}',0)=w'^{12}+w'^{13}+w'^{14}+w'^{15}-w_{11}'$. So, $w''^{12}=\min(w'^{12},w'^{12}+w'^{13}+w'^{14}+w'^{15}-w_{11}')=w'^{12}+w'^{13}+w'^{14}+w'^{15}-w_{11}'$ since  $w'^{13}+w'^{14}+w'^{15}\leq w_{11}'$.\\

 (iii) If $w'^{12}+w'^{13}+w'^{14}+w'^{15}=w_{11}'$ then by $(9)$ we have $w''^{12}=0$. Since $w'^{12}+w'^{13}+w'^{14}+w'^{15}-w_{11}'=0$, we have $\max(w'^{12}+w'^{13}+w'^{14}+w'^{15}-w_{11}',0)=0$. So, $\min(w'^{12},0)=0$.
 \end{proof}

 Claim (e): $w''^{36}=\min(w'^{13},\max(w'_{11}-w'^{14}-w'^{15},0))$.
 
 \begin{proof}
 Recall that $w'^{36}=0$.\\
 
 (i) If $w'_{11}\leq w'^{15}+w'^{14}$ then by $(1)-(5)$ we have $w''^{36}=0$. Since $w'_{11}-w'^{14}-w'^{15}\leq 0$, $\max(w'_{11}-w'^{14}-w'^{15},0)=0$. So, $w''^{36}=\min(w'^{13},0)=0$. \\
  
 (ii) If $w'^{14}+w'^{15}<w'_{11}< w'^{13}+w'^{14}+w'^{15}$ then by (6) we have $w''^{36}=w'_{11}-w'^{14}-w'^{15}$. Since $w'_{11}-w'^{14}-w'^{15}>0$, we have $\max(w'_{11}-w'^{14}-w'^{15},0)= w'_{11}-w'^{14}-w'^{15}$. So, $w''^{36}=\min(w'^{13},w'_{11}-w'^{14}-w'^{15})=w'_{11}-w'^{14}-w'^{15}$ since $w'_{11}< w'^{13}+w'^{14}+w'^{15}$.\\
 
   (iii) If $w'_{11}\geq w'^{13}+w'^{14}+w'^{15}$ then by $(7)-(9)$ we have $w''^{36}=w'^{13}$. Since $w'_{11}-w'^{13}-w'^{14}-w'^{15}\geq 0$, $\max(w'_{11}-w'^{14}-w'^{15},0)=w'_{11}-w'^{14}-w'^{15}$. So, $w''^{36}=\min(w'^{13},w'_{11}-w'^{14}-w'^{15})=w'^{13}$ since $w'_{11}\geq w'^{13}+w'^{14}+w'^{15}$.
   \end{proof}
 
 Claim (f): $w''^{13}=\min(w'^{13},\max(w'^{13}+w'^{14}+w'^{15}-w_{11}',0))$.
\begin{proof}
(i) If  $w'^{14}+w'^{15}\geq w'_{11}$ then by $(1)-(5)$ we have $w''^{13}=w'^{13}$. Since $w'^{14}+w'^{15}- w'_{11}\geq 0$, $\max(w'^{13}+w'^{14}+w'^{15}-w_{11}',0)=w'^{13}+w'^{14}+w'^{15}-w_{11}'$. So, $w''^{13}=\min(w'^{13},w'^{13}+w'^{14}+w'^{15}-w_{11}')=w'^{13}$ since $w'^{14}+w'^{15}\geq w'_{11}$.\\

  (ii) If $w'^{14}+w'^{15}<w'_{11}\leq w'^{13}+w'^{14}+w'^{15}$ then by $(6)$ and $(7)$ we have $w''^{13}=w'^{13}+w'^{14}+w'^{15}-w_{11}'$. Since $w'^{13}+w'^{14}+w'^{15}- w_{11}'>0$, we have $\max(w'^{13}+w'^{14}+w'^{15}-w_{11}',0)=w'^{13}+w'^{14}+w'^{15}-w_{11}'$. So, $w''^{13}=\min(w'^{13},w'^{13}+w'^{14}+w'^{15}-w_{11}')=w'^{13}+w'^{14}+w'^{15}-w_{11}'$ since $w'^{14}+w'^{15}< w_{11}'$.\\
  
  (iii) If $w'^{13}+w'^{14}+w'^{15}<w_{11}'$ then by (8) and (9) we have $w''^{13}=0$.  Since $w'^{13}+w'^{14}+w'^{15}-w_{11}'<0$, $\max(w'^{13}+w'^{14}+w'^{15}-w_{11}',0)=0$. So, $w''^{13}=\min(w'^{13},0)=0$.\end{proof}

 Claim (g): $w''^{46}=\min(w'^{14},\max(w'_{11}-w'^{15},0)$.
 \begin{proof}
  (i) If  $w'_{11}\leq w'^{15}$ then by $(1)-(3)$ we have $w''^{46}=0$. Since $w'_{11}-w'^{15}\leq 0$, $\max(w'_{11}-w'^{15},0)=0$. So, $w''^{46}=\min(w'^{14},0)=0$. \\
  
  (ii) If $w'^{15}<w'_{11}<w'^{14}+w'^{15}$ then by (4) we have $w''^{14}=w'_{11}-w'^{15}$. Since $w'_{11}-w'^{15}>0$, $\max(w'_{11}-w'^{15},0)=w'_{11}-w'^{15}$. So,
  $w''^{46}=\min(w'^{14},w'_{11}-w'^{15})=w'_{11}-w'^{15}$ since $w'_{11}<w'^{14}+w'^{15}$.\\
  
  (iii) If $w'_{11}\geq w'^{14}+w'^{15}$
 then by $(5)-(9)$ we have $w''^{46}=w'^{14}$. Since $w'_{11}-w'^{14}-w'^{15}\geq 0$, we have $\max(w'_{11}-w'^{15},0)=w'_{11}-w'^{15}$. So,  $w''^{46}=\min(w'^{14},w'_{11}-w'^{15})=w'^{14}$ since $w'_{11}\geq w'^{14}+w'^{15}$.\end{proof}
 
  Claim (h): $w''^{14}=\min(w'^{14},\max(w'^{14}+w'^{15}-w'_{11},0))$.
  \begin{proof}
  (i) If $w'^{15}\geq w'_{11}$ then by $(1)-(3)$ we have $w''^{14}=w'^{14}$. Since 
  $w'^{15}-w'_{11}\geq 0$, we have $\max(w'^{14}+w'^{15}-w'_{11},0)=w'^{14}+w'^{15}-w'_{11}$. So, $w''^{14}=\min(w'^{14},w'^{14}+w'^{15}-w'_{11})=w'^{14}$ since $w'^{15}\geq w'_{11}$.\\
  
   (ii) If $w'^{15}<w'_{11}\leq w'^{14}+w'^{15}$ then by (4) and (5) we have $w''^{14}=w'^{14}+w'^{15}-w'_{11}$. Since $w'^{14}+w'^{15}-w'_{11}\geq 0$, we have 
   $\max(w'^{14}+w'^{15}-w'_{11},0)=w'^{14}+w'^{15}-w'_{11}$. So, $w''^{14}=\min(w'^{14},w'^{14}+w'^{15}-w'_{11})=w'^{14}+w'^{15}-w'_{11}$ since $w'^{15}<w'_{11}$.\\
   
   (iii) If $w'^{14}+w'^{15}< w'_{11}$ then by $(6)-(9)$ we have $w''^{14}=0$. Since  $w'^{14}+w'^{15}-w'_{11}<0$, we have $\max(w'^{14}+w'^{15}-w'_{11},0)=0$. So,  $w''^{14}=\min(w'^{14},0)=0$.
   \end{proof}

\begin{figure}[htb]
\begin{center}
\includegraphics[scale=.4]{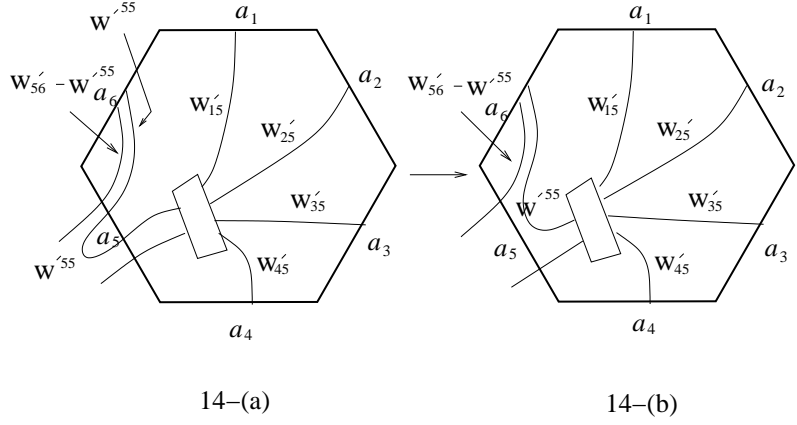}
\end{center}
\caption{The way to make $v^{ii}=0$}
\label{D7}
\end{figure}

Now, by pushing the arcs for $w'^{55}$ across  $a_5$  we  change  $w'^{55}$ to 0. (See Figure~\ref{D7}.)
Let $h_2$ be the isotopy move to push the arcs for $w'^{55}$ across  $a_5$.
We know that $h_2\circ h_1\circ\sigma_1$ is isotopic to $\sigma_1$. Let $h_2(h_1(\sigma_1(\gamma)))=\gamma''$ and $v_{ij}$ and $v^{kl}$ be the weights for $\gamma''$.
 We notice that
pushing the arcs for $w'_{55}$  does not depend on pushing the arcs for $w'^{11}$ since $h_1(\sigma_1(\gamma))$ does not change the weights $w'_{ij}$ if $i\neq j$.
So, by using the diagram which is obtained by switching $H$ and $H^c$, we  have  weight changes of $w'^{kl}$ by $\sigma$ from $\gamma$ to $\gamma''$ which has no bigons. (see Figure~\ref{D7}.) Actually, we have the formulas for $v_{ij}$ from $w''^{ij}$ by replacing $1$ by $5$ and $5$ by $1$.\\

$v_{12}=w'_{12}$, $v_{13}=w'_{13}$, $v_{14}=w'_{14}$,

 $v_{15}=\max(w'_{15}-w'^{55},0)$, 

 $v_{16}=\min(w'^{55},w'_{15})$;
   
    $v_{23}=w'_{23}$, $v_{24}=w'_{24}$,
    
     $v_{25}=\min(w'_{25},\max(w'_{15}+w'_{25}+w'_{35}+w'_{45}-w'^{55},0))$, 
    
    $v_{26}=\min(w'_{25},\max(w'^{55}-w'_{15}-w'_{45}-w'_{35},0))$;
  
   $v_{34}=w'_{34}$,
   
   $v_{35}=\min(w'_{35},\max(w'_{15}+w'_{35}+w'_{45}-w'^{55},0))$,
       
    $v_{36}=\min(w'_{35},\max(w'^{55}-w'_{15}-w'_{45},0))$; 
    
    $v_{45}=\min(w'_{45},\max(w'_{15}+w'_{45}-w'^{55},0))$,
    
     $v_{46}=\min(w'_{45},\max(w'^{55}-w'_{15},0)$;
     
$v_{56}=w'_{56}-(w'^{55})$.

 We  note that $v^{kl}=w''^{kl}$.
 
\end{proof}

Now, we have the following corollary by combining Theorem~\ref{T44} and Theorem~\ref{T45}.

\begin{Cor}\label{T46}
Let $w_{ij}$ and $w^{ij}$ be the weights for $[\gamma]$. Then the following formulas give the weights $v_{ij}$ and $v^{ij}$ for $[\sigma_1(\gamma)]$ which has $v_{ii}=v^{ii}=0$ for all $i\in\{1,2,3,4,5,6\}$.\\

$v_{12}=w_{12}+w_{26}$,

 $v_{13}=w_{13}+w_{36}$,
 
  $v_{14}=w_{14}+w_{46}$, 
  
  $v_{15}=\max(w_{15}+w_{56}-w^{56},0)$, 

 $v_{16}=\min(w^{56},w_{15}+w_{56})$;
   
    $v_{23}=w_{23}$,
    
     $v_{24}=w_{24}$, 
     
     $v_{25}=\min(w_{25},\max(w_{15}+w_{25}+w_{35}+w_{45}+w_{56}-w^{56},0))$, 
    
    $v_{26}=\min(w_{25},\max(w^{56}-w_{15}-w_{56}-w_{45}-w_{35},0))$;
  
   $v_{34}=w_{34}$,
   
    $v_{35}=\min(w_{35},\max(w_{15}+w_{35}+w_{45}+w_{56}-w^{56},0))$,
       
    $v_{36}=\min(w_{35},\max(w^{56}-w_{15}-w_{56}-w_{45},0))$; 
    
    $v_{45}=\min(w_{45},\max(w_{15}+w_{45}+w_{56}-w^{56},0))$,
    
     $v_{46}=\min(w_{45},\max(w^{56}-w_{15}-w_{56},0)$;
     
$v_{56}=w_{16}+w_{26}+w_{36}+w_{46}+w_{56}-(w^{56})$.\\

$v^{52}=w^{52}+w^{26}$,

 $v^{53}=w^{53}+w^{36}$,
 
  $v^{54}=w^{54}+w^{46}$,
  
   $v^{51}=\max(w^{51}+w^{16}-w_{16},0)$,

 $v^{56}=\min(w_{16},w^{51}+w^{16})$; 

$v^{23}=w^{23}$, 

$v^{24}=w^{24}$, 

$v^{12}=\min(w^{12},\max(w^{12}+w^{13}+w^{14}+w^{15}+w^{16}-w_{16},0))$, 

$v^{26}=\min(w^{12},\max(w_{16}-w^{13}-w^{14}-w^{15}-w^{16},0))$; 

$v^{34}=w^{34}$,

 $v^{13}=\min(w^{13},\max(w^{13}+w^{14}+w^{15}+w^{16}-w_{16},0))$,

 $v^{36}=\min(w^{13},\max(w_{16}-w^{14}-w^{15}-w^{16},0))$;

 $v^{14}=\min(w^{14},\max(w^{14}+w^{15}+w^{16}-w_{16},0))$,
 
  $v^{46}=\min(w^{14},\max(w_{16}-w^{15}-w^{16},0))$;
 
$v^{16}=w^{16}+w^{26}+w^{36}+w^{46}+w^{56}-(w_{16})$.

\end{Cor}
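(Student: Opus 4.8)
The plan is to obtain Corollary~\ref{T46} purely by composing the two weight-change rules already established. Theorem~\ref{T44} expresses the weights $w'_{ij}$, $w'^{ij}$ of the not-yet-tightened curve $\sigma_1(\gamma)$ in terms of the weights $w_{ij}$, $w^{ij}$ of $[\gamma]$, while Theorem~\ref{T45} expresses the final weights $v_{ij}$, $v^{ij}$ of $[\sigma_1(\gamma)]$ in terms of $w'_{ij}$, $w'^{ij}$. Since the intermediate weights appearing in Theorem~\ref{T45} are exactly those produced by Theorem~\ref{T44}, Corollary~\ref{T46} follows by substituting the formulas of Theorem~\ref{T44} into those of Theorem~\ref{T45} and simplifying; no new geometric input is needed.

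First I would assemble the substitution table coming from Theorem~\ref{T44}. On the hexagon side: $w'_{1j}=w_{1j}+w_{j6}$ for $j\in\{2,3,4,5\}$, $w'_{ij}=w_{ij}$ for $2\le i<j\le 5$, $w'_{56}=w_{16}+w_{26}+w_{36}+w_{46}+w_{56}$, and, for use in the $\max/\min$ terms, $w'^{55}=w^{56}$ and $w'_{11}=w_{16}$. On the complement side one uses the $1\leftrightarrow 5$ mirror of this list, as remarked right after Theorem~\ref{T44}: $w'^{5j}=w^{5j}+w^{j6}$, $w'^{ij}=w^{ij}$ for $1\le i<j\le 4$, $w'^{16}=w^{16}+w^{26}+w^{36}+w^{46}+w^{56}$, together with $w'_{11}=w_{16}$ and $w'^{15}=w^{15}+w^{16}$. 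Substituting these into Theorem~\ref{T45} then reproduces each displayed identity line by line; for instance $v_{15}=\max(w'_{15}-w'^{55},0)=\max(w_{15}+w_{56}-w^{56},0)$, $v_{25}=\min(w'_{25},\max(w'_{15}+w'_{25}+w'_{35}+w'_{45}-w'^{55},0))=\min(w_{25},\max(w_{15}+w_{25}+w_{35}+w_{45}+w_{56}-w^{56},0))$, and $v_{56}=w'_{56}-w'^{55}=(w_{16}+w_{26}+w_{36}+w_{46}+w_{56})-w^{56}$. The $v^{ij}$ identities drop out the same way from the $w''^{kl}$ formulas proved inside Theorem~\ref{T45} after substituting $w'_{11}=w_{16}$ and the $w'^{1j}$ values above; equivalently they follow from the $v_{ij}$ computation by the $1\leftrightarrow 5$ symmetry already built into Theorem~\ref{T45}.

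The hard part is not the substitution but keeping the indices straight, since two different $1\leftrightarrow 5$ symmetries are in play: the one internal to Theorem~\ref{T45} and the hexagon/complement mirror appearing in Theorem~\ref{T44}. One must also remember that the mixed entries $w'_{15}=w_{15}+w_{56}$ and $w'^{15}=w^{15}+w^{16}$ pick up an extra term, so these have to be expanded before being fed into the $\max/\min$ expressions — dropping the stray $w_{56}$ or $w^{16}$ is the most likely slip. Once the substitution table is fixed, each equality in the statement is a routine one-line check, so I would simply verify them in blocks (the $v_{1j}$, then the $v_{2j}$, and so on, then their mirrors).
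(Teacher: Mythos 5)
Your proposal is correct and follows essentially the same route as the paper: the paper obtains Corollary~\ref{T46} precisely by combining Theorem~\ref{T44} with Theorem~\ref{T45}, i.e.\ by substituting the intermediate weights $w'_{ij}$, $w'^{kl}$ (including $w'_{11}=w_{16}$, $w'^{55}=w^{56}$, $w'_{15}=w_{15}+w_{56}$, $w'^{15}=w^{15}+w^{16}$) into the bigon-removal formulas, which is exactly the substitution table you assemble and verify.
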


Also, we can calculate  weight changes which are affected by $\sigma_1^{-1}$ by using the symmetry as follows.
 $\partial H$ separates $\Sigma_{0,6}$ into two disks $H$ and $H^c$.
Now, we interchange $H$ and $H^c$ while fixing $\sigma_1(\gamma)$.
Then we can get the formulas for the weights $u_{ij}$ and $u^{kl}$ for $[\sigma_1^{-1}(\gamma)]$. In fact, if we switch the upper indices and lower indices, we get the
formulas for the weight changes by $\sigma_1^{-1}$. For example, we get $u^{12}=w^{12}+w^{26}$ from $v_{12}=w_{12}+w_{26}$.\\

Similarly, we  get  weight changes which are effected by $\sigma_i^{\pm 1}$ for $2\leq i\leq 4$ by using a multiple of $60^\circ$ rotation. 
For example, consider $\sigma_3$. First, rotate the hexagon diagram $-120^\circ$ (clockwise) about the center of the hexagon. Let $f$ be the rotation.
 Now apply $\sigma_1$ to $f(\gamma)$ to have the weights $u_{ij}$ and $u^{kl}$ for $[\sigma_1(f(\gamma))]$.
After this, rotate the resulting diagram $+120^\circ$ (couterclockwise) about the center of the hexagaon. Then we obtain the weight change formulas for $[\sigma_3(\gamma)]$.\\

We notice that the permutation $(123456)^{(6-2)}=(153)(264)$ gives the index changes for clockwise $120^\circ$ rotation. From example, $u_{12}=w_{56}$.
Then we can get the weight $v'_{ij}$ and $v'^{kl}$ for $[\sigma_1(f(\gamma))]$ from $u_{ij}$ and $u^{kl}$. Now, we switch the indices to have the weights $v_{ij}$ and $v^{kl}$ for $[\sigma_3(\gamma)]$
by using the permutation $(123456)^{(6-4)}=(135)(246)$ for counterclockwise $120^\circ$ rotation.\\

Now, we can calculate the weights of $G^{-1}F(\partial E_i)$ in the hexagion parameterization by using the formulas given in this section.
We need 30 parameters with integer entries to express a simple closed curve $\gamma$.
We notice that it is possible to have a very long length sequence of five generators of $\pi_1(\Sigma_{0,6})$ to express $\gamma$ if we use the fundamental group argument.
For example, consider a simple closed curve $\gamma$ which has weights $w^{16}=w^{45}=w_{14}=w_{56}=1, w^{15}=w_{15}=20001$ and all the other weights are zero. However, we need a $40004$ length sequence of five generators of $\pi_1(\Sigma_{0,6})$ to express $\gamma$.\\

Despite this benefit, it is difficult to know whether $\gamma$ bounds an essential disk or not from the hexagon prameterization.
So, we will use the \emph{Dehn} parameterization as the follows.

\section{Step 2-1:  Dehn parameterization of $\mathcal{C}$}

Let $\gamma$ be a simple closed curve in $\Sigma_{0,6}$. 
Consider the pair of pants $I:=\partial B^3-\{E_1'\cup E_2'\cup E_3'\}$. \\

\begin{figure}[htb]
\begin{center}
\includegraphics[scale=.32]{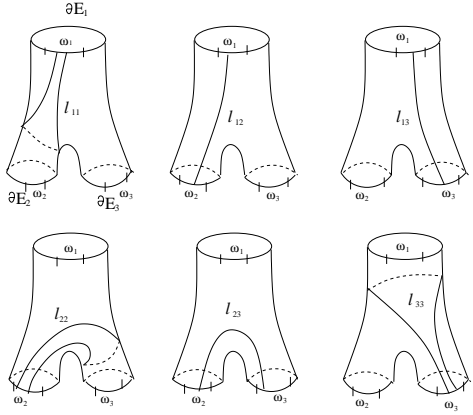}
\end{center}
\caption{Standard arcs $l_{ij}$}
\label{E1}
 \end{figure}

Figure~\ref{E1}  shows $\emph{standard arcs}$ $l_{ij}$ in the pair of pants $I$. We notice that we can isotope $\gamma$ into $\delta$ in $\Sigma_{0,6}$ so that each component of $\delta\cap I$ is isotopic to one of the standard arcs and $\delta\cap \partial E_i\subset \omega_i$. 
Then we say that subarc $\alpha$ of $\delta$ $\emph{is carried by}$ $l_{ij}$ if some component of $\alpha\cap I$ is isotopic to $l_{ij}$. The closed arc $\omega_i\subset \partial E_i$ is called a $\emph{window}$. \\

 Let $I_i=|\delta\cap \omega_i|$. 
Then $\delta$ can have many parallel arcs which are the same type in $I$. Let $x_{ij}$ be the number of parallel arcs of the type $l_{ij}$ which is called the $weight$ of $l_{ij}.$ \\

\begin{figure}[htb]
\begin{center}
\includegraphics[scale=.25]{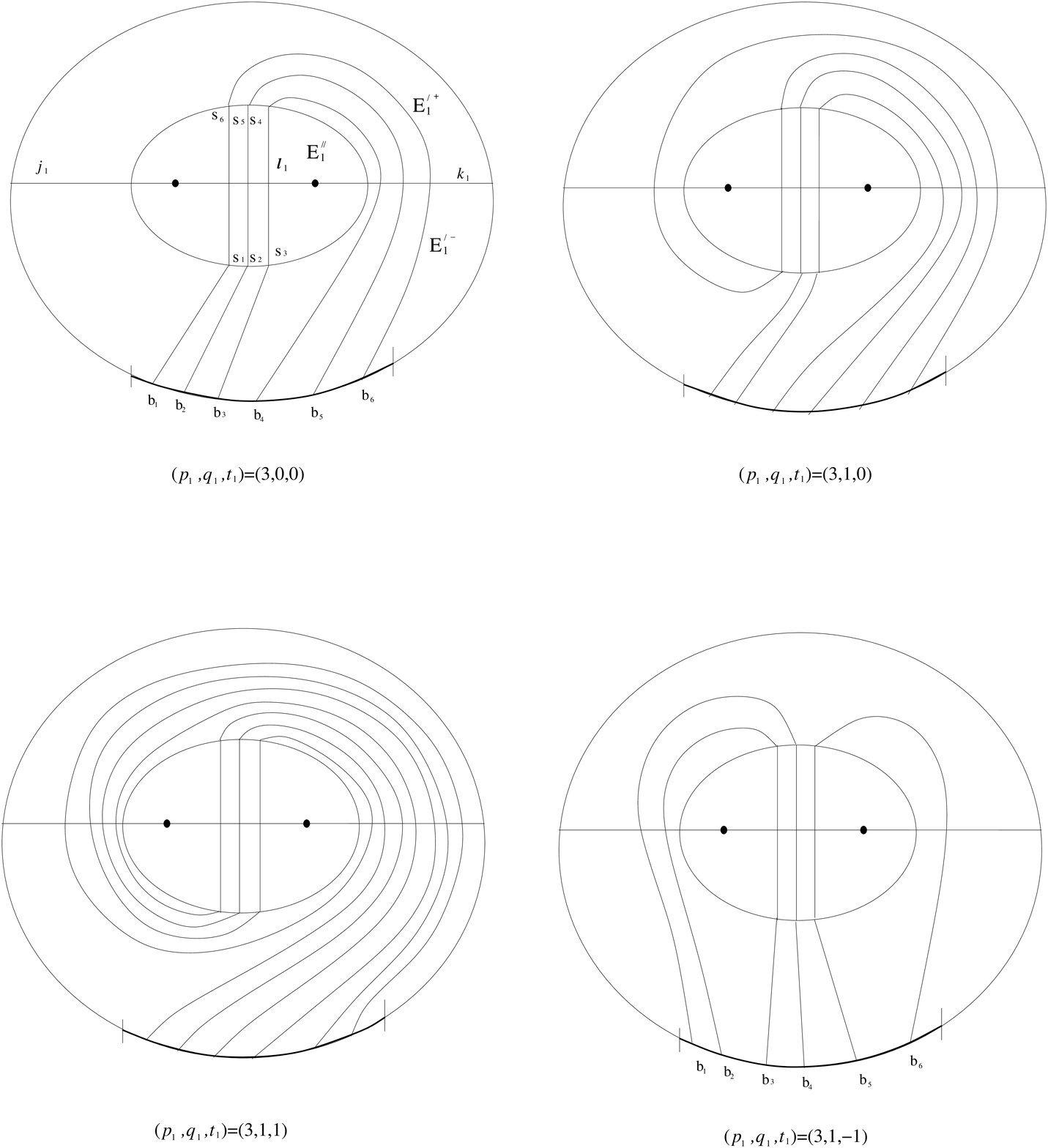}
\end{center}
\caption{}
\label{E2}
 \end{figure}

Now, consider $E_1'$.  Let $j_1$ and $k_1$ be the simple arcs as in Figure~\ref{E2}.
We assume that $\partial E_1'\cup\delta \cup j_1\cup k_1\cup l_1$ has no bigon in $\Sigma_{0,6}$. We note that $j_1\cup k_1\cup l_1$ separates $E_1'$ into two semi-disks ${E_1'}^+$ and ${E_1'}^-$ as in Figure~\ref{E2}.
Let ${u}_1^+$ be the number of subarcs of $\delta$ from $l_1$ to $j_1$ in  ${E_1'}^+$. Also, let ${v}_1^+$ be the number of subarcs of $\delta$ from $l_1$ to $k_1$ in ${E_1'}^+$ and let ${w}_1^+$ be the number of subarcs of $\delta$ from $j_1$ to $k_1$ in ${E_1'}^+$.  
Let $m_1=|\delta\cap j_1|$ and $n_1=|\delta\cap k_1|$ in $E_1'$.  For example, in the third diagram of Figure~\ref{E2}, we have ${u}_1^+=0$, ${v}_1^+=3$, ${w}_1^+=4$, $m_1=4$ and $n_1=7$.\\

 We notice that each component of $\delta\cap E_1'$ meets $l_1$ exactly once. Also, we know that each such component is essential in 
$E_1'-\{1,2\}$.\\

The components of $\delta\cap E_1'$ are determined by three parameters $p_1,q_1,t_1$ as in Figure~\ref{E2},
where $p_1=\min\{|\delta'\cap l_1||\delta'\sim \delta$ in $\Sigma_{0,6}\}$, $q_1\in \mathbb{Z},~0\leq q_1<p_1$. 
In order to define $q_1$ and $t_1$, consider $m_1$ and $n_1$.
 Then we know that $u_1^++v_1^+=p_1$. 
So, $m_1-n_1=(u_1^++w_1^+)-(v_1^++w_1^+)=u_1^+-v_1^+$. Therefore, $-p_1=-u_1^+-v_1^+\leq u_1^+-v_1^+=m_1-n_1=u_1^+-v_1^+\leq u_1^++v_1^+=p_1$.
So, we know $-p_1\leq m_1-n_1\leq p_1$. Now, we define $q_1$ and $t_1$ as follows. If $n_1-m_1=p_1$ then $q_1\equiv m_1$ (mod $p_1$) and $0\leq q_1<p_1$, and $t_1={m_1-q_1\over p_1}$ and if $-p_1\leq n_1-m_1<p_1$ then $q_1\equiv -m_1$ (mod $p_1$) and $0\leq q_1<p_1$, and $t_1={-m_1-q_1\over p_1}$.
Then $t_1$ is called the $twisting$ $number$ in $E_1'$. 
Also, let $(p_1,q_1,t_1)$ be the three parameters to determine the arcs in $E_1'$. Similarly, we have the three parameters $(p_i,q_i,t_i)$ for $E_i'$ ($i=2,3$). 
Then $\gamma$ is determined by a sequence of nine parameters $(p_1,q_1,t_1,p_2,q_2,t_2,p_3,q_3,t_3)$ by Lemma~\ref{T51}.

\begin{figure}[htb]
\begin{center}
\includegraphics[scale=.25]{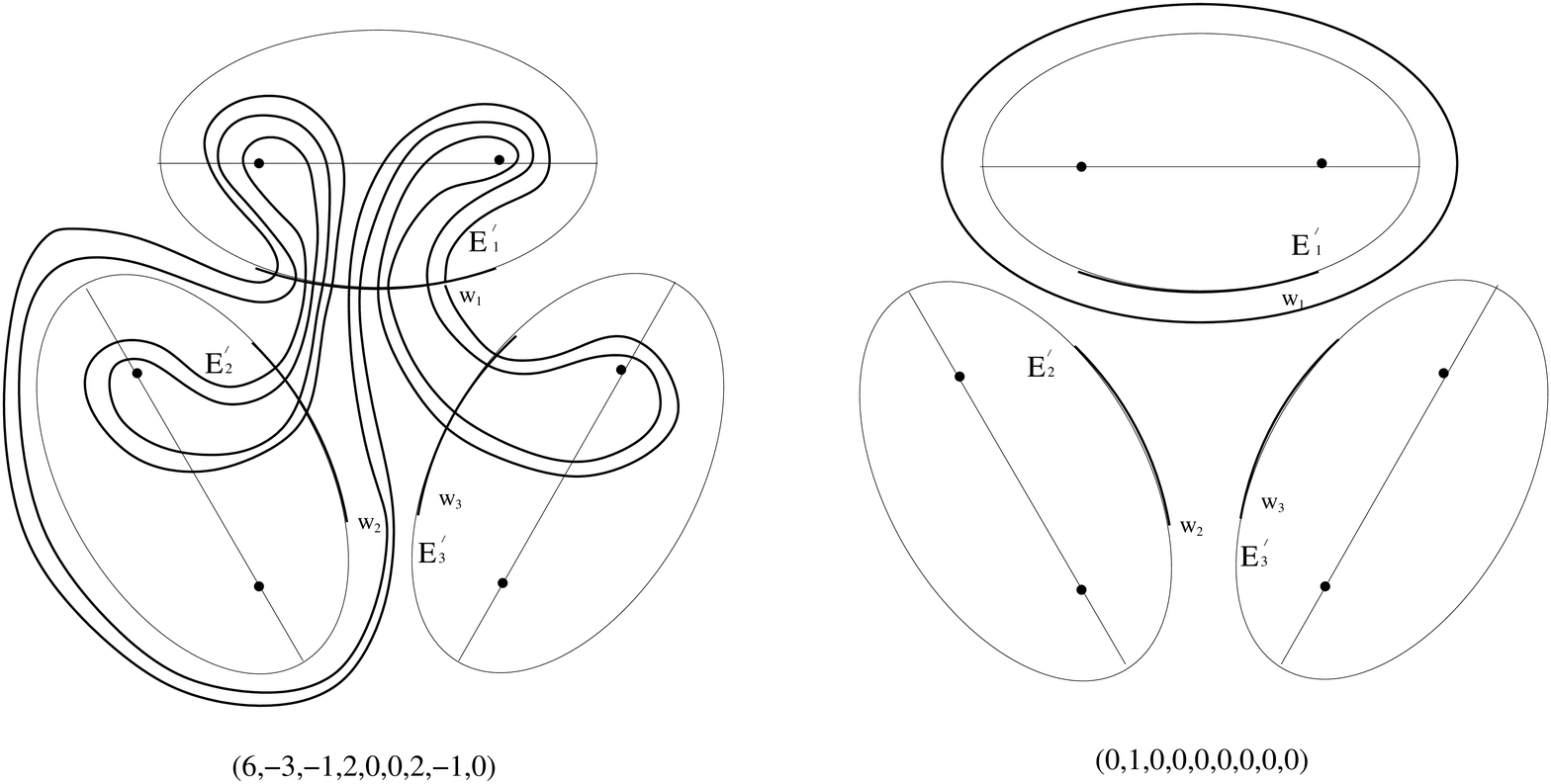}
\end{center}
\caption{Essential curves obtained by a sequece of nine parameters}
\label{E3}
\end{figure}

\begin{Lem}\label{T51}

$I_i$ ($i=1,2,3$) determine the weights $x_{jk}$. ($j,k\in\{1,2,3\}$)
\end{Lem}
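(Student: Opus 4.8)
The plan is to count intersections along the three windows $\omega_i$ and show that the $I_i = |\delta \cap \omega_i|$, together with the finitely many combinatorial types $l_{jk}$ of standard arcs in the pair of pants $I$, force the arc-weights $x_{jk}$ uniquely. First I would observe that each standard arc $l_{jk}$ has its two endpoints on the windows $\omega_j$ and $\omega_k$ (with $l_{jj}$ an arc returning to $\omega_j$), and that since $\delta$ has been isotoped so that every component of $\delta \cap I$ is a standard arc and $\delta \cap \partial E_i \subset \omega_i$, the collection $\{x_{jk}\}$ records \emph{all} of $\delta \cap I$. Counting endpoints on the window $\omega_i$ then gives the linear system
\[
I_i \;=\; 2x_{ii} + \sum_{j \neq i} x_{ij}, \qquad i = 1,2,3,
\]
where the factor $2$ appears because an $l_{ii}$-arc meets $\omega_i$ twice. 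This is three equations in six unknowns $x_{11},x_{22},x_{33},x_{12},x_{13},x_{23}$, so by itself it is underdetermined; the real content of the lemma is that the geometric constraints on a \emph{simple} closed curve eliminate the extra freedom.

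The key step is to rule out the "mixed" configurations. The plan is to argue that for a simple closed curve $\delta$ in minimal position, at most one of the three types $l_{ii}$ can have positive weight, and moreover that the presence of a "trivial" return arc $l_{ii}$ is incompatible with the arcs $l_{jk}$ for $\{j,k\}\ne\{i\}$ crossing it, because two standard arcs of different types that both appear must be realized disjointly inside $I$ (a pair of pants), and the isotopy classes of disjoint standard arc systems in a pair of pants are very restricted. Concretely, I would enumerate the disjoint multi-arc patterns in $I$: an essential simple arc multicurve in a pair of pants with endpoints confined to three windows is, up to isotopy, determined by how many arcs run between each ordered pair of boundary windows and how many are boundary-parallel "returns", and these patterns do not allow, e.g., both $x_{12}>0$ and $x_{33}>0$ unless forced. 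After this reduction the linear system becomes triangular — determined by which single $x_{ii}$ (if any) is nonzero — and solving it expresses every $x_{jk}$ as an explicit (piecewise-linear, min/max) function of $I_1, I_2, I_3$, exactly as in the hexagon-to-Dehn formulas of the previous section.

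The main obstacle I anticipate is the case analysis showing that the return-arc weights $x_{ii}$ are pinned down: a priori one could trade an $l_{ii}$-arc together with some $l_{ij}$, $l_{ik}$ arcs for a different combination realizing the same window counts, and ruling this out requires using that $\delta$ is a single simple closed curve (so the arcs must close up consistently around the three disks $E_i'$) rather than an arbitrary arc system in $I$. I would handle this by tracking, around each $\partial E_i'$, how the $x_{ij}$ arc-ends match up with the $p_i$ arcs entering $E_i'$ through $l_i$; the cyclic order on $\omega_i$ and the no-bigon hypothesis then force a unique matching, hence unique $x_{jk}$. Once that matching is fixed, the remaining computation is the routine min/max bookkeeping already illustrated in Lemma 4.5 and its corollaries, so I would only state the resulting formulas rather than rederive each branch.
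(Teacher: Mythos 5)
Your strategy is essentially the paper's proof: count endpoints on the windows to get $I_i=2x_{ii}+\sum_{j\neq i}x_{ij}$, use the pair-of-pants disjointness restrictions (at most one $x_{ii}$ can be positive, and $x_{ii}>0$ forces $x_{jk}=0$ for the complementary pair $\{j,k\}$), and then solve. The one point you treat as an "obstacle" is in fact a one-line arithmetic consequence of the equations you already wrote, and it is exactly how the paper splits into cases: if some $x_{ii}>0$ then $I_j=x_{ij}$, $I_k=x_{ik}$ and $I_i=2x_{ii}+I_j+I_k>I_j+I_k$, while if all $x_{ii}=0$ then $I_i=x_{ij}+x_{ik}\leq I_j+I_k$; so the $I_i$ alone detect which case occurs, and in each case the solution is unique ($x_{ij}=\tfrac{I_i+I_j-I_k}{2}$ in the first case, $x_{ij}=I_j$, $x_{ik}=I_k$, $x_{ii}=\tfrac{I_i-I_j-I_k}{2}$ in the second). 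Your proposed fallback via the cyclic order on $\omega_i$ and the no-bigon matching around $\partial E_i'$ is therefore unnecessary, and it would also be slightly off-target: it would show that $\delta$ determines the $x_{jk}$, whereas the lemma asserts determination by $I_1,I_2,I_3$ alone, which is precisely what the triangle-inequality comparison delivers.
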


\begin{proof} We have two subcases for this.
First, suppose that $I_i<I_j+I_k$ for all distinct $i,j,k\in\{1,2,3\}$. 
We claim that $x_{11}=x_{22}=x_{33}=0$. If not, then $x_{ii}>0$ for some $i$.
We notice that $x_{jj}=x_{kk}=0$. So we have $I_i=2x_{ii}+x_{ij}+x_{jk}$, $I_j=x_{ij}$ and $I_k=x_{ik}$.
This shows that $2x_{ii}+x_{ij}+x_{jk}<x_{ij}+x_{ik}$. This makes a contradiction. So, $x_{11}=x_{22}=x_{33}=0$.
Now, we have $I_i=x_{ij}+x_{ik}$. This implies that
 $x_{ij}={{I_i+I_j-I_k}\over 2}$. \\

Now, suppose that $I_i>I_j+I_k$ for some $i$.
 Then we notice that $\gamma$ has $I_i=2x_{ii}+x_{ij}+x_{ik}$, $I_j=x_{ij}$ and $I_k=x_{ik}$.\\

This implies that $x_{ij}=I_j,~ x_{ik}=I_k$ and $x_{ii}={I_i-I_j-I_k\over 2}$. 

\end{proof}

Recall that $\mathcal{C}$ is the set of isotopy classes of simple closed curves in $\Sigma_{0,6}$.
For a given simple closed curve $\delta$ in a hexagon diagram, we define $p_i$, $q_i$ and $t_i$ in $E_i'$ as above. Then 
let $q_i'=p_it_i+q_i$ for $i=1,2,3$. 
\begin{Thm} [Special case of Dehn's Theorem ]\label{T52}

There is an one-to-one map $\phi:\mathcal{C}\rightarrow\mathbb{Z}^6$ so that $\phi(\delta)=(p_1,p_2,p_3,q_1',q_2',q_3')$. i.e., it classifies isotopy classes of simple closed curves.
 \end{Thm}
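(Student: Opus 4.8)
The plan is to show that $\phi$ is well defined and injective; since the statement only asserts that $\phi$ is one-to-one, I would not try to describe the image (which is in any case constrained, e.g.\ by the usual condition that $q_i'\geq 0$ whenever $p_i=0$). Throughout I work with a representative of $[\delta]$ in minimal general position with respect to the $1$-complex made of $\partial E_1'\cup\partial E_2'\cup\partial E_3'$, the auxiliary arcs $j_i,k_i,l_i$ inside the twice-punctured disks $E_i'$, and the standard arcs $l_{jk}$ inside the pair of pants $I$.

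For well-definedness, the first point is that such a minimal representative exists and that the intersection counts it realizes are intrinsic: by the bigon criterion of Lemma~\ref{T41} and Corollary~\ref{T42}, any two minimal representatives give the same values of $|\delta\cap l_i|$, of $m_i=|\delta\cap j_i|$ and $n_i=|\delta\cap k_i|$, and of $I_i=|\delta\cap\omega_i|$. Now $p_i$ is the minimum of $|\delta'\cap l_i|$ over the isotopy class, so it depends only on $[\delta]$, and a minimal representative achieves $|\delta\cap l_i|=p_i$; then $q_i$ (the residue of $\pm m_i$ mod $p_i$) and the twisting number $t_i$ are computed from $m_i,n_i,p_i$ by the explicit formulas stated just before the theorem, so $q_i'=p_it_i+q_i$ is an isotopy invariant. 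The degenerate cases in which some $p_i=0$ — where $\delta$ misses $E_i'$ and is boundary-parallel inside one of the four pieces — I would dispose of by hand, setting $q_i'=0$ there.

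For injectivity, suppose $\phi(\delta)=\phi(\delta')$ and put both curves in minimal position. Cutting $\Sigma_{0,6}$ along $\partial E_1'\cup\partial E_2'\cup\partial E_3'$ turns $\delta$ and $\delta'$ into arc systems in $E_1',E_2',E_3'$ and $I$ that meet along the windows $\omega_i$. Because each component of $\delta\cap E_i'$ crosses $l_i$ exactly once, one has $I_i=2p_i$, so $\delta$ and $\delta'$ induce the same triple $(I_1,I_2,I_3)$ and hence, by Lemma~\ref{T51}, the same weights $x_{jk}$; therefore $\delta\cap I$ and $\delta'\cap I$ are isotopic in $I$ (the isotopy allowed to slide endpoints within the windows). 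Inside each $E_i'$, the triple $(p_i,q_i,t_i)$ — recovered from $(p_i,q_i')$ by $q_i\equiv q_i'\pmod{p_i}$, $0\le q_i<p_i$, and $t_i=(q_i'-q_i)/p_i$ — agrees for $\delta$ and $\delta'$, and a properly embedded essential $1$-manifold in a twice-punctured disk whose every component meets $l_i$ once is determined up to isotopy by $(p_i,q_i,t_i)$; this is the elementary twice-punctured-disk instance of Dehn's theorem and follows from the bookkeeping of $u_i^\pm,v_i^\pm,w_i^\pm$ given above. Finally I would patch the pieces: both arc systems meet $\partial E_i'$ in the same $2p_i$ points, and the shared twisting number $t_i$ is exactly the rotational offset relating the way the $I$-arcs and the $E_i'$-arcs attach, so after an isotopy of $I$ supported near $\partial E_1'\cup\partial E_2'\cup\partial E_3'$ the four piecewise isotopies agree on that curve system; gluing them produces an ambient isotopy of $\Sigma_{0,6}$ carrying $\delta$ to $\delta'$, i.e.\ $[\delta]=[\delta']$.

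The hard part will be this last gluing step, namely verifying that the combinatorial recipe defining $q_i$ and $t_i$ via the crossings $m_i,n_i$ of $\delta$ with $j_i$ and $k_i$ matches the number of Dehn twists about $\partial E_i'$ that one must perform to splice the $I$-side pattern to the $E_i'$-side pattern, with minimality (the no-bigon results, Lemma~\ref{T41} and Corollary~\ref{T42}) ruling out any hidden slack. A secondary but necessary chore is a careful treatment of the window $\omega_i$ and of the degenerate configurations $p_i=0$, so that the $t_i$ are honest integers rather than integers modulo something.
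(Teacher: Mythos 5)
Your main line of argument (well-definedness via the bigon criterion, then injectivity by cutting along $\partial E_1'\cup\partial E_2'\cup\partial E_3'$, matching the $I$-side weights $x_{jk}$ through Lemma~\ref{T51}, matching the $E_i'$-side arc systems through $(p_i,q_i,t_i)$, and regluing) is the standard proof of Dehn's coordinatization theorem, and it is a reasonable route. Note, however, that the paper itself gives no proof of Theorem~\ref{T52}: it is stated as a special case of Dehn's theorem with a citation to Penner--Harer, so your proposal supplies an argument where the paper defers to the literature. The step you yourself flag as "the hard part" -- checking that the combinatorially defined twisting number $t_i$ is exactly the offset needed to splice the $I$-side pattern to the $E_i'$-side pattern -- is indeed where the whole content of the theorem lives, and as written it remains a sketch; that is acceptable only to the extent one is willing to lean on the cited general theorem, which is what the paper does anyway.

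There is one concrete error: your treatment of the degenerate case. You propose to set $q_i'=0$ whenever $p_i=0$. But the three curves $\partial E_1'$, $\partial E_2'$, $\partial E_3'$ are pairwise non-isotopic essential curves, each disjoint (up to isotopy) from all three arcs $l_1,l_2,l_3$, so each has $(p_1,p_2,p_3)=(0,0,0)$; with your convention all three are sent to $(0,0,0,0,0,0)$, and $\phi$ fails to be one-to-one exactly where the statement requires it. This is why, immediately after the theorem, the paper adopts the standard Dehn--Thurston convention: when $p_1=p_2=p_3=0$ the "twist" coordinate of the annulus containing the curve is set to $1$ (i.e.\ $t_i'=1$ for the curve isotopic to $\partial E_i'$ and $t_j'=0$ for $j\neq i$), so that the coordinate records which boundary curve (more generally, how many parallel copies of it) one has. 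The fix is routine, but as proposed your map does not classify the boundary-parallel-to-$\partial E_i'$ classes, so the degenerate case cannot simply be "disposed of by hand" with the zero convention.
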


When $p_1=p_2=p_3=0$ then $t_i'=1$ if the simple closed curve is isotopic to $\partial E_i'$ and $t_j'=0$ if $j\neq i$.
Refer~\cite{6} to see the general Dehn's theorem.\\

We will use a sequence of nine parameters instead of six parameters for convenience.

\section{ Step 2-2: Hexagon diagram and Dehn diagram}

Let $(p_1,q_1,t_1,p_2,q_2,t_2,p_3,q_3,t_3)$ be the nine parameters of $\gamma$ in $\Sigma_{0,6}$.
Assume that $\gamma$ bounds an essential disk $A$ in $B^3-\epsilon$.
Then we notice that $|\gamma\cap l_i|$ is an even number and  $H_i$ cannot contain a bigon component of the closures of $A-E$.
If $H_i$ contains a bigon $\Delta$, then $\Delta$ will meet $\epsilon_i$. This makes a contradiction that $A$ is an essential disk in $B^3-\epsilon$.\\

To have an essential disk $A$ which is not parallel to one of the $E_i$, at least two components of the closures of $A-E$ are bigon in $P$ since $H_i$ cannot have a bigon.
So, $\gamma$ has  $l_{ii}$ arcs in $I$ for some $i$.
We notice that we cannot have $l_{ii}$ and $l_{jj}(j\neq i)$ at the same time.
 We assume that $x_{11}>0$ and $x_{22}=x_{33}=0$. If not, then we rotate $\gamma$ a multiple of $120^\circ$ about the center of $H$ to have $x_{11}>0$. Note that  the $120^\circ$ rotation  preserves  $\infty$ tangle.
Essential curves obtained by a sequece of nine parameters
The following lemma is very useful to simplify the sequence of parameters of $\gamma$.

\begin{Lem}\label{T61}
 Let $N$ be an essential disk in $B^3-\epsilon$ and $h$ be the clockwise half Dehn twist supported on $N'$ which is the 2-punctured disk in $\Sigma_{0,6}$ so that $\partial N=\partial N'$ . Then  $\gamma$ bounds an essential disk in $B^3-\epsilon$ if and only if  $h(\gamma)$ bounds an essential disk in $B^3-\epsilon$. 
 \end{Lem}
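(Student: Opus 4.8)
The statement is the natural invariance of the property ``$\gamma$ bounds an essential disk in $B^3-\epsilon$'' under a half Dehn twist $h$ supported on a $2$-punctured disk $N'$ whose boundary already bounds an essential disk $N$ in $B^3-\epsilon$. The whole point is that $h$ is not just a homeomorphism of $\Sigma_{0,6}$ but extends to a homeomorphism of the pair $(B^3,\epsilon)$; once that is in place the equivalence is formal, just as in the proof of Lemma~\ref{T61}'s analogues earlier in the paper. So my plan is: first construct the extension, then push the disk across it.

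First I would build a homeomorphism $\widehat{h}\colon(B^3,\epsilon)\to(B^3,\epsilon)$ extending $h$. Since $\partial N'=\partial N$ and $N$ is a properly embedded essential disk in $B^3-\epsilon$, the union $N\cup N'$ bounds a $3$-ball $\mathcal{B}$ in $B^3$, and because $N'$ is a $2$-punctured disk, $\mathcal{B}\cap\epsilon$ consists of exactly two trivial (unknotted, by Lemma~\ref{T31}) subarcs of $\epsilon$ with both endpoints on $N'$. This is precisely the local picture of the balls $B_i$ used to define the extensions $\tau_i$ in Figure~\ref{B3}: a ball meeting $\partial B^3$ in a twice-punctured disk and meeting $\epsilon$ in two trivial arcs. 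Hence, exactly as for $\tau_i$, I define $\widehat{h}$ to be the identity on $\mathrm{cl}(B^3-\mathcal{B})$ and on $\mathcal{B}$ to be an extension of $h|_{N'}$ that performs the clockwise half twist on the two trivial arcs inside $\mathcal{B}$; this exists and is a homeomorphism of $(B^3,\epsilon)$ by the same Alexander-trick argument (Lemma~\ref{T22}) used in Section~2. Note $\widehat{h}$ need not be the identity on $\partial B^3$, but that is irrelevant here: we only need a homeomorphism of the pair $(B^3,\epsilon)$.

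With $\widehat{h}$ in hand, the equivalence is immediate. If $\gamma$ bounds an essential disk $A$ in $B^3-\epsilon$, then $\widehat{h}(A)$ is a properly embedded disk disjoint from $\widehat{h}(\epsilon)=\epsilon$ with boundary $\widehat{h}(\gamma)$; since $\widehat{h}$ fixes $\epsilon$ setwise and $\widehat{h}|_{\Sigma_{0,6}}=h$, and $h$ sends a curve enclosing two punctures to a curve enclosing two punctures, $\widehat{h}(A)$ is essential. Because $\widehat{h}|_{\Sigma_{0,6}}=h$, we have $\widehat{h}(\gamma)\sim h(\gamma)$ in $\Sigma_{0,6}$, so $h(\gamma)$ also bounds an essential disk in $B^3-\epsilon$ (boundary-parallel curves bound boundary-parallel disks, so essentiality is an isotopy invariant of the boundary curve). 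The converse follows by applying the same argument to $\widehat{h}^{-1}$, which is the extension of the counterclockwise half twist.

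The only real content is the first step: verifying that the hypothesis ``$N$ is an essential disk in $B^3-\epsilon$ with $\partial N=\partial N'$ and $N'$ a $2$-punctured disk'' forces $N\cup N'$ to bound a ball $\mathcal{B}$ meeting $\epsilon$ in exactly two trivial arcs, so that $h$ genuinely extends over $B^3$ respecting $\epsilon$. This uses that $\partial N=\partial N'$ separates $\partial B^3$ into $N'$ (two punctures) and its complement (four punctures), that $N$ together with either side of $\partial B^3$ bounds a ball (irreducibility of $B^3$), that $A$ essential means we may take the ball on the $N'$ side, and that the two strands of $\epsilon$ inside it are unknotted trivial arcs by Lemma~\ref{T31} — matching the setup of Figure~\ref{B3} exactly. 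Everything after that is the same formal disk-pushing already carried out in the proof of Theorem~\ref{T32}.
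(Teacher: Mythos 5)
Your overall strategy is the right one and is, in spirit, the same as the paper's: extend the half twist $h$ over $B^3$ compatibly with $\epsilon$ and then transport the disk. But the justification of the crucial extension step rests on an incorrect local picture. Since $N$ is disjoint from $\epsilon$ and $N'$ contains only two punctures, every strand of $\epsilon$ meeting the ball $\mathcal{B}$ bounded by $N\cup N'$ must lie entirely inside $\mathcal{B}$ with both endpoints among the two punctures of $N'$; hence $\mathcal{B}\cap\epsilon$ is exactly \emph{one} whole strand $\epsilon_i$ (this is why the paper observes that the two punctures of $N'$ are the punctures of some $E_i'$), not ``two trivial subarcs.'' The two-subarc picture of Figure~\ref{B3} arises only because the frontier disk of $B_i$ there crosses $\epsilon$ twice; here $N$ is disjoint from $\epsilon$, so that model does not apply. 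Consequently the extension you invoke ``exactly as for $\tau_i$'' (twisting two arcs past each other) is not available: what is needed is an end-over-end flip of a single unknotted strand whose endpoints are exchanged by $h|_{N'}$, and that requires its own argument. Also, Lemma~\ref{T31} does not show $\epsilon_i$ is unknotted in $\mathcal{B}$ (that needs, e.g., a cut-along-$N$/handlebody argument using that $\epsilon$ is the trivial tangle), and Lemma~\ref{T22} (Alexander) does not by itself produce an extension that preserves the arc.

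There is a second, related gap: even with the correct flip, the natural extension $\widehat{h}$ satisfies only that $\widehat{h}(\epsilon)$ is \emph{tangle-equivalent} to $\epsilon$ (the endpoints of $\epsilon_i$ are interchanged without changing the tangle type), not $\widehat{h}(\epsilon)=\epsilon$ setwise as your argument assumes. This is precisely the point where the paper does extra work: it records $(B^3,\epsilon)\approx(B^3,H(\epsilon))$, builds the auxiliary homeomorphism $J$ of $(B^3,\epsilon)$, and then invokes Theorem~\ref{T32} to conclude that $h(\gamma)$ bounds an essential disk. Your proof can be repaired by composing $\widehat{h}$ with a boundary-fixing homeomorphism carrying $\widehat{h}(\epsilon)$ back to $\epsilon$ (or by quoting Theorem~\ref{T32} as the paper does), but as written the step ``$\widehat{h}(\epsilon)=\epsilon$, hence $\widehat{h}(A)$ is an essential disk in $B^3-\epsilon$'' is not justified.
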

 
  \begin{proof}
Let $B_1$ and $B_2$ be the closures of two components of $B^3-N$. Assume that $B_1$ contains one arc of $\epsilon_i$ for $i=1,2,3$.
Consider an extended homeomorphism $H^{-1}$ of $h^{-1}$ from $(B^3,H(\epsilon))$ to $(B^3,\epsilon)$ so that $H(N)=N$ and $H|_{B_2}=id_{B_2}$. Then, $H^{-1}$ interchanges the endpoints of the properly embedded arc in $B_1$ without changing the tangle type. So, we know that $(B^3,\epsilon)\approx (B^3, H(\epsilon))$.
Now, we know that there exists $i$ so that $E_i'$  contains the two punctures of $N'$ since $N$ is an essential disk in $B^3-\epsilon$.
Let $K_1$ and $K_2$ be the closure of two components of $B^3-E_i$. Actually, $K_1=H_i$ and $K_2=B^3-H_i$.
Let $M_1$ and $M_2$ be the closure of two components of $B^3-N$. We assume that $K_1$ and $M_1$ contains only the same two punctures.
Now, we can construct a homeomorphism $J$ from $(B^3,\epsilon)$ to $(B^3,\epsilon)$ so that $J(K_1)=M_1$, $J(K_2)=M_2$ and $J(\epsilon_i)=\epsilon_i$ for  $i=1,2,3$. 
So, we know that $(B^3,\epsilon)\approx (B^3,J(\epsilon))=(B^3,\epsilon)$. This implies that $(B^3,H^{-1}(\epsilon))\approx (B^3,J(\epsilon)).$
By using Theorem~\ref{T32}, we know that $(H^{-1})^{-1}J(\partial E_i)=H(\gamma )=h(\gamma)$ bounds an essential disk in $B^3-\epsilon$.\\

To see the other direction, we consider $h^{-1}$ which is the counter-clockwise half Dehn twist supported on $N'$. 
\end{proof}

By using this lemma, we notice that a simple closed curve $\gamma'$ which is parameterized by $(p_1,q_1,0,p_2,q_2,0,p_3,q_3,0)$  bounds an essential disk in $B^3-\epsilon$ if only if $\gamma$ does. \\

Now, we will discuss how to modify $\gamma$ into $\gamma'$ which is parameterized by $(p_1,q_1,0,p_2,$ $q_2,0,p_3,q_3,0)$.\\

\begin{figure}[htb]
\begin{center}
\includegraphics[scale=.25]{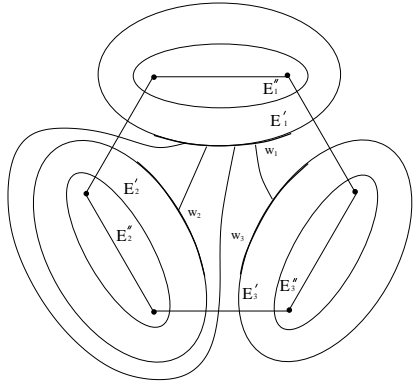}
\end{center}
\caption{Pseudo-hexagon diagram}
\label{F1}
\end{figure}

In $\Sigma_{0,6}$, we choose windows $\omega_i$ and three two punctured disks $E_i'$ to have a pseudo-hexagon diagram as in Figure~\ref{F1}.
We isotope $\gamma$ into $\delta$ in $\Sigma_{0,6}$ so that all components of $\delta\cap I$ are parallel to one of the standard arcs as in Figure~\ref{F1}. i.e., $l_{12}$ and $l_{13}$ lie in $H$ and $l_{11}$ meets the hexagon at exactly two times. \\

Now, consider the graph $(\cup_{j=1}^6a_j)\cup\partial E'\cup \delta$.
Then we assume that $\partial E_i'\cup \delta\cup(E_i'\cap \cup_{j=1}^6a_j)$ has no bigon for $i=1,2,3$. Then let $v_{ij}$ be the number of arcs for $\delta$ from $a_i$ to $a_j$ in $H$ and
let $v^{ij}$ be the number of arcs for $\delta$ from $a_i$ to $a_j$ in $H^c$.

\begin{Lem}\label{T62}
Suppose that $\delta$ is a simple closed curve in pseudo-hexagon diagram for which $x_{11}>0$. Then $v_{11}=v_{33}=v_{44}=v_{55}=v_{35}=0$ and $v^{ii}=0$ for all $i=1,2,3,4,5,6$.
\end{Lem}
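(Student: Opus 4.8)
The plan is to analyze the components of $\delta\cap H$ and $\delta\cap H^c$ one family at a time, using the hypothesis $x_{11}>0$ together with the no-bigon assumption on $\partial E_i'\cup\delta\cup(E_i'\cap\bigcup_{j=1}^6 a_j)$. The starting point is that in the pseudo-hexagon diagram of Figure~\ref{F1} the standard arcs $l_{12}$ and $l_{13}$ lie inside $H$, while $l_{11}$ crosses the hexagon boundary exactly twice; the remaining pair-of-pants arcs $l_{1i}$ ($i=2,3$) and $l_{jk}$ ($j,k\neq1$) sit in $H^c$. So each component of $\delta$ is built from these pieces, and I would first record exactly which arcs $a_j$ each standard arc meets and on which side of $\partial H$. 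This is the bookkeeping that converts the geometric picture into the combinatorial statement about the $v$'s.

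Next I would dispose of the $v^{ii}=0$ claims and of $v_{11}=0$ first, since these are the ``obvious'' normal-position facts: a subarc of $\delta$ contributing to $v^{ii}$ (respectively $v_{11}$) returns to the same $a_i$ without leaving $H^c$ (respectively $H$), so it cuts off a subdisk of $H^c$ (respectively $H$); that subdisk either is a genuine bigon with $a_i$—excluded by hypothesis after we note the relevant $a_i$ lies in $E_i'$ or can be pushed to—or it encloses a puncture, which is impossible because $\delta$ is essential and each component of $\delta\cap E_i'$ is essential in the twice-punctured disk. The only subtlety is to check that ``innermost such subarc'' really does bound an honest bigon with the graph $\bigcup a_j$, which is exactly Corollary~\ref{T42}/Lemma~\ref{T41} applied in the local disk; I would cite those.

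For $v_{33}=v_{44}=v_{55}=v_{35}=0$ the argument is different and this is where the hypothesis $x_{11}>0$ does the real work. The punctures $3,4,5$ are the vertices bounding the arcs $a_3,a_4,a_5$ that lie in the region of the hexagon ``cut off'' by where $l_{11}$ must pass; since $x_{11}>0$, $\delta$ already contains an arc of type $l_{11}$ crossing $\partial H$ at two prescribed points, and a disjoint essential simple closed curve cannot also have an arc of type $l_{33}$, $l_{44}$, $l_{55}$, or $l_{35}$ without either intersecting that $l_{11}$-arc or being forced (by planarity inside the hexagon, together with the fact that no component of $\delta$ may enclose a single puncture) to create a bigon with $\bigcup a_j$. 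Concretely I would argue by contradiction: assume, say, $v_{35}>0$; take the relevant subarc of $\delta$ joining $a_3$ to $a_5$ in $H$, and together with the $l_{11}$-subarc and the boundary arcs of $H$ it separates $H$ into pieces whose puncture count forces a trivial (puncture-free or single-puncture) complementary region, contradicting essentiality or the no-bigon hypothesis. The cases $v_{33},v_{44},v_{55}$ are the degenerate sub-cases of the same separation argument.

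The main obstacle I anticipate is purely organizational: making the planarity/separation argument in the third paragraph airtight requires naming the finitely many combinatorial types of how an $a_i$-to-$a_j$ arc can sit in $H$ relative to the mandatory $l_{11}$-crossing, and checking each one yields either a bigon or an enclosed puncture. I would handle this by fixing the concrete picture of Figure~\ref{F1}, labeling the at most two complementary subdisks of $H\setminus(l_{11}\text{-arc})$, noting which punctures lie in each, and observing that an arc contributing to any of $v_{33},v_{44},v_{55},v_{35}$ must have both endpoints on the boundary of a single one of these subdisks; then an innermost such arc bounds a subdisk of that piece whose puncture set is empty or a singleton, which is the contradiction. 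Everything else is a direct application of Lemma~\ref{T41} and Corollary~\ref{T42}, plus the essentiality of $\delta$ and of each $\delta\cap E_i'$.
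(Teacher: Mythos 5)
Your plan has a genuine gap, and it sits in your second paragraph (and then infects the third). In the pseudo-hexagon diagram the no-bigon hypothesis is only the \emph{local} condition that $\partial E_i'\cup\delta\cup(E_i'\cap\bigcup_{j}a_j)$ has no bigon for each $i$; the curve $\delta$ is \emph{not} assumed to be in minimal position with respect to the $a_j$ globally. Consequently an arc of $\delta\cap H$ (or of $\delta\cap H^c$) returning to the same side $a_i$ is not automatically forbidden: the disk it cuts off is always puncture-free (the punctures are the vertices of the hexagon, and the subarc of the open side between the two endpoints contains none of them), so your ``encloses a puncture'' branch is vacuous; and the cut-off disk is a bigon excluded by the hypothesis only if it lies inside a single $E_i'$ --- which it need not, since the arc may exit through the window $\omega_i$, travel through the pair of pants, and come back. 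In fact your dichotomy, as stated, would equally prove $v_{22}=v_{66}=0$, which is false in general: Lemma~\ref{T63} exists precisely because those two weights can be positive and have to be pushed across $a_2$ and $a_6$ afterwards. So the argument proves too much and cannot be the real mechanism.

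What actually forces the stated vanishings is the Dehn normal form inside the $E_i'$: each component of $\delta\cap E_i'$ is essential in the twice-punctured disk and meets the core arc ($a_1$, $a_5$, $a_3$ respectively) exactly once, and each component of $\delta\cap I$ is carried by one of $l_{11},l_{12},l_{13},l_{23}$, with $l_{23}$ excluded because $x_{11}>0$ (your separation observation in the pair of pants is the correct reason that $x_{11}>0$ forces $x_{23}=0$, and it does dispose of $v_{44}$ and $v_{35}$). For $v_{11},v_{33},v_{55}$ and for the $v^{ii}$ one must run the case analysis over which standard arc carries the offending arc: for instance, a $v_{11}$-arc carried by $l_{12}$ would have to enter and leave $E_2'$ through $\omega_2$ without crossing the core, producing a subarc parallel into $\omega_2$ and hence a bigon \emph{inside} $E_2'$ --- this is where the local no-bigon hypothesis is genuinely used --- while an arc carried by none of the $l_{1j}$ is parallel in $E_1'$ into $a_1$, contradicting that each component of $\delta\cap E_1'$ meets $a_1$ exactly once; for $v_{33},v_{55}$ the contradiction is with essentiality in $E_1'-\{1,2\}$ and the ``meets the core exactly once'' property, not with a puncture count; and for $v^{22},v^{44},v^{66}$ one also needs the structural fact that no arc of $\delta\cap H^c$ joins two distinct $E_i'$, so that both endpoints lie in one $E_i'$ and the resulting bigon really is inside that disk. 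None of these inputs appear in your outline, and the contradiction you propose (an innermost arc cutting off a region with empty or singleton puncture set) is not, by itself, a contradiction in this setting.
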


\begin{proof}  
Suppose that $v_{11}>0$. Then there is a component $\alpha$ of $\delta\cap H$ with $\partial \alpha\subset a_1$. We notice that $\alpha$ cannot be carried by $l_{11}$ since $l_{11}$ arc meets $a_6$ and $a_4$. If $\alpha$ is carried by $l_{12}$, then $\alpha$ needs to meet $\omega_2$ at least two times. However, $\alpha$ cannot meet $a_4,~a_5$ or $a_6$ in $E_2'$. Therefore, there exists an arc which is parallel in $E_2'$ to a subarc of $\omega_2$. So, we can isotope the arc out of $E_2'$. This makes a bigon in $E_2'$. This contradicts the definition of a pseudo-hexagon diagram.
Therefore, $\alpha$ cannot be carried by $l_{12}$. Similarly, $\alpha$ cannot be carried by $l_{13}$. If it cannot be carried by $l_{11},~l_{12}$ or $l_{13}$ then $\alpha$ is parallel in $E_1'$ to an arc in $a_1$. This contradicts the fact that each component of $\delta\cap E_1'$ meets $a_1$ exactly once.\\

Suppose that $v_{33}>0$. Then there is a component $\alpha$ of $\delta\cap H$ with $\partial \alpha\subset a_3$.
We notice that $a_3\subset E_3''$. So, $\partial \alpha$ is in $E_3''$. If $\alpha$ is carried by $l_{13}$, $\alpha$ needs to pass through $\omega_1$. However, it cannot come back to $\omega_1$ without meet $a_6,~a_1$ or $a_2$ because it is essential in $E_1'-\{1,2\}$.  Therefore, $\alpha$ is not carried by $l_{13}$. If $\alpha$ cannot be carried by $l_{13}$ then $\alpha$ is parallel in $E_3'$ to an arc in $a_3$. This contradicts the fact that each component of $\delta\cap E_3'$ meets $a_3$ exactly once.  With a similar argument, we also can show that $v_{55}=0$. \\

Suppose that $v_{44}>0$. Then there is a component $\alpha$ of $\delta\cap H$ with $\partial \alpha\subset a_4$. If $\alpha$ cannot be carried by $l_{23}$, then $\alpha$ is parallel in either $E_2'$ or $E_3'$ to an arc in $a_5$ or $a_3$ respectively. This contradicts the fact that each component of $\delta\cap E_2'$ or $\delta\cap E_3'$ meets $a_5$ or $a_3$ exactly once.  So, $\alpha$ needs to be carried by $l_{23}$. However, we know that $x_{23}=0$ since $x_{11}>0$. This implies that $v_{44}$ also should be zero. \\

Suppose that $v_{35}>0$. Let $\alpha$ be an arc for $v_{35}$. Then the endpoints lie in both $E_2'$ and $E_3'$. Therefore, $\alpha$ need to be carried by $l_{23}$.
However, $x_{23}=0$ since $x_{11}>0$. This implies that $v_{35}=0$.\\

Suppose that $v^{11}>0$. Then there is a component $\alpha$ of $\delta\cap H^c$ with $\partial \alpha\subset a_1$. We notice that $\alpha$ is parallel in $E_1'$ to an arc in $a_1$. This condradicts the fact that each component of $\delta\cap E_1'$ meets $a_1$ exactly once. 
Therefore, $v^{11}=0$. With a similar argument, we can show that $v^{33}=v^{55}=0$.\\

Suppose that $v^{22}>0$. Then there is a component $\alpha$ of $\delta\cap H^c$ with $\partial \alpha\subset a_2$. We notice that both of the endpoints of $\alpha$ lies in $E_1'$ or $E_3'$ since there is no subarc of $\delta$ from $E_1'$ to $E_2'$, from $E_1'$ to $E_3'$ or from $E_2'$ to $E_3'$ in $H^c$. So, $\alpha$ is parallel in $E_1'$ or $E_3'$ to an arc in $a_2$. So, we can isotope $\alpha$ in $E_1'$ or $E_3'$ to reduce the intersection number of $\delta$ with $\cup_{i=1}^6a_i$. This contradicts that there is no bigon in $E_3'$  in pseudo-hexagon diagram. Therefore, $v^{22}=0$.\\

Similarly, we can show that  $v^{44}=v^{66}=0$.
\end{proof}
This lemma shows that  $v_{22}$ and $v_{66}$ are the only $v_{ii}$ which might be positive integers.\\

\begin{figure}[htb]
\begin{center}
\includegraphics[scale=.25]{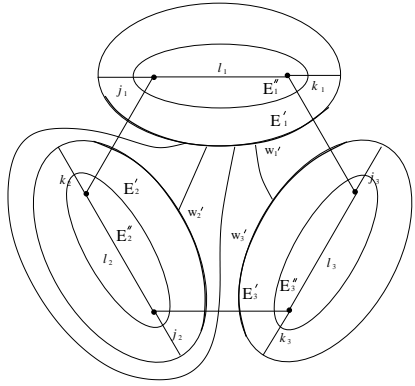}
\end{center}
\caption{Pseudo-hexagon diagram and Hexagon diagram}
\label{F2}
\end{figure}

Now, we take  new windows $\omega'_i$ as in Figure~\ref{F2}.\\

Then, we can isotope all arcs for $v_{22}$ and $v_{66}$ by pushing across $a_2$ and $a_6$ respectively to have a new simple closed curve $\eta$ with windows $\omega'_i$. Let $u_{ij}$ be the number of arcs of $\eta$ from $a_i$ to $a_j$ in $H$. Also, let $u^{ij}$ be the number of arcs of $\eta$ from
$a_i$ to $a_j$ in $H^c$.

\begin{Lem}\label{T63}
Suppose that $\delta$ is a simple closed curve in pseudo-hexagon diagram. Then  $\eta$ has $u_{ii}=u^{ii}=0$ for all $i=1,2,3,4,5,6$.
\end{Lem}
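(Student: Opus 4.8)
The plan is to show that the only $v_{ii}$ that could have been positive before taking new windows were $v_{22}$ and $v_{66}$ (this is exactly the content of Lemma~\ref{T62}, applied to $\delta$), and that the isotopy producing $\eta$ eliminates these while not creating any new $u_{ii}$ or $u^{ii}$. So first I would invoke Lemma~\ref{T62}: since $x_{11}>0$ for $\delta$, we already know $v_{11}=v_{33}=v_{44}=v_{55}=v_{35}=0$ and $v^{ii}=0$ for all $i$, so that among the lower-index self-weights $v_{ii}$, only $v_{22}$ and $v_{66}$ can be nonzero, and among the upper-index self-weights all $v^{ii}=0$.

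Next I would examine the passage from $\delta$ (with windows $\omega_i$) to $\eta$ (with windows $\omega_i'$) depicted in Figure~\ref{F2}. The construction pushes every arc counted by $v_{22}$ across $a_2$ and every arc counted by $v_{66}$ across $a_6$. I would argue that after this push, no arc of $\eta$ has both endpoints on the same $a_i$ inside $H$: the arcs formerly contributing to $v_{22}$ now run from $a_2$ into $H^c$ (or connect $a_1$ to $a_3$ through the enlarged window region, depending on how they sat relative to $E_1', E_3'$), and symmetrically for the $v_{66}$ arcs across $a_6$; crucially this local modification does not alter which sides of $H$ the other arcs meet, so it cannot turn an arc with distinct endpoints into one with coincident endpoints. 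Hence $u_{ii}=0$ for all $i$. For the $u^{ii}$: since $v^{ii}=0$ for all $i$ already, I need only check that pushing the $v_{22}$- and $v_{66}$-arcs across $a_2$ and $a_6$ into $H^c$ does not create an arc of $\eta\cap H^c$ with both endpoints on a single $a_i$. The image arc of a $v_{22}$-arc meets $a_2$ once on each end before the push; after the push its endpoints lie on the two windows $\omega_1'$ and $\omega_3'$ (respectively on $a_1$ and $a_3$), so it is an $a_1$-to-$a_3$ arc in $H^c$, not a self-arc; an analogous statement holds for the $v_{66}$-arcs. If some pushed arc did momentarily yield an arc with both endpoints on $a_2$ in $H^c$, that arc would be parallel in $E_1'$ or $E_3'$ to a subarc of $a_2$, contradicting (as in the proof of Lemma~\ref{T62}) the fact that each component of $\delta\cap E_i'$ meets $a_i$ exactly once, together with the no-bigon hypothesis on the pseudo-hexagon diagram. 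Therefore $u^{ii}=0$ for all $i$ as well.

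The main obstacle I anticipate is purely bookkeeping: making precise, for each of the several positional cases of how a $v_{22}$- or $v_{66}$-arc sits with respect to $E_1'$, $E_2'$, $E_3'$ and the standard arcs $l_{ij}$, that the push-across move sends it to a legitimate non-self arc with respect to the new windows $\omega_i'$, and that no bigon or new self-intersection pattern is introduced in the process. This is exactly the kind of case analysis already carried out in Lemma~\ref{T62}, so I would handle it by the same style of argument — in each case, a hypothetical surviving self-arc would be parallel in some $E_i'$ to a subarc of $a_i$ or of a window, contradicting either the ``meets $a_i$ exactly once'' property or the no-bigon assumption. With those contradictions in hand, $u_{ii}=u^{ii}=0$ for all $i\in\{1,2,3,4,5,6\}$, completing the proof.
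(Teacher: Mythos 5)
Your overall route is the same as the paper's: invoke Lemma~\ref{T62} to see that only $v_{22}$ and $v_{66}$ can be nonzero among the self-weights, then argue that pushing those arcs across $a_2$ and $a_6$ creates no new self-weights. The $u_{ii}=0$ part of your argument is fine. The gap is in the $u^{ii}$ part. First, your claim that the image of a $v_{22}$-arc has its endpoints on $a_1$ and $a_3$ is both unjustified and too strong: after the push the new component of $\eta\cap H^c$ is the union of the old $v_{22}$-arc with the two adjacent components of $\delta\cap H^c$, and tracing those (as the paper does) one only gets that one endpoint lies on $a_1$ or $a_6$ and the other on $a_3$ or $a_4$ (types $u^{13},u^{14},u^{63},u^{64}$); similarly the pushed $v_{66}$-arcs give $u^{14},u^{15},u^{24},u^{25}$.

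More seriously, you treat the two pushes as independent, but they interact: if an endpoint of a $v_{22}$-arc $\alpha_1$ is joined, across $a_2$, to an arc $\alpha_3$ counted by $v^{26}$ whose other end abuts a $v_{66}$-arc $\alpha_2$, then pushing $\alpha_1$ across $a_2$ and $\alpha_2$ across $a_6$ merges everything into a single component of $\eta\cap H^c$, and the two remaining endpoints could a priori both land on $a_4$, i.e.\ produce $u^{44}>0$. This is exactly the configuration the paper spends the second half of its proof excluding, by observing that such a combined arc $\alpha_1\cup\alpha_3\cup\alpha_2$ would intersect $E_1'$ in a non-essential arc of $E_1'-\{1,2\}$, contradicting the Dehn-diagram property that every component of $\delta\cap E_1'$ is essential in the twice-punctured disk. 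Your proposal never identifies this case, and the contradictions you sketch (a hypothetical self-arc on $a_2$, or parallelism of a single pushed arc to some $a_i$ or a window) do not apply to it; the generic appeal to ``the same style of case analysis as Lemma~\ref{T62}'' does not supply the needed essentialness argument, so as written the proof does not establish $u^{ii}=0$ for all $i$.
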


\begin{proof}
We notice that all arcs for $v_{ij}$  are essential in $H$ if $i\neq j$. 
Also, all arc for $v^{ij}$ are essential in $H^c$ if $i\neq j$. By Lemma~\ref{T62}, we know that $v^{ii}=0$ for all $i$ and $v_{22}$ and $v_{66}$ are the only $v_{ii}$ which might be positive integers.\\

Now, let $\alpha_1$ be an arc for $v_{22}$.
Then, we claim that  $\alpha_1$ is carried by $l_{13}$. If $\alpha_1$ is not carried by $l_{13}$ then the endpoints lie in either $E_1'$ or $E_3'$. So, $\alpha_1$ is parallel in $E_1'$ or $E_3'$ to an arc in $a_2$. So, we can isotope $\alpha_1$ in $E_1'$ or $E_3'$ to reduce the intersection of $\delta$ with $a_2$. This contradicts the fact that there is no bigon in $E_i'$ in a pseudo-hexagon diagram. Therefore, $\alpha_1$ is carried by $l_{13}$.  Let $x$ be the endpoint of $\alpha_1$ in $E_1'$ and $y$ be the endpoint of $\alpha_1$ in $E_3'$. Let $\beta_1$ be the component  of $(\delta-\alpha_1)\cap E_1'$ which contains $x$. Then let $\theta_1$ be the component of $\beta_1\cap H^c$ which contains $x$. Then the other endpoint $z_1$ of $\theta_1$ lies on either $a_1$ or $a_6$.  Similarly, Let $\beta_2$ be the component of $(\delta-\alpha_1)\cap E_3'$ which contains $y$ and $\theta_2$ be the component of $\beta_2\cap H^c$ which contains $y$. Then the other endpoint $z_2$ of $\theta_2$ lies on either $a_3$ or $a_4$.\\

Therefore, by pushing $\alpha_1$ across $a_2$ we get an arc for either $u^{13},~u^{14},~u^{63}$ or $u^{64}$. We notice that each arc for $u^{13},~u^{14},~u^{63}$ or $u^{64}$ is essential in $H^c$. \\

Now, let $\alpha_2$ be an arc for $v_{66}$. Then , we know that $\alpha_2$ is carried by $l_{11}$ or $l_{12}$.
 Let $x'$ be the endpoint of $\alpha_2$ in $E_1'$ and $y'$ be the endpoint of $\alpha_2$ in ${E_1'}^c$. Then let $\beta_1'$ be the component  of $(\delta-\alpha_2)\cap E_1'$ 
which contains $x'$ and $\theta_1'$ be the component of $\beta_1'\cap H^c$ which contains $x'$.  Then the other endpoint $z_1'$ of $\theta_1'$ lies on either $a_1$ or $a_2$. Similarly, let  $\beta_2'$ be the component of $(\delta-\alpha_2)\cap (E_1')^c$ which contains $y$ and let $\theta_2'$ be the component of $\beta_2'\cap H^c$ which contains $y$. Then the other endpoint $z_2'$ of $\theta_2'$ lies on either  $a_4$ or $a_5$.\\

Therefore, by pushing $\alpha_2$ across $a_6$ we get an arc for either $u^{14},~u^{15},~u^{24}$ or $u^{25}$. We notice that each arc for $u^{14},~u^{15},~u^{24}$ or $u^{25}$ is essential in $H^c$. It is possible to use an arc $\alpha_3$ for $v^{26}$ to have a new arc $\alpha=\alpha_1\cup\alpha_2\cup\alpha_3$ so that $|\alpha_1\cap \alpha_3|=|\alpha_2\cap\alpha_3|=1$. So, if we push $\alpha_1$ and $\alpha_2$ across $a_2$ and $a_6$ respectively then we have a new arc which might have $\partial \alpha$ in $a_4$. However, we notice that $\alpha\cap E_1'$ is a non-essential arc in $E_1'-\{1,2\}$. This contradicts the fact that each component of $\delta\cap E_1'$ is essential in $E_1'-\{1,2\}$. Therefore, this case cannot be happen. This implies that $u_{ii}=0$ and $u^{ii}=0$ for all $i$. This completes the proof of this lemma.
\end{proof}

Then the following lemma is also true. 

\begin{Lem}\label{T64}
  $u_{ij}$ and $u^{ij}$ ($i,j\in\{1,2,3,4,5,6\}$) are the weights for a hexagon diagram of $\delta$. 
\end{Lem}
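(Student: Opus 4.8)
The statement to prove is that the curve $\eta$ constructed in the proof of Lemma~\ref{T63} (by pushing the arcs realizing $v_{22}$ and $v_{66}$ across $a_2$ and $a_6$ and recording arc counts with respect to the new windows $\omega'_i$) is a genuine hexagon diagram of $\delta$ in the sense of Section~4; granting that, $u_{ij}$ and $u^{ij}$ are its hexagon weights by definition. So the plan is to verify the two conditions in the definition following Lemma~\ref{T43}: first, that $\eta$ is isotopic to $\delta$, and second, that $\eta$ has no bigon with the hexagon graph $\Gamma^{\circ}=(\cup_{j=1}^{6}\bar a_j)-\{1,2,3,4,5,6\}$. Then Lemma~\ref{T41} (applied with $\Gamma_{*}=\Gamma$) upgrades ``no bigon'' to ``minimal general position'', which is exactly what is needed.

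The first condition is immediate: the only change made to $\delta$ in forming $\eta$ is the pushing of the $v_{22}$- and $v_{66}$-arcs across $a_2$ and $a_6$, which is an ambient isotopy of $\Sigma_{0,6}$, and the windows are auxiliary data that do not affect the underlying curve; hence $\eta\sim\delta\sim\gamma$. I would also note that we may assume $\eta$ is essential (otherwise the whole construction is vacuous) and, after a small perturbation, that $\eta$ meets $\Gamma^{\circ}$ transversely.

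For the second condition I would argue by contradiction. Suppose $\eta$ and $\Gamma^{\circ}$ have a bigon $(\Delta,\lambda,\nu)$ with $\lambda\subset\eta$, $\nu\subset\Gamma^{\circ}$. Since $\Gamma^{\circ}$ is the disjoint union of the six open edges $a_1,\dots,a_6$, the subarc $\nu$ lies in a single $a_j$, so $\partial\lambda=\partial\nu\subset a_j$. The interior of the disk $\Delta$ is disjoint from $\Gamma^{\circ}$, hence lies in one of the two complementary open disks $H^{\circ}$, $(H^{c})^{\circ}$ of $\Gamma$ in $\Sigma_{0,6}$; say $\Delta\subset\overline H$ (the other case is symmetric). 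Then $\lambda\subset\partial\Delta\subset\overline H$, and since every transverse crossing of $\Gamma^{\circ}$ by $\eta$ passes from $\overline H$ to $\overline{H^{c}}$, the arc $\lambda$ can meet $\Gamma^{\circ}$ only at its two endpoints. Therefore $\lambda$ is an entire component of $\eta\cap\overline H$ with both endpoints on $a_j$, i.e.\ an arc contributing to $u_{jj}$; by Lemma~\ref{T63}, $u_{jj}=0$, a contradiction (and if $\Delta\subset\overline{H^{c}}$ instead, one gets the contradiction $u^{jj}=0$). Hence $\eta$ has no bigon with $\Gamma^{\circ}$, so by Lemma~\ref{T41} it is in minimal general position with respect to $\Gamma^{\circ}$, which means $\eta$ together with $\Gamma$ is a hexagon diagram of $\delta$ and $u_{ij}$, $u^{ij}$ are its weights.

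The only real work is this bigon analysis, and its core — that a bigon would force a nonzero ``diagonal'' weight $u_{ii}$ or $u^{ii}$ — is precisely what Lemma~\ref{T63} has already excluded; so that lemma carries the load, and the present statement is essentially an assembly of definitions. The one place to be careful is the complementary-region step: one must check that $\Delta$ lies in a single closed hexagon and that $\lambda$ is a \emph{full} component of $\eta\cap\overline H$ (not a proper subarc), which follows from transversality of $\eta$ with $\Gamma^{\circ}$ as indicated above.
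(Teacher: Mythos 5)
Your proposal is correct and follows essentially the same route as the paper: the paper's proof of this lemma is just the citation ``by Lemma~\ref{T43} and Lemma~\ref{T63},'' and your bigon analysis (a bigon between $\eta$ and $\Gamma^{\circ}$ would force a component of $\eta\cap\overline H$ or $\eta\cap\overline{H^{c}}$ returning to a single side $a_j$, hence $u_{jj}>0$ or $u^{jj}>0$, contradicting Lemma~\ref{T63}) is exactly the implicit step linking those two lemmas to the definition of hexagon weights after Lemma~\ref{T43}. No gaps; your write-up simply makes explicit what the paper leaves unstated.
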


\begin{proof}
By Lemma~\ref{T43} and Lemma~\ref{T63}, we get this lemma.
\end{proof}

Recall that $I_i=2(\sum_{k=1}^6 w_{ki})=2p_i$ for $i=1,2,3$.
Then by Lemma~\ref{T51}, we can calculate the weights $x_{ij}$ of $l_{ij}$. We remark that $x_{ij}$ only depends on $p_1$, $p_2$ and $p_3$.\\

It is clear that $p_i={I_i\over 2}$. But, it is difficult to find $q_i$ and $t_i$ together. So, I want to find $q_i$ by making $t_i$ zero for all $i$.
Actually, if $t_i=0$ for all $i$, then we can find $q_i$ as follows.
\begin{Lem}\label{T65} Let $\delta$ be a simple closed curve in a Dehn diagram. Let $w_{ij}$ and $w^{ij}$ be the weights of $\delta$ in a hexagon diagram.\\

Suppose that $t_1=t_2=t_3=0$. Then 
$q_1=w^{26}$, $q_2=w^{46}-x_{11}$ and $q_3=w^{24}$.
\end{Lem}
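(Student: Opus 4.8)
The plan is to reduce the computation of each $q_i$ to an intersection count inside the $2$-punctured disk $E_i'$ and then translate that count into the hexagon weights $w^{jk}$. First I would use the hypothesis $t_1=t_2=t_3=0$ to collapse the definition of the Dehn parameters from Section~5: in $E_i'$ the twisting number is $t_i=(\pm m_i-q_i)/p_i$ with $m_i=|\delta\cap j_i|$ and $0\le q_i<p_i$, so in every branch of that definition $t_i=0$ forces $q_i=m_i$. Hence it suffices to prove the three equalities $m_1=w^{26}$, $m_3=w^{24}$, and $m_2=w^{46}-x_{11}$, where the $j_i\subset E_i'$ are the arcs of Figure~\ref{E2}; note that $x_{11}$ is a legitimate quantity on the right-hand side because, by Lemma~\ref{T51}, it is determined by $p_1,p_2,p_3$.

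Next I would fix a representative $\delta$ in the pseudo-hexagon diagram of Figure~\ref{F1} with $x_{11}>0$ (so $x_{22}=x_{33}=x_{23}=0$), and put it in the position constructed just before Lemma~\ref{T63}: pushing the $v_{22}$ and $v_{66}$ arcs across $a_2$ and $a_6$ produces a curve $\eta$ whose weights $u_{ij},u^{ij}$ are, by Lemma~\ref{T64}, exactly the hexagon weights $w_{ij},w^{ij}$. The geometric input (Figures~\ref{E2} and~\ref{F2}) is that after this isotopy the arc $j_1$ is isotopic rel endpoints, through the region outside the hexagon, to a subarc of the hexagon graph whose transverse intersections with $\eta$ are precisely the arcs of $\eta$ running from $a_2$ to $a_6$ in $H^{c}$; likewise $j_3$ meets precisely the arcs running from $a_2$ to $a_4$ in $H^{c}$. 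Since $l_{12}$ and $l_{13}$ lie in $H$, no arc carried by them contributes a complement segment, and by Lemma~\ref{T62} an $l_{11}$ arc crosses only $a_4$ and $a_6$, not $a_2$; so these two counts receive no correction, giving $q_1=m_1=w^{26}$ and $q_3=m_3=w^{24}$.

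For $E_2'$ the same identification shows $j_2$ meets precisely the arcs of $\eta$ running from $a_4$ to $a_6$ in $H^{c}$, i.e.\ the $w^{46}$ arcs. But each of the $x_{11}$ arcs carried by $l_{11}$ contributes such a segment too — an $l_{11}$ arc crosses both $a_4$ and $a_6$, so outside the hexagon it runs from $a_4$ to $a_6$ — and these $x_{11}$ segments are not among the essential arcs of $\delta\cap E_2'$ counted by $m_2$. Subtracting, $q_2=m_2=w^{46}-x_{11}$. One also has to verify that the bigon-removal isotopy (pushing the $v_{22}$ and $v_{66}$ arcs across $a_2$ and $a_6$) neither creates nor destroys arcs of type $w^{26}$, $w^{24}$, $w^{46}$; this follows from the enumeration in the proof of Lemma~\ref{T63} of which complement weights those pushed arcs can feed into ($u^{13},u^{14},u^{63},u^{64}$ and $u^{14},u^{15},u^{24},u^{25}$), together with the observation there that the potentially troublesome $v_{66}$-contribution to $u^{24}$ is excluded by $x_{11}>0$ and essentiality of $\delta\cap E_1'$.

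The main obstacle is exactly this last bookkeeping: pinning down which "side" of each $E_i'$ (the $j_i$-side) corresponds to which complement weight, and isolating the $x_{11}$ correction for $E_2'$. This step genuinely depends on the precise shapes of $j_i,k_i,l_i$ and of the standard arcs $l_{11},l_{12},l_{13}$ drawn in Figures~\ref{E2},~\ref{F1},~\ref{F2}; once those pictures are in hand the rest is the short reduction supplied by the $t_i=0$ hypothesis and Lemmas~\ref{T62},~\ref{T63},~\ref{T64}.
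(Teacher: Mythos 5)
Your reduction of $q_i$ to $m_i=|\delta\cap j_i|$ via the hypothesis $t_i=0$ is fine, and the three counts you aim for, $m_1=w^{26}$, $m_3=w^{24}$, $m_2=w^{46}-x_{11}$, are exactly the identities the paper uses. The gap is in your bookkeeping step, where you pass from the pseudo-hexagon weights of $\delta$ to the hexagon weights by invoking the pushing isotopy of Lemma~\ref{T63} and asserting that it neither creates nor destroys arcs of type $w^{26},w^{24},w^{46}$. By the enumeration in that very proof, a pushed $v_{66}$ arc can become a $u^{24}$ arc and a pushed $v_{22}$ arc can become a $u^{64}=u^{46}$ arc, so the pushing can in principle change precisely two of the three weights you need. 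Your proposed exclusion of the $u^{24}$ contribution ``by $x_{11}>0$ and essentiality of $\delta\cap E_1'$'' is not what Lemma~\ref{T63} establishes (the essentiality argument there only rules out a pushed arc with both endpoints on $a_4$), and you do not address the $u^{64}=u^{46}$ contribution at all; so this step, as justified, does not go through.

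What closes the gap, and what the paper actually does, is to use the hypothesis $t_1=t_2=t_3=0$ at exactly this point rather than only for $q_i=m_i$: if $t_1=0$, every subarc of $\delta$ in $H$ with an endpoint on $\omega_1$ has its other endpoint on $a_2$, which forces $v_{66}=0$, and similarly $t_3=0$ forces $v_{22}=0$; combined with Lemma~\ref{T62} this gives $v_{ii}=v^{ii}=0$ for all $i$, so the pseudo-hexagon weights already coincide with the hexagon weights and no pushing (hence no bookkeeping) is needed. One then reads off directly $q_1=m_1=v^{26}=w^{26}$, $q_3=m_3=v^{24}=w^{24}$, and $q_2=m_2=v^{46}-x_{11}=w^{46}-x_{11}$, the last using your (correct) observation that the $x_{11}$ arcs carried by $l_{11}$ run from $a_4$ to $a_6$ in $H^c$ without meeting $j_2$. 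In short, your proof never uses $t_1=t_3=0$ to control the pseudo-hexagon diagram, and that omission is where the argument breaks; once you add the implication $t_1=t_3=0\Rightarrow v_{66}=v_{22}=0$, your remaining geometric identifications match the paper's proof.
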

\begin{proof}
Consider the pseudo-hexagon diagram of $\delta$ with the weights $v_{ij}$ and $v^{ij}$.
Suppose that $t_1=t_2=t_3=0$. We notice that the graph $\partial E_1'\cup (E_1'\cap (a_1\cup a_2\cup a_6))\cup \delta$. Then consider a subarc $C$ of $\delta$ in $H$ so that one of the endpoints of $C$ is on $\omega_1$. Then the other endpoint should be on $a_2$ since $t_1=0$.  This implies that $v_{66}=0$. Similarly, we see that $v_{22}=0$ since $t_3=0$.  So, $v_{ii}=v^{ii}=0$ for all $i=1,2,...,6$ by Lemma~\ref{T62}. Therefore, $v_{ij}=w_{ij}$ and $v^{ij}=w^{ij}$ for $i,j\in\{1,2,3,4,5,6\}$.
By definition of $q_i$, we have $q_1=m_1=v^{26}=w^{26}$ since $t_1=0$.\\

Let $m_2=|\delta\cap j_2|$ and $n_2=|\delta\cap k_2|$ in $E_2'$. Also, let $m_3=|\delta\cap j_3|$ and $n_3=|\delta\cap k_3|$ in $E_3'$.
Then, we also know that $v^{46}-l_{11}=m_2$ and $v^{24}=m_3$.
Then this implies that $q_2=m_2=v^{46}-l_{11}$ and $q_3=m_3=v^{24}$ since $t_2=t_3=0$.

\end{proof}

We say that $\gamma$ is $\textit{right-twisted}$ (or $\textit{left-twisted}$) in $E_i'$ if $t_i>0$ (or $t_i<0)$.
If we know $\delta$ is right-twisted (or left-twisted) in $E_i'$, then we apply a half Dehn twist supported on $E_i'$ to decrease (or increase) the twisting number $t_i$ until the  simple closed curve is not twisted.

\begin{Lem}\label{T66}
$\gamma$ is left-twisted in $E_1'$ if and only if $u_1^+>0$.
\end{Lem}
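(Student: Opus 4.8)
The plan is to deduce the equivalence from the linear relations among $u_1^+, v_1^+, w_1^+, m_1, n_1, p_1$ recorded just before the statement, together with the two-case definition of the twisting number $t_1$. First I would recall that each component of $\delta\cap E_1'$ meets $l_1$ exactly once, so $u_1^+ + v_1^+ = p_1$, and that $m_1 = u_1^+ + w_1^+$, $n_1 = v_1^+ + w_1^+$ (whence $m_1 - n_1 = u_1^+ - v_1^+$). Solving the pair $u_1^+ + v_1^+ = p_1$ and $u_1^+ - v_1^+ = m_1 - n_1$ gives $u_1^+ = (p_1 + m_1 - n_1)/2$, so $u_1^+ > 0$ precisely when $n_1 - m_1 < p_1$. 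Since $u_1^+ \ge 0$ always, this means that $u_1^+ > 0$ is exactly the hypothesis $-p_1 \le n_1 - m_1 < p_1$ of the second case in the definition of $(q_1, t_1)$, while $u_1^+ = 0$ corresponds exactly to the first case $n_1 - m_1 = p_1$.

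Next I would dispose of the two cases separately. If $u_1^+ = 0$, then $t_1 = (m_1 - q_1)/p_1$ with $q_1 \equiv m_1 \pmod{p_1}$ and $0 \le q_1 < p_1$; here $m_1 - q_1$ is a nonnegative multiple of $p_1$, so $t_1 \ge 0$ and $\gamma$ is not left-twisted. If $u_1^+ > 0$, then $t_1 = (-m_1 - q_1)/p_1$ with $q_1 \equiv -m_1 \pmod{p_1}$ and $0 \le q_1 < p_1$; the key observation is that $m_1 > 0$ in this case, because $u_1^+ = m_1 - w_1^+ \le m_1$ forces $m_1 \ge u_1^+ > 0$, and then $t_1 = -(m_1 + q_1)/p_1 < 0$, i.e. $\gamma$ is left-twisted. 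Combining the two cases (and noting that the degenerate case $p_1 = 0$ is vacuous, since then $\delta\cap E_1' = \emptyset$, $u_1^+ = 0$, and $\gamma$ is untwisted in $E_1'$) gives $t_1 < 0 \iff u_1^+ > 0$, which is the assertion.

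The only genuinely delicate point is the observation that $u_1^+ > 0$ forces $m_1 > 0$: without it the second branch of the definition of $t_1$ could a priori return the value $0$, breaking the implication $u_1^+>0\Rightarrow t_1<0$. This uses only $w_1^+ \ge 0$, i.e. that $\delta$ is in the minimal position with no bigons against $j_1 \cup k_1 \cup l_1$ inside $E_1'$ set up around Figure~\ref{E2}; the rest is elementary bookkeeping with the intersection counts and the modular definition of the twisting number.
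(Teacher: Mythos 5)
Your proof is correct and follows essentially the same route as the paper: identify $u_1^+>0$ with the case $-p_1\le n_1-m_1<p_1$ via $u_1^++v_1^+=p_1$ and $m_1-n_1=u_1^+-v_1^+$, then read off the sign of $t_1$ from its two-case definition. In fact you are slightly more careful than the paper, which simply asserts that the second case yields $t_1<0$; your observation that $m_1\ge u_1^+>0$ (so $t_1=-(m_1+q_1)/p_1<0$) is exactly the detail needed to justify that step.
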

\begin{proof}
 If $\gamma$ is left-twisted in $E_1'$ then $t_1<0$ by definition. Then $-p_1\leq m_1-n_1<p_1$. We claim that $u_1^+>0$. If $u_1^+=0$, then we know that $v_1^+=p_1$.  So, $n_1-m_1=v_1^+-u_1^+=p_1$. This contradicts that $-p_1\leq n_1-m_1<p_1$. Therefore, $u_1^+>0$.
Now, suppose that $u_1^+>0$ to show the other direction. Then we know that $-p_1\leq n_1-m_1<p_1$.
This implies that $t_1<0$ by the definition of $t_i$. Therefore, 
 $u_1^+>0$ if and only if $\gamma$ is left-twisted in $E_1'$.
\end{proof}
\begin{Lem}\label{T67}
Suppose that $\gamma$ has $x_{11}>0$.
 $\gamma$ is left-twisted in $E_1'$ if and only if $\gamma$ has either
 \begin{enumerate}
 \item $w^{15}+w^{16}>0$ 
 \item $w^{15}=w^{16}=0$, $w^{14}>0$ and $w^{45}+w^{46}< w_{45}+w_{46}+w_{14}$, or
 \item $w^{15}=w^{16}=0$, $w^{14}>0$, $w_{45}+w_{46}+w_{14}\leq w^{45}+w^{46}< w_{45}+w_{46}+w_{14}+w_{24}$ and $w_{26}+w_{25}+( w^{45}+w^{46}-(w_{45}+w_{46}+w_{14}))<w^{12}$.
\end{enumerate}
\end{Lem}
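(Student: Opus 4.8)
The plan is to turn the statement into a single combinatorial inequality at the window $\omega_1$ and then run the case analysis that (1)--(3) encode. First I would invoke Lemma~\ref{T66}: $\gamma$ is left-twisted in $E_1'$ if and only if $u_1^+>0$, so the lemma is equivalent to the assertion that $u_1^+>0$ holds exactly when one of (1), (2), (3) holds. Since $x_{11}>0$, Lemmas~\ref{T62} and~\ref{T63} apply, so I may pass to the curve $\eta$ in a genuine hexagon diagram whose weights are the $w_{ij},w^{ij}$ in the statement; moreover $x_{11}>0$ forces $x_{22}=x_{33}=0$, so every component of $\delta\cap E_1'$ is carried by $l_{11}$, $l_{12}$ or $l_{13}$. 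Recall from the paragraph preceding Lemma~\ref{T66} that $u_1^++v_1^+=p_1$ and $u_1^+-v_1^+=m_1-n_1$, hence $u_1^+=(p_1+m_1-n_1)/2$ and $u_1^+>0$ is equivalent to $n_1<p_1+m_1$. So the problem reduces to reading $m_1=|\delta\cap j_1|$, $n_1=|\delta\cap k_1|$ and $p_1$ off the hexagon weights near $E_1'$ and deciding this inequality.

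Second, I would pin down exactly how $j_1$ and $k_1$ sit relative to the three hexagon edges $a_1,a_2,a_6$ bordering the parts of $H$ and $H^c$ meeting $E_1'$, using Figure~\ref{E2} together with Figure~\ref{F1}; this is the same bookkeeping behind Lemma~\ref{T65} (where, for $t_1=0$, one gets $m_1=w^{26}$, etc.). The arcs of $\delta$ crossing $\omega_1$ are linearly ordered along $\omega_1$; upon entering ${E_1'}^+$, such an arc either returns directly to $\omega_1$ (contributing to $v_1^+$) or is forced around $j_1$ (contributing to $u_1^+$), and which happens is governed by how many arcs on the ``$a_4$-side'' have to be routed past it. The $H^c$-arcs joining $a_1$ to $a_5$ or to $a_6$, counted by $w^{15}+w^{16}$, are precisely those immediately forced around $j_1$: an arc of $\eta$ in $H^c$ from $a_1$ to $a_5$ or $a_6$ cannot close up in $E_1'$ without meeting $j_1$ in ${E_1'}^+$. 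This gives case (1), $w^{15}+w^{16}>0\Rightarrow u_1^+>0$, and it also handles the converse direction whenever such an arc is present; the remaining converse work is to show that when $w^{15}=w^{16}=0$ no wrapping arc can arise unless (2) or (3) holds.

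Third, assume $w^{15}=w^{16}=0$, so the only arcs leaving $a_1$ into $H^c$ are those counted by $w^{12},w^{13},w^{14}$; if also $w^{14}=0$ a direct check in the diagram gives $u_1^+=0$, matching the lemma, so I may assume $w^{14}>0$. Then $u_1^+>0$ comes down to whether a wrapping arc (one carried by $l_{11}$, running from the $a_6$-side across to the $a_4$-side) survives an absorption count at the corner where $a_4,a_5,a_6$ meet: the arcs on the $a_4$-side to be fitted past are counted by $w^{45}+w^{46}$, while the capacity on the near side is $w_{45}+w_{46}+w_{14}$. If $w^{45}+w^{46}<w_{45}+w_{46}+w_{14}$ there is slack and a wrapping arc survives in ${E_1'}^+$, which is case (2). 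If instead $w_{45}+w_{46}+w_{14}\le w^{45}+w^{46}$, the excess $w^{45}+w^{46}-(w_{45}+w_{46}+w_{14})$ is pushed toward the $a_2$-side, first through the $H$-arcs counted by $w_{24}$, then through $w_{25},w_{26}$, and finally against the $H^c$-arcs counted by $w^{12}$; a wrapping arc survives exactly when this excess is not fully absorbed, i.e. when $w^{45}+w^{46}<w_{45}+w_{46}+w_{14}+w_{24}$ and $w_{26}+w_{25}+\bigl(w^{45}+w^{46}-(w_{45}+w_{46}+w_{14})\bigr)<w^{12}$, which is case (3). In every other configuration the same bookkeeping shows each arc returns directly to $\omega_1$, so $u_1^+=0$, and the equivalence follows.

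The main obstacle is the third step: verifying that the ``absorb around $a_4$, then propagate toward $a_2$'' count is exactly right — in particular that the two inequalities in (3) are the precise threshold and that no band of parallel arcs is double-counted. This requires a careful diagram of $\eta$ near the three edges bounding $E_1'$ and near the $a_4a_5a_6$ corner, matched endpoint by endpoint along the windows one band at a time, which is exactly where off-by-one errors are easiest to make.
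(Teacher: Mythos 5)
Your opening move is the same as the paper's: invoke Lemma~\ref{T66} to replace ``left-twisted'' by $u_1^+>0$ (your identity $u_1^+=(p_1+m_1-n_1)/2$ is correct and is exactly the computation preceding Lemma~\ref{T66}), and then analyze the hexagon weights near $E_1'$. The problem is that everything after that reduction is asserted rather than proved. Saying the question ``reduces to reading $m_1,n_1,p_1$ off the hexagon weights'' glosses over the fact that no such formula is available when $t_1\neq 0$ (Lemma~\ref{T65} only applies when the twisting vanishes); producing a weight criterion for $u_1^+>0$ \emph{is} the lemma. Your step 3 then replaces that missing argument with a ``capacity/absorption'' heuristic -- capacity $w_{45}+w_{46}+w_{14}$ on the $a_4$-side, excess pushed through $w_{24}$, then $w_{25},w_{26}$, then compared with $w^{12}$ -- and you yourself flag that verifying these thresholds is ``the main obstacle.'' Since those thresholds are the entire content of conditions (2) and (3), the proposal is an outline that reproduces the case structure of the statement rather than a proof of it; moreover the implications ``(1) $\Rightarrow$ left-twisted'' and ``(2) $\Rightarrow$ left-twisted'' are covered only by one-line claims.

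What is missing is the arc-tracing argument the paper uses to justify each inequality. For the forward direction the paper takes the \emph{innermost} arc $C$ for $u_1^+$ (endpoints on $l_1$ and $j_1$) and follows its successive continuations, alternately components of $\gamma\cap H^c$ and $\gamma\cap H$: the hypothesis $w^{15}=w^{16}=0$ forces the first $H^c$-continuation to end on $a_4$ (hence $w^{14}>0$); $x_{23}=0$ (a consequence of $x_{11}>0$) restricts the next $H$-component to end on $a_5,a_6,a_1$ or $a_2$, which is precisely the inequality $w^{45}+w^{46}<w_{45}+w_{46}+w_{14}+w_{24}$; and in the sub-case where it ends on $a_2$, the facts $w^{26}=0$ and $w_{21}=0$ force the next $H^c$-component onto $a_1$, giving the last inequality of (3). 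For the converse the paper traces the \emph{outermost} arc for $v_1^+$ under the assumption that $\gamma$ is not left-twisted and derives the negation of that final inequality. Some endpoint-by-endpoint argument of this kind (or an equivalent band-counting carried out in full) is needed to certify your claimed capacities and the order of absorption; without it the proof is not complete.
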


\begin{proof}

$(\Rightarrow)$ Suppose that $\gamma$  has $x_{11}>0$ and $\gamma$ is left-twisted in $E_1'$ with $w^{15}=w^{16}=0$.\\

\begin{figure}[htb]
\begin{center}
\includegraphics[scale=.28]{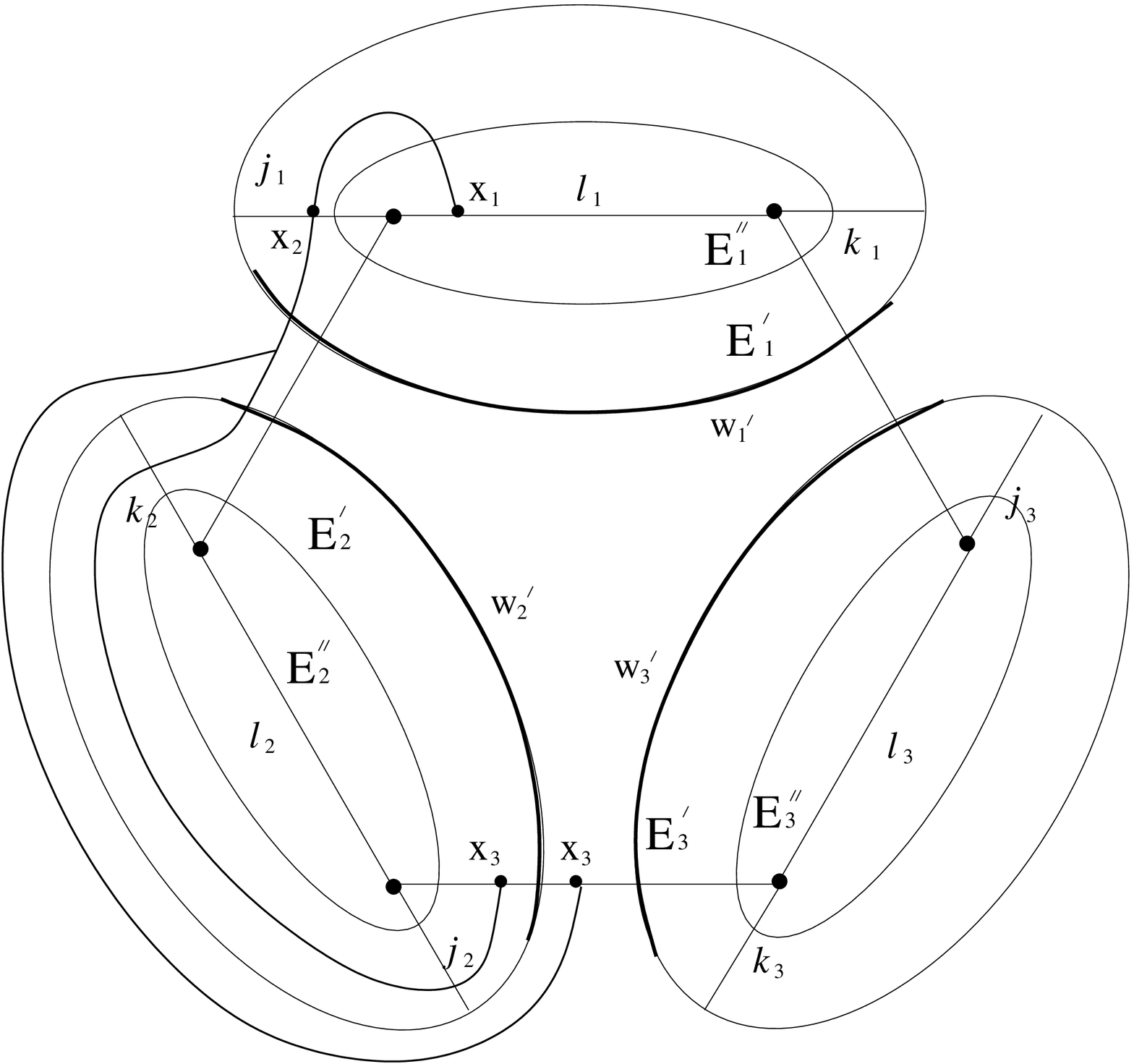}
\end{center}
\caption{}
\label{F3}
\end{figure}
 Then $\gamma$ has $u_1^+>0$ by Lemma~\ref{T66} since $\gamma$ is left-twisted. Let $C$ be the innermost arc for $u_1^+$ with the endpoints $x_1$ and $x_2$ which lie on $l_1$ and $j_1$ respectively. 
Then there is a component $\alpha_1$ of $(\gamma-C)\cap H^c$ so that one of endpoints of $\alpha_1$ is $x_2$. Then, the other endpoint $x_3$ lies on either $a_6$, $a_5$ or $a_4$.
Because $w^{15}=w^{16}=0$, we notice that $x_3$ lies on $a_4$. This implies that $w^{14}>0$. We know that $\alpha_1$ is carried by either $l_{11}$ or $l_{12}$.
We also know that $w^{26}=0$ because of $C\cup \alpha$.
We notice that $C\cup \alpha$ is also the innermost arc for $w^{14}$ since $C$ is the innermost arc for $u_1^+$.\\

Because $x_{23}=0$, we know that $x_3$ goes along a component $\alpha_2$ of $(\gamma-(C\cup\alpha_1))\cap H$ and the other endpoint $x_4$ of $\alpha_2$ lies on either $a_5,~a_6,~a_1$ or $a_2$. This implies that $w^{45}+w^{46}< w_{45}+w_{46}+w_{14}+w_{24}$.
Especially, if $x_4$ lies on $a_2$ then we need a condition that $w_{45}+w_{46}+w_{14}\leq w^{45}+w^{46}< w_{45}+w_{46}+w_{14}+w_{24}$. In this case, $x_4$ needs to go along a component $\alpha_3$ of $(\gamma-(C\cup\alpha_1\cup\alpha_2))\cap H^c$ and the other endpoint $x_5$ of $\alpha_3$ should be on $a_1$ since $w^{26}=0$.
Therefore, we have $w_{26}+w_{25}+w_{21}+( w^{45}+w^{46}-(w_{45}+w_{46}+w_{14}))<w^{12}$.
We notice that $w_{21}=0$ since $w^{12}>0$. Finally, $w_{26}+w_{25}+( w^{45}+w^{46}-(w_{45}+w_{46}+w_{14}))<w^{12}$.\\

$(\Leftarrow)$ Suppose that $\gamma$ is not left-twisted and $w^{16}=w^{15}=0$, $w^{14}>0$ and  $w_{45}+w_{46}+w_{14}\leq w^{45}+w^{46}< w_{45}+w_{46}+w_{14}+w_{24}$.
Since $\gamma$ is not left-twisted, we know that $n_1-m_1=v_1^+=p_1$. \\

\begin{figure}[htb]
\begin{center}
\includegraphics[scale=.3]{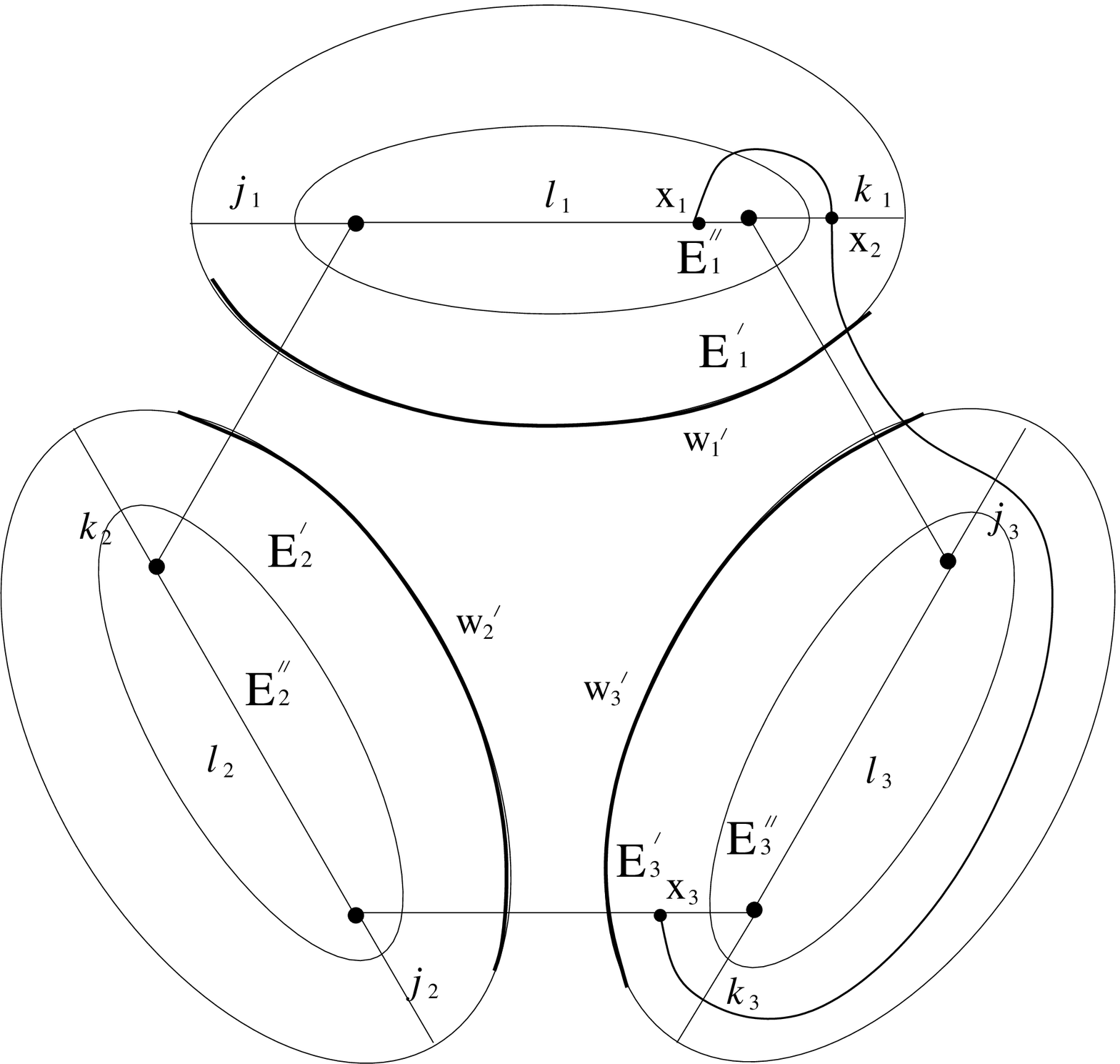}
\end{center}
\caption{}
\label{F4}
\end{figure}
Let $C$ be the outermost arc for $v_1^+$ with the endpoints $x_1$ and $x_2$ as in Figure~\ref{F4}.
Let $\alpha_1$ be the component of $(\gamma-C)\cap H^c$ so that $x_2$ is one of the endpoints of $\alpha_1$.
We notice that $C\cup \alpha_1$ needs to be the outermost arc for $w^{14}$.
Let $x_3$ be the other endpoint of $\alpha_1$. Then $x_3$ should be on $a_4$.\\

 $x_3$ needs to go alone a component $\alpha_2$ of $(\gamma-(C\cup \alpha))\cap H$. Then the other endpoint $x_4$ of $\alpha_2$ should be on $a_2$ 
since $w_{45}+w_{46}+w_{14}\leq w^{45}+w^{46}< w_{45}+w_{46}+w_{14}+w_{24}$.
 After that, $x_4$ continues to go along a component $\alpha_3$ of $(\gamma-(C\cup\alpha_1\cup\alpha_2))\cap H^c$ and the other endpoint $x_5$ of $\alpha_3$ lies on either $a_3$ or $a_4$. Then, we need an equality
 $w_{26}+w_{25}+( w^{45}+w^{46}-(w_{45}+w_{46}+w_{14}))\geq w^{12}$.
This completes the proof of lemma~\ref{T67}.

\end{proof}

If $\gamma$ is left-twisted in $E_1'$, then we apply the counter clockwise half Dehn twist supported on $E_1'$ to $\gamma$.
If $\gamma$ is not left-twisted in $E_1'$, then we notice that $t_1=0$ or $t_1>0$. So, if we apply the clockwise half Dehn twist supported on $E_1'$ to $\gamma$ then
the modified simple closed curve $\gamma'$ is either left-twisted in $E_1'$ or not. If not, then we continue to apply the clockwise half Dehn twist to $\gamma'$.
If $\gamma'$ is left-twisted in $E_1'$ then we notice that $\gamma$ has $t_1=0$. Therefore, this lemma is enough to know if a simple closed curve has $t_1=0$ or not.\\


Now, we need the following lemma to make $t_2$ zero.

\begin{Lem}\label{T68}
Suppose that $x_{11}>0$ and $t_1=0$.
 $\gamma$ is left-twisted in $E_2'$ if and only if $w^{45}>0$.
\end{Lem}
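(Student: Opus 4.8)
The plan is to mimic the structure of the proof of Lemma~\ref{T67} but with $E_2'$ playing the role of $E_1'$, using Lemma~\ref{T66} (after the obvious relabelling of the windows and the arcs $j_i,k_i,l_i$) as the bridge between the twisting direction and the existence of an arc for $u_2^+$. First I would record the analog of Lemma~\ref{T66} for $E_2'$: $\gamma$ is left-twisted in $E_2'$ if and only if $u_2^+>0$, where $u_2^+$ counts the subarcs of $\delta$ from $l_2$ to $j_2$ in ${E_2'}^+$. Since we are assuming $x_{11}>0$ (so $x_{23}=0$, and the arcs carried by $l_{ij}$ with $j\neq i$ near $E_2'$ are exactly the $l_{12}$-type arcs and the $l_{22}$-type bigon arcs) and $t_1=0$, Lemma~\ref{T62} and its consequences restrict which arcs can appear near $E_2'$; this is what will let the single weight $w^{45}$ detect the twist rather than a case split like the one in Lemma~\ref{T67}.

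For the forward direction, I would suppose $\gamma$ is left-twisted in $E_2'$, take $C$ to be the innermost arc for $u_2^+$ with endpoints $x_1\in l_2$, $x_2\in j_2$, and follow $\gamma$ out of $E_2'$ through $x_2$ along a component $\alpha_1$ of $(\gamma-C)\cap H^c$. The point is that, because $t_1=0$ kills the arcs that would otherwise let $\delta$ re-enter $E_1'$ on the ``wrong'' side and because $x_{23}=0$ forbids $l_{23}$-type arcs, the only possibility for the far endpoint of the chain emanating from $x_2$ is an endpoint on $a_5$, which forces $w^{45}>0$. I would spell out that $C\cup\alpha_1$ is then forced to be (part of) the innermost arc recording $w^{45}$, exactly as in Figure~\ref{F3}. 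For the converse I would argue the contrapositive in the style of the $(\Leftarrow)$ part of Lemma~\ref{T67}: assume $\gamma$ is not left-twisted in $E_2'$, so $n_2-m_2=v_2^+=p_2$, take $C$ the outermost arc for $v_2^+$, and trace the chain out of $E_2'$; using $t_1=0$ and $x_{11}>0$ again, the chain cannot produce an $a_5$ endpoint adjacent to $E_2'$ in $H^c$, so $w^{45}=0$. Combining the two implications gives the lemma.

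The main obstacle I expect is the bookkeeping in the forward direction: verifying that under the hypotheses $x_{11}>0$ and $t_1=0$ there is genuinely \emph{no other} place for the arc $\alpha_1$ leaving $x_2$ to go except to $a_5$ — one must rule out endpoints on $a_4$, on the arcs meeting $E_3'$, and re-entry into $E_2'$ — and that this really is equivalent to $w^{45}>0$ rather than, say, $w^{45}+w^{46}>0$. This is exactly the kind of careful arc-chasing that made Lemma~\ref{T67} long, but here the hypothesis $t_1=0$ collapses the case analysis, so I would lean on Lemma~\ref{T62}, Lemma~\ref{T63}, and the already-established fact that $v_{ij}=w_{ij}$, $v^{ij}=w^{ij}$ when all twisting numbers vanish (as used in the proof of Lemma~\ref{T65}) to keep the argument short. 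The rest is routine and parallels the cited figures.
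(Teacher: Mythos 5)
Your proposal follows essentially the same route as the paper: the paper's proof simply observes that $u_2^+=w^{45}$ once $t_1=0$ and then invokes the Lemma~\ref{T66}/\ref{T67}-style equivalence between left-twisting in $E_2'$ and $u_2^+>0$, which is exactly the identification your arc-chasing in both directions is designed to establish. Your version is just a more explicit (two-implication) unpacking of that same identity, so it is correct in approach, though stating the equality $u_2^+=w^{45}$ directly would shorten the bookkeeping you anticipate.
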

\begin{proof}
We notice that  $u_2^+=w^{45}$ if $t_1=0$. By a similar argumet in lemma~\ref{T67},  we see that $\gamma$ is left-twisted in $E_2'$ if and only if $w^{45}>0$.

\end{proof}

In order to check $\gamma$ is left-twisted in $E_3'$, we  need the following lemma.

\begin{Lem}\label{T69}
Suppose that  $x_{11}>0$ and $t_1=t_2=0$.
 $\gamma$ is left-twisted in $E_3'$ if and only if $w^{13}+w^{14}+w^{63}+w^{64}>0$.
\end{Lem}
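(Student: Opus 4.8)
The plan is to follow the same two‑move template already used for $E_1'$ and $E_2'$ in Lemmas~\ref{T66}--\ref{T68}: first reduce left‑twistedness in $E_3'$ to the positivity of a local count inside $E_3'$, and then evaluate that count in terms of the hexagon weights, using the standing hypotheses $x_{11}>0$ and $t_1=t_2=0$ to pin down how the relevant arcs leave $E_3'$.

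First I would record the analogue of Lemma~\ref{T66} for the third window: $\gamma$ is left‑twisted in $E_3'$ if and only if $u_3^+>0$, where $u_3^+$ denotes the number of subarcs of the Dehn representative $\delta$ that run from $l_3$ to $j_3$ inside ${E_3'}^+$. Its proof is word for word that of Lemma~\ref{T66} with the subscript $1$ replaced by $3$; that argument uses only the definitions of $p_3,q_3,t_3$, of $m_3=|\delta\cap j_3|$ and $n_3=|\delta\cap k_3|$, and of $u_3^+,v_3^+$, together with $u_3^++v_3^+=p_3$ and the resulting bound $-p_3\le m_3-n_3\le p_3$, none of which is special to the first disk.

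The content of the lemma is then the identity $u_3^+=w^{13}+w^{14}+w^{63}+w^{64}$ under the hypotheses, which I would prove by tracing an extremal arc exactly as in Lemma~\ref{T67}. Pick an innermost subarc $C$ of $\delta$ that contributes to $u_3^+$; following $\delta$ out of $E_3'$, the adjacent subarc $\alpha_1$ of $\delta$ lies in $H^c$ (by minimality of $\delta$ and the innermost choice of $C$) and leaves $E_3'$ across one of the two arcs of $\cup_{j=1}^{6}a_j$ meeting $E_3'$. Now trace $\alpha_1$ to its far end. Since $x_{11}>0$ we have $x_{22}=x_{23}=x_{33}=0$, so $\delta$ carries no arc of type $l_{22}$, $l_{23}$ or $l_{33}$; and since $t_1=t_2=0$, no subarc of $\delta$ wraps around inside $E_1'$ or $E_2'$. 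Together with Lemma~\ref{T62} (the only $v_{ii}$ that can be nonzero are $v_{22}$ and $v_{66}$, and these are absorbed on passing to the pseudo‑hexagon curve $\eta$ of Lemma~\ref{T63}), these constraints force the far end of $\alpha_1$ to land on one of the two arcs of $\cup_{j=1}^{6}a_j$ meeting $E_1'$. Hence every $u_3^+$‑arc extends to a subarc of $\eta$ in $H^c$ counted by exactly one of $w^{13},w^{14},w^{63},w^{64}$, and, reading the correspondence backwards and using the window convention of Figure~\ref{F2}, each such subarc comes from an arc from $l_3$ to $j_3$ on the ${E_3'}^+$ side (not the ${E_3'}^-$ side). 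This bijection gives the displayed identity, and combining it with the first step proves the lemma.

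I expect the main obstacle to be the last clause: verifying that the four weights $w^{13},w^{14},w^{63},w^{64}$ count the ${E_3'}^+$ arcs and not a mixture of the $+$ and $-$ arcs, i.e.\ that under $t_1=t_2=0$ and $x_{11}>0$ the side of $E_3'$ on which the extended arc exits correctly records the sign of $m_3-n_3$. This requires fixing an orientation convention for the window $\omega_3'$ and running the two‑ or three‑subcase check on where the far endpoint of $\alpha_1$ can sit (which arc at $E_3'$ it leaves by, and which arc at $E_1'$ it arrives at), entirely parallel to the subcase analysis in the proof of Lemma~\ref{T67} and Figures~\ref{F3}--\ref{F4}.
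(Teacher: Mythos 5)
Your plan follows the paper's own route: the paper likewise first reduces left-twistedness in $E_3'$ to $u_3^+>0$ (the $E_3'$ analogue of Lemma~\ref{T66}) and then argues, exactly as you do, that under $x_{11}>0$ (so $x_{22}=x_{23}=x_{33}=0$) and $t_1=t_2=0$ any arc contributing to $u_3^+$ must continue into an $H^c$ arc counted by one of $w^{13},w^{14},w^{63},w^{64}$, and conversely. Your stated identity $u_3^+=w^{13}+w^{14}+w^{63}+w^{64}$ is a bit stronger than the positivity equivalence the paper records, but it is in the same spirit as the identity $u_2^+=w^{45}$ used for Lemma~\ref{T68}, and the arc-tracing and side-check you outline is precisely the (very briefly sketched) content of the paper's two implications.
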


\begin{proof}
By using a similar argument in lemma~\ref{T67}, $\gamma$ is left-twisted in $E_3'$ if and only if $u_3^+>0$. 
Let $C$ be an arc for $u_3^+$.\\

$(\Rightarrow)$ Suppose that $w^{13}=w^{14}=w^{63}=w^{64}=0$, then $C$ cannot be a subarc for any $w^{ij}$. Therefore, $\gamma$ is not left-twisted. \\

$(\Leftarrow)$ 
Suppose that $\gamma$ is not left-twisted in $E_3'$ then we know that $w^{13}=w^{14}=w^{63}=w^{64}=0$.

\end{proof}

With the three previous lemmas, we can get $\gamma'$  by applying appropriate half Dehn twists to $\gamma$ so that $\gamma'$ has the same $p_i$ and $q_i$, but $t_i=0$ for all $i$. Also, we know that $\gamma'$ bounds an essential disk in $B^3-\epsilon$ if and only if $\gamma$ does by Lemma~\ref{T61}.

\section{Step 3-1: Pattern diagram of $\gamma'$ in $I'$}

Now, we modify $\gamma'$ into $\gamma_0$ which is in $\emph{standard position}$ to access the main theorem.

\begin{figure}[htb]
\begin{center}
\includegraphics[scale=.45]{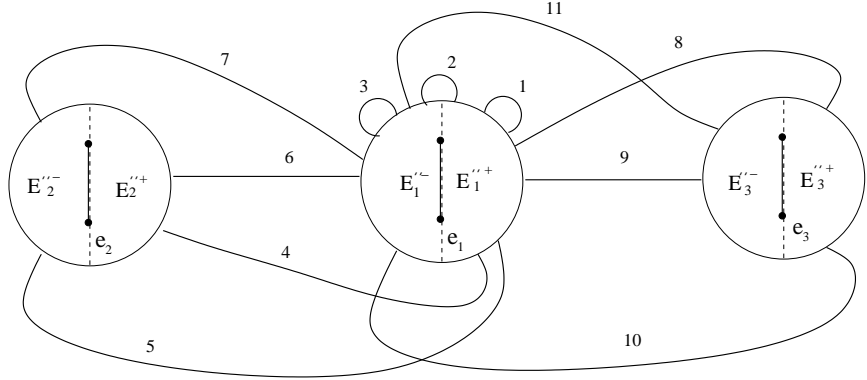}
\end{center}
\caption{Pattern diagram}
\label{G1}
 \end{figure}

Recall that $E_i''$ is  a concentric two punctured disk in the interior of $E_i'$ so that all the components of $\partial A\cap E_i''$ are parallel simple arcs as in Figure~\ref{E2}. Let $I'=S^2-\{E_1''\cup E_2''\cup E_3''\}.$
Now, take equators $e_i$ for each $E_i''$ as in Figure~\ref{G1}. Then we can divide $E_i''$ into $E_i''^{+}$ and $E_i''^{-}$ by $e_i$.\\

Recall the fact that we need to have $x_{ii}>0$ for some $i$ to have an essential disk in $B^3-\epsilon$ which is not parallel to one of the $E_i$, where $x_{ij}$ is the weight of $l_{ij}$ and $l_{ij}$ is the standard arc from the window $\omega_i$ to the window $\omega_j$. So, we assume that $x_{11}>0$ and $x_{22}=x_{33}=0$ without loss of generality.\\

Let $\gamma$ be a simple closed curve which bounds an essential disk in $B^3-\epsilon$ and has $x_{11}>0$.
Now, we define a $pattern$ $diagram$ of $\gamma$ in $I'$ which has 11 types of essential arcs in $I'$ as in Figure~\ref{G1}. \\

 The given number shows each type of arc. For example, type 1 is for an arc from $E_1''^{+}$ to $E_1''^{+}$. These are patterns of connectivity, not isotopy classes of arcs.\\
 
 We will discuss the relation between the \emph{hexagon diagram} and the \emph{pattern diagram} in Section 8

\section{ Step 3-2: Standard diagram of $\gamma_0$ in $I'$}

\begin{figure}[htb]
\includegraphics[scale=.25]{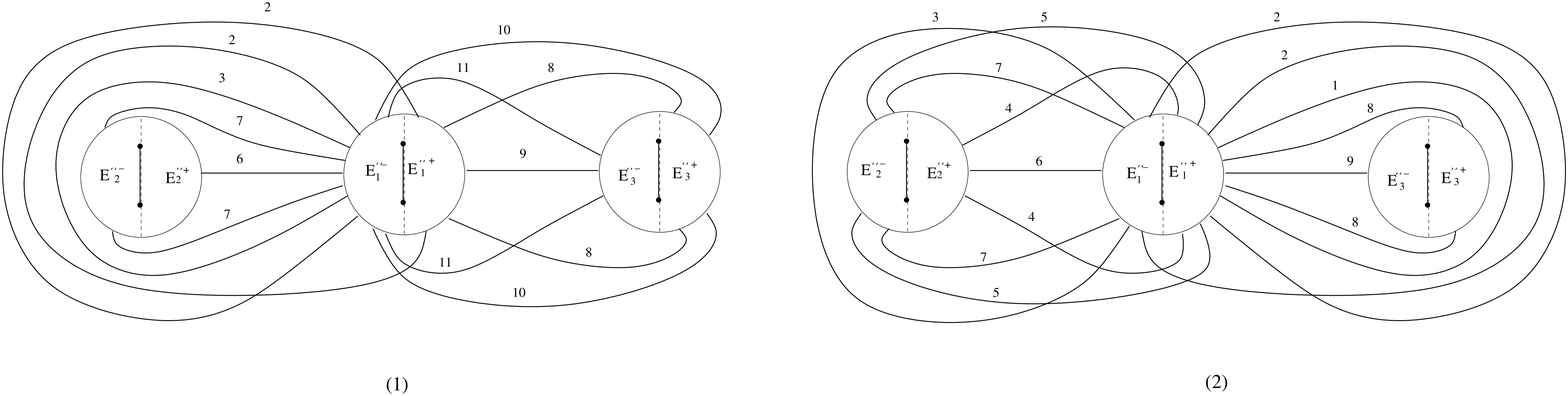}
\caption{}
\label{H1}
 \end{figure}
 
From a simple closed curve $\gamma$ which bounds an essential disk in $B^3-\epsilon$,
we want to construct a new simple closed curve $\gamma_0$ which may be in a different isotopy class in $\Sigma_{0,6}$, but 
$\gamma_0$  bounds an essential disk in $B^3-\epsilon$ if and only if $\gamma$ does.
Especially, every component of $\gamma_0\cap I'$ is isotopic to one of the given  arc types in one of the diagrams in Figure~\ref{H1}.
The two diagrams in Figure~\ref{H1} are called $\emph{standard diagrams}$ and a simple closed curve $\gamma_0$ is in $\emph{standard position}$  if $\gamma_0$ is obtained from one of the standard diagrams by putting weights on these arcs.\\

In this section, we show how $\gamma_0$ is obtained from $\gamma$ by having a certain properly chosen $t_i$ and the same $p_i$ and $q_i$.\\

 \begin{figure}[htb]
 \begin{center}
\includegraphics[scale=.35]{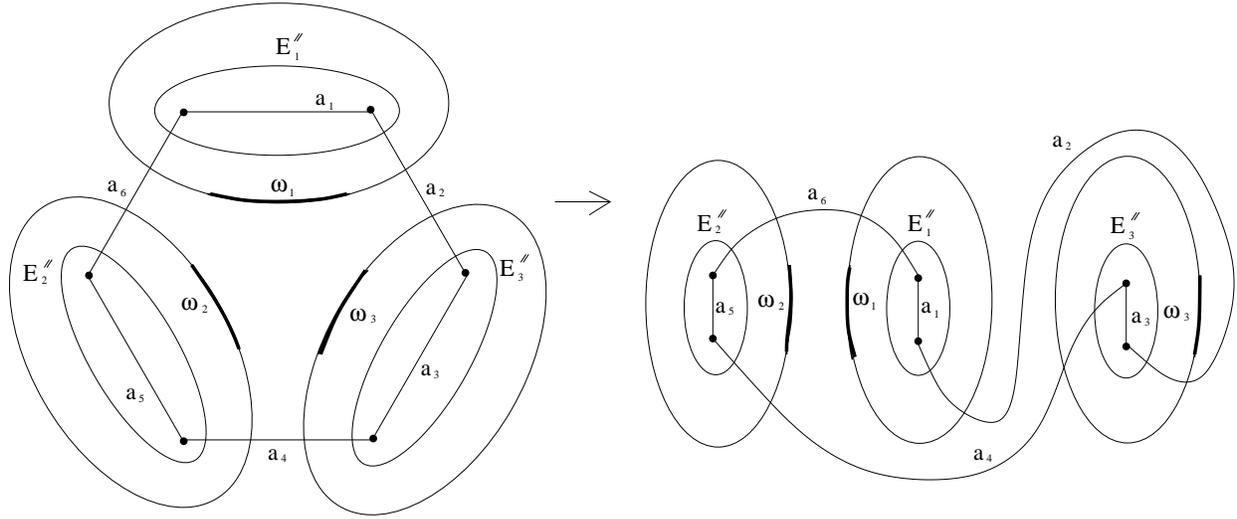}
\end{center}
\caption{Standard diagram}
\label{H2}
 \end{figure}

First, we modify the hexagon diagram into  the new diagram as in Figure~\ref{H2}.\\

We will let $m_k$ be the number of parallel arcs of $\gamma_0\cap I'$ that are isotopic to the arc type $k$ in the standard diagram. Then we say that $m_k$ is the \emph{weight} of the arc type $k$ in the standard diagram. We note that it is possible to have two non-isotopic arcs in $I'$, but they are the same arc type $k$. So, we define $m_{k_1}$ and $m_{k_2}$ of the weights for the two different isotopic arcs with the same arc type $k$ so that $m_{k_1}+m_{k_2}=m_k$ if they exist. Then we have a sequence of weights $m_i$ for $\gamma_0$. It is called a $\emph{standard}$ parameterization. I want to mention the fact that $m_k$ is going to be computed from the Dehn parameters as described in the following pages.\\

Now, we will show the two following lemmas.

\begin{Lem}\label{T81}
Suppose that $\gamma_0$ is a simple closed curve which bounds an essential disk $A$ in $B^3-\epsilon$ and it is in standard position with $x_{11}>0$.
 Then  $m_1+m_3>0$.
\end{Lem}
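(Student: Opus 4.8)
The plan is to argue by contradiction: suppose $\gamma_0$ is in standard position, bounds an essential disk $A$ in $B^3-\epsilon$, has $x_{11}>0$, yet $m_1=m_3=0$. Recall from Section 7 that $m_1$ counts arcs of $\gamma_0\cap I'$ running from $E_1''^{+}$ to $E_1''^{+}$ and $m_3$ counts the analogous arcs on the other side (from $E_1''^{-}$ to $E_1''^{-}$, or whichever of the eleven pattern types is the companion to type 1 across the equator $e_1$); together these are exactly the arc types in $I'$ both of whose endpoints lie on $E_1''$. First I would make precise, from the standard diagram in Figure~\ref{H1} and Figure~\ref{H2}, which of the arc types $k=1,\dots,11$ have an endpoint on $E_1''$, and observe that once $m_1=m_3=0$ the only remaining arcs touching $E_1''$ are those that have their \emph{other} endpoint on $E_2''$ or $E_3''$.

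The key step is to connect this combinatorial vanishing back to the weights $x_{ij}$ of the standard arcs $l_{ij}$ in the pair of pants $I$. The hypothesis $x_{11}>0$ says there is at least one arc of $\gamma_0\cap I$ whose two endpoints both lie on the window $\omega_1\subset\partial E_1'$. Following such an $l_{11}$-arc: inside $E_1'$ each of its two endpoints extends into $E_1''$ (since every component of $\gamma_0\cap E_1'$ meets $l_1$ exactly once and is essential in $E_1'-\{1,2\}$, as recalled in Section 5), so the arc $l_{11}$ together with the two sub-arcs it runs into inside $E_1'$ contributes, after passing to $I'$, an arc of $\gamma_0\cap I'$ with \emph{both} endpoints on $E_1''$. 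That is precisely an arc of type $1$ or type $3$. Hence $x_{11}>0$ forces $m_1+m_3>0$, contradicting $m_1=m_3=0$.

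I would organize the write-up as: (i) recall the correspondence between arcs of $\gamma_0\cap I$ carried by $l_{11}$ and arcs of $\gamma_0\cap I'$ with both endpoints on $E_1''$, using that $E_1''$ is a concentric sub-disk of $E_1'$ and that $\gamma_0$ is in standard position so each $\gamma_0\cap E_1'$-component crosses $l_1$ exactly once; (ii) identify the pattern types with both endpoints on $E_1''^{+}\cup E_1''^{-}$ as exactly types $1$ and $3$ (here I would lean on Figure~\ref{H1}); (iii) conclude $x_{11}>0 \Rightarrow m_1+m_3>0$. The essential-disk hypothesis itself is barely used directly — its role was already to guarantee $x_{ii}>0$ for some $i$ (and we have normalized to $i=1$), so I expect the statement will actually go through for any standard-position $\gamma_0$ with $x_{11}>0$.

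The main obstacle I anticipate is purely bookkeeping: pinning down, from the two standard diagrams in Figure~\ref{H1} and the modified hexagon diagram in Figure~\ref{H2}, exactly which arc types are "type $1$ and type $3$" and verifying that no arc carried by $l_{11}$ can, after the modifications defining $\gamma_0$, re-enter $E_1''$ on a type other than $1$ or $3$ (e.g. that it cannot get rerouted to have an endpoint on $E_2''$ because of the twisting adjustments $t_i$ chosen in Section 7). Handling this cleanly will require a careful look at how the standard position is built from the Dehn parameters $p_i,q_i$ while keeping the $p_i$ fixed, so that $x_{11}$ — which by the remark after Lemma~\ref{T51} depends only on $p_1,p_2,p_3$ — is genuinely preserved through the passage from $\gamma'$ to $\gamma_0$.
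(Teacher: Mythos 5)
There is a genuine gap, and it sits exactly at your step (ii)--(iii). An arc of $\gamma_0\cap I$ carried by $l_{11}$ does give, after passing through $E_1'$ into the concentric disk $E_1''$, an arc of $\gamma_0\cap I'$ with both endpoints on $E_1''$ --- but such an arc need not be of type $1$ or type $3$: it can just as well be of type $2$, running from $E_1''^{+}$ to $E_1''^{-}$ across the equator $e_1$. (In the train-track description used in the proof of Lemma~\ref{T82}, the branch $e$ carries $x_{11}$, and both the type $2$ and type $3$ connectivities pass through $e$; which half of $E_1''$ an endpoint lands on is governed by the Dehn parameters $q_1,t_1$, not by the mere fact that the arc is an $l_{11}$-arc.) So the hypothesis $x_{11}>0$ only yields $m_1+m_2+m_3>0$, and your argument cannot rule out the configuration $m_2>0$, $m_1=m_3=0$. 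That configuration is perfectly realizable by a simple closed curve in standard position, so the statement does \emph{not} hold ``for any standard-position $\gamma_0$ with $x_{11}>0$'' as you predict; your remark that the essential-disk hypothesis is barely used is precisely where the proposal goes wrong.

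Ruling out $m_2>0$, $m_1=m_3=0$ is in fact the entire content of the lemma, and it is here that the paper uses the essential disk in an essential way: it takes a neighborhood $N^\circ(\epsilon)$ so that $B^3-N^\circ(\epsilon)$ is a handlebody with free fundamental group on generators $a,b,c$, reads off the word represented by $\partial A$ from the sequence of arc types it traverses in the standard diagram (using the arcs $r_i$ of Figure~\ref{H3}), and checks by an exhaustive analysis of consecutive length-two paths that no cancellation can occur, so $[\partial A]\neq e$; this contradicts the fact that $\partial A$ bounds a disk in $B^3-\epsilon$ and hence is null-homotopic in $B^3-N^\circ(\epsilon)$. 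To repair your proposal you would need to supply an argument of this kind (or some other use of the disk $A$, e.g. an innermost-curve/Euler-characteristic count on $A\cap E$) for the case where every arc of $\gamma_0$ meeting $E_1''$ of ``$l_{11}$ type'' crosses the equator; the purely combinatorial bookkeeping you outline cannot decide this case.
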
 

\begin{proof}

  \begin{figure}[htb]
  \begin{center}
\includegraphics[scale=.4]{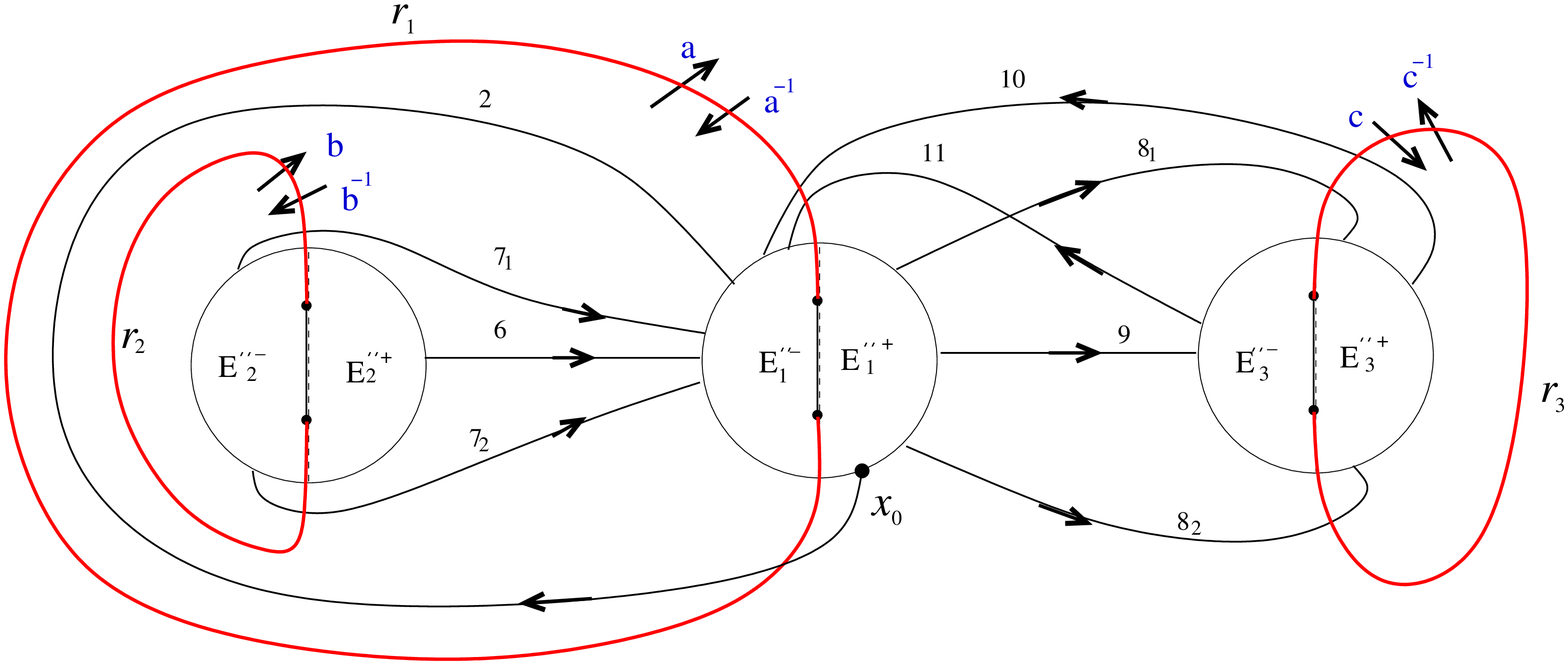}
\end{center}
\caption{}
\label{H3}
\end{figure}

Consider the standard diagram with $m_2>0, ~m_1=m_3=0$ as in Figure~\ref{H3}. By referring to $(1)$ in Figure~\ref{H1}  we can have two possible diagrams to have $m_2>0$. We note that the other diagram can be obtained by reflecting the given diagram about the horizontal axis which is passing through the middles of $E_i''$. Then we have a similar argument to check the other case. So, it is enough to consider the diagram as in Figure~\ref{H3}.\\

 We take a small open neighborhood $N^\circ(\epsilon)$ of $\epsilon$ so that $B^3-N^\circ(\epsilon)$ is a handlebody. Then  $[\partial A]$ is the trivial element of $\pi_1(B^3-N^\circ(\epsilon))$ since $A$ is an essential disk in $B^3-\epsilon$.\\
 
  Let $r_i$ be the simple arcs as in Figure~\ref{H3}. Then we note that the $r_i\cup \epsilon_i$ bound three disjoint disks in $B^3$. So, by considering the intersections between $\partial A$ and $r_i$ we can calculate the element $[\partial A]$ of $\pi_1(B^3-N^\circ(\epsilon),x_0)$ as in Figure~\ref{H3}.\\
  
  Now, we consider  all the possible cases for  the element $[\partial A]$ with respect to the three generators $a,b,c$ of $\pi_1(B^3-N^\circ(\epsilon),x_0)$.\\
  
  Consider the sequence of arc types in the standard diagram which carries $\partial A$. Then, we note that a subarc of $\partial A$ which is carried by one of the arcs in the standard diagram meets at most two times with $r_1\cup r_2\cup r_3$. The type $2$ and $11$ meet only $r_1$, the type $7$  meets only $r_2$ and the type $8$ meets only $r_3$, but the type $10$ meets $r_1$ and $r_3$ to have $(ac^{-1})^{\pm 1}$ with respect to the generators.\\

  We note that the arc type $2$ carries $\partial A$ since $m_2>0$. So, we take the base point $x_0$ in $\partial E_1''$ as in Figure~\ref{H3} and start with a subarc of $\partial A$ which is carried by the arc type $2$.  Then we have $a^{\pm 1}$ with respect to the generators.  Let $i_1$ be the upper arc type $i$ and $i_2$ be the lower arc type $i$ as in Figure~\ref{H3}.
  We note that if we show that $[\partial A]$ with the given orientation is not trivial then $[\partial A]$ with the other orientation is also not trivial.\\
  
 Now,  we give an orientation to each arc type $i$ as in Figure~\ref{H3}. We say an arc type $\bar{i}$ if the arc type $i$ has the opposite oriention of $i$.\\

 We define a $\emph{path}$ $p$ of $\partial A$ so that  $p\subset \partial A-E_1''$ and $|p\cap E_1''|=2$. We note that a path $p$ can break $E_2''$ or $E_3''$.
 A path $p$ is carried by some arc types. So, a path $p$ is represented by an arc type or a sequence of two types. For example, $2$ and $9~\bar{8_1}$ stand for paths. We also note that each path generates non-trivial element with respect to the generators.\\
  
  Now, we consider all the cases for consecutive paths of length 2 to check whether or not there is a cancellation between two paths as follows, where the first element of each pair means  consecutive paths of length 2 and the second element of each pair means the element for the given paths of length 2 with respect to the generators.\\
  
\begin{enumerate}
\item  $(2|2, a^{-1}a^{-1})$, $(2|8_i~\bar{9},a^{-1}c)$, $(2|9~\bar{8_i}, a^{-1}c^{-1})$, $(2|9~10, a^{-1}c^{-1}a^{-1})$ for $i=1,2$
\vskip 10pt
\item  $(\bar{2}|\bar{2},aa)$, $(\bar{2}|\bar{7_i}~6,ab^{-1})$,  $(\bar{2}|\bar{6}~7_i,ab)$ for $i=1,2$
\vskip 10pt
\item  $(\bar{6}~7_i|2,ba^{-1})$, $(\bar{6}~7_i|8_j~\bar{9},bc)$, $(\bar{6}~7_i|9~\bar{8_j},bc^{-1})$, $(\bar{6}~7_i|9~10,bc^{-1}a^{-1})$, $(\bar{6}~7_i|8_2~11,bca^{-1})$ for $i,j=1,2$
\vskip 10pt
\item  $(\bar{7_i}~6|2,b^{-1}a^{-1})$, $(\bar{7_i}~6|8_j~\bar{9},b^{-1}c)$, $(\bar{7_i}~6|9~\bar{8_j},b^{-1}c^{-1})$, $(\bar{7_i}~6|9~10,b^{-1}c^{-1}a^{-1})$, $(\bar{7_i}~6|8_2~11,b^{-1}ca^{-1})$ for $i,j=1,2$
\vskip 10pt
\item  $(8_1~\bar{9}|\bar{2},ca)$, $(8_1~\bar{9}|\bar{6}~7_i,cb)$,  $(8_1~\bar{9}|\bar{7_i}~6,cb^{-1})$, 
$(8_1~\bar{9}|\bar{10}~ \bar{9},cac)$ for $i=1,2$
\vskip 10pt
\item  $(8_2~\bar{9}|\bar{2},ca)$, $(8_2~\bar{9}|\bar{6}~7_i,cb)$,  $(8_2~\bar{9}|\bar{7_i}~6,cb^{-1})$, 
$(8_2~\bar{9}|\bar{10}~ \bar{9},cac)$,  $(8_2~\bar{9}|\bar{10}~ 11,caca^{-1})$, $(8_2~\bar{9}|\bar{11}~ 10,cac^{-1}a^{-1})$,  $(8_2~\bar{9}|\bar{11}~\bar{8_2},cac^{-1})$ for $i=1,2$
\vskip 10pt

\item  $(8_2~ 11|8_2~11,ca^{-1}ca^{-1})$, $(8_2~ 11|9~\bar{8_2}, ca^{-1}a^{-1})$

\vskip 10pt

\item  $(9~\bar{8_1}|\bar{2},c^{-1}a)$, $(9~\bar{8_1}|\bar{6}~7_i,c^{-1}b)$,  $(9~\bar{8_1}|\bar{7_i}~6,c^{-1}b^{-1})$,
$(9~\bar{8_1}|\bar{10}~ \bar{9},c^{-1}ac)$ for $i=1,2$
\vskip 10pt
\item  $(9~\bar{8_2}|\bar{2},c^{-1}a)$, $(9~\bar{8_2}|\bar{6}~7_i,c^{-1}b)$,  $(9~\bar{8_2}|\bar{7_i}~6,c^{-1}b^{-1})$,
$(9~\bar{8_2}|\bar{10}~ \bar{9},c^{-1}ac)$, $(9~\bar{8_2}|\bar{10}~ 11,c^{-1}aca^{-1})$,  $(9~\bar{8_2}|\bar{11}~10,c^{-1}ac^{-1}a^{-1})$,  $(9~\bar{8_2}|\bar{11}~\bar{8_2},c^{-1}ac^{-1})$ for $i=1,2$
\vskip 10pt
\item  $(9~10|8_i~\bar{9},c^{-1}a^{-1}c)$,  $(9~10|9~\bar{8_i},c^{-1}a^{-1}c^{-1})$, $(9~10|9~10,c^{-1}a^{-1}c^{-1}a^{-1})$ for $i=1,2$
\vskip 10pt
\item  $(\bar{10}~ \bar{9}|\bar{2},aca)$, $(\bar{10}~ \bar{9}|\bar{6}~7_i,acb)$, $(\bar{10}~ \bar{9}|\bar{7_i}~6,acb^{-1})$, 
$(\bar{10}~ \bar{9}|\bar{10}~ \bar{9},acac)$, $(\bar{10}~ \bar{9}|\bar{10}~ 11,acaca^{-1})$, $(\bar{10}~\bar{9}|\bar{11}~ 10,acac^{-1}a^{-1})$ for $i=1,2$
\vskip 10pt
\item  $(\bar{10}~ 11|8_2~\bar{9},aca^{-1}c)$,  $(\bar{10}~ 11|9~\bar{8_2},aca^{-1}c^{-1})$, $(\bar{10}~ 11|9~10,aca^{-1}c^{-1}a^{-1})$
\vskip 10pt
\item $(\bar{11}~10|8_2~\bar{9},ac^{-1}a^{-1}c)$,  $(\bar{11}~10|9~\bar{8_2},ac^{-1}a^{-1}c^{-1})$, $(\bar{11}~10|9~10,ac^{-1}a^{-1}c^{-1}a^{-1})$
\end{enumerate}
\vskip 20pt

We note that the path $2$ cannot be the next path of paths $9~10$, $\bar{11}~10$, $8_2~11$ and $\bar{10}~11$.  Otherwise, $\partial A$ has an infinite spiral. Similarly, the paths $\bar{10}~\bar{9}$, $\bar{10}~11$, $\bar{11}~\bar{8_2}$ and $\bar{11}~10$ cannot be the next path of the path $\bar{2}$.\\

 By considering  all the cases, we  note that there is no cancellation between two consecutive paths with respect to the generators $a,b,c$.
This implies that $[\partial A]\neq e$ since $\pi_1(B^3-N^\circ(\epsilon),x_0)$ is a free group. However, $[\partial A]=e$ since $A$ is an essential disk in $B^3-\epsilon$.
This contradicts the assumption that $m_1=m_3=0$, $m_2>0$ and this completes the proof.
\end{proof}
 
\begin{Lem}\label{T82} Suppose that $\gamma'$ is a simple closed curve which is parameterized by $(p_1,q_1,0,p_2,q_2,$ $0,p_3,q_3,0)$.
If $x_{11}>0$,
then we can construct a  simple closed curve $\gamma_0$ which is parameterized by $(p_1,q_1,t_1,p_2,q_2,t_2,p_3,q_3,t_3)$ for  $t_i\in \mathbb{Z}$ as in the table below, and it bounds an essential disk in $B^3-\epsilon$ if  $\gamma'$ does. Moreover, each component of $\gamma_0\cap I'$ is carried by one of the given arc types in one of the standard diagrams.

\begin{enumerate}
\item $q_1+p_1< x_{11}+x_{13}:$ $(t_1,t_2,t_3)=(0,-1,0)$ if $p_2\neq 0$, $(t_1,t_2,t_3)=(0,0,0)$ if $p_2=0$.\\

\item $ q_1+p_1 \geq x_{11}+x_{13}:$ $(t_1,t_2,t_3)=(-1,-1,0)$ if $p_2\neq 0$, $(t_1,t_2,t_3)=(-1,0,0)$ if $p_2=0$.

\end{enumerate}

Moreover, if we have the following condition then $\gamma'$ does not bound an essential disk in $B^3-\epsilon$.
\begin{enumerate}
\item [(3)] $x_{11}+x_{13}\leq q_1+p_1<x_{11}+x_{12}+x_{13}$ and $x_{13}\geq q_1$.

\end{enumerate}
\end{Lem}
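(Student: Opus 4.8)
The engine of the argument is Lemma~\ref{T61}: since $\partial E_i'=\partial E_i$ and each $E_i$ is an essential disk in $B^3-\epsilon$, any half Dehn twist supported on the $2$-punctured disk $E_i'$ preserves, in both directions, the property of bounding an essential disk in $B^3-\epsilon$. Because $\gamma'$ has all $t_i=0$, applying $|t_i|$ clockwise (resp.\ counterclockwise) half twists on $E_i'$ produces a curve with Dehn parameters $(p_i,q_i,t_i)$, so for parts (1) and (2) it suffices to verify that after performing the twists recorded in the table every component of the resulting curve $\gamma_0\cap I'$ is isotopic to one of the standard arc types of Figure~\ref{H2}; the essential-disk equivalence is then immediate from Lemma~\ref{T61} (note $\gamma_0$ need not be isotopic to $\gamma'$). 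The preliminary step is to convert the Dehn data into arc combinatorics near the windows. Since $x_{11}>0$ we are in the second case of Lemma~\ref{T51}, so $x_{12}=2p_2$, $x_{13}=2p_3$ and $x_{11}=p_1-p_2-p_3$, whence $x_{11}+x_{13}=p_1-p_2+p_3$ and $x_{11}+x_{12}+x_{13}=p_1+p_2+p_3$; moreover $\omega_1$ carries $2p_1=2x_{11}+x_{12}+x_{13}$ endpoints of $\gamma'$, joined through $I'$ by the $l_{11}$-, $l_{12}$- and $l_{13}$-arcs, while $\omega_2$, $\omega_3$ carry only $l_{12}$- and $l_{13}$-arcs (as $x_{22}=x_{33}=x_{23}=0$). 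With $t_1=0$ the cyclic pattern by which these arcs attach to $\omega_1$, read against the offset $q_1$, is rigid, and $q_1+p_1$ is precisely the index at which the $l_{11}$-block (width $x_{11}$) together with the shift $q_1$ begins to push the remaining strands past the $l_{13}$-block (width $x_{13}$) and into the $l_{12}$-block (width $x_{12}$). This is why the thresholds $x_{11}+x_{13}$ and $x_{11}+x_{12}+x_{13}$ govern the three cases, and making this dictionary precise is the crux of the lemma.

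Granting the dictionary, cases (1) and (2) reduce to finite checks against the standard diagram of Figure~\ref{H2}. In case (1), $q_1+p_1<x_{11}+x_{13}$: no strand at $\omega_1$ overflows into the $l_{12}$-block, so the arcs meeting $E_1''$ and $E_3''$ are already standard and we take $t_1=t_3=0$; the arcs meeting $E_2''$ (present exactly when $x_{12}=2p_2>0$, i.e.\ $p_2\neq0$) are brought to standard type by a single clockwise half twist on $E_2'$ ($t_2=-1$), and when $p_2=0$ no correction is needed ($t_2=0$). In case (2), $q_1+p_1\geq x_{11}+x_{13}$, a strand does wrap at $\omega_1$, and one clockwise half twist on $E_1'$ ($t_1=-1$) unwraps it; the arcs meeting $E_2''$ and $E_3''$ are handled exactly as in case (1) ($t_2=-1$ if $p_2\neq0$ and $0$ otherwise, $t_3=0$). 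In both cases every component of $\gamma_0\cap I'$ is then, by inspection, one of the standard arc types.

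For part (3), the hypothesis $x_{11}+x_{13}\leq q_1+p_1<x_{11}+x_{12}+x_{13}$ places us inside case (2), so the case-(2) twists put $\gamma'$ into standard position as a curve $\gamma_0$. The inequalities now pin the standard weights: $q_1+p_1\geq x_{11}+x_{13}$ forces strands into the $l_{12}$-block, giving $m_2>0$, while $q_1+p_1<x_{11}+x_{12}+x_{13}$ together with $x_{13}\geq q_1$ forces that after the half twist on $E_1'$ no strand survives as a type-$1$ or type-$3$ arc, so $m_1=m_3=0$. By Lemma~\ref{T81}, a standard-position curve with $x_{11}>0$ and $m_1+m_3=0$ cannot bound an essential disk in $B^3-\epsilon$; hence $\gamma_0$ does not, and by Lemma~\ref{T61} neither does $\gamma'$. (Alternatively one could run the $\pi_1(B^3-N^\circ(\epsilon))$ argument of Lemma~\ref{T81} on $\gamma_0$ directly.)

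The main obstacle I expect is the combinatorial verification underlying all three cases: tracing strands through $\omega_1,\omega_2,\omega_3$ in the standard diagram of Figure~\ref{H2} to confirm that after the prescribed half twists no leftover arc has a forbidden pattern type (or a doubled variant of a standard one), and, for part (3), checking that $x_{13}\geq q_1$ is exactly the inequality that eliminates the type-$3$ arcs. The argument is finite and essentially pictorial, but the bookkeeping of the offsets between the $E_1'$- and $E_2'$-twists and the window patterns, and the precise location of the boundary between cases (2) and (3), is where the care must go.
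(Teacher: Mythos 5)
Your strategy coincides with the paper's: the same thresholds $x_{11}+x_{13}$ and $x_{11}+x_{12}+x_{13}$ (via $x_{11}=p_1-p_2-p_3$, $x_{12}=2p_2$, $x_{13}=2p_3$ from Lemma~\ref{T51}), the same twist choices $(t_1,t_2,t_3)$, Lemma~\ref{T61} to transfer the essential-disk property, and, for part (3), the observation that $x_{13}\geq q_1$ forces $m_1=m_3=0$ so that Lemma~\ref{T81} applies. The genuine gap is that the heart of the paper's proof is exactly the step you defer. After fixing the twists one must show that every component of $\gamma_0\cap I'$ is carried by one of the arc types of the standard diagrams in Figure~\ref{H1}, and this is not ``by inspection'': the train-track/window picture (the TWP diagram of Figure~\ref{H4}, with the black boxes encoding the unknown connectivities) a priori admits extra connectivity patterns --- the arcs the paper labels $x$, $y$, $z$ and $w$ in Figures~\ref{H5}--\ref{H8} --- and each of these has to be ruled out by a separate counting argument on the connectivities inside $E_1''$, $E_2''$, $E_3''$ (e.g.\ deriving contradictions of the form $m_{8_1}>j$ against $m_{8_1}<j$, or $m_{4_1}>m_{4_1}+2m_5$). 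Your proposal asserts the conclusion of these exclusions and explicitly lists the bookkeeping as ``the main obstacle,'' so the decisive step of the lemma is announced rather than proved.

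A second, smaller omission: your case (2) lumps together two regimes that the paper must analyze separately, namely $x_{11}+x_{13}\leq q_1+p_1<x_{11}+x_{12}+x_{13}$ (Figure~\ref{H6}) and $x_{11}+x_{12}+x_{13}\leq q_1+p_1$ (Figures~\ref{H7}--\ref{H8}); they share the twist values in the table, but the resulting standard diagrams and the patterns to be excluded differ, and the second regime splits further according to $x_{13}\geq q_1$ or $x_{13}<q_1$ (and then $p_1-q_1$ versus $q_1-x_{13}$), so a complete proof has to run the connectivity analysis in each of these subcases. By contrast, your part-(3) argument is the paper's own: in the regime $x_{11}+x_{13}\leq q_1+p_1<x_{11}+x_{12}+x_{13}$ with $x_{13}\geq q_1$, the twisted curve has $m_1=0$ (every strand carried by the $l_{11}$-block runs from $E_1''^+$ to $E_1''^-$) and $m_3=0$ (because $m_1=0$ before the twist), so Lemma~\ref{T81} together with Lemma~\ref{T61} finishes --- but this relies on the standard-position claim whose verification is the missing content above.
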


 \begin{figure}[htb]
 \begin{center}
\includegraphics[scale=.4]{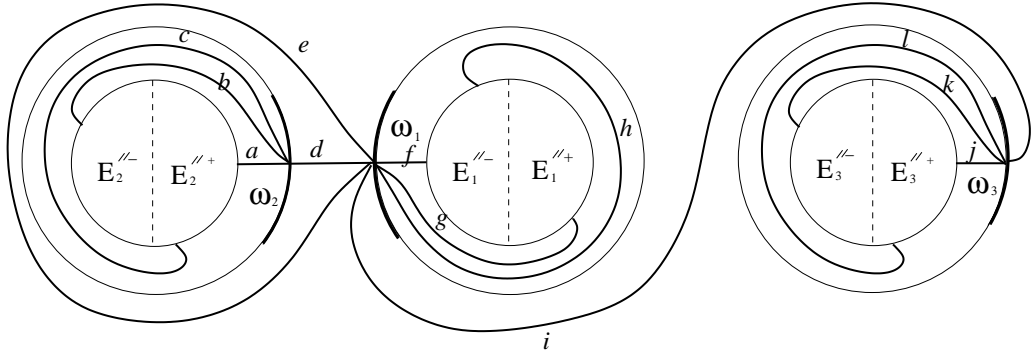}
\end{center}
\caption{The TWP diagram}
\label{H4}
 \end{figure}
 
\begin{proof}

Consider a diagram which is called the \emph{train tracks-window-pattern  diagram} or the \emph{TWP diagram} as in Figure~\ref{H4}. We note that $\gamma'$ meets $\partial E_i'$ only at  the windows $\omega_i$ for $i=1,2,3$ in the TWP diagram. Let $a,b,\cdot\cdot\cdot,l$ be the weights for the train tracks as in Figure~\ref{H4}. We also consider $a,b,\cdot\cdot\cdot,l$ as the types of arcs. Then we can get the 11 types of essential arcs in $I'$ as in the pattern diagram. (See Figure~\ref{G1}.) Then we can realize each connectivity pattern in the pattern diagram by an arc carried by this train track. For instance, the arcs for type 1 are carried by only $g-e-g$ and the arcs for type 3 are carried by $f-e-f$, $h-e-f$ or $h-e-h$.\\

   We note that $|\gamma'\cap\omega_1|=2p_1=2e+d+i$, $q_1=h$, $x_{11}=e$ and $x_{13}=i$.\\

We will now define the standard diagram by modifying the arcs carried by the train track so that they may lie outside the windows.\\

 We set $t_i=0$ if $p_i=0$.\\

\begin{figure}[htb]
\begin{center}
\includegraphics[scale=.26]{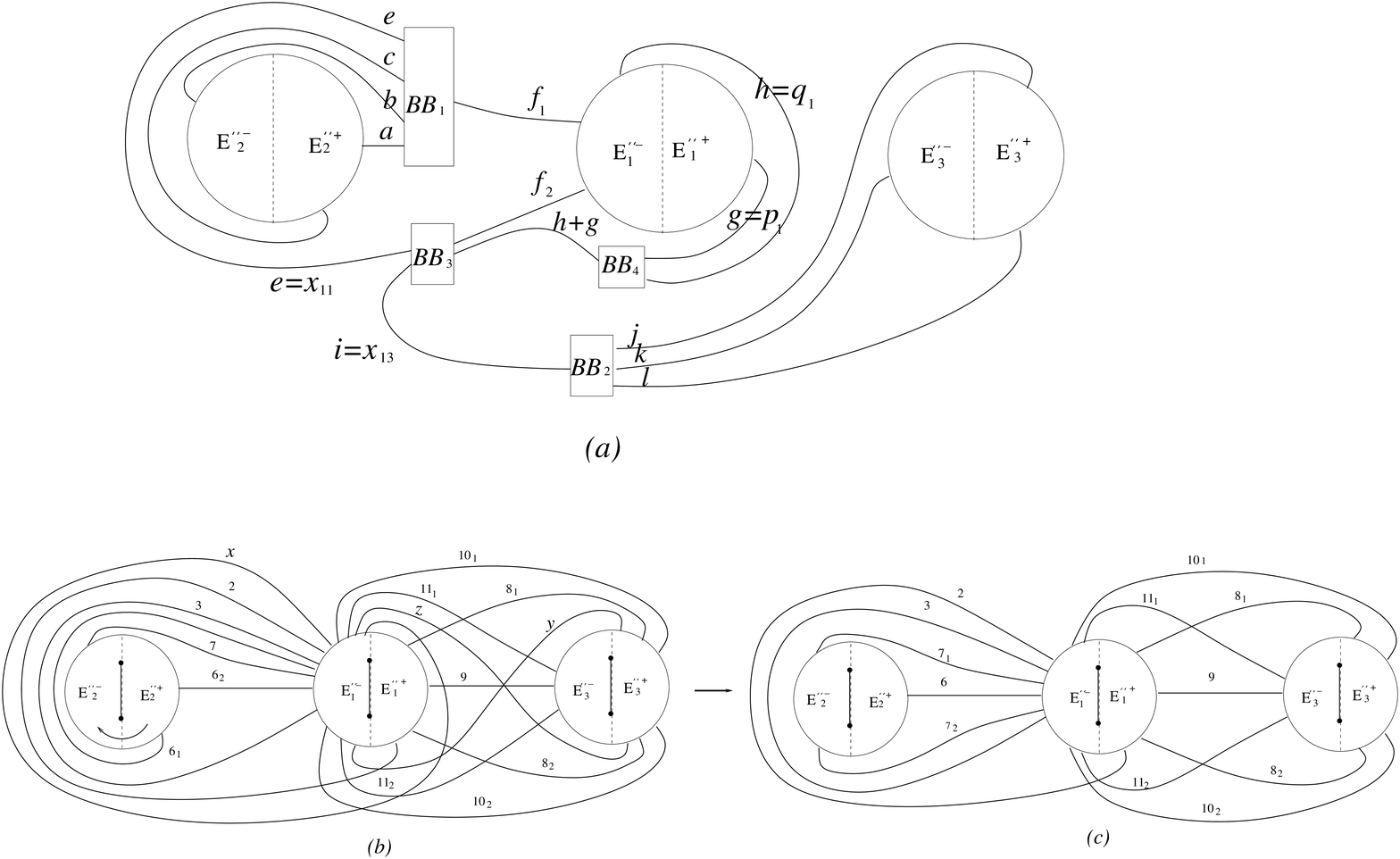}
\end{center}
\caption{}
\label{H5}
 \end{figure}

In order to consider all the possible cases, we modify the diagram of Figure~\ref{H4} into the diagram $(a)$ of Figure~\ref{H5}, Figure~\ref{H6} and Figure~\ref{H7} by the following three subcases.\\

Case 1: $q_1+p_1< x_{11}+x_{13}$. (See the  diagram $(a)$ of Figure~\ref{H5}.) We note that $q_1+p_1=h+g<e+i= x_{11}+x_{13}$.
Because of the given inequality $h+g< e+i$, we can have the black boxes $BB_1$ and $BB_3$ for the connectivities. Then we can consider the two more black boxes $BB_2$ and $BB_4$ for the rest of connectivities of subarcs. We say that the left incoming arc of $BB_i$ is $\emph{input}$ of $BB_i$ and the right incoming arc arc of $BB_i$ is $\emph{output}$ of $BB_i$. We note that $f_2>0$ since $h+g<e+i$. So, we have only one possible output of $BB_1$ which is called $f_1$.  We note that $f_1+f_2=f$. Then the following are all possible connectivities of arcs having arc types from the diagram $(a)$ of Figure~\ref{H5}, where $i_1$ is the upper type $i$ and $i_2$ is the lower type $i$.
Also, we define $m_{k_i}$ as  the weight of the types $k_1$ and $k_2$ if $\gamma$ has two different isotopy types for type $k$. Clearly, we have $m_k=m_{k_1}+m_{k_2}$.\\

We claim that the diagram $(b)$ contains  all the possible arc types which are obtained from the diagram $(a)$.   \\

\begin{enumerate}

\item We start from $E_1''^-$ with $f_1$. Then $f_1-e-(h+g)-h$ gives $x$, $f_1-e-(h+g)-g$ gives $2$, $f_1-e-f_2$ gives $3$,  $f_1-c$ gives $6_1$, $f_1-b$ gives $7$ and $f_1-a$ gives $6_2$.\\

\item We start from $E_1''^-$ with $f_2$. Then $f_2-i-j$ gives $y$, $f_2-i-k$ gives $11_2$ and $f_2-i-l$ gives $10_2$.\\

\item We start from $E_1''^-$ with $g$. Then $g-(h+g)-i-j$ gives $8_1$, $g-(h+g)-i-k$ gives $9$ and $g-(h+g)-i-l$ gives $8_2$.\\

\item We start from $E_1''^-$ with $h$. Then  $h-(h+g)-i-j$ gives $10_1$, $h-(h+g)-i-k$ gives $11_1$ and $h-i-l$ gives $z$.\\

We note that there is no subarc in $I'$ which connects $E_2''$ and $E_3''$.
Now, we exclude some cases as follows.\\

Claim 1: $x$ cannot be realized.

\begin{proof}
 If $m_{x}>0$ then $m_8=m_9=m_{10_2}=m_{11_2}=0$.  Then we note that $|\gamma'\cap (\partial E_1''\cap E_1''^+)|=m_2$. However, $|\gamma'\cap (\partial E_1''\cap E_1''^+)|\geq m_2+2m_{x}$.  Because of the connnectivity in $E_1''$, we have an inequality $m_2\geq m_2+2m_{x}$. This violates the assumption that $m_{x}>0$. Therefore $m_{x}=0$.

\end{proof}

Claim 2: $y$ and $z$ cannot be realized.

\begin{proof}
If there is an arc for $y$ then the arc for type $8_1$ is the only possible arc which connects $E_1''^+$ and $E_3''^+$ and there is no arc to connect $E_1''^+$ and $E_3''^-$. We note that $m_{8_1}<j$ if there is an arc for $y$. We have the equality $j+l=k$ for the connectivity in $E_3''$. We point out that $m_y\neq 0$ implies $m_2=0$, so the only arc entering $E_1''^+$ is $8_1$. Now, we have the inequality $m_{8_1}>l+k$ for the connectivity in $E_1''$. So,  $m_{8_1}>l+k=l+(j+l)=j+2l$. This implies that $m_{8_1}>j$.  This contradicts that $m_{8_1}<j$.\\

If there is an arc for $z$ then we note that the two arcs $2$ and $8_2$ are the only arcs that can enter $E_1''^+$. We note that $m_{8_2}<l$ if there is an arc for $z$. We still have the equality $j+l=k$. Then, we  have the inequality that $m_{8_2}>k+j=(j+l)+j=2j+l$ for the connectivity in $E_1''$. So, $m_{8_2}>l$. This contradicts that $m_{8_2}<l$.
\end{proof}

\end{enumerate}

Now, we set $t_1=0$ and $t_2=-1$ if $p_2\neq 0$ by applying a half Dehn twist supported on $E_2''$ clockwise to have $\gamma_0$ in the diagram $(c)$ of Figure~\ref{H5}. We set $t_2=0$ if $p_2=0$. Also, we set $t_3=0$. We note that the numbers on the diagram $(c)$ does not match with the numbers of the diagram $(b)$ in Figure 28 since the numbers came from the pattern diagram.
Then, we can check that every component of $\gamma_0\cap I'$ is isotopic to one of the arcs in the standard diagram~\ref{H1} $(1)$.\\

We note that  $\gamma'$ bounds a disk in $B^3-\epsilon$ if and only if $\gamma_0$ bounds a disk in $B^3-\epsilon$ by Lemma 6.1.\\

 \begin{figure}[htb]
 \begin{center}
\includegraphics[scale=.26]{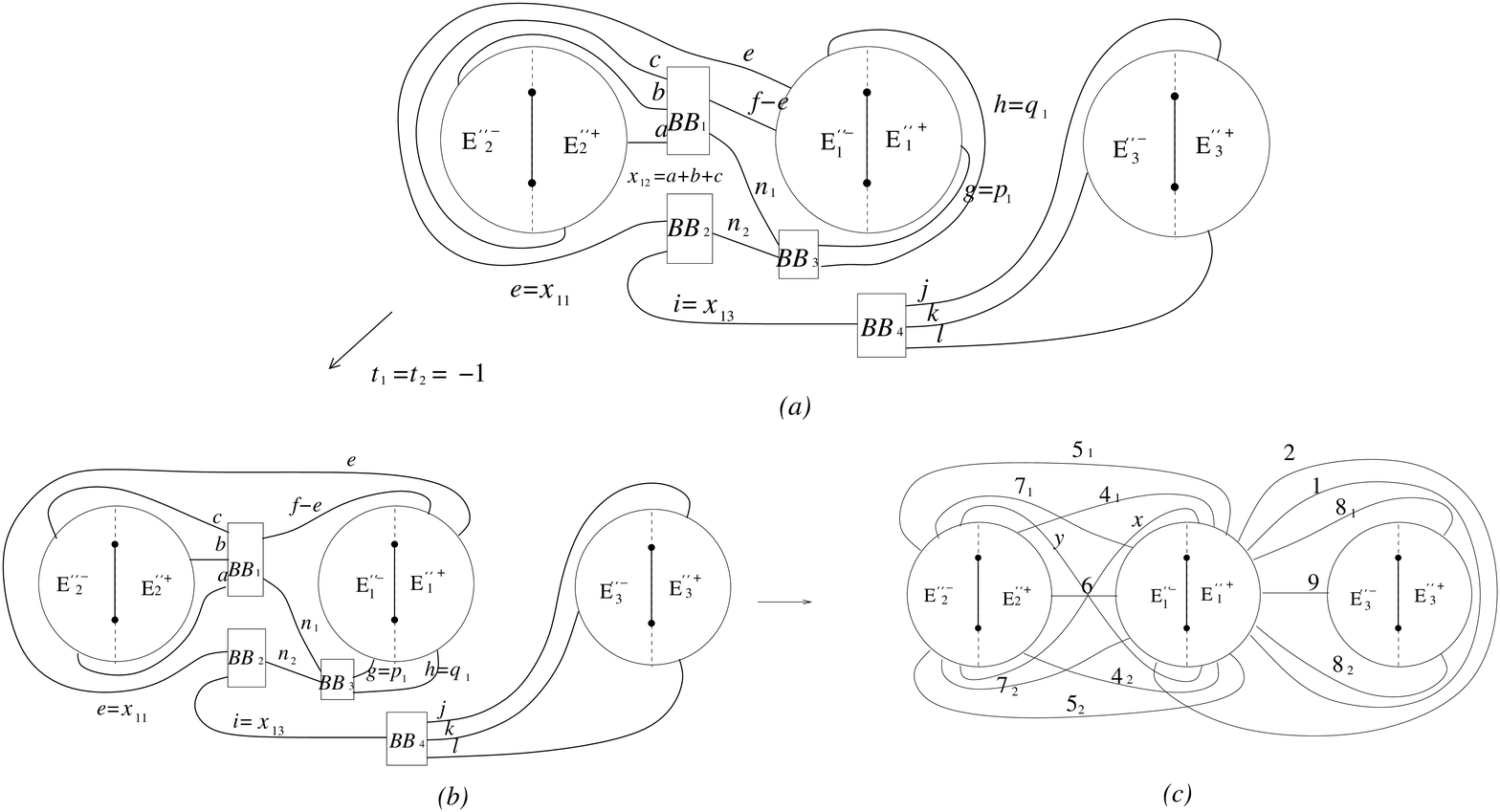}
\end{center}
\caption{}
\label{H6}
 \end{figure}

Case 2: $x_{11}+x_{13}\leq q_1+p_1< x_{11}+x_{12}+x_{13}$. (See the  diagram $(a)$ of Figure~\ref{H6}.) First, in the diagram $(a)$ we note that  $m_1=0$.\\

Because of the given inequalities $x_{11}+x_{13}\leq q_1+p_1< x_{11}+x_{12}+x_{13}$, we need to have the three black boxes $BB_1,BB_2$ and $BB_3$, where $n_1\geq 0$ and $f-e>0$ since $x_{12}>0$. We note that the sum of the inputs of $BB_1$ and $BB_2$ is $x_{11}+x_{12}+x_{13}$. All the output of $BB_2$ should be the input of $BB_3$ since $x_{11}+x_{13}\leq q_1+p_1$. \\

 Now, we  get the diagram $(b)$ of Figure~\ref{H6} by setting $t_1=t_2=-1$ 
 if $p_2\neq 0$. If $p_2=0$ then set $t_2=0$.  Set $t_3=0$.\\

We claim that the diagram $(c)$ of Figure~\ref{H6} contains all the possible arc types which can be obtained from the diagram $(b)$.\\

The following are all possible connectivities of arcs having arc types from the diagram $(b)$ of Figure~\ref{H6}.\\

\begin{enumerate}
\item We start from $E_1''^+$ with $e$. Then  $e-n_2-g$ gives $2$ and $e-n_2-h$ gives $1$.\\

\item We start from $E_1''^+$ with $f-e$. Then   $(f-e)-c$ gives $5_1$, $(f-e)-b$ gives $4_1$ and $(f-e)-a$ gives $x$.\\

\item We start from $E_1''^-$ with $g$. Then $g-n_1-c$ gives $7_1$, $g-n_2-b$ gives $6$ and $g-n_1-a$ gives $7_2$. We note that $g-n_2$ cannot take $i$ since $x_{13}<q_1$.\\

\item We start from $E_1''^+$ with $h$. Then $h-n_1-c$ gives $y$, $h-n_1-b$ gives $4_2$, $h-n_1-a$ gives $5_2$, $h-n_2-i-j$ gives $8_1$, $h-n_2-i-k$ gives $9$ and $h-n_2-i-l$ gives $8_2$.

\end{enumerate}

Claim: $x$ and $y$ cannot be realized.
\begin{proof}
Assume that there is a subarc to realize $x$. Then, there is no arc to connect $E_1''^-$ and $E_2''^+$.
So, $m_{4_1}>m_{5}+m_{7_2}$ for the connectivity in $E_2''$ since  $m_x>0$.
Also, we have an inequality $m_{7_2}+m_{2}\geq m_{2}+m_{4_1}+m_{5}$ for the connectivity in $E_1''$. By combining these two inequalities, we have $m_{4_1}>m_{5}+m_{7_2}\geq m_{5}+(m_{4_1}+m_{5})=m_{4_1}+2m_{5}$.
However, this is impossible to satisfy. Therefore, the assumption fails.\\

Now, assume that there is a subarc to realize $y$. Then, there is no arc to connect $E_1''^-$ and $E_2''^+$.
So, $m_{4_2}>m_{5}+m_{7_1}$ for the connectivity in $E_2''$ since  $m_y>0$.
Also, we note that the arc $7_1$  is the only arc that can enter $E_1''^-$. This makes an inequality $m_{7_1}> m_{4_2}+m_{5}$ for the connectivity in $E_1''$. By combining these two inequalities, we have $m_{4_2}>m_{5}+m_{7_1}\geq m_{5}+(m_{4_2}+m_{5})=m_{4_2}+2m_{5}$.
However, this is impossible to satisfy. Therefore, the assumption fails.

\end{proof}

We note that if $x_{13}\geq q_1$, then  $m_1=0$ because in the diagram $(b)$ every subarc of $\gamma'$ carried by $e$  starts from $E_1''^+$ and ends at $E_1''^-$ in the diagram $(b)$ in Figure~\ref{H6}. Also, $m_3=0$ in the diagram $(b)$ since $m_1=0$ in the diagram $(a)$. Therefore, $m_1+m_3=0$. In this case, $\gamma'$ cannot bound an essential disk in $B^3-\epsilon$ by Lemma~\ref{T81}.\\

Thus, we can check that every component of $\gamma_0\cap I'$ is isotopic to one of the arcs in the standard diagram~\ref{H1} $(2)$.\\

\begin{figure}[htb]
\begin{center}
\includegraphics[scale=.3]{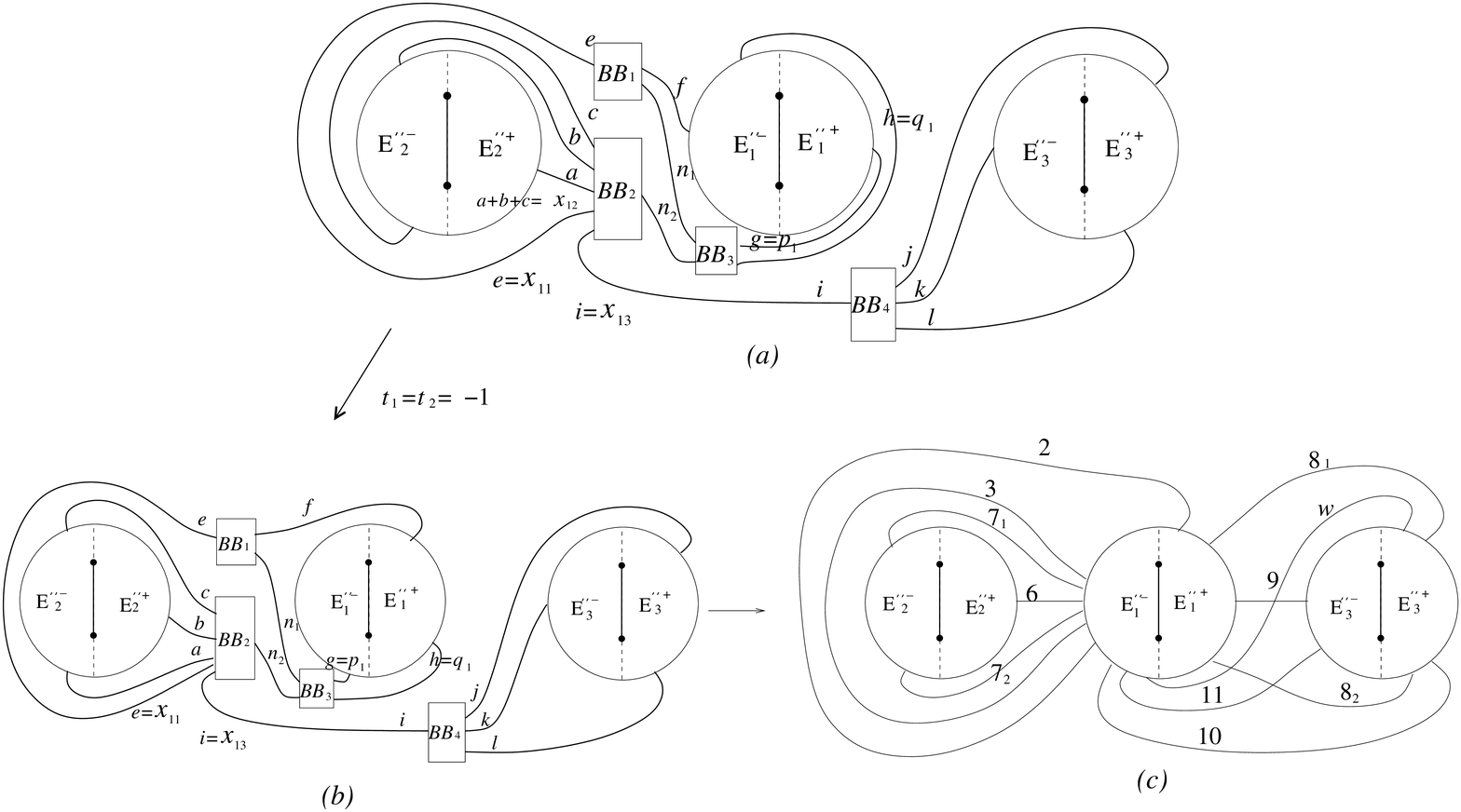}
\end{center}
\caption{}
\label{H7}
 \end{figure}

Case 3:  $x_{11}+x_{12}+x_{13}\leq q_1+p_1$. (See the diagram $(a)$ of Figure~\ref{H7}.)  We have the black box $BB_2$ for the given inequality. We note the sum of the inputs of $BB_2$ is $x_{11}+x_{12}+x_{13}$ and the sum of the outputs of $BB_3$ is $q_1+p_1$. We note that $n_2=x_{11}+x_{12}+x_{13}$.\\

 Then, we set $t_1=-1$,  and we set $t_2=-1$ if $p_2>0$ and $t_2=0$ if $p_2=0$. Also we set $t_3=0$ to have the diagram $(b)$ of Figure~\ref{H7}.\\


Then we have two subcases as follows. We refer to the diagram $(b)$.\\

Subcase 1: If $x_{13}\geq q_1$ then we note that $m_1=0$ since every arc which starts from $E_1''^+$ with $f$ should end at $E_1''^-$. \\

Now, we claim that the diagram $(c)$ of Figure~\ref{H7} contains all the possible arc types which are obtained from the diagram $(b)$.\\

 The following are  the connectivities of arcs having arc types from the diagram $(b)$ of Figure~\ref{H7}.\\
 
 \begin{enumerate}
 \item We start from $E_1''^+$ with $f$. Then $f-e-n_2-g$ gives $2$. We note that $m_1=0$ implies $f-e-n_2-h$ does not occur.
 
 \item We start from $E_1''^-$ with $g$. Then $g-n_1-e-n_2-g$ gives $3$, $g-n_2-c$ gives $7_1$, $g-n_2-b$ gives $6$, $g-n_2-a$ gives $7_2$, $g-n_2-i-j$ gives $w$, $g-n_2-i-k$ gives $11$ and $g-n_2-i-l$ gives $10$. We note that $g-n_1-e-n_2-h$ does not exist since every arc starting from $E_1''^-$ with $g-n_1-e-n_2$ should connect to $g$  since $x_{13}=i\geq h= q_1$.\\

 \item We start from $E_1''^+$ with $h$. Then $h-n_2-i-j$ gives $8_1$, $h-n_2-i-k$ gives $9$ and $h-n_2-i-l$ gives $8_2$.\\
 
  Claim: $w$ cannot be realized.
 \begin{proof}
 Suppose that there is a subarc to realize $m_w>0$. Then, we note that the arcs $2$ and $8_1$  are the only two arcs that can enter $E_1''^+$. This makes an inequality $m_{8_1}+m_2> m_{2}+m_{10}+m_{11}+m_w$ for the connectivity in $E_1''$ since $m_3>0$ by Lemma~\ref{T81}. This implies that $m_{8_1}>m_{10}+m_{11}+m_w$. We also have an equality $m_w+m_{8_1}+m_{10}=m_{11}$ for the connectivity in $E_3''$. By combining this inequality and equality, we have $m_{11}=m_w+m_{8_1}+m_{10}>m_w+m_{10}+m_{11}+m_w+m_{10}>m_{11}$ since $m_w>0$.
However, this is impossible to satisfy. Therefore, the assumption fails.
 \end{proof}
 
 Thus, we can check that every component of $\gamma_0\cap I'$ is isotopic to one of the arcs in the standard diagram~\ref{H1} $(1)$.\\
 \end{enumerate}

\begin{figure}[htb]
\begin{center}
\includegraphics[scale=.35]{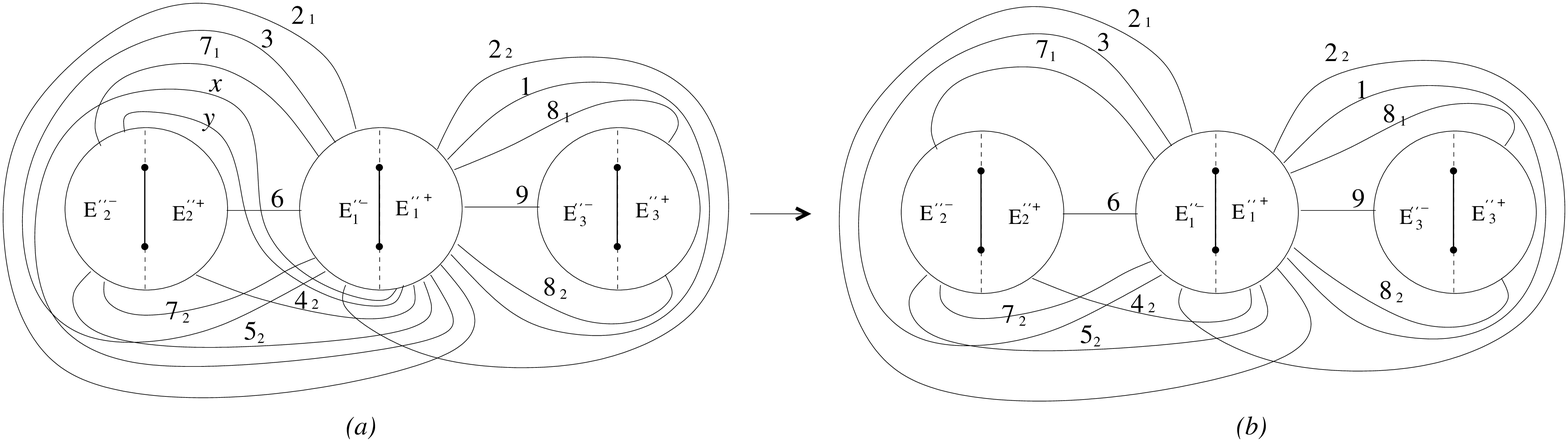}
\caption{}
\label{H8}
\end{center}
\end{figure}

Subcase 2: If $x_{13}<q_1$ then $m_1>0$ since $e>0$ because $x_{11}>0$.\\

The following are the connectivities of arcs having arc types from the diagram $(b)$ of Figure~\ref{H7}. We claim that the diagram $(b)$ of Figure~\ref{H8} contains all the possible arc types.\\

\begin{enumerate}
\item We start from $E_1''^+$ with $f$. Then $f-e-n_2-h$ gives $1$ and $f-e-n_2-g$ gives $2_2$.\\

\item We start from $E_1''^-$ with $g$. Then $g-n_1-e-n_2-h$ gives $2_1$, $g-n_1-e-n_2-g$ gives $3$, $g-n_2-c$ gives $7_1$, $g-n_2-b$ gives $6$ and $g-n_2-a$ gives $7_2$. \\

We note that $g-n_2-i-j$, $g-n_2-i-k$ and $g-n_2-i-l$ do not exist since no arc starting from $E_1''^-$ with $g-n_2$ can go to $i$ since $i<h$.\\

\item We start from $E_1''^+$ with $h$. Then $h-n_1-e-n_2-h$ gives $x$, $h-n_2-c$ gives $y$, $h-n_2-b$ gives $4_2$, $h-n_2-a$ gives $5_2$, $h-n_2-i-j$ gives $8_1$, $h-n_2-i-k$ gives $9$ and $h-n_2-i-l$ gives $8_2$.\\

Claim: $x$ and $y$ cannot be realized.

\begin{proof}
First, assume that there is a subarc to realize $m_x>0$. Then we note that $2_1$ is the only possible arc which can enter to the $E_1''^-$. Because of the connectivity in $E_1''$, we have an inequality $m_{2_1}\geq m_{2_1}+m_x$. This implies that $m_{2_1}>m_{2_1}$ since $x>0$. However, this is impossible to satisfy. Therefore, $m_x=0$.\\

Now, assume that there is a subarc to realize $m_y>0$. Then we note that $4_2$ is the only possible arc which can enter to the $E_2''^+$.  We have an equality $m_{4_2}=m_y+m_{7_1}+m_{5_2}$ for the connectivity in $E_2''$. Also, we have an inequality $m_{7_1}\geq m_y+m_{4_2}+m_{5_2}$ for the connectivity in $E_1''$. Therefore, we have $m_{7_1}\geq m_y+m_{4_2}+m_{5_2}=m_y+(m_y+m_{7_1}+m_{5_2})+m_{5_2}>m_{7_1}$ since $m_y>0$. This is impossible to satisfy. Therefore, $m_y=0$.

\end{proof}
\end{enumerate}
Thus, we can check that every component of $\gamma_0\cap I'$ is isotopic to one of the arcs in the standard diagram~\ref{H1} $(2)$.\\

 By combining cases 1, 2 and 3, we note that  $\gamma'$ can be modified as a  simple closed curve $\gamma_0$  so that all components of $\gamma_0\cap I'$ are isotopic to  arcs of the two diagrams in Figure~\ref{H1}, but $\gamma_0$ bounds an essential disk  if $\gamma'$ does by Lemma 6.1.

\end{proof}

We represent $\gamma_0$ by putting weights on these arcs.\\

Now, we want to calculate $m_i$ of $\gamma_0$ for $i=1,2,...,11$ by using the parameters $p_j,q_j$ of $\gamma'$ for $j=1,2,3$.\\
 
 The following is the table of $m_i$ for each case.\\
 
 \begin{Lem}\label{T83}
 
 \begin{enumerate}
 
 \item $x_{11}+x_{13}>q_1+p_1$:\\
 
  $m_3=p_3-p_2-q_1$, $m_{2}=q_1+p_1-2p_3$, $m_{7_1}=q_2$, $m_{7_2}=p_2-q_2$, $m_6=p_2$, $m_{10_1}=\min(q_1,p_3-q_3)$, $m_{10_2}=\min(0,\max(2p_3-p_1-q_1,0),q_3)$, $m_{11_1}=\max(0,q_1-p_3+q_3)$, $m_{11_2}=\max(0,\max(2p_3-p_1-q_1,0)-q_3)$, $m_{8_1}=\max(0,p_3-q_3-p_1)$, $m_{8_2}=\max(0,q_3-\max(2p_3-p_1-q_1,0))$, $m_9=m_{10}+m_8-m_{11}.$\\
 
 \item $x_{11}+x_{13}\leq q_1+p_1$ and $x_{13}<q_1$:\\
 
 $(a)$ $q_1>x_{13}+x_{11}$: $m_1=p_1-p_2-p_3$, $m_{8_1}=p_3-q_3,~ m_{8_2}=q_3,~m_9=p_3$, $m_{5_1}=\min(q_2,p_2+p_3-q_1)$, $m_{7_1}=q_2-m_{5_1},~ m_{4_1}= p_2+p_3-q_1-m_{5_1}$, $m_{5_2}=\min(p_2-q_2,q_1+p_2-p_1-p_3),~ m_{7_2}=p_2-q_2-m_{5_2},~ m_{4_2}=q_1+p_2-p_1-p_3-m_{5_2}$, $m_6=m_5+m_7-m_4$.\\

$(b)$ $x_{13}<q_1\leq x_{13}+x_{11}$: $m_1=q_1-2p_3, ~m_{8_1}=p_3-q_3, ~m_{8_2}=q_3,~ m_9=p_3$, $m_{2}=p_1+p_3-p_2-q_1$, $m_{5_1}=\min(q_2,p_2+p_3-q_1),~m_{7_1}=q_2-m_{5_1},~ m_{4_1}=p_2+p_3-q_1-m_{5_1}, m_{7_2}=p_2-q_2$, $m_6=m_5+m_7-m_4$. \\

\item $x_{11}+x_{12}+x_{13}\leq q_1+p_1$:\\

\begin{enumerate}

\item   $x_{13}\geq q_1$:\\

$m_3=q_1-p_2-p_3$, $m_2=p_1-q_1$, $m_{7_1}=q_2$, $m_{7_2}=p_2-q_2,$ $m_6=p_2$,
$m_{8_1}=\min(q_1,p_3-q_3)$, $m_{8_2}=\max(q_3-(2p_3-q_1),0),$
$m_{10}=\min(2p_3-q_1,q_3)$,
$m_{11}=\max((2p_3-q_1)-q_3,0)$.\\

\item [(b1)] $x_{13}<q_1$, $p_1\geq 2q_1-2p_3$: $m_{2_2}=p_1+2p_3-2q_1$, $m_{1}=q_1-2p_3$, $m_3=q_1-p_2-p_3$, $m_{7_1}=q_2$, $m_{7_2}=p_2-q_2$, $m_6=p_2$, $m_{8_1}=p_3-q_3$, $m_{8_2}=q_3$ and $m_9=p_3$.\\

\item [(b2)] $x_{13}<q_1$, $p_1< 2q_1-x_{13}$:\\

\begin{enumerate}
\item $q_1>x_{13}+x_{11}$: $m_{2_1}=q_1-p_2-p_3$, $m_{1}=p_1-q_{1}$,  $m_{7_1}=q_2$, $m_{5_2}=\min(p_2-q_2,q_1-x_{11}-x_{13})$, $m_{7_2}=p_2-q_2-m_{5_2}=p_2-q_2-\min(p_2-q_2,q_1+p_2-p_1-p_3)$, $m_{4_2}=q_1+p_2-p_1-p_3-m_{5_2}=q_1+p_2-p_1-p_3-\min(p_2-q_2,q_1+p_2-p_1-p_3)$,  $m_6=p_2-m_{4_2}=p_1+p_3-q_1+\min(p_2-q_2,q_1+p_2-p_1-p_3)$, $m_{8_1}=p_3-q_3$, $m_{8_2}=q_3$ and $m_9=p_3$.\\

\item $q_1\leq x_{13}+x_{11}$: $m_{2_1}=2q_1-p_1-2p_3$, $m_{1}=p_1-q_{1}$, $m_3=p_1+p_3-p_2-q_1$, $m_{7_1}=q_2$, $m_{7_2}=p_2-q_2$, $m_6=p_2$, $m_{8_1}=p_3-q_3$, $m_{8_2}=q_3$ and $m_9=p_3$.
\end{enumerate}

\end{enumerate}
 \end{enumerate}
 
\end{Lem}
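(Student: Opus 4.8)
The plan is to read off the weights $m_i$ directly from the standard diagrams produced in the proof of Lemma~\ref{T82}, tracking how the Dehn parameters $p_j, q_j$ distribute across the arc types. Recall from the proof of Lemma~\ref{T82} that in each of the three cases (and their subcases) we have an explicit ``black box'' picture of the train track carrying $\gamma_0$, with the window crossings controlled by the identities $|\gamma'\cap\omega_1| = 2p_1 = 2e+d+i$, $q_1 = h$, $x_{11} = e$, $x_{13} = i$, and the analogous identities at $\omega_2,\omega_3$ giving $|\gamma'\cap\omega_2| = 2p_2$, $|\gamma'\cap\omega_3| = 2p_3$, together with $q_2, q_3$ recording the positions of the strands entering $E_2'', E_3''$. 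Since each component of $\gamma_0\cap I'$ is (by Lemma~\ref{T82}) isotopic to one of the listed standard arcs, and since $\gamma_0$ has prescribed twisting numbers $t_i$, the multiplicities $m_i$ are forced by the connectivity constraints at the three punctured disks: at each $E_k''$ the number of strand-ends on $E_k''^+$ must equal the number on $E_k''^-$, and the cyclic order of the arcs around $\partial E_k''$ must be consistent with an embedded curve.

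The key steps, carried out case by case, are as follows. First, in Case~1 ($x_{11}+x_{13} > q_1+p_1$, diagram $(c)$ of Figure~\ref{H5}) I would count: the $l_{11}$-arcs split between the ``type $3$'' return and the ``type $2$'' return according to whether a strand falls inside or outside the nested band of width $q_1+p_1$ versus $x_{11}$; this yields $m_3 = p_3 - p_2 - q_1$ and $m_2 = q_1 + p_1 - 2p_3$ after substituting $x_{11} = e$, $x_{13}=i$ and $p_3$ for the $E_3''$-crossing count. The arcs through $E_2''$ give $m_{7_1} = q_2$, $m_{7_2} = p_2 - q_2$, $m_6 = p_2$ immediately from the definition of $q_2$ and the choice $t_2 = -1$. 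The arcs through $E_3''$ require comparing $q_1$, $p_3 - q_3$, and the ``overflow'' quantity $2p_3 - p_1 - q_1$; the $\min$/$\max$ expressions for $m_{10_i}, m_{11_i}, m_{8_i}$ record which of several sub-configurations occurs, and $m_9 = m_{10} + m_8 - m_{11}$ is the balancing identity at $E_3''$. Second, for Case~2 with $x_{13} < q_1$ (i.e. the displayed case (2) in the lemma, diagram $(b)$ of Figure~\ref{H8} for subcase~2(b), and diagram $(b)$ of Figure~\ref{H6} for subcase~2(a)), I do the same bookkeeping: the further split $q_1 > x_{13}+x_{11}$ versus $x_{13} < q_1 \le x_{13}+x_{11}$ governs whether $m_1$ equals $p_1 - p_2 - p_3$ or $q_1 - 2p_3$, and the $E_2''$-counts produce the $\min$'s in $m_{5_i}, m_{7_i}, m_{4_i}$ with $m_6 = m_5 + m_7 - m_4$ again the balancing identity. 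Third, Case~3 ($x_{11}+x_{12}+x_{13} \le q_1 + p_1$) splits into (a) $x_{13}\ge q_1$, (b1) $x_{13}<q_1$ with $p_1 \ge 2q_1 - 2p_3$, and (b2) $x_{13} < q_1$ with $p_1 < 2q_1 - x_{13}$ (further split by $q_1$ versus $x_{13}+x_{11}$); in each I read off $m_i$ from diagram $(c)$ of Figure~\ref{H7} or diagram $(b)$ of Figure~\ref{H8}, using the same two principles.

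Throughout I would use the relations $x_{11} = e$, $x_{12} = f - e$, $x_{13} = i$ from the TWP diagram of Figure~\ref{H4}, together with $p_i = \tfrac12|\gamma'\cap\omega_i|$ and the definitions of $q_i$, to convert train-track weights into the stated polynomial/piecewise-linear formulas in $p_j, q_j$; the already-established Claims in the proof of Lemma~\ref{T82} (that the spurious arc types $x, y, z, w$ cannot be realized) guarantee that no other arc types contribute, so the lists above are exhaustive. The main obstacle is purely organizational rather than conceptual: there are many sub-cases, and in each one must verify that the chosen $\min$/$\max$ expression genuinely equals the strand count for \emph{every} sub-configuration compatible with the inequalities defining that case — exactly the kind of case-check already modeled in the proofs of Claims (a)--(h) of Theorem~\ref{T45} and in the Claims of Lemma~\ref{T82}. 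I would present one representative computation in full (say $m_3$ and $m_9$ in Case~1, including the $E_3''$ balancing argument) and indicate that the remaining entries follow by the identical method, so as not to grind through all branches.
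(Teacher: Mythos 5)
Your proposal follows essentially the same route as the paper's own proof: the paper likewise reads each $m_i$ directly off the standard/train-track diagrams constructed in the proof of Lemma~\ref{T82}, case by case, using the window counts $I_i=2p_i$ and the resulting substitutions $x_{11}=p_1-p_2-p_3$, $x_{12}=2p_2$, $x_{13}=2p_3$ (from Lemma~\ref{T51}), with the $\min$/$\max$ expressions recording the subconfigurations at $E_2''$ and $E_3''$ and with $m_9$ and $m_6$ obtained from the balancing identities, exactly as you describe. The only difference is one of completeness rather than method: the paper actually carries out the count in every subcase (the diagrams of Figures~\ref{H9}--\ref{H13}), whereas you propose a single representative computation, so a full write-up would still require grinding through the remaining branches.
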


\begin{figure}[htb]
\begin{center}
 \includegraphics[scale=.35]{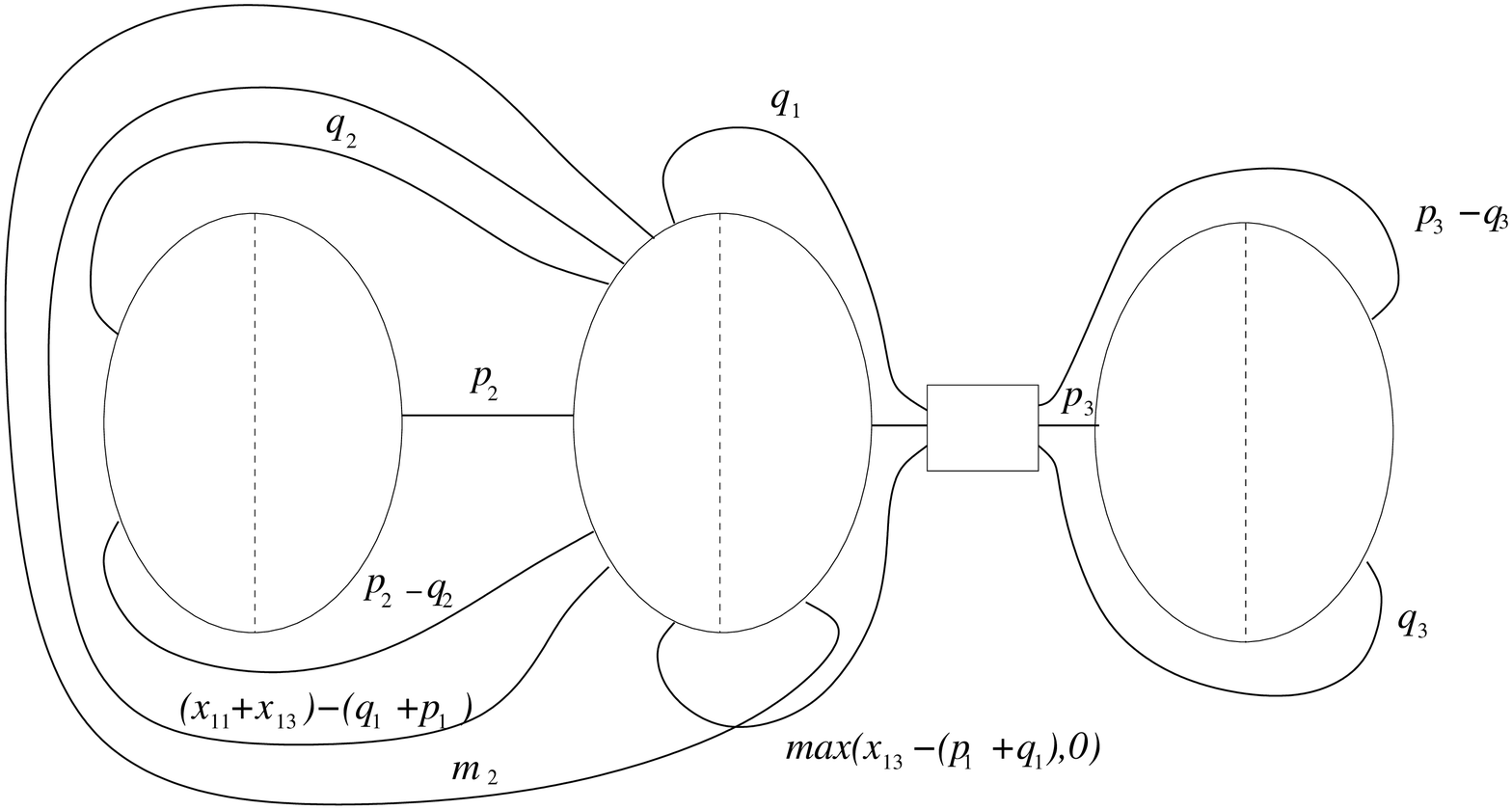}
 \end{center}
 \caption{}
 \label{H9}
 \end{figure}
 
 \begin{proof}
 
 Consider $\gamma_0$ which is in standard position.\\
 
 First, we recall that $x_{ij}=I_j,~ x_{ik}=I_k$ and $x_{ii}={I_i-I_j-I_k\over 2}$ if $x_{11}>0$ by the equations in Lemma~\ref{T51}. Also, we note that $I_1=2p_1,I_2=2p_2$ and $I_3=2p_3$. So, we can have $x_{11}=p_1-p_2-p_3$, $x_{12}=2p_2$ and $x_{13}=2p_3$ from the equations. \\

\underline{Case 1}: $x_{11}+x_{13}>q_1+p_1$.  Refer to the diagrams of Figure~\ref{H5} and Figure~\ref{H9} to get the weights  $m_i$  as follows.\\

First, we note the weight of the bottom arc starting from  $E_1''^-$ is $\max(x_{13}-(p_1+q_1),0)$ as in Figure~\ref{H9} since if $m_2>0$ then the weight is zero and $x_{13}-(p_1+q_2)<0$, and if $m_2=0$ then the weight is $x_{13}-(p_1+q_1)$ because the weight starting from $E_1''^+$ is $p_1$ in this case.\\

 Then we have $m_3=f_2=(e+i)-(h+g)=(x_{11}+x_{13})-(q_1+p_1)$, $m_2=x_{11}-m_3=(q_1+p_1)-x_{13}$, $m_{7_1}=q_2$, $m_{7_2}=p_2-q_2,$ $m_6=p_2$.\\

We note that $m_{10_1}=\min(q_1,p_3-q_3)$, $m_{10_2}=\min(\max(x_{13}-(p_1+q_1),0),q_3)$. Then we have
$m_{11_1}=\max(0,q_1-(p_3-q_3))$ since if $q_1\geq p_3-q_3$ then $m_{11_1}=q_1-(p_3-q_3)$ and if $q_1<p_3-q_3$ then $m_{11_1}=0$. Similarly, we have $m_{11_2}=\max(0,\max(x_{13}-(p_1+q_1),0)-q_3)$. Also, we have
$m_{8_1}=\max(0,(p_3-q_3)-q_1)$ since if $ p_3-q_3\geq q_1$ then $m_{8_1}=(p_3-q_3)-q_1$ and if $p_3-q_3<q_1$ then $m_{18_1}=0$. Similarly, $m_{8_2}=\max(0,q_3-\max(x_{13}-(p_1+q_1),0))$. We note that $m_9=m_{10}+m_8-m_{11}.$\\

In order to have better formulas as in Lemma~\ref{T83}, use  $x_{11}=p_1-p_2-p_3$, $x_{12}=2p_2$ and $x_{13}=2p_3$.\\

\underline{Case 2}: $x_{11}+x_{13}\leq q_1+p_1< x_{11}+x_{12}+x_{13}$ and $x_{13}<q_1$.\\

 Then we need to consider the following two cases in Figure~\ref{H10} which are obtained from the diagram $(b)$ of Figure~\ref{H6}. We note that the first case has $m_2=0$ and the second case has $m_2>0$. By referring to the two diagrams of Figure~\ref{H10}, we can get the weights $m_i$ as follows. We will use a similar argument as in the first case to find $m_{5_1}$ and $m_{5_2}$.\\

\begin{figure}[htb]
\begin{center}
\includegraphics[scale=.25]{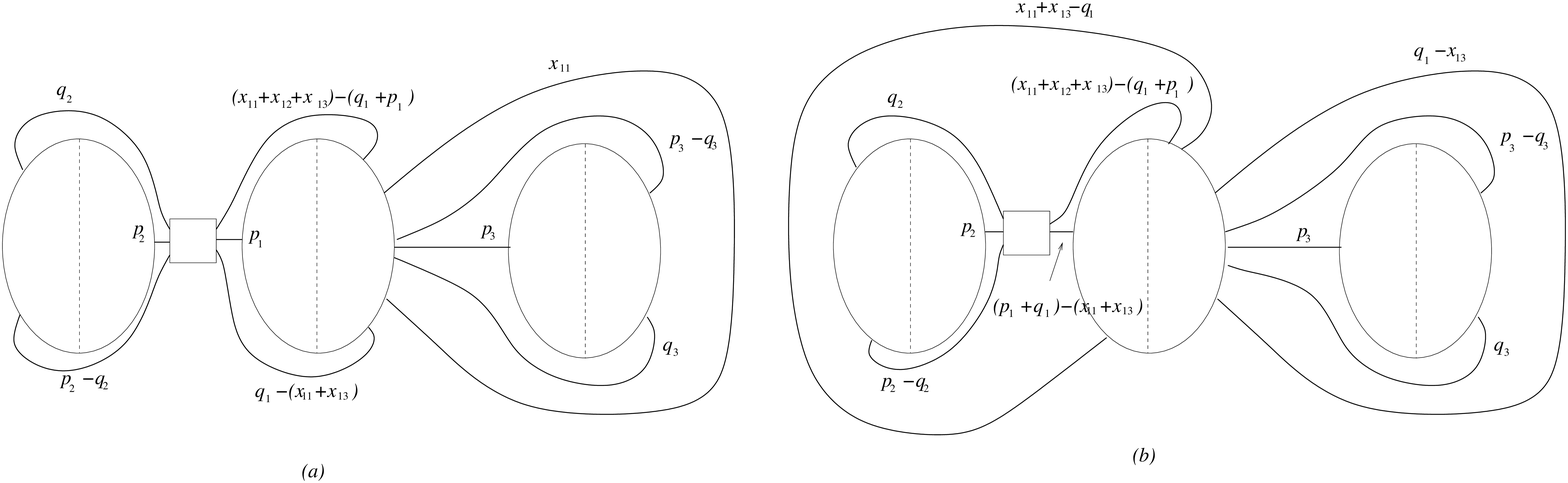}
\end{center}
\caption{}
\label{H10}
\end{figure}

$(a)$ $q_1>x_{13}+x_{11}$: $m_1=x_{11}$, $m_{8_1}=p_3-q_3, m_{8_2}=q_3,m_9=p_3$, $m_{5_1}=\min(q_2,x_{11}+x_{12}+x_{13}-(q_1+p_1))$, $m_{7_1}=q_2-m_{5_1}, m_{4_1}= x_{11}+x_{12}+x_{13}-(q_1+p_1)-m_{5_1}$, $m_{5_2}=\min(p_2-q_2,q_1-(x_{11}+x_{13})), m_{7_2}=p_2-q_2-m_{5_2}, m_{4_2}=q_1-(x_{11}+x_{13})-m_{5_2}$, $m_6=m_5+m_7-m_4$.\\

$(b)$ $x_{13}<q_1\leq x_{13}+x_{11}$: $m_1=q_1-x_{13}, m_{8_1}=p_3-q_3, m_{8_2}=q_3, m_9=p_3$, $m_{2}=x_{11}+x_{13}-q_1$; $m_{5_1}=\min(q_2,x_{11}+x_{12}+x_{13}-(q_1+p_1)),m_{7_1}=q_2-m_{5_1}, m_{4_1}=x_{11}+x_{12}+x_{13}-(q_1+p_1)-m_{5_1}, m_{7_2}=p_2-q_2,$ $m_6=m_5+m_7-m_4$.\\

In order to have better formulas as in Lemma~\ref{T83}, use  $x_{11}=p_1-p_2-p_3$, $x_{12}=2p_2$ and $x_{13}=2p_3$.\\

We recall that if $q_1\leq x_{13}$ then $\gamma'$ does not bound an essential disk in $B^3-\epsilon$.\\

\underline{Case 3}: $x_{11}+x_{12}+x_{13}\leq q_1+p_1$.  We note that $q_1+p_1<2p_1$ since $0\leq q_1<p_1$.
 Then we have two cases for this.\\
 
 \begin{figure}[htb]
 \begin{center}
 \includegraphics[scale=.3]{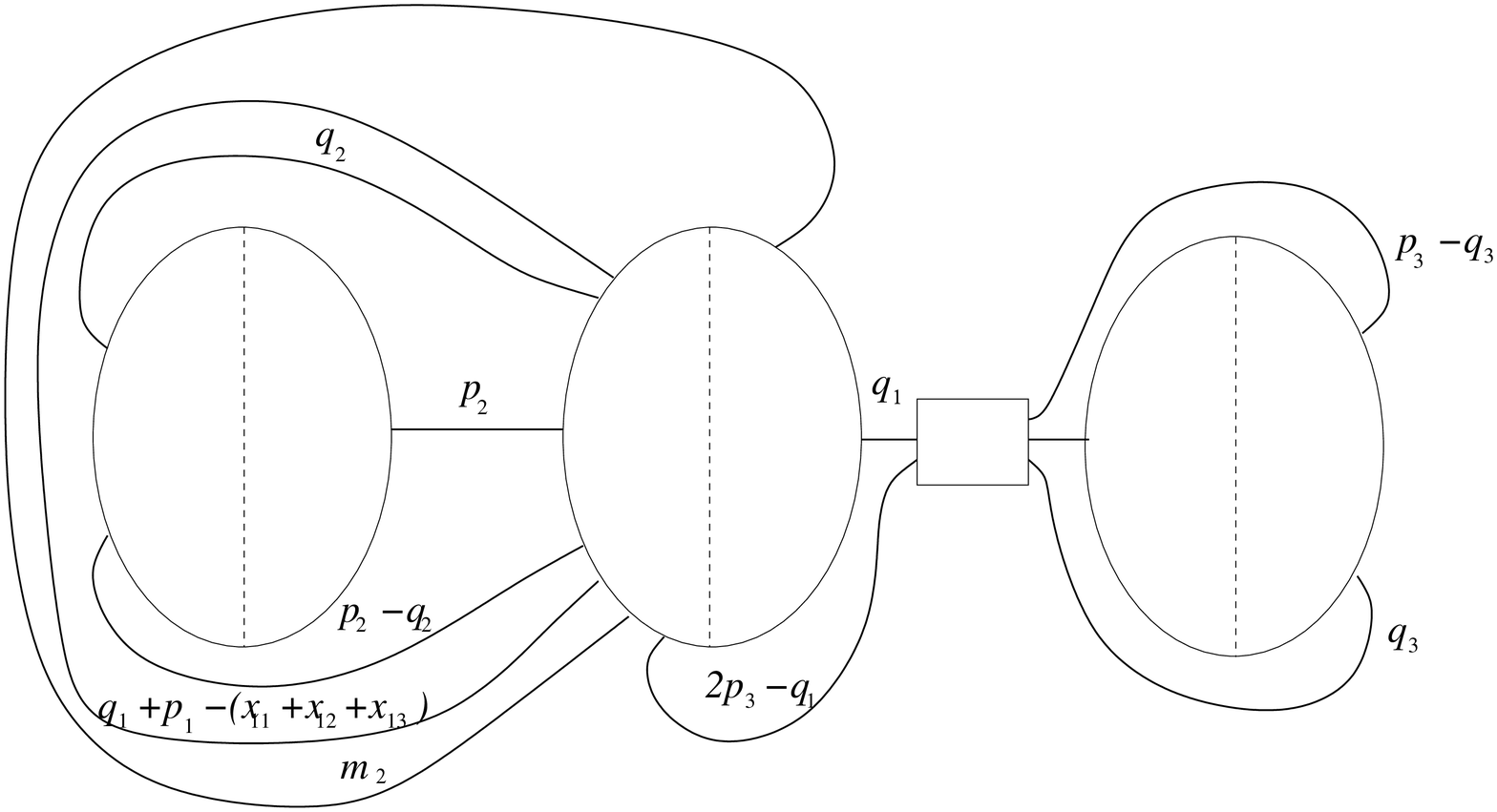}
 \end{center}
 \caption{}
 \label{H11}
 \end{figure}
 
 \begin{enumerate}
 \item[$(a)$] $x_{13}\geq q_1$: Refer to the  diagram $(c)$ of Figure~\ref{H7} and the diagram of Figure~\ref{H11}.\\
 
Then we have the following formulas for $m_j$.  We will use a similar argument as in the first case to find $m_{8_1},m_{8_2},m_{10}$ and $m_{11}$.\\

$m_3=(q_1+p_1)-(x_{11}+x_{12}+x_{13})$, $m_2=x_{11}-m_3=(2x_{11}+x_{12}+x_{13})-(q_1+p_1)$, $m_{7_1}=q_2$, $w_{7_2}=p_2-q_2,$ $w_6=p_2$,
$m_{8_1}=\min(q_1,p_3-q_3)$, $m_{8_2}=\max(q_3-(2p_3-q_1),0)$,
$m_{10}=\min(2p_3-q_1,q_3)$,
$m_{11}=\max((2p_3-q_1)-q_3,0)$.\\

\begin{figure}[htb]
\begin{center}
\includegraphics[scale=.25]{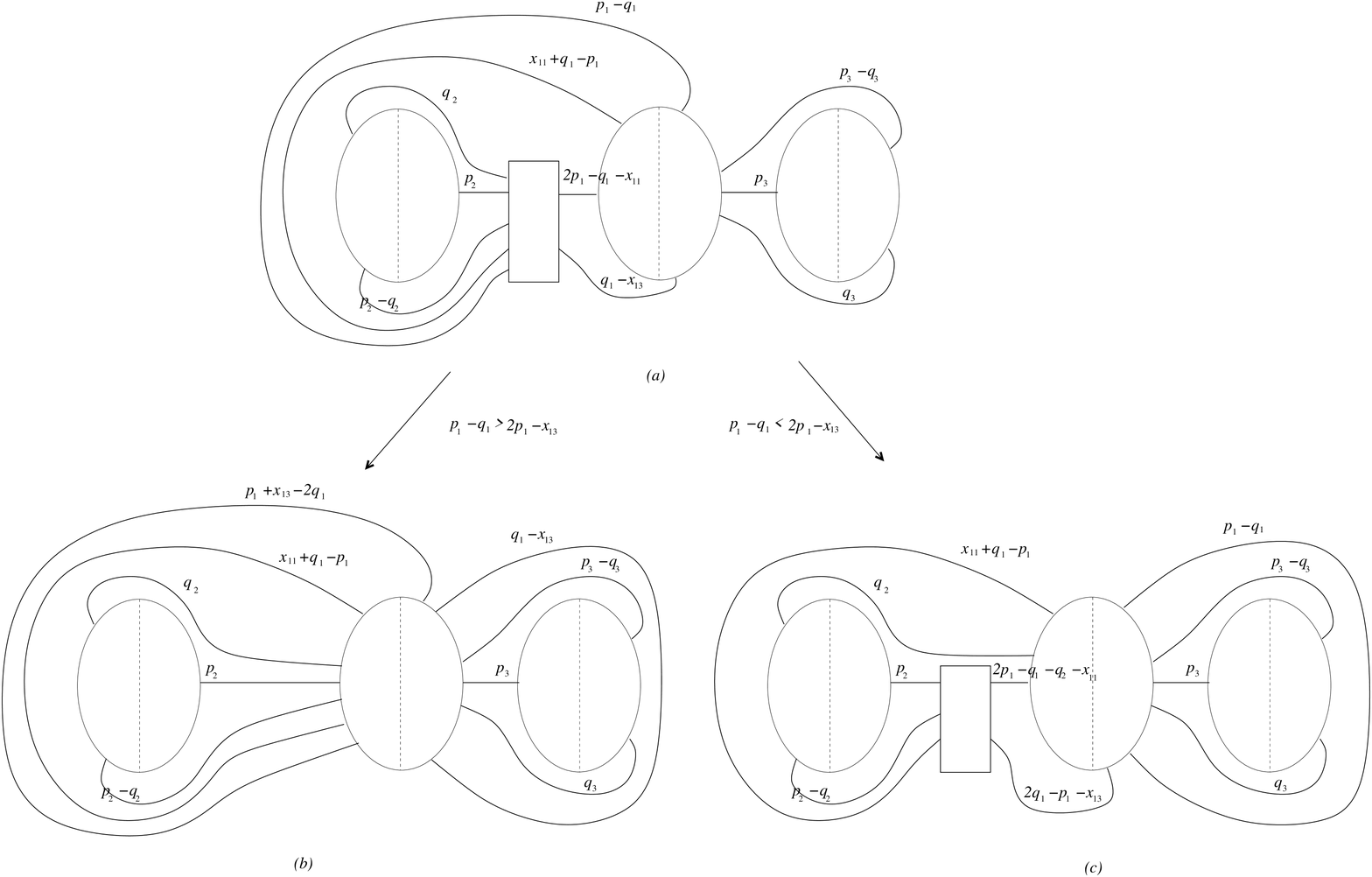}
\end{center}
\caption{}
\label{H12}
\end{figure}
\item[$(b)$] $x_{13}< q_1$:\\

We have the diagram $(a)$ of Figure~\ref{H12} by referring Figure~\ref{H7} and Figure~\ref{H8}.\\

Then we consider the two subcases  $p_1-q_1\geq q_1-x_{13}$ and $p_1-q_1< q_1-x_{13}$.\\

\item[(b1)]: If $p_1-q_1\geq q_1-x_{13}$ then we can directly get the formulas for $m_j$ as follows. (Refer to the diagram $(b)$ of Figure~\ref{H12}.)\\

$m_{2_2}=p_1+x_{13}-2q_1$, $m_{1}=q_1-x_{13}$, $m_3=x_{11}+q_1-p_1$, $m_{7_1}=q_2$, $m_{7_2}=p_2-q_2$, $m_6=p_2$, $m_{8_1}=p_3-q_3$, $m_{8_2}=q_3$ and $m_9=p_3$.\\

\item[(b2)]: If $p_1-q_1<q_1-x_{13}$ then we  have  the diagram $(c)$ of Figure~\ref{H12}.\\

Now, we consider the two subcases for this which are  $(i)~q_1> x_{13}+x_{11}$ or  $(ii)~q_1\leq x_{13}+x_{11}$.\\

\begin{figure}[htb]
\begin{center}
\includegraphics[scale=.25]{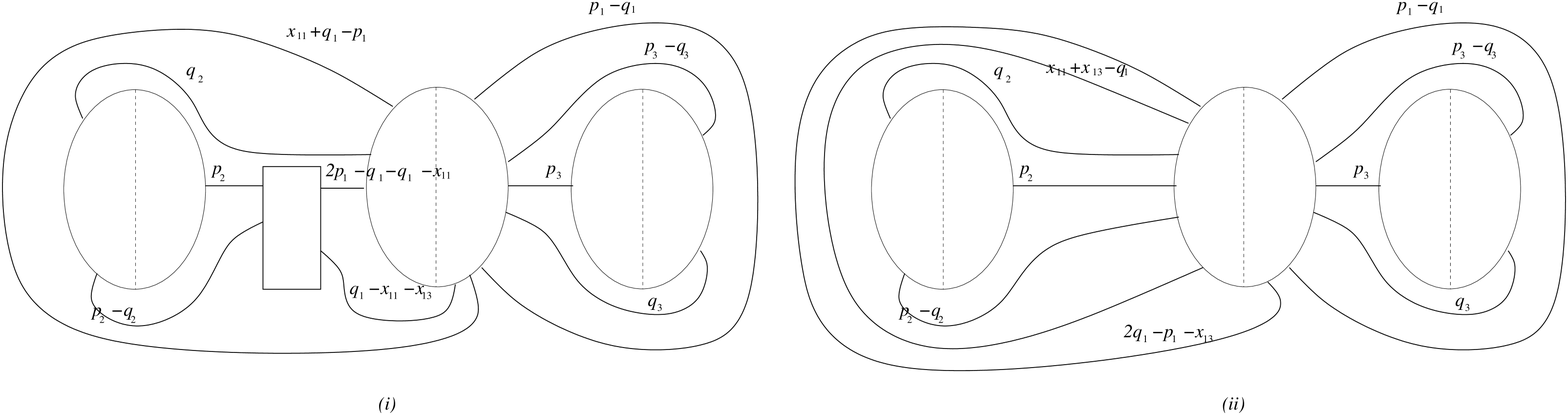}
\end{center}
\caption{}
\label{H13}
\end{figure}

Then we have the two diagrams of Figure~\ref{H13}\\

\begin{enumerate}
\item [$(i)$] $q_1>x_{13}+x_{11}$: (Refer to the diagram $(i)$ of Figure~\ref{H13}.)\\

Then we get the formulas for $m_j$ as follows.\\

$m_{2_1}=x_{11}+q_1-p_1$, $m_{1}=p_1-q_{1}$,  $m_{7_1}=q_2$, $m_{5_2}=\min(p_2-q_2,q_1-x_{11}-x_{13})$, $m_{7_2}=p_2-q_2-m_{5_2}=p_2-q_2-\min(p_2-q_2,q_1-x_{11}-x_{13})$, $m_{4_2}=q_1-x_{11}-x_{13}-m_{5_2}=q_1-x_{11}-x_{13}-\min(p_2-q_2,q_1-x_{11}-x_{13})$,  $m_6=p_2-m_{4_2}=p_2-(q_1-x_{11}-x_{13}-\min(p_2-q_2,q_1-x_{11}-x_{13}))=p_2-q_1+x_{11}+x_{13}+\min(p_2-q_2,q_1-x_{11}-x_{13})$, $m_{8_1}=p_3-q_3$, $m_{8_2}=q_3$ and $m_9=p_3$.\\

\item [$(ii)$] $q_1\leq x_{13}+x_{11}$: (Refer to the diagram $(ii)$ of Figure~\ref{H13}.)\\

Then we get the formulas for $m_j$ as follows.\\

$m_{2_1}=2q_1-p_1-x_{13}$, $m_{1}=p_1-q_{1}$, $m_3=x_{11}+x_{13}-q_1$, $m_{7_1}=q_2$, $m_{7_2}=p_2-q_2$, $m_6=p_2$, $m_{8_1}=p_3-q_3$, $m_{8_2}=q_3$ and $m_9=p_3$.\\

In order to have better formulas as in Lemma~\ref{T83}, use  $x_{11}=p_1-p_2-p_3$, $x_{12}=2p_2$ and $x_{13}=2p_3$.

\end{enumerate}
\end{enumerate}
\end{proof}

\section{Step 4: Main Theorem}

Now, we want to discuss the main theorem that can complete my algorithm. 
In order to do this, we define four homeomorphisms $(\delta_1\delta_2^{-1})^{\pm 1}$ and $\delta_3^{\pm 1}$ as follows.\\

Let $\delta_1$ and $\delta_2$ be the clockwise half Dehn twists supported on two punctured disks $C_1$ and $C_2$ respectively as in Figure~\ref{I1}.
Also, let $\delta_3$ be clockwise half Dehn twists supported on two punctured disk $E_4'$ as in Figure~\ref{I2}.
Then we have the two following lemmas.

\begin{Lem}\label{T91}
$\gamma_0$ bounds an essential disk in $B^3-\epsilon$ if and only if  $(\delta_1\delta_2^{-1})^{\pm 1}(\gamma_0)$  bounds an essential disk in $B^3-\epsilon$. 
\end{Lem}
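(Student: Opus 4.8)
The plan is to realize $\delta_1\delta_2^{-1}$ as the boundary restriction of a homeomorphism of $(B^3,\epsilon)$ to itself, exactly as in the proof of Lemma~\ref{T61}, and then invoke Theorem~\ref{T32}. First I would examine the two punctured disks $C_1$ and $C_2$ in Figure~\ref{I1}: the claim that will make everything work is that $\partial C_1$ and $\partial C_2$ each bound essential disks $N_1,N_2$ in $B^3-\epsilon$, and moreover that these two disks can be chosen disjointly, separating $B^3-\epsilon$ into pieces in which the endpoints of $C_1$ and of $C_2$ both lie on the same one of the three strings $\epsilon_i$. This is the geometric content that must be read off from the figure; I expect it is visually clear that $C_1$ and $C_2$ are positioned so that $\partial C_i$ surrounds two punctures belonging to two different strings in a configuration for which the corresponding disk is essential, and that they are ``concentric enough'' to be pushed off one another.

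The second step is to extend $\delta_1$ and $\delta_2$ individually. By the argument of Lemma~\ref{T61}, since $N_1$ is an essential disk in $B^3-\epsilon$ with $\partial N_1 = \partial C_1$, there is an extended homeomorphism $\Delta_1$ of $B^3$ with $\Delta_1|_{\Sigma_{0,6}} \sim \delta_1$ and $\Delta_1(\epsilon)\approx\epsilon$ in the sense that $(B^3,\Delta_1(\epsilon))\approx(B^3,\epsilon)$ — the twist merely interchanges the two endpoints of one subarc of one string without changing the tangle type. The same applies to $\delta_2$ with $N_2$. Composing (and using that the supports $C_1,C_2$ and the essential disks $N_1,N_2$ can be made disjoint, so the two extensions commute up to isotopy on the relevant pieces), the composition $\delta_1\delta_2^{-1}$ extends to a homeomorphism $\mathcal{H}$ of $B^3$ with $\mathcal{H}|_{\Sigma_{0,6}}\sim \delta_1\delta_2^{-1}$ and $(B^3,\mathcal{H}(\epsilon))\approx(B^3,\epsilon)$. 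The inverse direction is handled by running the same argument with $\delta_1^{-1}\delta_2$, i.e. the counter-clockwise twists, so I only need to prove one direction and the $\pm 1$ statement follows.

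The third step is the bookkeeping that converts ``$\mathcal{H}$ is a tangle-preserving extension'' into the stated equivalence of bounding essential disks. Following the last paragraph of the proof of Lemma~\ref{T61}: write $H := \mathcal{H}$, so $H$ is a homeomorphism from $(B^3,H^{-1}(\epsilon))=(B^3,\epsilon)$-up-to-$\approx$ to $(B^3,\epsilon)$, and there is also a homeomorphism $J$ of $(B^3,\epsilon)$ matching up the complementary balls of the relevant essential disks with $J(\epsilon_i)=\epsilon_i$, giving $(B^3,H^{-1}(\epsilon))\approx(B^3,J(\epsilon))=(B^3,\epsilon)$. Then Theorem~\ref{T32}, applied with the two extensions in question, shows that $(H^{-1})^{-1}J(\partial E_i) = H(\gamma_0)=(\delta_1\delta_2^{-1})(\gamma_0)$ bounds an essential disk in $B^3-\epsilon$ if and only if $\gamma_0$ does. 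Here $\gamma_0=G^{-1}F(\partial E_i)$ for the relevant index $i$, and we use that it suffices to have two of the three curves bound essential disks (the remark after Theorem~\ref{T32}, i.e. Lemma~\ref{T33}).

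\textbf{Main obstacle.} The delicate point is verifying from the figure that $C_1$ and $C_2$ really are supported on two punctured disks whose boundaries bound \emph{essential} disks in $B^3-\epsilon$ and can be isotoped to be disjoint with the appropriate separation property — this is what licenses the use of Lemma~\ref{T61}'s mechanism for each factor and for the composition simultaneously. Once that is granted, the rest is a direct transcription of the Lemma~\ref{T61} argument combined with Theorem~\ref{T32}, with no new ideas required; the $\delta_1\delta_2^{-1}$ combination (rather than a single twist) is exactly what is needed so that the composite permutation of endpoints is realized by a homeomorphism fixing each $\epsilon_i$ setwise.
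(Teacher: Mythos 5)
Your reduction to Lemma~\ref{T61} breaks down at its first step. The two-punctured disks $C_1$ and $C_2$ of Figure~\ref{I1} each contain one endpoint of one string and one endpoint of a \emph{different} string; consequently $\partial C_1$ and $\partial C_2$ do \emph{not} bound essential disks in $B^3-\epsilon$: if such a curve bounded a properly embedded disk in the tangle complement it would be null-homotopic there, but in the free group $\pi_1(B^3-\epsilon)$ on the three meridians $x_1,x_2,x_3$ it represents (a conjugate of) $x_ix_j^{\pm1}\neq 1$. Your own text wavers on exactly this point: you first require that the punctures of each $C_i$ lie on a single string, and then assert that $\partial C_i$ ``surrounds two punctures belonging to two different strings in a configuration for which the corresponding disk is essential''; the latter is the configuration actually shown, and for it the essentiality claim is false. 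Because of this, Lemma~\ref{T61} cannot be applied to $\delta_1$ or $\delta_2$ separately: a single half twist supported on $C_1$ changes which boundary points are joined by the strings, so no extension of it to $B^3$ can carry $\epsilon$ to a tangle isotopic to $\epsilon$ rel $\partial B^3$. This is also why the paper disposes of $\delta_3$ in Lemma~\ref{T92} by simply citing Lemma~\ref{T61}, but gives a separate argument for $(\delta_1\delta_2^{-1})^{\pm1}$, and why the algorithm only ever uses the combination $\delta_1\delta_2^{-1}$, never $\delta_1$ or $\delta_2$ alone.

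The intended argument (and the paper's) treats $\delta_1\delta_2^{-1}$ as a single move: the clockwise twist on $C_1$ composed with the counterclockwise twist on $C_2$ is the boundary trace of a homeomorphism $K$ of $B^3$ that interchanges two entire strings of $\epsilon$ (a $180^\circ$ rotation of a ball containing those two strings, damped to the identity outside), so $K(\epsilon)\approx\epsilon$. Given an essential disk $A$ with $\partial A=\gamma_0$, the image $K(A)$ is then an essential disk in $B^3-\epsilon$ with boundary $(\delta_1\delta_2^{-1})(\gamma_0)$, and running the same argument for $(\delta_1\delta_2^{-1})^{-1}=\delta_2\delta_1^{-1}$ gives the converse and the $\pm1$ cases. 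Your step 3 detour through Theorem~\ref{T32} is harmless but unnecessary once such a $K$ exists; the essential repair is to justify the tangle-preserving extension for the \emph{composite} twist directly, rather than factor by factor.
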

\begin{figure}[htb]
\begin{center}
\includegraphics[scale=.45]{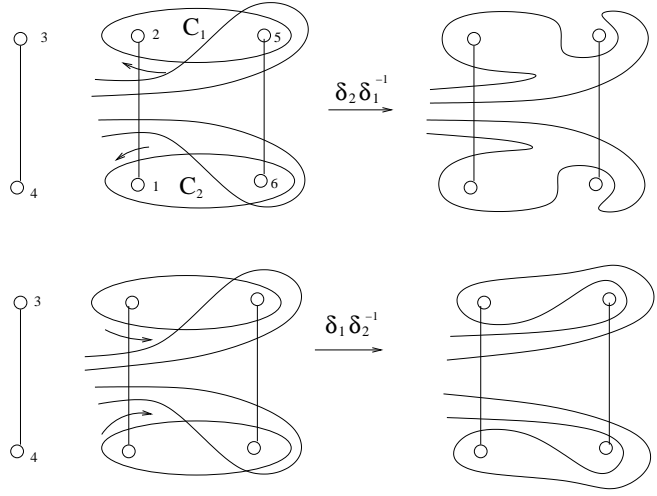}
\end{center}
\caption{The homeomorphisms $\delta_1\delta_2^{-1}$ and $\delta_2\delta_1^{-1}$}
\label{I1}
 \end{figure}
\begin{proof}
We notice that an extension $K$ to $B^3$ of $(\delta_1\delta_2^{-1})^{\pm 1}$ changes the position of two strings. So, it preseves the $\infty$ tangle.
Let $A$ be the essential disk in $B^3-\epsilon$ so that $\partial A=\gamma_0$.
Then, we know that $K(A)$ bounds a disk in $B^3-\epsilon$ and $K(\gamma_0)$ is essential in $\Sigma_{0.6}$. Therefore, $(\delta_1\delta_2^{-1})^{\pm 1}(\gamma_0)$  bounds an essential disk in $B^3-\epsilon$. 

\end{proof}

\begin{Lem}\label{T92}
 $\gamma_0$ bounds an essential disk in $B^3-\epsilon$ if and only if $\delta_3(\gamma_0)$  bounds an essential disk in $B^3-\epsilon$.
\end{Lem}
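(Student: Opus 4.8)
The plan is to obtain Lemma~\ref{T92} from Lemma~\ref{T61} in the same spirit that Lemma~\ref{T91} is obtained, exploiting the fact that $\delta_3$ is a \emph{single} half Dehn twist supported on the two-punctured disk $E_4'$. First I would record the geometric content of Figure~\ref{I2}: the simple closed curve $\partial E_4'$ encloses exactly two punctures and can be isotoped off two of the three standard curves among $\partial E_1,\partial E_2,\partial E_3$. Hence, by Lemma~\ref{T33}, $\partial E_4'$ bounds an essential disk $N$ in $B^3-\epsilon$. If the figure only displays $E_4'$ after reorienting the hexagon, I would first apply the appropriate multiple of a $120^\circ$ rotation, which preserves the $\infty$ tangle, so that $\partial E_4'$ becomes one of the curves bounding an essential disk in $B^3-\epsilon$.

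Once such an $N$ is in hand, the lemma is immediate: I apply Lemma~\ref{T61} with this essential disk $N$, with $N'=E_4'$ the two-punctured disk satisfying $\partial N=\partial N'$, and with $h=\delta_3$ the clockwise half Dehn twist supported on $E_4'$. Lemma~\ref{T61} then says precisely that $\gamma_0$ bounds an essential disk in $B^3-\epsilon$ if and only if $\delta_3(\gamma_0)$ does. For the version with $\delta_3^{-1}$ (needed for the $\pm 1$ in Theorem~\ref{T93}) I would repeat the argument verbatim using the counterclockwise half Dehn twist on $E_4'$, exactly as in the last sentence of the proof of Lemma~\ref{T61}.

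As a fallback, in case $\partial E_4'$ turns out not to be one of the curves bounding an essential disk in $B^3-\epsilon$, the proof would instead be run directly through Theorem~\ref{T32}: extend $\delta_3$ to a homeomorphism $D_3\colon B^3\to B^3$ supported in a ball $B_4$ with $B_4\cap\epsilon$ a pair of trivial subarcs, check that $D_3(\epsilon)$ is isotopic to $\epsilon$ rel $\partial B^3$ (the twist only re-embeds unknotted subarcs inside a ball, so it preserves the $\infty$ tangle), and then transport essential disks back and forth exactly as in the proof of Lemma~\ref{T61}: if $A$ is an essential disk in $B^3-\epsilon$ with $\partial A=\gamma_0$ then $D_3(A)$ is a properly embedded disk disjoint from $D_3(\epsilon)\approx\epsilon$ whose boundary $\delta_3(\gamma_0)$ still encloses two punctures, hence is essential, and conversely via $D_3^{-1}$. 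The main obstacle is the very first step: correctly reading off from Figure~\ref{I2} which two punctures $E_4'$ encircles and confirming that the extension of $\delta_3$ to $B^3$ genuinely preserves the $\infty$ tangle (equivalently, that $\partial E_4'$ bounds an essential disk in $B^3-\epsilon$). After that point the argument is a direct citation of Lemma~\ref{T61}, with no further computation required.
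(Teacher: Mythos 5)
Your main route is exactly the paper's proof: Lemma~\ref{T92} is established there by the one-line citation of Lemma~\ref{T61}, with the needed hypothesis that $\partial E_4'$ bounds an essential disk in $B^3-\epsilon$ taken as read from Figure~\ref{I2}. Your extra verification of that hypothesis (via Lemma~\ref{T33} or a rotation preserving the $\infty$ tangle) and the fallback through Theorem~\ref{T32} merely make explicit what the paper leaves implicit, so the proposal is correct and essentially the same argument.
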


\begin{proof}
By lemma 6.1, it is trivial.
\end{proof}

\begin{figure}[htb]
\begin{center}
\includegraphics[scale=.45]{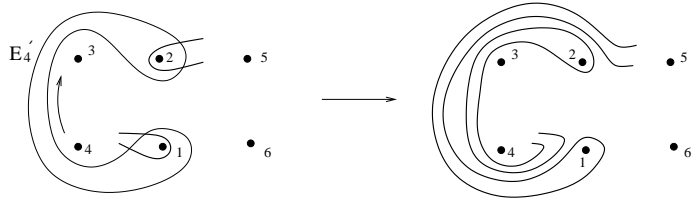}
\end{center}
\caption{The homeomorphism $\delta_3$}
\label{I2}
 \end{figure}
 
Now, let $(p_{11},q_{11},t_{11},p_{21},q_{21},t_{21},p_{31},q_{31},t_{31})$ be the parameters for $h(\gamma_0)$ for $h\in\{(\delta_1\delta_2^{-1})^{\pm 1}, \delta_3^{\pm 1}\}$. 
Also, let $(p_{11},q'_{11},p_{21},q'_{21},p_{31},q'_{31})$ be the Dehn's parameters for $h(\gamma_0)$, where $q'_{i1}=p_{i1}t_{i1}+q_{i1}$ for $i=1,2,3$.
We note that the nine parameters can be obtained from the Dehn's parameters.
Also, we know that $|\gamma_0\cap \partial E|=2(p_1+p_2+p_3)$ and $|\delta_1^{-1}\delta_2(\gamma_0)\cap \partial E|=2(p_{11}+p_{21}+p_{31})$.\\

Now, here is the main theorem.

\begin{Thm}\label{T93}
Suppose that $\gamma_0$ bounds an essential disk in $B^3-\epsilon$ and $\gamma_0$  is in standard position in $I'$ and $m_3>0$.
Then applying one of the homeomorphisms  $(\delta_1\delta_2^{-1})^{\pm 1}$ and $\delta_3^{\pm 1}$ reduces the sum of the $p_i$ for the image of $\gamma_0$. Especially, the following are the formulas for the Dehn's parameter changes for each case.\\

\begin{enumerate}
\item $m_{2_1}, m_3>0, m_1=0$ and $m_3>m_2+1$: $(p_{11},q'_{11},p_{21},q'_{21},p_{31},q'_{31})=
(p_1-2m_2,q'_1+m_2-(m_{10}+m_{11}),p_2,q_2',p_3,q_3'+2(m_{10}+m_{11}))$ by $\delta_3$.\\

\item  $m_{2_2},m_3>0, m_1=0$ and $m_3>m_2+1$: $(p_{11},q'_{11},p_{21},q'_{21},p_{31},q'_{31})=(p_1-2m_2,q'_1-m_2+(m_{10}+m_{11}),p_2,q_2',p_3,q_3'-2(m_{10}+m_{11}))$ by $\delta_3^{-1}$.\\

\item $m_3>m_2>0$:  $(p_{11},q'_{11},p_{21},q'_{21},p_{31},q'_{31})=(p_1-2m_2,q'_1+m_2,p_2,q_2',p_3,q'_3)$ by $\delta_3$.\\

\item $m_1> m_2\geq m_3>0$:  $(p_{11},q'_{11},p_{21},q'_{21},p_{31},q'_{31})=(p_1-2m_2,q'_1+(m_3-m_2),p_2,q_2',p_3,q'_3)$ by $\delta_3$.\\

\item $m_1=m_3=1$ and $m_i=0$ for all $i\neq 1,3$: It bounds an essential disk in $B^3-\epsilon$.\\

\item $m_1=m_2=0$, $m_3\geq 2$.

\begin{enumerate}
\item $m_{11}=0$: $(p_{11},q'_{11},p_{21},q'_{21},p_{31},q'_{31})=(p_1-m_8,q'_1-m_{8_1},p_2,q'_2,p_3-m_8,q'_3+m_8)$ by $\delta_1^{-1}\delta_2$.\\

\item $m_8=0$: $(p_{11},q'_{11},p_{21},q'_{21},p_{31},q'_{31})=(p_1-m_{11},q'_1-m_{11_1},p_2,q'_2,p_3-m_{11},q'_3+m_{11})$ by $\delta_1\delta_2^{-1}$.\\

\item $m_8,m_{11}>0$: \\

\begin{enumerate}
\item $m_{8_1},m_{11_2}>0$: $(p_{11},q'_{11},p_{21},q'_{21},p_{31},q'_{31})=(p_1-(m_{8_1}-m_{11_2}),q'_1-m_{8_1},p_2,q'_2,p_3-(m_{8_1}-m_{11_2}),q'_3+m_{11_2})$ by $\delta_1^{-1}\delta_2$.\\

\item $m_{8_2},m_{11_1}>0$: $(p_{11},q'_{11},p_{21},q'_{21},p_{31},q'_{31})=(p_1-(m_{8_2}-m_{11_1}),q'_1+m_{11_1},p_2,q'_2,p_3-(m_{8_2}-m_{11_1}),q'_3-m_{8_2})$ by $\delta_1^{-1}\delta_2$.
\end{enumerate}
\end{enumerate}

Also, if $\gamma_0$ satisfies  any of the following conditions then $\gamma_0$ does not bound an essential disk in $B^3-\epsilon$.\\

\item $m_1+m_3< 2$ and $m_i>0$ for some $i$.\\

\item $m_2,m_3>0, m_1=0$ and $m_3\leq m_2+1$.\\

\item $m_2\geq m_1,m_3>0$.

\end{enumerate}
\end{Thm}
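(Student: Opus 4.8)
The plan is to run a careful case analysis dictated by the standard-position data $(m_1,\dots,m_{11})$ of $\gamma_0$, using the combinatorial model set up in Sections 7--9. In each case I will exhibit an explicit subword of $\gamma_0\cap I'$ that the relevant homeomorphism ($\delta_3^{\pm 1}$ or $(\delta_1\delta_2^{-1})^{\pm1}$) ``unwinds'', read off the effect on the Dehn parameters $(p_i,q_i',?)$ from how the weighted arc types are transported, and then verify $\sum p_{i1}<\sum p_i$ directly from the stated formulas. Since Lemma~\ref{T91} and Lemma~\ref{T92} guarantee that each of the four homeomorphisms preserves the property of bounding an essential disk in $B^3-\epsilon$, it suffices to track the parameter changes and to rule out the ``bad'' cases (7)--(9) by showing that in those configurations $[\partial A]$ cannot be trivial in $\pi_1(B^3-N^\circ(\epsilon))$, exactly as in the proof of Lemma~\ref{T81}.

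First I would treat the cases where $m_3>0$ but the curve is \emph{not} in the ``doubly spiralled'' regime, namely (1)--(4). Here the geometric picture is that $\gamma_0$ has $m_2$ (or $m_{2_1},m_{2_2}$) arcs running between the two halves of $E_1''$ together with more arcs of type $3$; applying $\delta_3^{\pm1}$ (the half Dehn twist on $E_4'$) removes a ``bigon-like'' pair of arcs through $E_1''$, decreasing $p_1$ by $2m_2$ while leaving $p_2,p_3$ fixed, and shifting $q_1'$ (and in case (1),(2) also $q_3'$) by the bookkeeping amounts involving $m_{10}+m_{11}$ that come from how the type-$10,11$ arcs get carried around $E_3''$. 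The hypotheses $m_3>m_2+1$ in (1),(2) and $m_3>m_2$ in (3), $m_1>m_2\ge m_3>0$ in (4) are precisely what is needed so that after the twist the curve still closes up consistently — i.e.\ the new weights are all $\ge 0$ — and so that $p_1-2m_2\ge 0$ with strict decrease $2m_2>0$. I would verify the claimed formulas by substituting the expressions for the $m_i$ from Lemma~\ref{T83} into the parameter-update rules and checking nonnegativity inequality by inequality.

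Next I would handle case (6), $m_1=m_2=0$, $m_3\ge 2$: here the relevant move is $(\delta_1\delta_2^{-1})^{\pm1}$, and the subcases (a) $m_{11}=0$, (b) $m_8=0$, (c) $m_8,m_{11}>0$ correspond to whether the spiral of $\gamma_0$ around $E_3''$ is one-sided or two-sided. In subcase (c) one further splits according to which of $m_{8_1},m_{11_2}$ or $m_{8_2},m_{11_1}$ is the ``outer'' pair; applying $\delta_1^{-1}\delta_2$ peels off $\min$ of the two, reducing $p_1$ and $p_3$ by the same positive amount $|m_{8_1}-m_{11_2}|$ (resp.\ $|m_{8_2}-m_{11_1}|$) and adjusting $q_1',q_3'$ accordingly, while $p_2$ is untouched. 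Case (5) is the base case: $m_1=m_3=1$, all other $m_i=0$ forces $\gamma_0$ to be isotopic to a $\partial E_k$-type curve, which indeed bounds an essential disk, so nothing is reduced and the recursion terminates. Finally, for the non-bounding cases (7) ($m_1+m_3<2$ with some $m_i>0$), (8) ($m_2,m_3>0$, $m_1=0$, $m_3\le m_2+1$), and (9) ($m_2\ge m_1$, $m_3>0$), I would argue as in Lemma~\ref{T81}: reading the cyclic word of arc types against the arcs $r_1,r_2,r_3$ produces a reduced, hence nontrivial, word in the free group $\pi_1(B^3-N^\circ(\epsilon))$, contradicting $[\partial A]=1$; the inequality constraints in (8) and (9) are exactly what guarantees there is no available cancellation.

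The main obstacle will be case~(6c), and more generally making the parameter-update formulas \emph{provably} correct rather than merely plausible: one must be sure that after the twist every new weight $m_i'$ is $\ge 0$ and that the resulting nine parameters are genuinely realized by a simple closed curve in standard position (so that the recursion can continue), which requires invoking Lemma~\ref{T82} and Lemma~\ref{T83} again for the image curve and checking the defining inequalities of the appropriate case of Lemma~\ref{T83} are met. A secondary but tedious point is the $q'$-bookkeeping: the shifts by $m_{10}+m_{11}$, $m_{8_1}$, $m_{11_1}$, etc., must be computed from the explicit train-track/window picture of Figure~\ref{H4} and Figures~\ref{H5}--\ref{H13}, and sign errors there are easy to make; I would pin these down by tracking a single oriented arc through one full period of the spiral before and after the twist.
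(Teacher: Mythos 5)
Your outline for cases (1)--(6) follows the paper's actual route: apply $\delta_3^{\pm1}$ or $(\delta_1\delta_2^{-1})^{\pm1}$, chase the weighted arcs through the train-track/standard diagrams, and read off the new Dehn parameters, with Lemmas~\ref{T91} and~\ref{T92} transferring the disk-bounding property. But your proposed mechanism for the non-bounding cases (7)--(9) has a genuine gap. You want to argue ``as in Lemma~\ref{T81}'': read the cyclic word of arc types against $r_1,r_2,r_3$ and show it is reduced, hence nontrivial in the free group $\pi_1(B^3-N^\circ(\epsilon))$, claiming that ``the inequality constraints in (8) and (9) are exactly what guarantees there is no available cancellation.'' That cannot work as stated: the conditions $m_3\leq m_2+1$ (case (8)) and $m_2\geq m_1,m_3$ (case (9)) are purely quantitative inequalities among the weights, while the set of arc types present --- and hence the set of possible consecutive length-2 paths whose cancellations Lemma~\ref{T81} enumerates --- is the same on both sides of these inequalities. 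For example, a curve with $m_{2},m_3>0$, $m_1=0$ and $m_3>m_2+1$ (case (1), which may bound a disk, so its word \emph{is} trivial) uses exactly the same adjacency patterns as one with $m_3\leq m_2+1$ (case (8)); only the multiplicities differ. So a local no-cancellation argument in the style of Lemma~\ref{T81}, which succeeded there only because $m_1=m_3=0$ excluded the cancelling configurations altogether, cannot distinguish (8) from (1) or (9) from (3)/(4).

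The paper's proof handles these cases by a counting argument on the \emph{image} curve instead: one applies $\delta_3$ and observes from the diagram that no arcs contributing to $x_{22}$ or $x_{33}$ can occur, and that under $m_3\leq m_2+1$ the image has $x_{11}<2$ (respectively $x_{11}=0$ when $m_2\geq m_1,m_3$); since a curve bounding an essential disk not parallel to some $E_i$ must have $x_{ii}>0$ (indeed $x_{11}\geq 2$ here, by evenness of $|\gamma\cap l_1|$ together with Lemma~\ref{T81}), the image does not bound, and Lemma~\ref{T92} carries this back to $\gamma_0$. You would need to replace your free-group step for (7)--(9) by this (or an equivalent weight-counting) argument; note also that case (7) needs its own short treatment (Lemma~\ref{T81} plus a parity/connectivity observation when $m_1+m_3=1$), which your sketch does not supply. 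Your remaining concerns --- nonnegativity of the new weights, realizability of the image in standard position, and the $q'$-bookkeeping --- are real but are exactly the diagram checks the paper carries out case by case, so they are matters of execution rather than of approach.
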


\begin{proof}
Suppose that $\gamma_0$ is parameterized by $(p_1,q_1,t_1,p_2,q_2,t_2,$ $p_3,q_3,0)$  to have $\gamma_0$ which is in standard position in $I'$.
We note that  $t_2=0$ if $p_2=0$ and $t_2=-1$ if  $p_2\neq 0$.\\

\begin{figure}[htb]
\begin{center}
\includegraphics[scale=.3]{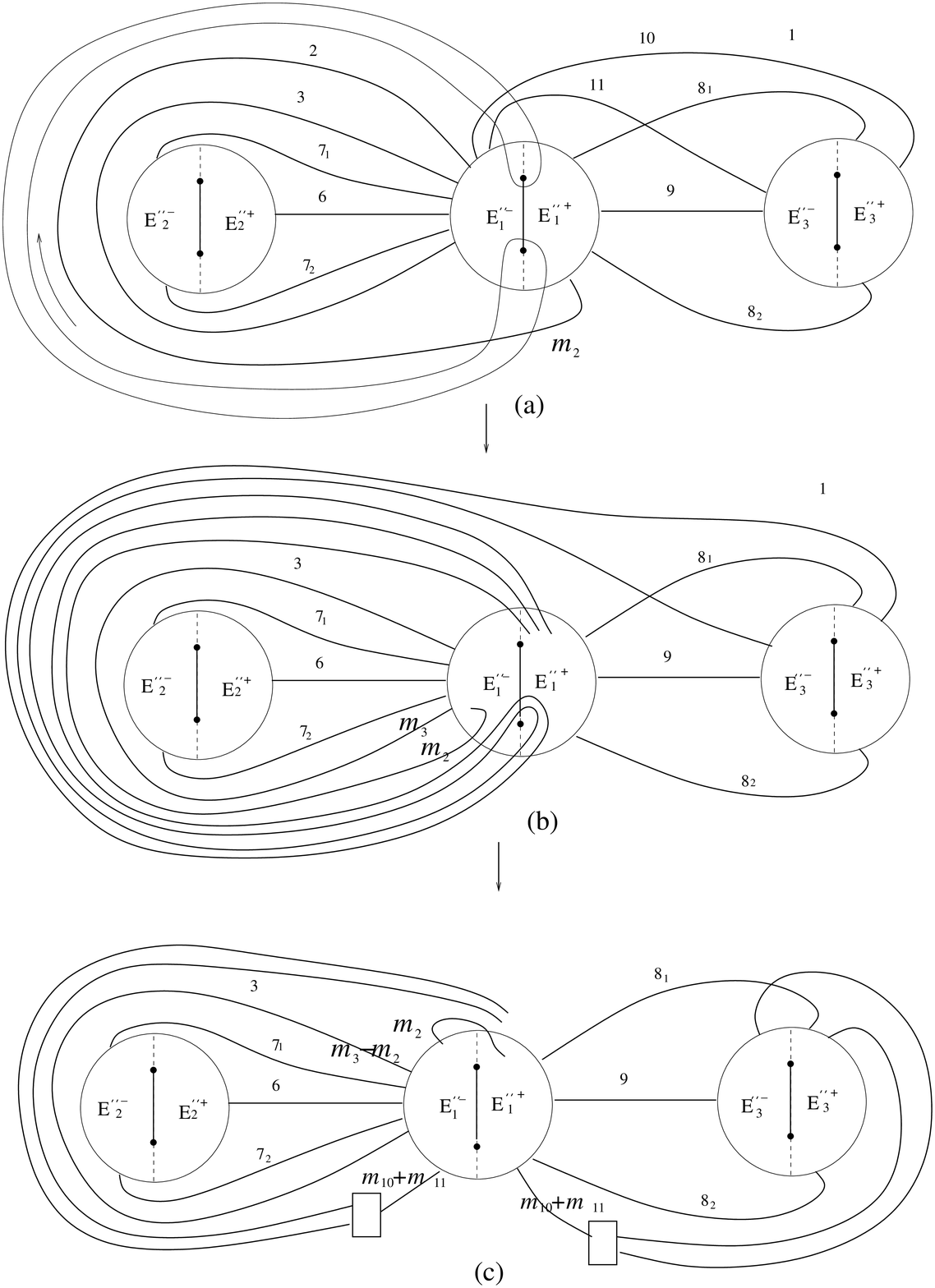}
\end{center}
\caption{}
\label{I3}
\end{figure}

First, we assume that $m_2,m_3> 0$ and $m_1=0$. We note that there are two type 2 in a standard diagram and they cannot coexist. Without loss of generality, we choose the  diagram $(a)$ as in Figure~\ref{I3}.\\

Then we apply $\delta_3$ to $\partial A$ to reduce the minimal intersection number of $\gamma_0$ with $\partial E$. The diagram $(b)$ of Figure~\ref{I3} shows there is no possibility to have $x_{22}>0$ or $x_{33}>0$ for $\delta_3(\gamma_0)$.\\

We note that if $ m_3\leq m_2+1$ then $x_{11}$ of $\delta_3(\gamma_0)$ is less than $2$. Therefore, $\delta_3(\gamma_0)$ does not bound an essential disk in $B^3-\epsilon$ since $x_{11}\geq 2$ if $\delta_3(\gamma_0)$ bounds an essential disk in $B^3-\epsilon$. This implies that $\gamma_0$ also does not bound an essential disk in $B^3-\epsilon$. This makes a contradiction. Therefore, $m_3>m_2+1$. So, we have the diagram $(c)$ of Figure~\ref{I3}.\\

Now, we note that $|\delta_3(\gamma_0)\cap \partial E|=|\gamma_0\cap \partial E|-4m_2$ as the diagram $(c)$ of  Figure~\ref{I3}.\\

First, we note that $(p_{21},q'_{21})=(p_2,q_2')$.  Also, we know that $p_{31}=p_3$. We note that the rightmost arc type coming to $E_3''$ is $8_2$ before taking $\delta_3$, but the  arc type $8_2$ moved around counterclockwise by $2(m_{10}+m_{11})$ after taking $\delta_3$. Therefore, $q'_{31}=q_3+2(m_{10}+m_{11})$.\\
 
We note that $p_{11}=p_1-2m_2$.  We also note that $q'_{11}=q'_1+m_2-(m_{10}+m_{11})$ by considering the incoming of the arc type for $x_{11}$ to the $E_1''^-$. In the diagram $(a)$, the arc type $2$ for $x_{11}$ is coming to the $E_1''^-$ after the types $10$ and $11$. However, after applying $\delta_3$ to $\gamma_0$ the arc type $3$ for $x_{11}$ is the right of some arcs with the weight $m_2$ which are not for $x_{11}$ in the $E_1''^-$.
Therefore, we have the following formula for the parameter changes.\\

 $(p_{11},q'_{11},p_{21},q'_{21},p_{31},q'_{31})=(p_1-2m_2,q'_1+m_2-(m_{10}+m_{11}),p_2,q_2',p_3,q_3'+2(m_{10}+m_{11}))$.
 \\
 
 Also, we check that $p_{11}+p_{21}+p_{31}=p_1-2m_2+p_2+p_3<p_1+p_2+p_3$ since $m_2>0$.\\

We note that if $\gamma_0$ has another type 2 then we need to apply $\delta_3^{-1}$ to reduce the sum of $p_i$ for $\gamma_0$.\\

Actually, the following is the formula for the parameter changes by $\delta_3^{-1}$.\\

 $(p_{11},q'_{11},p_{21},q'_{21},p_{31},q'_{31})=(p_1-2m_2,q'_1-m_2+(m_{10}+m_{11}),p_2,q_2',p_3,q_3'-2(m_{10}+m_{11}))$.\\

The case that $m_1,m_2>0$ and $m_3=0$ is analogous to the previous case.\\

Now, we assume that $m_1,m_2,m_3>0$.\\

\begin{figure}[htb]
\begin{center}
\includegraphics[scale=.27]{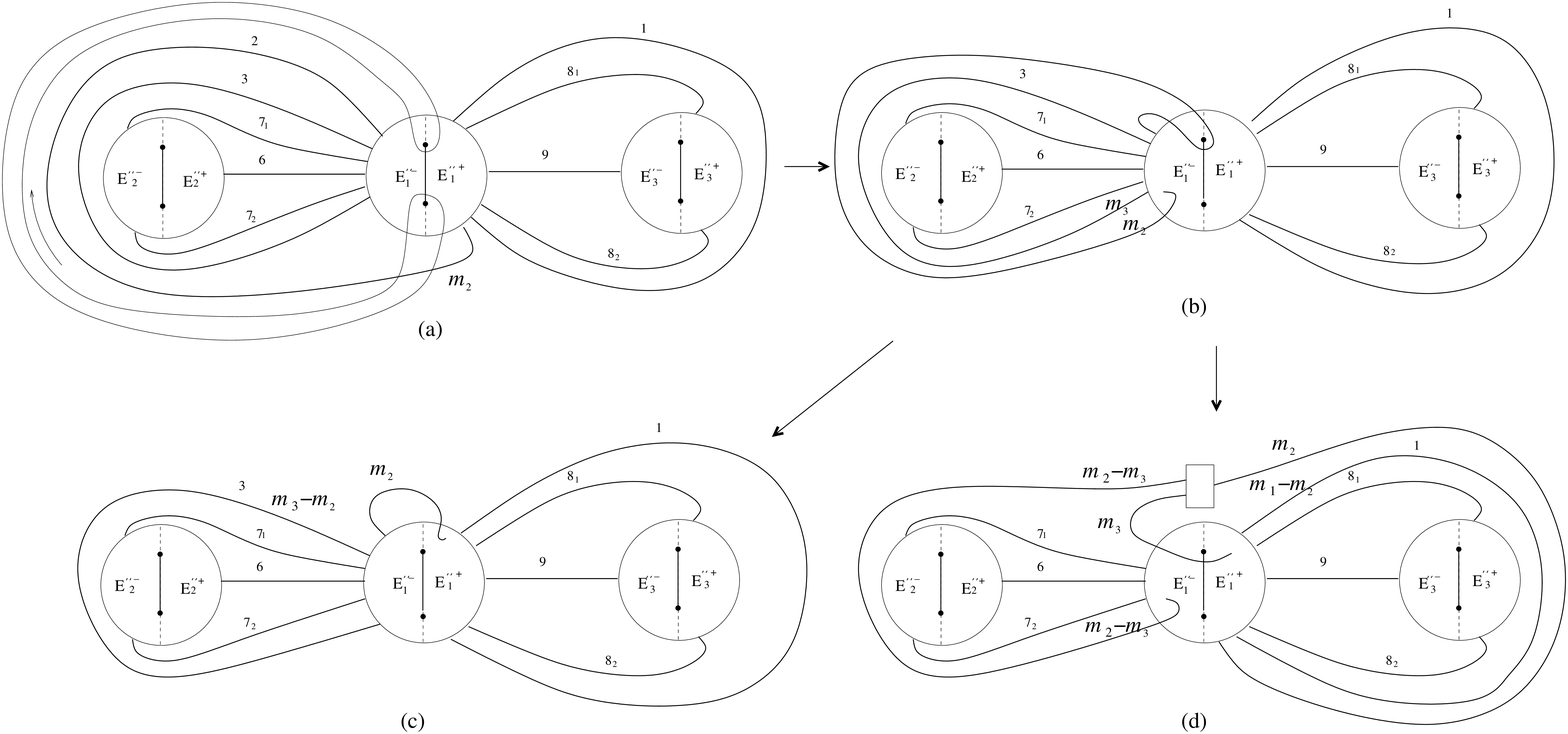}
\end{center}
\caption{}
\label{I4}
\end{figure}

Then we have the diagram $(a)$ of Figure~\ref{I3}. After applying $\delta_3$ we can get the diagram $(b)$ of Figure~\ref{I3}.\\

First of all, we note that if $m_2\geq m_1,m_3$ then $\gamma_0$ does not bound an essential disk in $B^3-\epsilon$ since $\delta_3(\gamma_0)$ has $x_{11}=0$ and in the diagram $(b)$ there is no arc can occur $x_{22}$ and $x_{33}$. So, we consider the two cases $m_2<m_3$ and $m_3\leq m_2<m_1$.\\

\begin{enumerate}
\item First, we assume that $m_2<m_3$. Then we can get the diagram $(c)$ in Figure~\ref{I3}.\\

We note that $(p_{21},q'_{21})=(p_2,q'_2)$ and $(p_{31},q'_{31})=(p_3,q'_3)$. Also, $p_{11}=p_1-2m_2$.  In the diagram $(a)$, the arc type $2$ for $x_{11}$ is coming to the $E_1''^-$ leftmost. However, after applying $\delta_3$ to $\gamma_0$ the arc with the weight $m_2$ which is not for $x_{11}$ is the left of the arc type $2$ in the $E_1''^-$. This implies that $q'_{11}=q'_1+m_2$. So, we have the following formula in this case.\\

 $(p_{11},q'_{11},p_{21},q'_{21},p_{31},q'_{31})=(p_1-2m_2,q'_1+m_2,p_2,q_2',p_3,q'_3)$.
 \\
 
 Also, we can check that  $p_{11}+p_{21}+p_{31}=p_1-2m_2+p_2+p_3<p_1+p_2+p_3$ since $m_2>0$.\\
 
 \item Now, we assume that $m_3\leq m_2<m_1$. Then we can get the diagram $(d)$ of Figure~\ref{I3}.\\
 
 We note that $(p_{21},q'_{21})=(p_2,q'_2)$ and $(p_{31},q'_{31})=(p_3,q'_3)$. We also can check that $p_{11}=p_1-2m_2$. We note that the arc with the weight $m_2-m_3$ in $E_1''^-$ cannot connect to the arc with the weight $m_3$ since $m_1>m_3$. In the diagram $(a)$, the arc type $2$ for $x_{11}$ is coming to the $E_1''^-$ leftmost. However, after applying $\delta_3$ to $\gamma_0$ the arc with the weight $m_3$ which is  for $x_{11}$ is replacing the position of the arc type $2$ in the $E_1''^-$. This implies that  $q'_{11}=q'_1+(m_3-m_2)$. So, we have the following formula in this case.\\
 
  $(p_{11},q'_{11},p_{21},q'_{21},p_{31},q'_{31})=(p_1-2m_2,q'_1+(m_3-m_2),p_2,q_2',p_3,q'_3)$.\\
 
 Also, we can check that  $p_{11}+p_{21}+p_{31}=p_1-2m_2+p_2+p_3<p_1+p_2+p_3$ since $m_2>0$.
 \end{enumerate}

\begin{figure}[htb]
\begin{center}
\includegraphics[scale=.30]{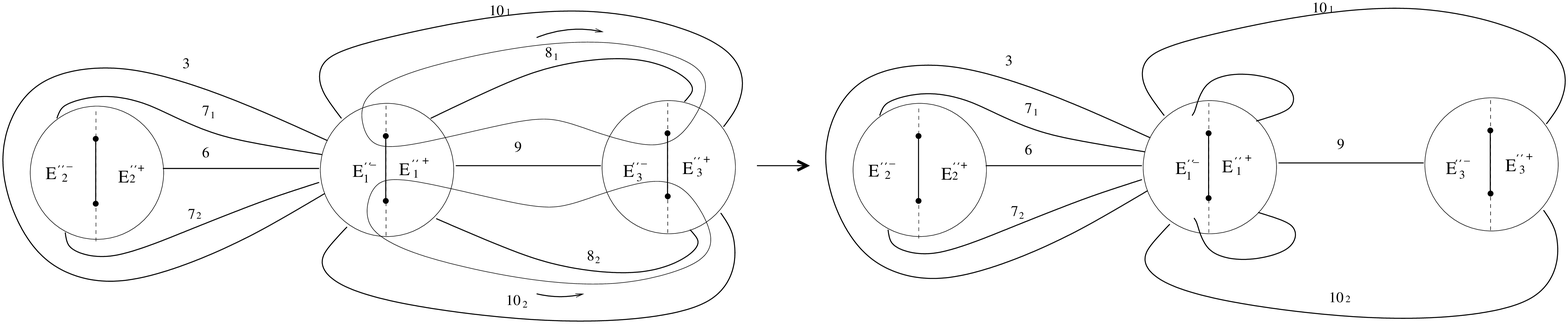}
\end{center}
\caption{}
\label{I5}
\end{figure}

Now, we assume that $m_2=0$. \\

We note that if $m_1,m_3>0$ then $m_1=m_3=1$ and $m_i=0$ for all $i\neq 1,3$. Otherwise, $\gamma_0$ is not a simple closed curve. Moreover, if $m_1=m_3=1$ then $\gamma_0$ bounds an essential disk in $B^3-\epsilon$ and the algorithm stops. So, we may assume that $m_3>0$, $m_1=0$ for the rest of this algorithm.\\

Then, we have consider the following three subcases.\\

\begin{enumerate}
\item $m_{11}=0$: We have the left diagram of Figure~\ref{I5}. Now, apply $\delta_1^{-1}\delta_2$ to $\gamma_0$ to get the right diagram of Figure~\ref{I5}. 
We note that $(p_{21},q'_{21})=(p_2,q'_2).$ Also, $p_{31}=p_3-m_{8}$ and  $q'_{31}=q'_3+m_8$.
Moreover, $p_{11}=p_1-m_8$ and $q'_{11}=q'_1-m_{8_1}$.\\

Therefore, we have the following formula for the Dehn's parameter changes.\\

$(p_{11},q'_{11},p_{21},q'_{21},p_{31},q'_{31})=(p_1-m_8,q'_1-m_{8_1},p_2,q'_2,p_3-m_8,q'_3+m_8).$\\

We can check that $p_{11}+p_{21}+p_{31}=p_1+p_2+p_3-2m_{8}$.\\

\begin{figure}[htb]
\begin{center}
\includegraphics[scale=.30]{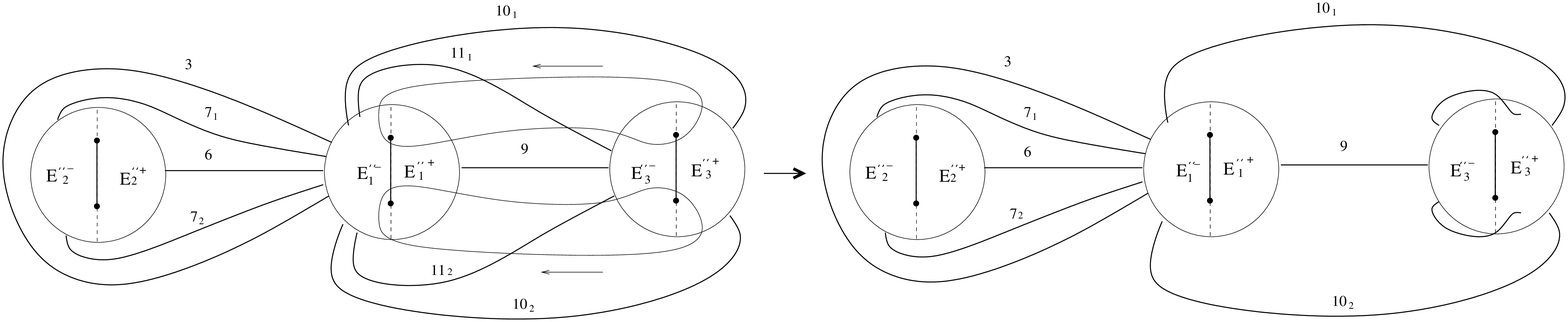}
\end{center}
\caption{}
\label{I6}
\end{figure}

\item $m_{8}=0$: We have the left  diagram of Figure~\ref{I6}. Now, apply $\delta_1\delta_2^{-1}$ to $\gamma_0$ to get the right diagram of Figure~\ref{I6}.
We know that $(p_{21},q'_{21})=(p_2,q'_2).$ Also, $p_{31}=p_3-m_{11}$ and  $q'_{31}=q'_3+m_{11}$.
Moreover, $p_{11}=p_1-m_{11}$ and $q'_{11}=q'_1-m_{11_1}$. So, we have the following formula for the Dehn's parameter changes.\\

$(p_{11},q'_{11},p_{21},q'_{21},p_{31},q'_{31})=(p_1-m_{11},q'_1-m_{11_1},p_2,q'_2,p_3-m_{11},q'_3+m_{11}).$\\

We also can check that $p_{11}+p_{21}+p_{31}=p_1+p_2+p_3-2m_{11}$.\\

\item $m_8,m_{11}> 0$: Since $m_{8_1}$ and $m_{11_1}$ cannot coexist, we assume that $m_{8_1}>0$ without loss of generality.
Then $m_{11_2}>0$ and $m_{8_2}=m_{11_1}=0$. Figure~\ref{I7}  shows this case. \\
\begin{figure}[htb]
\begin{center}
\includegraphics[scale=.30]{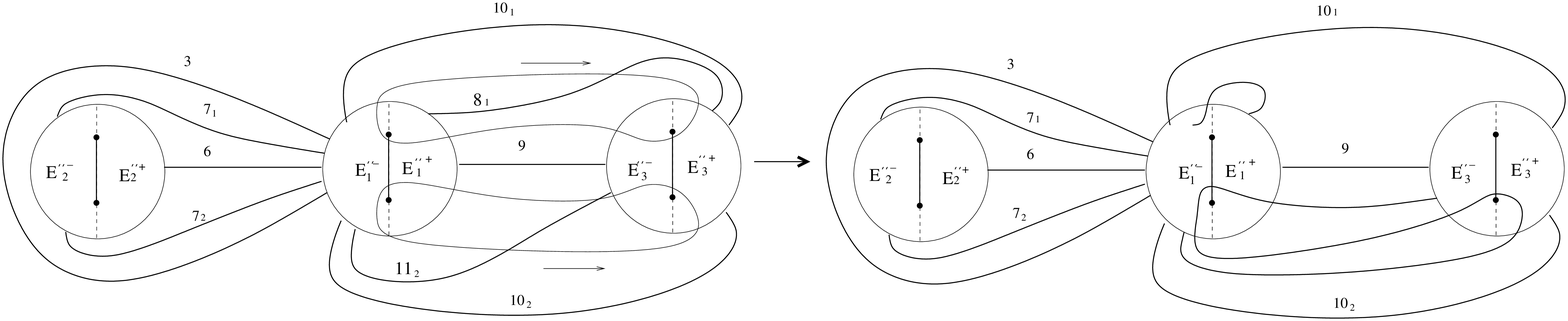}
\end{center}
\caption{}
\label{I7}
\end{figure}

Now, apply $\delta_1^{-1}\delta_2$ to the left diagram of Figure~\ref{I7}. For the connectivities in $E_1''$ and $E_3''$, we have $(m_3+m_7+m_6)+m_{10}+m_{11_2}=m_{8_1}+m_9$ and
$m_9+m_{11_2}=m_{10}+m_{8_1}$. So, we have $(m_3+m_7+m_6)+2m_{11_2}=2m_{8_1}$. This implies that $m_{8_1}>m_{11_2}$ since $m_3>0$.\\

We also note that $m_3+m_{10_1}\geq m_{8_1}+2$ to have at least two type 3 for $\delta_1^{-1}\delta_2(\gamma_0)$.
So, we know that $(p_{21},q'_{21})=(p_2,q'_2)$. Also, we know that $p_{31}=p_3-(m_{8_1}-m_{11_2})$ and $q'_{31}=q'_3+m_{8_1}.$
Moreover, $p_{11}=p_1-(m_{8_1}-m_{11_2})$ and $q'_{11}=q'_1-m_{8_1}$. So, we have the following formula for the Dehn's parameter changes.\\

$(p_{11},q'_{11},p_{21},q'_{21},p_{31},q'_{31})=(p_1-(m_{8_1}-m_{11_2}),q'_1-m_{8_1},p_2,q'_2,p_3-(m_{8_1}-m_{11_2}),q'_3+m_{11_2}).$
Also, we can check that  $p_{11}+p_{21}+p_{31}=p_1+p_2+p_3-2(m_{8_1}-m_{11_2})$.\\

For the case that $m_{8_2}>0$, $m_{11_1}>0$ and $m_{8_1}=m_{11_2}=0$, we  still need to apply $\delta_1^{-1}\delta_2$. to reduce the minimal intersection number of $\gamma_0$ with $\partial E$. Then we can have the following formula for the Dehn's parameter changes.\\

  $(p_{11},q'_{11},p_{21},q'_{21},p_{31},q'_{31})=(p_1-(m_{8_2}-m_{11_1}),q'_1+m_{11_1},p_2,q'_2,p_3-(m_{8_2}-m_{11_1}),q'_3-m_{8_2}).$ Also, we can check that  $p_{11}+p_{21}+p_{31}=p_1+p_2+p_3-2(m_{8_2}-m_{11_1})$.
\end{enumerate}
\end{proof}

If $\gamma_0$ is in standard position in $I'$ with $m_1>0$, then we rotate $\gamma_0$ with $180^\circ$ about the center of $E_1'$ to have a new simple closed curve $\eta$ which is in standard position in $I'$ with $m_3>0$. We note that $\eta$ also bounds an essential disk in $B^3-\epsilon$ since the rotation preserves $\infty$ tangle.\\

If the set of weights $m_i$ for $\gamma_0$ satisfies one of the conditions $(7)-(9)$ of Theorem~\ref{T93}, then we stop the algorithm to say that $\gamma_0$ does not bound an essential disk in $B^3-\epsilon$. If the set of weights $m_i$ for $\gamma_0$ satisfies the condition $(5)$ of Theorem~\ref{T93} then we stop the algorithm to say that $\gamma_0$ does bound an essential disk in $B^3-\epsilon$. If not, i.e., the set of weights $m_i$ for $\gamma_0$ satisifies one of the conditions $(1)-(4),(6)$ of Theorem~\ref{T93}, then we reduce the sum of $m_i$ by using  
 the formulas for the Dehn's parameter changes after applying one of four homeomorphism as in Theorem~\ref{T93}. Then with the new Dehn's parameters we can continue to follow this algorithm until either the data in each step fails to bound an essential disk in $B^3-\epsilon$ or $m_i=0$ for all $i=1,2,...,11$. We note that if $m_i=0$ for all $i$ for $\gamma_0$ then it bounds an essential disk in $B^3-\epsilon$.

\section{Examples of the use of the algorithm}

Example 1: $\infty$ tangle and $T$.\\

Consider $\infty$ tangle as in Figure~\ref{J1}. Then the extension of $\sigma_5\sigma_3\sigma_1\sigma_2^{-1}\sigma_3\sigma_1$ to $B^3$  makes  a rational 3-tangle $T$.
For every strings of $T$, if we choose the other two strings then they are isotopic to a trivial rational 2-tangle in $B^3$. However, $T$ is not isotopic to $\infty$ tangle.\\

\begin{figure}[htb]
\begin{center}
\includegraphics[scale=.25]{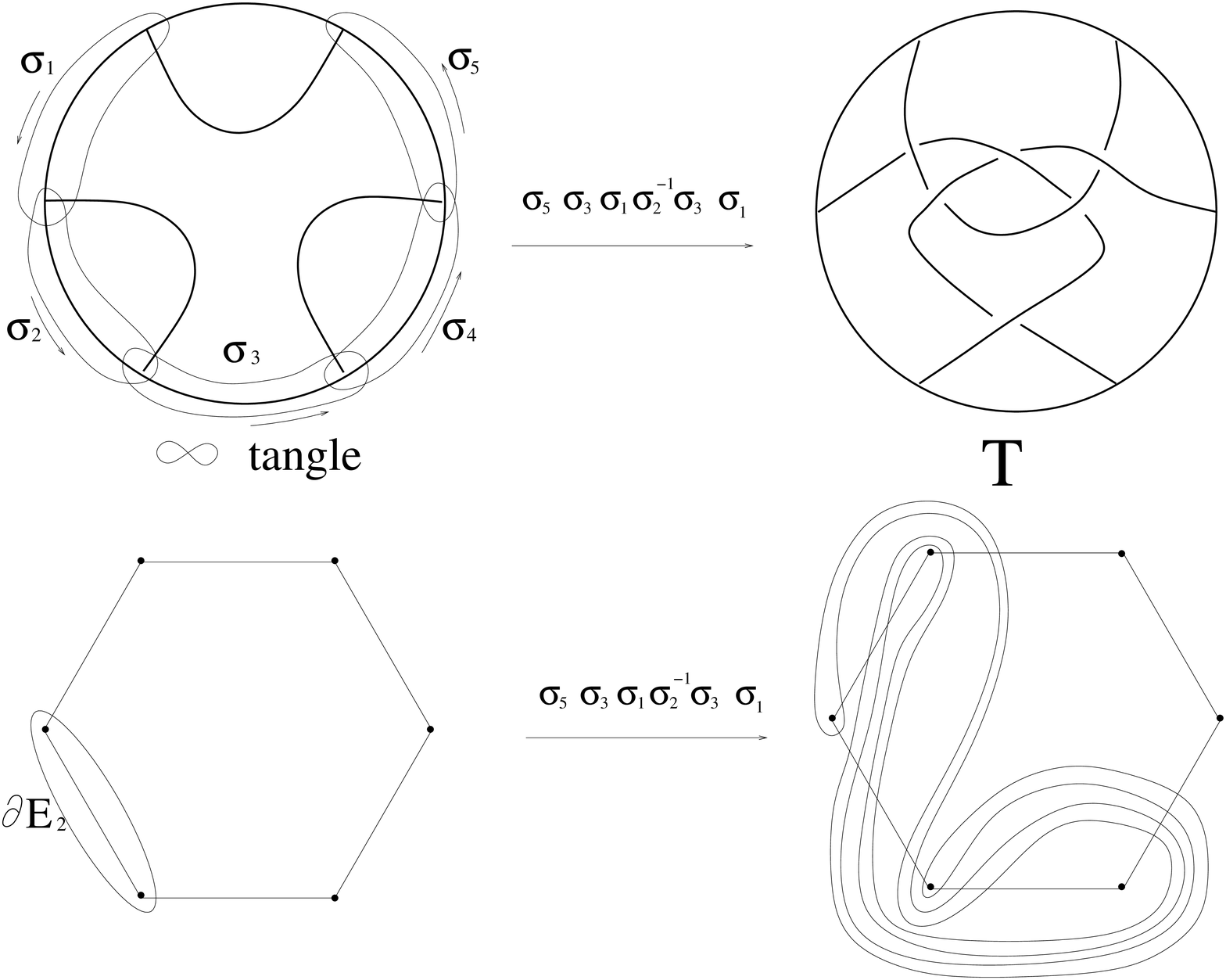}
\end{center}
\caption{Example 1}
\label{J1}
\end{figure}

In order to show this, we consider $\partial E_2$ which bounds an essential disk in $B^3-\epsilon$. We notice that $w_{46}=w^{46}=1$ and all the other weights are zero for $[\partial E_2]$.\\

Now, consider the simple closed curve $\alpha =\sigma_5\sigma_3\sigma_1\sigma_2^{-1}\sigma_3\sigma_1(\partial E_2)$. We will show that $\alpha$ does not bound an essential disk in $B^3-\epsilon$.  Also, let $w_{ij}(f)$  and $w^{ij}(f)$ be the weights of $[f(\partial E_2)]$.\\

Let $f_1=\sigma_1$, $f_2=\sigma_3\sigma_1,f_3=\sigma_2^{-2}\sigma_3\sigma_1$, $f_4=\sigma_1\sigma_2^{-2}\sigma_3\sigma_1$, $f_5=\sigma_3\sigma_1\sigma_2^{-2}\sigma_3\sigma_1$ and $f_6=\sigma_5\sigma_3\sigma_1\sigma_2^{-2}\sigma_3\sigma_1$.\\

 From the weight change formulas, we can get $w_{14}(f_1)=w_{56}(f_1)=w^{16}(f_1)=w^{45}(f_1)=1$ and all the other weights are zero.\\

 From $w_{ij}(f_1)$ and $w^{ij}(f_1)$, we get $w_{15}(f_2)=w_{56}(f_2)=w_{34}(f_2)=w^{35}(f_2)=w^{45}(f_2)=w^{16}(f_2)=1$ and all the other weights are zero.\\
 
 From $w_{ij}(f_2)$ and $w^{ij}(f_2)$, we get $w_{14}(f_3)=w_{46}(f_3)=w_{34}(f_3)=w^{36}(f_3)=w^{46}(f_3)=w^{16}(f_3)=1$ and
 $f_3(w)_{56}=f_3(w)^{45}=2$ and all the other weights are zero.\\
 
 From $w_{ij}(f_3)$ and $w^{ij}(f_3)$, we get $w_{14}(f_4)=w_{15}(f_4)=2,w_{34}(f_4)=1,w_{56}(f_4)=3$ and $w^{15}(f_4)=w^{35}(f_4)=1, w^{16}(f_4)=w^{45}(f_4)=3$ and all the other weights are zero.\\
 
 From $w_{ij}(f_4)$ and $w^{ij}(f_4)$, we get $w_{15}(f_5)=4,w_{34}(f_5)=w_{56}(f_5)=3,w_{35}(f_5)=1$ and $w^{15}(f_5)=1,w^{16}(f_5)=w^{45}(f_5)=3,w^{35}(f_5)=4$ and all the other weights are zero.\\

 Finally, we get $w_{15}(f_6)=4,w_{34}(f_6)=w_{56}(f_6)=3,w_{35}(f_6)=1$ and $w^{15}(f_6)=1,w^{16}(f_6)=w^{45}(f_6)=3,w^{35}(f_6)=4$ and all the other weights are zero.
 We notice that $w_{ij}(f_5)=w_{ij}(f_6)$ and $w^{kl}(f_5)=w^{kl}(f_6)$.\\
 
So, $\alpha$ has $p_1=w_{15}(f_6)=4,p_2=w_{15}(f_6)+w_{56}(f_6)+w_{35}(f_6)=8$ and $p_3=w_{34}(f_6)+w_{35}(f_6)=4$. This implies that $x_{12}=8$, $x_{23}=8$ and all other $x_{ij}=0$.\\

Especially, $x_{11}+x_{22}+x_{33}=0$. Therefore, $\alpha$ does not bound an essential disk in $B^3-\epsilon$.
This implies that $T$ is not isotopic to $\infty$ tangle. We can find the nine parameters for $\alpha$ by using the algorithm to check if $\alpha$ is left-twisted in $E_i'$. We remark that $\alpha$ is parameterized by $(4,0,-1,8,1,-1,4,0,0)$.\\

Example 2: $T$ and $T'$\\

 Now, consider $T'$ which is obtained by reversing all the crossings in $T$. Then we have Figure~\ref{J2}. We want to check whether $T'$ is isotopic to $T$ or not.
Let $f_7=\sigma_5f_6, f_8=\sigma_3f_7,...,f_{12}=\sigma_1 f_{11}=\sigma_1\sigma_3\sigma_2^{-1}\sigma_1\sigma_3\sigma_5 f_6 =(\sigma_5^{-1}\sigma_3^{-1}\sigma_1^{-1}\sigma_2\sigma_3^{-1}\sigma_1^{-1})^{-1}f_6.$\\

\begin{figure}[htb]
\begin{center}
\includegraphics[scale=.25]{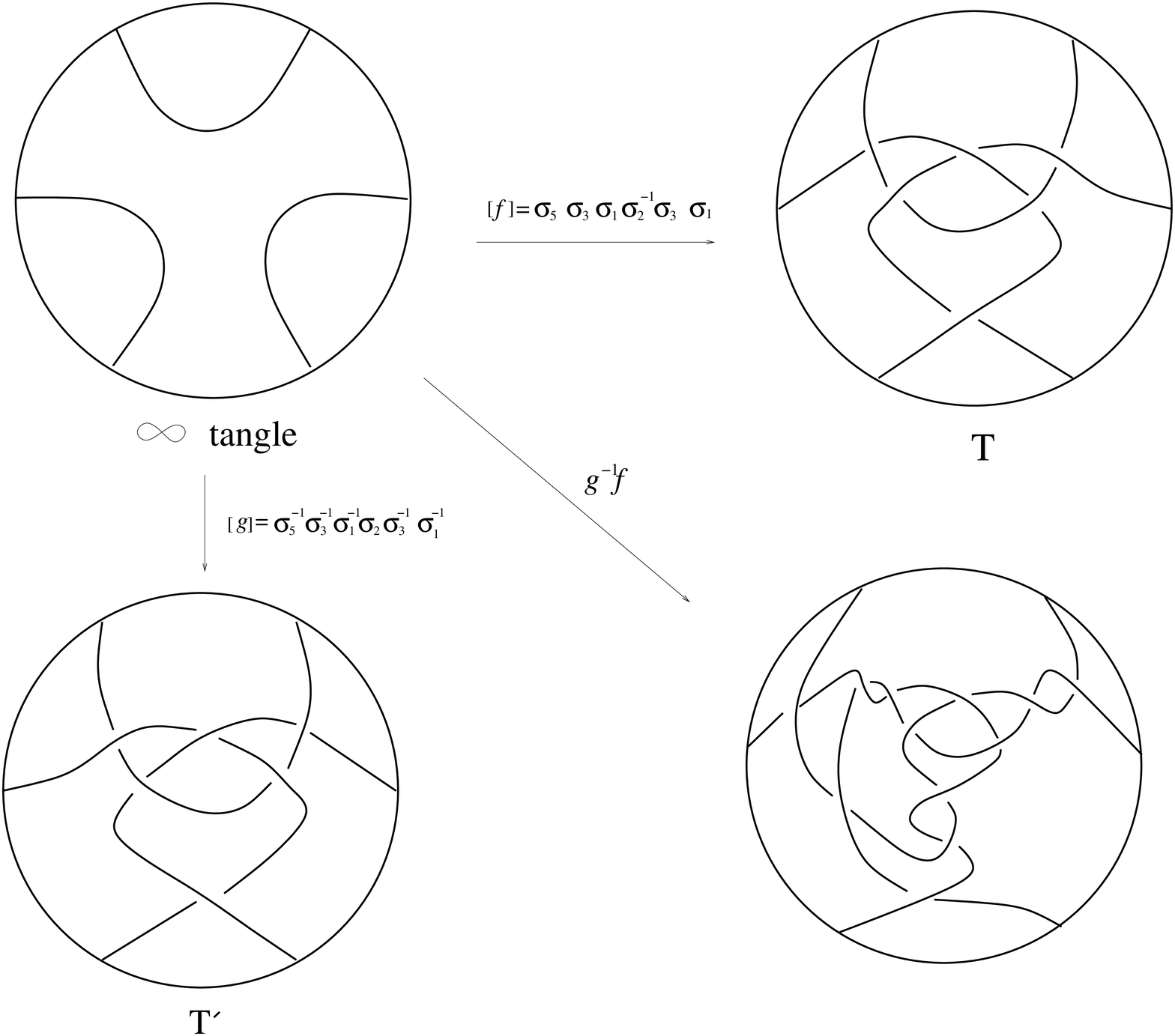}
\end{center}
\caption{Example 2}
\label{J2}
\end{figure}

We notice that if $T$ is isotopic to $T'$ then $f_{12}(\partial E)$ bound essential disks in $B^3-\epsilon$.\\

Consider $\partial E_2$. Then we already got $w_{ij}(f_6)$ and $w^{ij}(f_6)$ in the previous argument.\\

From $w_{ij}(f_6)$ and $w^{ij}(f_6)$, we can get $w_{ij}(f_7)$ and $w^{ij}(f_7)$. Actually, $w_{ij}(f_7)=w_{ij}(f_6)$ and $w^{ij}(f_7)=w^{ij}(f_6)$.\\

From $w_{ij}(f_7)$ and $w^{ij}(f_7)$, we get $w_{34}(f_8)=w_{56}(f_8)=3, w_{15}(f_8)=w_{35}(f_8)=4$ and $w^{15}(f_8)=1, w^{16}(f_8)=w^{45}(f_8)=3, w^{35}(f_8)=7$.\\

From $w_{ij}(f_8)$ and $w^{ij}(f_8)$, we get $w_{34}(f_9)=w_{56}(f_9)=3, w_{35}(f_9)=4, w_{15}(f_9)=7$ and $w^{16}(f_9)=w^{45}(f_9)=3, w^{15}(f_9)=4, w^{35}(f_9)=7$. \\

From $w_{ij}(f_9)$ and $w^{ij}(f_9)$,  we get $w_{34}(f_{10})=w_{14}(f_{10})=7, w_{46}(f_{10})=3, w_{56}(f_{10})=14$ and $w^{16}(f_{10})=w^{36}(f_{10})=7, w^{46}(f_{10})=3, w^{45}(f_{10})=14$.\\

From $w_{ij}(f_{10})$ and $w^{ij}(f_{10})$,  we get $w_{15}(f_{11})=w_{35}(f_{11})=7, w_{34}(f_{11})= w_{56}(f_{11})=17$ and $w^{16}(f_{11})=7,w^{36}(f_{11})=10, w^{35}(f_{11})=14, w^{45}(f_{11})=17$.\\

Then finally, we get $w_{35}(f_{12})=7, w_{34}(f_{12})= w_{56}(f_{12})=17, w_{15}(f_{12})=24$ and $w^{15}(f_{12})=7,w^{16}(f_{12})= w^{45}(f_{12})=17, w^{35}(f_{12})=24$ from $w_{ij}(f_{11})$ and $w^{ij}(f_{12})$.\\

Let $\beta=f_{12}(\partial E_2)$. Then $\beta$ has $p_1=w_{15}(f_{12})=24, p_2=w_{35}(f_{12})+w_{56}(f_{12})+w_{15}(f_{12}) =48$ and $p_3=w_{35}(f_{12})+w_{34}(f_{12})=24$. Therefore,  $x_{12}=48$, $x_{23}=48$ and all other $x_{ij}=0$. Especially, $x_{11}+x_{22}+x_{33}=0$. This implies that $f_{12}(\partial E_2)$ does not bound an essential disk in $B^3-\epsilon$. Therefore, $T$ is not isotopic to $T'$.

\end{document}